\documentclass[11pt,letterpaper]{article}
\usepackage[T1]{fontenc}
\usepackage[utf8]{inputenc}
\usepackage[margin=1in]{geometry}
\usepackage{amsmath,amssymb,amsthm,mathtools}
\usepackage{newtxtext,newtxmath}
\usepackage{microtype,booktabs,tabularx,array,longtable}
\usepackage[numbers,sort&compress]{natbib}
\usepackage{enumitem,float,needspace}
\floatstyle{plain}
\newfloat{algorithm}{tbp}{loa}
\floatname{algorithm}{Algorithm}
\usepackage{xurl}
\usepackage[hidelinks]{hyperref}
\usepackage{fancyhdr}
\setlist{itemsep=2pt,topsep=4pt}
\allowdisplaybreaks[2]
\numberwithin{equation}{section}
\newtheorem{theorem}{Theorem}[section]
\newtheorem{lemma}[theorem]{Lemma}
\newtheorem{proposition}[theorem]{Proposition}
\newtheorem{corollary}[theorem]{Corollary}
\theoremstyle{definition}
\newtheorem{definition}[theorem]{Definition}
\newtheorem{example}[theorem]{Example}
\theoremstyle{remark}

\newcommand{\R}{\mathbb R}
\newcommand{\Q}{\mathbb Q}
\newcommand{\E}{\mathbb E}
\newcommand{\K}{\mathcal K}
\newcommand{\I}{\mathcal I}
\newcommand{\N}{\mathcal N}
\newcommand{\CC}{\mathfrak C}
\newcommand{\CQ}{\mathcal Q}
\newcommand{\CL}{\mathcal C}
\newcommand{\CR}{\mathcal R}
\newcommand{\CA}{\mathcal A}
\newcommand{\ind}{\mathbf 1}
\newcommand{\Prb}{\mathbb P}
\newcommand{\eps}{\varepsilon}
\DeclareMathOperator{\out}{out}
\DeclareMathOperator{\inw}{in}
\DeclareMathOperator{\Inf}{Inf}
\newcommand{\pos}[1]{[#1]_+}
\DeclareMathOperator{\conv}{conv}
\DeclareMathOperator{\cone}{cone}
\DeclareMathOperator{\supp}{supp}
\DeclareMathOperator{\argmax}{arg\,max}
\DeclareMathOperator{\OPT}{OPT}
\DeclareMathOperator{\DCG}{DCG}
\DeclareMathOperator{\CG}{CG}
\DeclareMathOperator{\cut}{cut}
\DeclareMathOperator{\cov}{cov}
\DeclareMathOperator{\Bin}{Bin}
\DeclareMathOperator{\vol}{vol}
\newcolumntype{Y}{>{\raggedright\arraybackslash}X}
\hypersetup{pdftitle={Concave Roofs and Downward Recovery for DR-Submodular Polynomial Maximization},pdfauthor={Donglei Du},pdfsubject={Optimization; submodularity; approximation algorithms; explicit-input hardness}}
\title{\bfseries Concave Roofs and Downward Recovery for DR-Submodular Polynomial Maximization}
\author{Donglei Du\\[3pt]\small Faculty of Management, University of New Brunswick\\\small Fredericton, NB E3B 5A3, Canada\\\small \href{mailto:ddu@unb.ca}{ddu@unb.ca}\quad ORCID: 0000-0003-0111-8572}
\date{}
\newcommand{\IndependentResultsNote}{%
\paragraph{Independently obtained results.}
The quadratic $1/2$ and cubic $8/17$ guarantees were obtained independently in this work and by \citet{Aggarwal2026}. Both approaches combine nonnegative decompositions, concave majorants, and common recovery maps for low-degree objectives. Aggarwal also develops geometry-dependent refinements and adversarial-online guarantees. Here we construct the orientation by linear programming and identify its optimal value through threshold sets in the dual. We retain repeated powers through an exact complement identity on the original cube and obtain the baseline cubic guarantee with two explicit maps.
}
\begin{document}
\maketitle
\begin{abstract}
We give approximation algorithms for $\max\{F(x):x\in\K\}$, where $\K$ is a nonempty compact down-closed convex subset of the unit cube and $F$ is a nonnegative diminishing-returns (DR) submodular polynomial: every entry of its Hessian is nonpositive on the cube.

Starting from quadratic or cubic coefficients, we decompose the objective exactly into nonnegative rooted atoms and affine terms. Concave upper bounds for these components give a tractable roof. We then decrease the coordinates of a roof optimizer using maps that work for all components at once, recovering a fraction of the roof value while preserving feasibility. For quadratics, we also retain the concave diagonal residual exactly.

Under the stated assumptions on optimization access, these constructions give deterministic polynomial-time factors of $1/2$ for quadratics and $8/17$ for cubics, up to any prescribed additive error. The cubic result includes positive cubic coefficients and repeated variables. The factor $8/17$ also holds for two larger classes: supplied compatible rooted certificates of literal degree at most twelve, and sums of nonnegative submodular local functions of arity at most four. At arbitrary finite literal degree, a common scalar map attains the optimal single-map factor $2\sqrt3-3$. Global monotonicity and concave-cardinality structure give stronger guarantees for multilinear local-function sums, with no recovery beyond the identity map. Standard matroid rounding transfers the multilinear guarantees without loss. We also prove online preservation for directed-hypergraph cuts and, separately, a Unique-Games inapproximability bound of approximately $0.82843$ for explicit quadratic input under a cardinality upper bound.
\end{abstract}
\noindent\textbf{Keywords:} DR-submodularity; submodular maximization; concave roof; concave closure; nonnegative decomposition; approximation; online contention resolution; explicit-input hardness.

\smallskip\noindent\textbf{Mathematics Subject Classification (2020):} 90C26, 90C27, 90C30, 68Q17, 68W25.

\tableofcontents
\clearpage
\section{Introduction}\label{sec:intro}

We consider
\begin{equation}\label{eq:continuous_problem}
 \OPT(F,\K)=\max\{F(x):x\in\K\},
\end{equation}
where $\K\subseteq[0,1]^n$ is nonempty, compact, convex, and \emph{down-closed}: $x\in\K$ and $0\le y\le x$ imply $y\in\K$. A polynomial is \emph{Boolean-nonnegative} when it is nonnegative at every Boolean vertex. It is \emph{diminishing-returns (DR) submodular} when all diagonal and mixed second partial derivatives are nonpositive on the cube. In particular, it is concave in each coordinate separately, so Boolean nonnegativity implies nonnegativity throughout the cube.

We maximize a tractable concave upper bound and recover a feasible point whose objective value is a fixed fraction of that bound. When components share variables, recovering them separately may require conflicting choices. We resolve this conflict with maps that decrease coordinates and work for all components at once. 

\subsection{Two coefficient-input algorithms}
For Boolean-nonnegative quadratic DR polynomials, we decompose the multilinear part into nonnegative components and keep the concave diagonal residual intact. Clipping then recovers half the roof value while preserving feasibility, giving a deterministic $1/2$ approximation.

Our second result is a deterministic $8/17$ approximation for every Boolean-nonnegative cubic DR polynomial. Two difficulties arise beyond the quadratic case. DR-submodularity permits positive cubic coefficients, and replacing every positive coordinate exponent by one (Boolean multilinearization) can leave a residual that is not concave. We address both with an exact complement identity and a linear program for the decomposition. Two common maps then balance the competing recovery requirements of the components. Both algorithms start from rational coefficients and incur only a prescribed additive error under the access assumptions in Section~\ref{sec:access}. To handle repetitions, Section~\ref{sec:cubic_maps} proves the scalar inequalities before identifying coordinates.

\begin{table}[htbp]
\centering\small
\caption{The two coefficient-input guarantees. Each algorithm constructs its representation and returns $y\in\K$ with $F(y)\ge\rho\OPT(F,\K)-\varepsilon$. Powers and repeated coordinates are retained.}\label{tab:main}
\begin{tabularx}{\textwidth}{@{}p{.20\textwidth}p{.11\textwidth}Y Y@{}}\toprule
Objective & Factor $\rho$ & Construction & Recovery \\\midrule
Quadratic DR polynomial & $1/2$ & Pair orientation and exact concave diagonal residual & One clipping map; Theorem~\ref{thm:quadratic}\\[4pt]
Cubic DR polynomial & $8/17$ & Complement identity and uniform-polarity orientation & Two scalar maps; Theorem~\ref{thm:cubic}\\\bottomrule
\end{tabularx}
\end{table}

Boolean values alone do not determine the continuous objective. For example, $t(1-t)$ vanishes at both Boolean points but has a positive interior optimum. A multilinear extension therefore cannot replace an arbitrary continuous polynomial. Section~\ref{sec:model_comparison} makes this distinction precise. Section~\ref{sec:worked_cubic} follows one constrained continuous cubic from its coefficients to a recovered point and compares the continuous optimum, the Boolean optimum, and the roof bound.

\subsection{The mechanism and its extensions}
A \emph{concave roof} is a concave majorant of the objective. A \emph{literal} is a coordinate or its complement. A \emph{rooted atom} subtracts a literal product from one distinguished factor, its root; the other occurrences are its heads. A \emph{rooted certificate} is an exact nonnegative sum of atoms, literals, and a constant. Orientation allocates each subtracted product's coefficient among its permitted root choices. Literal degree counts occurrences, including repetitions. The low-degree construction uses uniform-polarity products, consisting only of coordinates or only of complements. The more general compatibility rule requires a complemented root whenever the product contains a complemented literal.

A \emph{downward map} satisfies $0\le R(x)\le x$. A \emph{portfolio} is a finite collection of downward maps with nonnegative weights summing to one. A common-coordinatewise map uses the same scalar function on every coordinate. The recovery inequalities add because every component uses the same maps and weights, including components with overlapping supports. Termwise recovery requires a separate inequality for each generator. The algorithm evaluates all branches and keeps the one with the largest original objective value. Theorem~\ref{thm:common} gives the guarantee; Algorithm~\ref{alg:recovery} specifies how to allocate numerical error.

The extensions in Table~\ref{tab:extensions} go in two directions. Larger rooted certificates allow more literal occurrences, while richer local functions allow any nonnegative submodular function on a small support. A local function's \emph{arity} counts distinct coordinates. Its concave closure maximizes expected value over joint distributions with prescribed coordinate marginals. Recentering subtracts an affine function from each local component to make its remaining marginals nonnegative.

\begin{table}[htbp]
\centering\small
\caption{Extensions and structural refinements. Weights are nonnegative and representations are supplied unless a construction is stated. The local-function rows concern multilinear extensions.}\label{tab:extensions}
\begin{tabularx}{\textwidth}{@{}>{\hsize=1.25\hsize\linewidth=\hsize}Y >{\hsize=.7\hsize\linewidth=\hsize}Y >{\hsize=1.05\hsize\linewidth=\hsize}Y@{}}\toprule
Additional input structure & Factor & Additional argument \\\midrule
Compatible rooted certificate of literal degree at most twelve & $8/17$ & Higher-rank atom inequalities; Section~\ref{sec:larger_atoms}\\[3pt]
Compatible rooted certificate of arbitrary finite literal degree & $2\sqrt3-3$ & An optimal common scalar map; Theorem~\ref{thm:single}\\[3pt]
Nonnegative submodular local functions of arity at most four & $8/17$ & A different pair on the full local cone; Section~\ref{sec:local}\\[3pt]
Globally monotone sum of nonnegative submodular local functions of arity at most $d\ge1$ & $1-(1-1/d)^d$ & Recentering; local monotonicity is unnecessary\\[3pt]
Nonnegative concave-cardinality local functions of arities $d_b\ge2$ & $\min_b[1-(1-1/d_b)^{d_b-1}]$ & Exact cardinality closure and binomial comparison\\\bottomrule
\end{tabularx}
\end{table}

The degree-twelve atom inequalities are formalized in Lean. The arity-four inequalities are certified by exact arithmetic on a finite generating family. The exhibited map pairs differ, but the scalar map with factor $2\sqrt3-3$ works for both families and their nonnegative sums. We construct certificates for higher-degree negatively signed polynomials. The remaining higher-degree guarantees assume that a compatible certificate is supplied.

\paragraph{Boundaries and stronger roofs.}
The quadratic and cubic examples also reveal a limit of downward recovery: a roof optimizer may set to zero a coordinate needed by a better feasible solution. Section~\ref{sec:frontiers} proves sharpness for specified fixed roofs, representation families, and additive local roofs of arity at most four. For multilinear objectives whose representations may be optimized, fractional triple packing captures the remaining freedom in the coefficients; triangle-free pair capacities eliminate that freedom. At literal degree thirteen, a complemented atom conflicts with the two-point equality structure of the cubic witness. This yields a strict supremum gap for finite common-coordinatewise termwise portfolios. Table~\ref{tab:sharpness_scope} records the roof, benchmark, and recovery class to which each claim applies.

Grouping local functions forces their joint distributions to agree within each group and can tighten the roof. Section~\ref{sec:design} develops these formulations, finite games for choosing recovery maps, and recovery adapted to a supplied instance.

\paragraph{Discrete, online, and computational consequences.}
For a nonnegative submodular set function $f$ and a down-closed feasible family $\I$, the discrete problem is
\begin{equation}\label{eq:discrete_problem}
 \max\{f(S):S\in\I\}.
\end{equation}
Its multilinear extension belongs to the continuous model. Standard matroid rounding converts the applicable fractional guarantees without loss, although this conversion can fail for general continuous polynomials. The online theorem uses a more specific representation by directed-hypergraph cuts and positive association. To obtain a prophet guarantee, we also need the outcome-space benchmark in Appendix~\ref{app:outcomes}.

Sections~\ref{hard:sec:methodology}--\ref{hard:sec:instantiations} address computational hardness for explicit rational quadratic input. Padding transfers the established $12/13$ NP-hardness bound to cardinality density $1/2$. A heterogeneous directed test and an affine Gaussian certificate give a Unique-Games inapproximability bound of $0.828426578\ldots$ at asymptotic density $0.0933003645$. These bounds apply to deterministic polynomial-time algorithms and leave a gap above our quadratic factor $1/2$.

\subsection{Relation to prior work}
\IndependentResultsNote

Continuous DR-submodularity combines lattice submodularity with coordinatewise concavity \citep{Topkis1978,SomaKakimuraInabaKawarabayashi2014,BianMirzasoleimanBuhmannKrause2017,BianLevyKrauseBuhmann2017}. Multilinear relaxation, continuous greedy, and measured continuous greedy provide general algorithms for submodular maximization \citep{CalinescuChekuriPalVondrak2011,FeldmanNaorSchwartz2011,BuchbinderFeldman2019}. The $0.401$ guarantee of \citet{BuchbinderFeldman2024} applies to nonnegative smooth DR-submodular objectives under their assumptions on smoothness and optimization access. Our larger ratios rely on the additional structure of the objective classes considered here.

\paragraph{Oracle barriers and the role of structure.}
Oveis Gharan and Vondr\'ak proved value-oracle barriers of $0.478$ under a matroid independence constraint and $0.491$ under a cardinality constraint \citep{gharan2011submodular}. Qi strengthened the cardinality barrier to $0.478$ \citep[Theorem~4.1]{qi2024maximizing}. These are exponential value-query lower bounds for unrestricted nonnegative submodular functions. They do not directly impose the same bounds on explicitly represented fixed-degree polynomials or objectives supplied with the decompositions used here. Under a fixed-degree promise, a multilinear polynomial can be reconstructed with polynomially many value queries; the bit-length qualification and the limitation to multilinear objectives are stated in Section~\ref{sec:access}. In particular, the general $0.478$ barrier does not establish near-optimality of $8/17$ for the intrinsic cubic class.

Three kinds of limitation must be kept separate: the information available from a value oracle, the factor recoverable from a given roof under a given rule, and polynomial-time hardness for explicitly encoded instances. Oracle bounds motivate structural assumptions. Recovery bounds describe the limits of a method. Computational hardness constrains algorithms whether or not they use a roof. We also distinguish matroid-base constraints, which are not down-closed, from the matroid-independence constraints used here.

Negaforms, roof duality, and concave extensions have a long history in pseudo-Boolean optimization \citep{HammerRudeanu1968,HammerRudeanu1969,HammerHansenSimeone1984,Lu1987,BorosHammer1993,Crama1993,BorosHammer2002,CramaHammer2011}. The cubic unary-negaform representation, in which each nonlinear product uses only variables or only complements, is due to \citet{BillionnetMinoux1985}; see also \citet{BorosGruber2012}. \citet{XuLiberti2026} study relaxations based on signed certificates. Our construction retains the exact polynomial identity on the continuous cube. An orientation linear program produces nonnegative rooted coefficients, and common downward maps recover value from the resulting roof.

\paragraph{Quadratic optimization.}
The pairwise linear roof used in the quadratic construction belongs to the classical bilinear-relaxation framework. Section~\ref{sec:quadratic_relaxations} compares it with semidefinite relaxations and explains which additional feasibility and objective assumptions those methods use.

For box constraints, \citet{niazadeh2020optimal} obtain a randomized $1/2$ approximation, up to additive error, for general nonnegative Lipschitz continuous lattice-submodular objectives. Section~\ref{sec:quadratic_box} gives a direct quadratic specialization: replace positive squares by endpoint chords, apply the DR quadratic algorithm, and choose endpoints in those coordinates without further loss.

Concave closures and correlation gaps compare correlated and independent distributions \citep{AgrawalDingSaberiYe2012,Yan2011,ChekuriVondrakZenklusen2014,HusicKohLohoVegh2025}. For nonmonotone functions, the max-subset comparison of \citet{RubinsteinSingla2017} permits deletions, and the downward comparison of \citet{ChekuriLivanos2024} permits decreases of the marginals. Local closures introduce a further difficulty: optimal joint distributions for different components may disagree on shared coordinates. Grouping enforces some of the missing consistency; a common map ensures that the recovery inequalities can be added.

For monotone objectives, the constants $1-(1-1/d)^d$ follow from the finite-ground-set correlation inequality of \citet[Lemma~4.10]{ChekuriVondrakZenklusen2014}. Coverage relaxations and pipage rounding give the coverage specializations \citep{AgeevSviridenko2004}. Our recentering argument requires monotonicity only of the sum and preserves the local ground sets. Concave-coverage refinements and curvature bounds use other structural assumptions \citep{BarmanFawziFerme2021,SviridenkoVondrakWard2017}. Monotonic decompositions of submodular functions are studied by \citet{BercziEtAl2026}.

Offline contention resolution \citep{ChekuriVondrakZenklusen2014}, greedy online contention resolution \citep{FeldmanSvenssonZenklusen2021}, and submodular dominance \citep{QiuSingla2022} provide different ways to round fractional solutions. Our online preservation proof applies Harris's inequality to directed-hyperedge cuts \citep{Harris1960}. The offline hypergraph application uses the more general splitting functions of \citet{VeldtBensonKleinberg2022}; the online theorem requires the specific cut representation.

\paragraph{Explicit-input hardness.}
For unconstrained Max-DiCut, Trevisan, Sorkin, Sudan, and Williamson proved
NP-hardness within $12/13+\eps$ \citep{TrevisanSorkinSudanWilliamson2000}.
Brakensiek et al.\ give an unconditional $0.87448$-approximation and,
assuming Khot's Unique Games Conjecture (UGC) \citep{Khot2002},
$0.87461$-hardness for unconstrained Max-DiCut
\citep{BrakensiekHuangPotechinZwick2026}.  Padding transfers either hardness
bound to the at-most model at density $1/2$; these unconstrained bounds do not
determine the fixed-density approximability studied here.

Approximation frameworks for Boolean constraint-satisfaction problems
with exact global-cardinality constraints were developed through
semidefinite-programming hierarchies and reductions based on
coordinate-selection tests by Raghavendra and Tan
\citep{RaghavendraTan2012}. Their algorithmic methodology treats exact cardinality, whereas our recovery setting uses down-closed constraints. Austrin and Stankovi\'c showed that prescribed
cardinality can strengthen UGC-inapproximability, obtaining the approximately
$0.858$ boundary for cardinality-constrained Max-Cut
\citep{AustrinStankovic2019}.  For directed cuts with a prescribed part size,
Ageev, Hassin, and Sviridenko established the classical
$1/2$-approximation benchmark \citep{AgeevHassinSviridenko2001}; enumerating
all prescribed sizes $p\le k$ transfers that guarantee to an upper bound
$|S|\le k$.  Ghuge et al.\ also give a direct $1/2$-approximation for
weighted Max-DiCut with nonnegative arc weights under an at-most-cardinality constraint
\citep{GhugeKwonNagarajanSharma2022}.  The present hardness test directly
treats this at-most model and uses heterogeneous biases to preserve directed
asymmetry.

The analytic soundness step uses Gaussian noise-correlation bounds
\citep{Mossel2010}.  The finite certificate uses Plackett's reduction formula
for multivariate normal probabilities \citep{Plackett1954} and the
correctly rounded multiple-precision floating-point library MPFR \citep{FousseEtAl2007}.  We also evaluate the
generic UGC-to-NP ``losing-half'' transfer of Bhangale and Khot
\citep{BhangaleKhot2022}; for the certified test, the available soundness
certificate does not preserve a gap.

\subsection{Organization and proof routes}
Part~\ref{part:recovery} develops the quadratic and cubic algorithms. Section~\ref{sec:models} defines the continuous and multilinear models and states the common-recovery guarantee. Section~\ref{sec:quadratic_main} gives the quadratic construction, clipping proof, applications, and first obstruction. Section~\ref{sec:cubic_main} handles the additional cubic difficulties, proves the two-map guarantee, and works through a continuous example. Appendix~\ref{app:cubic} supplies the remaining scalar analysis.

Part~\ref{part:extensions} treats larger certificates, richer local functions, and stronger structural assumptions in Sections~\ref{sec:larger_atoms}--\ref{sec:structured}. It then establishes recovery limits, develops stronger roofs and recovery rules, and gives the discrete and correlation consequences. Part~\ref{part:online} proves online preservation; Appendix~\ref{app:outcomes} gives the outcome-space argument. Part~\ref{part:hardness} presents the separate computational-hardness results and the remaining questions. Appendices~\ref{app:twelve}, \ref{app:arity_four}, and~\ref{hard:app:certificate} document the three proof-software components.

\Needspace{14\baselineskip}
\part{Quadratic and cubic coefficient-input algorithms}\label{part:recovery}
\section{Continuous and multilinear models; common recovery}\label{sec:models}
We first distinguish a continuous polynomial from the multilinear interpolation of its Boolean values. A two-edge example then shows why overlapping components need a common recovery rule.

\subsection{Continuous and Boolean objects}
Let $N=[n]=\{1,\ldots,n\}$ be a finite ground set. Inequalities between vectors are coordinatewise. Write $[0,x]=\{y:0\le y\le x\}$ for the \emph{lower box}, $x_A=(x_i)_{i\in A}$, and $\ind_S$ for the incidence vector of $S\subseteq N$. The symbol $\ind\{E\}$ is one when statement $E$ holds and zero otherwise; $e_i$ is the $i$th coordinate unit vector. For a nonnegative integer multiindex $\beta$, set $|\beta|=\sum_i\beta_i$, $\beta!=\prod_i\beta_i!$, $\supp\beta=\{i:\beta_i>0\}$, and $x^\beta=\prod_i x_i^{\beta_i}$. For a set $T$, $x^T=\prod_{i\in T}x_i$ and $(1-x)^T=\prod_{i\in T}(1-x_i)$; for a multiindex these products include multiplicities. Also set $[a]_+=\max\{a,0\}$.

A set function $f:2^N\to\R$ is \emph{submodular} if
\[
 f(S)+f(T)\ge f(S\cup T)+f(S\cap T)\qquad(S,T\subseteq N).
\]
It is \emph{monotone} if $f(S)\le f(T)$ whenever $S\subseteq T$. Its marginal at $i\notin S$ is $f(S\cup\{i\})-f(S)$. A \emph{modular} function has the form $c+\sum_{i\in S}a_i$. A function on the Boolean cube is also called a \emph{pseudo-Boolean function}.

A continuous function on a box is \emph{lattice submodular} if
\[
 F(x)+F(y)\ge F(x\vee y)+F(x\wedge y),
\]
where $\vee$ and $\wedge$ take coordinatewise maxima and minima. For a twice continuously differentiable function defined on a neighborhood of a full-dimensional box, lattice submodularity is equivalent to $\partial_{ij}^2F(x)\le0$ for every $x$ in the box and $i\ne j$. The function is \emph{diminishing-returns (DR) submodular} on the cube when
\begin{equation}\label{eq:DR}
 \partial_{ij}^2F(x)\le0\qquad(i,j\in N,\ x\in[0,1]^N).
\end{equation}
The diagonal conditions are included. A polynomial is \emph{Boolean-nonnegative} if $F(v)\ge0$ at every $v\in\{0,1\}^N$.

\begin{lemma}[Separate concavity]\label{lem:separate}
If $F$ is concave in each coordinate separately, then
\[
 F(x)\ge\sum_{S\subseteq N}F(\ind_S)\prod_{i\in S}x_i\prod_{i\notin S}(1-x_i).
\]
Consequently, a Boolean-nonnegative DR-submodular polynomial is nonnegative throughout the cube.
\end{lemma}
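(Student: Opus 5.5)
The plan is to induct on coordinates, replacing one coordinate at a time by its two Boolean endpoints. The basic fact is one-dimensional: if $g:[0,1]\to\R$ is concave, then for $t\in[0,1]$,
\[
 g(t)=g\bigl((1-t)\cdot0+t\cdot1\bigr)\ge(1-t)g(0)+t\,g(1).
\]
Fix $x\in[0,1]^N$ and order the coordinates $1,\ldots,n$. For $k=0,\ldots,n$ define
\[
 \Phi_k(x)=\sum_{S\subseteq[k]}\prod_{i\in S}x_i\prod_{i\in[k]\setminus S}(1-x_i)\,F\bigl(\ind_S,x_{k+1},\ldots,x_n\bigr),
\]
where the first $k$ coordinates are set to the Boolean pattern $\ind_S$ and the remaining ones keep their values from $x$. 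Then $\Phi_0(x)=F(x)$, and $\Phi_n(x)$ is exactly the right-hand side of the claimed inequality. It therefore suffices to show $\Phi_k(x)\ge\Phi_{k+1}(x)$ for every $k$.

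For the inductive step, fix $S\subseteq[k]$ and apply the one-dimensional fact to
\[
 g(t)=F(\ind_S,t,x_{k+2},\ldots,x_n).
\]
This $g$ is concave because $F$ is concave in coordinate $k+1$ separately, with the other coordinates frozen at values in $[0,1]$. Evaluating at $t=x_{k+1}$ splits the term for $S$ into the terms for $S$ and $S\cup\{k+1\}$ at level $k+1$. Each term carries a nonnegative weight $\prod x_i\prod(1-x_i)$, so these inequalities can be summed over $S$, which gives $\Phi_k(x)\ge\Phi_{k+1}(x)$. Chaining over $k$ yields the inequality. The only care needed is that the frozen coordinates lie in the cube, so that the separate-concavity hypothesis applies on the slice.

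For the consequence, a DR-submodular polynomial satisfies $\partial_{ii}^2F\le0$ on the cube by \eqref{eq:DR}. Each one-variable restriction is therefore a polynomial with nonpositive second derivative on $[0,1]$, hence concave there, so the hypothesis holds. If $F$ is also Boolean-nonnegative, every $F(\ind_S)\ge0$. All weights are nonnegative, so the right-hand side is nonnegative and hence $F(x)\ge0$. None of these steps is a real obstacle; the only bookkeeping is to set up the weights $\Phi_k$ so that the telescoping is exact.
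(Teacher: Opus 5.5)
Your proof is correct and is the paper's argument: the paper applies the one-coordinate bound $F(x)\ge x_iF(x_{-i},1)+(1-x_i)F(x_{-i},0)$ successively to all coordinates, and your $\Phi_k$ telescoping makes that iteration explicit. You also derive the nonnegativity consequence the same way, using the diagonal conditions in \eqref{eq:DR} and the nonnegative product weights.
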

\begin{proof}
For one coordinate, with the others fixed, concavity gives
$F(x)\ge x_iF(x_{-i},1)+(1-x_i)F(x_{-i},0)$. Apply this inequality successively to all coordinates.
\end{proof}

For a set function $f:2^N\to\R$, its \emph{multilinear extension} is
\begin{equation}\label{eq:multilinear}
 F(x)=\E[f(R_x)]=\sum_{S\subseteq N}f(S)\prod_{i\in S}x_i\prod_{i\notin S}(1-x_i),
\end{equation}
where $R_x$ includes each element $i$ independently with probability $x_i$. These selection probabilities are the \emph{marginals}; a joint distribution with specified marginals is a \emph{coupling}. The function $F$ is the unique multilinear polynomial agreeing with $f$ on the Boolean cube. If $f$ is submodular, its multilinear extension satisfies \eqref{eq:DR}, and its diagonal second derivatives vanish. When we discuss a general continuous polynomial, $F$ denotes that polynomial itself, rather than the extension of its Boolean restriction.

For $g:2^A\to\R$, write $G$ for its multilinear extension and define its \emph{concave closure} by
\begin{equation}\label{eq:closure}
 g^+(x)=\max\left\{\sum_{T\subseteq A}\lambda_Tg(T):
  \lambda\ge0,\ \sum_T\lambda_T=1,\ \sum_{T\ni i}\lambda_T=x_i\ (i\in A)\right\}.
\end{equation}
It is the smallest concave function majorizing the Boolean values. The product distribution is feasible, so $G\le g^+$. 
A \emph{concave roof} of a continuous function $F$ is a concave majorant $L$, meaning $L(x)\ge F(x)$ throughout the cube. It is \emph{Boolean-exact} when $L(\ind_S)=F(\ind_S)$ for every $S$. An \emph{affine majorant} is an affine function with the stated domination property. The \emph{hypograph} of $L$ is $\{(x,t):t\le L(x)\}$. A lifted formulation describes this set using additional variables.

For a multilinear polynomial $F(x)=\sum_{T\subseteq N}a_Tx^T$, the M\"obius coefficients are
\[
 a_T=\sum_{U\subseteq T}(-1)^{|T|-|U|}f(U).
\]
Set $\deg(f)=\max\{|T|:a_T\ne0\}$ for nonzero $f$, and $\deg(0)=-\infty$.

\subsection{Restriction, interpolation, and relaxation}\label{sec:model_comparison}
Let $F$ be a continuous polynomial, let $f(S)=F(\ind_S)$, and write $M$ for the multilinear extension of $f$. Passing from $F$ to $M$ is \emph{Boolean multilinearization}: replace every positive coordinate exponent by one and collect coefficients. Three operations must be distinguished: evaluating $F$ at Boolean points, interpolating those values, and relaxing an integrality constraint. An \emph{exact polynomial identity} holds throughout the cube, before imposing $x_i^2=x_i$. Exactness of a representation does not imply that its objective can be optimized exactly.

For a DR polynomial, Lemma~\ref{lem:separate} gives $M\le F$, with equality throughout the cube precisely when $F$ is multilinear. We also have $M\le f^+$, since independent sampling is an admissible coupling. The functions $F$ and $f^+$ have no general ordering: the next example has $F>f^+$ in the interior, whereas the directed edge below has $F<f^+$ at the midpoint.

\begin{example}[A continuous roof is not necessarily a Boolean concave closure]\label{ex:self_atom}
Let $F(t)=t(1-t)$. Its Boolean restriction and the multilinear extension of that restriction are identically zero. Its standard rooted roof is $L(t)=\min\{t,1-t\}$, while $F(1/2)=1/4$. The \emph{continuous concave envelope}, the smallest concave majorant on the interval, is $F$ itself because $F$ is concave. Thus the Boolean concave closure, the continuous concave envelope, and the standard roof are three different functions. In particular, a Boolean closure need not majorize a continuous objective.
\end{example}

For a set function with multilinear extension $M$,
\[
 \max_{S\in\I}f(S)\le
 \max_{x\in\conv\{\ind_S:S\in\I\}}M(x).
\]
The lossless rounding theorem for matroid independence makes this an equality under its stated hypotheses. For a swap direction, the relevant second derivative is
\[
 \frac{d^2}{dt^2}F\bigl(x+t(e_i-e_j)\bigr)
 =\partial_{ii}F+\partial_{jj}F-2\partial_{ij}F,
\]
with all derivatives evaluated at the displayed point. For a multilinear submodular objective, the diagonal terms vanish and the expression is nonnegative. For a general continuous DR polynomial, those terms can be negative. Section~\ref{sec:rounding} states the applicable rounding guarantee; Example~\ref{ex:self_atom} shows why lossless integral conversion cannot hold for arbitrary continuous polynomials.

\subsection{One edge and a shared-coordinate conflict}\label{sec:edge_motivation}
For a directed edge with distinct endpoints,
\[
 h(x_1,x_2)=x_1(1-x_2),\qquad
 \ell(x_1,x_2)=\min\{x_1,1-x_2\}.
\]
The product is bounded by each factor, so their minimum is a concave majorant. Here $\ell$ is also the concave closure of the Boolean edge function: a coupling can make the intersection of the two relevant events attain the smaller marginal. At $(1/2,1/2)$, independent sampling gives $1/4$, while the coupling assigning probability $1/2$ to each of $(1,0)$ and $(0,1)$ gives closure value $1/2$.

Now consider the edges $1\to2$ and $2\to3$. Maximizing their contributions separately below $(1/2,1/2,1/2)$ requires $y_2=0$ and $y_2=1/2$, respectively. No single point meets both requirements. A common recovery rule makes both component inequalities refer to the same vector, allowing us to add them.

\subsection{The upper-bound interface}\label{sec:common}
A map $R:[0,1]^N\to[0,1]^N$ is \emph{downward} if $0\le R(x)\le x$. A \emph{portfolio} is a finite family of downward maps with nonnegative weights summing to one; each map is a branch. A \emph{common-coordinatewise} map applies the same scalar function $\phi$ to every coordinate: $R(x)_i=\phi(x_i)$. A common map uses one global output in every component, including components that overlap. Downwardness preserves every down-closed set containing $x$. It is also necessary for this universal feasibility property, as the set $[0,x]$ shows. A map tailored to one feasible region may satisfy weaker conditions.

\begin{theorem}[Upper bound and recovery]\label{thm:common}
Let $\K\subseteq[0,1]^N$ be nonempty, compact, and down-closed, let $F$ be continuous, and let $H$ be real-valued and upper semicontinuous on $\K$. Suppose
\[
 \max_{\K}H\ge\max_{\K}F.
\]
Let $k\ge1$, let $R_1,\ldots,R_k$ be downward maps, and let $p_s\ge0$ sum to one, and let $\rho\ge0$. If
\begin{equation}\label{eq:common_inequality}
 \sum_s p_sF(R_s(x))\ge\rho H(x)\qquad(x\in\K),
\end{equation}
then, for every $\delta\ge0$, a feasible $\bar x$ satisfying $H(\bar x)\ge\max_\K H-\delta$ gives
\begin{equation}\label{eq:common_guarantee}
 \max_sF(R_s(\bar x))\ge\rho\max_{x\in\K}F(x)-\rho\delta.
\end{equation}
Every recovered point is feasible.
\end{theorem}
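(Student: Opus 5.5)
The argument is a direct averaging followed by a chain of inequalities; no earlier result beyond the definitions is needed.

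First I settle feasibility and the meaning of the maxima. Since $\K$ is down-closed and each $R_s$ is downward, $0\le R_s(\bar x)\le\bar x\in\K$ gives $R_s(\bar x)\in\K$. So every recovered point is feasible, and $F(R_s(\bar x))$ is an objective value on $\K$. Next, $\K$ is compact and $F$ is continuous, so $\max_\K F$ is attained. An upper semicontinuous real-valued function on a compact set also attains its maximum, so $\max_\K H$ exists and is finite.

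For the guarantee, I apply \eqref{eq:common_inequality} at $x=\bar x\in\K$. A maximum dominates any convex combination, so
\[
 \max_sF(R_s(\bar x))\ge\sum_sp_sF(R_s(\bar x))\ge\rho H(\bar x).
\]
Using $\rho\ge0$ together with $H(\bar x)\ge\max_\K H-\delta$ and the hypothesis $\max_\K H\ge\max_\K F$, I get
\[
 \rho H(\bar x)\ge\rho\max_\K H-\rho\delta\ge\rho\max_\K F-\rho\delta,
\]
which is \eqref{eq:common_guarantee}.

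There is no real obstacle. The only points needing care are the sign $\rho\ge0$, which lets the inequalities be multiplied through, and the attainment of the maxima, which is supplied by compactness and semicontinuity.
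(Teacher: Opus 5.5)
Your proposal is correct and follows essentially the same route as the paper: feasibility from down-closedness, then the chain $\max_s F(R_s(\bar x))\ge\sum_s p_sF(R_s(\bar x))\ge\rho H(\bar x)\ge\rho(\max_\K H-\delta)\ge\rho(\max_\K F-\delta)$. Your remark that compactness together with (upper semi)continuity guarantees that both maxima are attained is a small detail the paper leaves implicit.
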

\begin{proof}
Down-closedness gives feasibility. The largest branch value dominates the weighted average, which is at least $\rho H(\bar x)\ge\rho(\max_\K H-\delta)\ge\rho(\max_\K F-\delta)$.
\end{proof}
A concave majorant provides the comparison of maxima required by Theorem~\ref{thm:common}. The recovery argument needs only that comparison, not concavity itself.

\begin{corollary}[Nonnegative aggregation]\label{cor:aggregation}
Suppose the sums $F=\sum_bw_bF_b$ and $L=\sum_bw_bL_b$ are finite, $w_b\ge0$, and $\rho\ge0$. An active component is one with $w_b>0$. If one fixed probability distribution over downward maps satisfies
\begin{equation}\label{eq:component_payment}
 \sum_sp_sF_b(R_s(x))\ge\rho L_b(x)\qquad\text{for every active component }b,
\end{equation}
then $\sum_sp_sF(R_s(x))\ge\rho L(x)$. If $L\ge F$ and the continuity, compactness, and down-closedness hypotheses of Theorem~\ref{thm:common} hold, that theorem gives its approximation guarantee to \eqref{eq:continuous_problem}.
\end{corollary}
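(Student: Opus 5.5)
The plan is to sum the componentwise inequalities \eqref{eq:component_payment} with the weights $w_b$ and then invoke Theorem~\ref{thm:common} with $H=L$. Fix $x$ in the cube, or in $\K$ if the inequalities are only assumed there. For each active component $b$, multiply \eqref{eq:component_payment} by $w_b>0$. Inactive components have $w_b=0$ and contribute zero on both sides, so they can be added without any hypothesis on them. Summing over $b$ gives
\[
 \sum_b w_b\sum_s p_sF_b(R_s(x))\ge\rho\sum_b w_bL_b(x)=\rho L(x).
\]
The essential point is that the maps $R_s$ and weights $p_s$ are the same for every component. Because of this, the left side regroups as $\sum_s p_s\sum_b w_bF_b(R_s(x))=\sum_s p_sF(R_s(x))$. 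The interchange of the finite sum over $s$ with the sum over $b$ is legitimate because the sums defining $F$ and $L$ are finite, as assumed. This proves the first claim.

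For the approximation guarantee, set $H=L$ in Theorem~\ref{thm:common}. From $L\ge F$ pointwise we get $\max_\K L\ge\max_\K F$, which is the required comparison of maxima. The remaining hypotheses of the theorem are the continuity of $F$, real-valuedness and upper semicontinuity of $L$ on $\K$, and compactness and down-closedness of $\K$. These are exactly the hypotheses referenced in the statement, so $\max_\K L$ is attained. Inequality \eqref{eq:common_inequality} is what we just derived. Theorem~\ref{thm:common} then yields \eqref{eq:common_guarantee} for any $\delta$-optimizer $\bar x$ of $L$ over $\K$, and every recovered point $R_s(\bar x)$ is feasible.

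There is no real obstacle here. The only care needed is bookkeeping: restrict the summation to active components, and note that the regrouping uses the common portfolio. If each component had its own portfolio, the two sums could not be interchanged to produce a single evaluation of $F$ at each $R_s(x)$.
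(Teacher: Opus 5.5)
Your proposal is correct and follows the paper's argument: multiply each active-component inequality by $w_b$, sum, and use the shared maps and weights to regroup the left side as $\sum_s p_sF(R_s(x))$; Theorem~\ref{thm:common} with $H=L$ then gives the guarantee. Your explicit remark that the pointwise bound $L\ge F$ yields $\max_\K L\ge\max_\K F$ spells out a step the paper leaves implicit.
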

\begin{proof}
Multiply the component inequalities by $w_b$ and sum. Nonnegative weights preserve their direction, and the same global map in each summand makes the left sides add to $F(R_s(x))$.
\end{proof}
For a fixed objective, componentwise recovery is sufficient but may be stronger than necessary, since weighted deficits can cancel.

\paragraph{Certificate terminology used in the low-degree theorems.}
A literal is $x_i$ or $1-x_i$; these are also called unary terms. A rooted atom is a root literal minus a product containing it, with repeated occurrences permitted; the other occurrences are its heads. It is nonnegative on the cube. Its standard atom roof is the minimum of the root literal and the sum of the complements of its head literals. An exact negaform writes the objective as an affine function minus a nonnegative sum of literal products. A nonnegative rooted certificate is a finite exact identity consisting of a nonnegative constant, nonnegative unary terms, and a nonnegative sum of rooted atoms. It is \emph{compatible} when every product containing a complemented literal has a complemented root, and every positive-only product has a positive root. The \emph{literal degree} counts occurrences in the product. Write $\CC_{\le d}$ for the cone of finite nonnegative combinations of $1$, literals, and compatible atoms of literal degree at most $d$. Here a cone generated by a family consists of its finite nonnegative linear combinations. Definition~\ref{def:certificate} gives the occurrence-indexed form used for larger certificates. The intrinsic quadratic and cubic constructions use only uniform-polarity products: all coordinates or all complements.

For a rooted certificate, positive unaries require
$\sum_sp_s[R_s(x)]_i\ge\rho x_i$. Complemented unaries lose no value because $1-[R_s(x)]_i\ge1-x_i$; a nonnegative constant also needs no additional condition when $\rho\le1$. We say that a map or portfolio \emph{pays} a component by factor $\rho$ when it satisfies \eqref{eq:component_payment} for that component. Thus atomwise payment and the positive-unary inequality suffice.

\subsection{Input and optimization access}\label{sec:access}
We supply continuous polynomials as sparse lists of rational coefficients and exponents, and measure complexity in the total binary encoding length. The low-degree algorithms construct their certificates from these coefficients. Later sections specify the encodings needed for supplied certificates, local tables, and cardinality sequences.

For fixed multilinear degree $d$, M\"obius inversion uses only values on sets of size at most $d$. We can therefore recover the polynomial with $n^{O(d)}$ exact value queries and arithmetic operations. A polynomial bound on bit complexity also requires polynomially bounded encoding lengths for the returned rational values. Boolean queries alone cannot recover a general continuous polynomial.

For $\delta\ge0$, a \emph{$\delta$-accurate maximizer} of a function $H$ over $\K$ is a feasible point $\bar x$ with $H(\bar x)\ge\max_\K H-\delta$. The analytical guarantees apply whenever we can obtain such a roof optimizer. For unconditional polynomial-time implementations, we use a rational polytope $\K$ with an explicit polynomial-size inequality system, or a polynomially bounded rational-polytope encoding together with a polynomial-time strong-separation oracle. On a queried rational point, this oracle either confirms membership or returns a rational hyperplane that strictly separates the point from the polytope \citep{Schrijver2003}. The coefficient-input algorithms have polynomial-size lifted roofs. After reversing the sign, the quadratic hybrid roof gives a convex quadratic program. For a general compact convex set, we assume a procedure that returns a \emph{feasible} roof optimizer with the prescribed additive accuracy.

A rooted atom roof uses one auxiliary variable constrained below its two affine bounds. The quadratic hybrid roof retains the concave diagonal residual; the cubic roof is piecewise affine. Section~\ref{sec:implementation_size} gives the formulation sizes.

We evaluate powers with binary exponents by repeated squaring and controlled rounding. Approximating fixed algebraic maps coordinatewise downward preserves feasibility. The following continuity bound controls the error in the objective value.

A sparse polynomial $F(x)=\sum_\beta c_\beta x^\beta$ satisfies
\[
 |F(x)-F(y)|\le M_F\|x-y\|_\infty,\qquad
 M_F=\sum_\beta |c_\beta|\,|\beta|\quad(x,y\in[0,1]^N).
\]
The bound follows by integrating the first-derivative bounds along the segment from $x$ to $y$. For a certificate, an analogous bound is the sum of each atom coefficient times one plus its literal degree, together with the unary contributions. The bound has polynomial logarithmic encoding length even when multiplicities are given in binary. We can therefore round an algebraic recovered point downward to a rational point with any prescribed additive loss. If each branch value is evaluated within $\eta$, choosing the largest estimate loses at most $2\eta$. Thus the implementation does not require exact comparisons of algebraic numbers.

\section{Quadratics: a complete roof-and-recovery algorithm}\label{sec:quadratic_main}
The quadratic algorithm has three steps: separate the diagonal terms, orient the pair coefficients into nonnegative atoms, and clip a roof optimizer to obtain a feasible point. The oriented coefficients also give a directed-cut representation. An odd cycle will show why downward recovery from this roof cannot guarantee more than one-half.

\subsection{The multilinear part and the exact diagonal residual}\label{sec:quadratic_representation}
Every quadratic DR polynomial has the form
\begin{equation}\label{eq:quadratic}
 F(x)=c+\sum_i\ell_ix_i-\sum_id_ix_i^2-\sum_{i<j}b_{ij}x_ix_j,
 \qquad d_i,b_{ij}\ge0.
\end{equation}
The multilinear polynomial agreeing with it at every Boolean point is
\[
 M(x)=c+\sum_i(\ell_i-d_i)x_i-\sum_{i<j}b_{ij}x_ix_j.
\]
Then
\begin{equation}\label{eq:diagonal_split}
 F(x)=M(x)+\sum_id_ix_i(1-x_i).
\end{equation}
If $F$ is Boolean-nonnegative, so is $M$. We construct a degree-two certificate of $M$ and retain the diagonal residual exactly. Its concavity lets us use a tighter bound than the piecewise-linear self-atom roof.

\subsection{Orienting the pair coefficients}\label{sec:quadratic_orientation}
Put $a_i=\ell_i-d_i$, so $M(x)=c+\sum_i a_ix_i-\sum_{i<j}b_{ij}x_ix_j$, and let $f$ be its Boolean restriction:
\begin{equation}\label{hard:eq:problem}
 f(S)=c+\sum_{i\in S}a_i-
 \sum_{\{i,j\}\subseteq S}b_{ij},
 \qquad b_{ij}\ge0.
\end{equation}

Split each $b_{ij}$ into nonnegative directed weights $\theta_{ij}$ and $\theta_{ji}$. If this leaves a negative linear coefficient, a complemented unary with coefficient $q_i$ absorbs it. We choose the weights by solving
\begin{equation}\label{eq:quadratic_orientation}
\begin{aligned}
 \min\quad&\sum_iq_i\\
 \text{s.t.}\quad&\sum_{j\ne i}\theta_{ij}-q_i\le a_i &&(i\in N),\\
 &\theta_{ij}+\theta_{ji}=b_{ij} &&(i<j),\\
 &\theta,q\ge0.
\end{aligned}
\end{equation}
Missing pairs have weight zero. Any split is feasible after increasing the $q_i$ sufficiently. Eliminating the dual variables for the equality constraints gives
\[
 \max_{0\le u\le1}
 \left\{\sum_{i<j}b_{ij}\min\{u_i,u_j\}-\sum_i a_iu_i\right\}.
\]
For $S_t=\{i:u_i\ge t\}$, the identities
\[
 u_i=\int_0^1\ind\{i\in S_t\}\,dt,
 \qquad
 \min\{u_i,u_j\}=\int_0^1\ind\{i,j\in S_t\}\,dt
\]
express the fractional dual value as an average of Boolean values. It is therefore at most
\[
 \max_{S\subseteq N}\left\{\sum_{\{i,j\}\subseteq S}b_{ij}-\sum_{i\in S}a_i\right\}
 =c-\min_{S\subseteq N}f(S),
\]
and a Boolean vector attains this bound. Strong linear-programming duality therefore gives the same primal optimum. For an optimal allocation, set
\[
 p_i=a_i-\sum_{j\ne i}\theta_{ij}+q_i\ge0,
 \qquad r=c-\sum_iq_i=\min_S f(S)\ge0.
\]
Expanding the terms gives the identity
\begin{equation}\label{hard:eq:cut-decomposition}
 M(x)=r+\sum_i p_ix_i+\sum_iq_i(1-x_i)
       +\sum_{i\ne j}\theta_{ij}x_i(1-x_j).
\end{equation}
This rational program has polynomial size and admits an optimal solution of polynomial encoding length. The threshold-set argument extends to both uniform polarities and higher powers in Section~\ref{sec:uniform_orientation}.

\subsection{The roof and clipping}
Replacing every directed edge in \eqref{hard:eq:cut-decomposition} by its roof gives
\[
 L_M(x)=r+\sum_i p_ix_i+\sum_iq_i(1-x_i)
       +\sum_{i\ne j}\theta_{ij}\min\{x_i,1-x_j\}.
\]
Add the exact diagonal residual from \eqref{eq:diagonal_split} to obtain the algorithm's concave upper bound. The edge terms admit a linear hypograph formulation; the diagonal terms are concave quadratics.

Define $\phi_2(t)=\min\{t,1/2\}$. For $a,u\in[0,1]$,
\begin{equation}\label{eq:clipping}
 \phi_2(a)\ge a/2,\qquad
 \phi_2(a)(1-\phi_2(u))\ge\tfrac12\min\{a,1-u\}.
\end{equation}
If $a\le1/2$, the product is at least $a/2$; otherwise it is at least $(1-u)/2$. Interchanging root and head gives the complemented one-head inequality, also when both occurrences refer to the same coordinate.

\begin{theorem}[Quadratic half approximation]\label{thm:quadratic}
Every Boolean-nonnegative quadratic DR polynomial \eqref{eq:quadratic} admits a deterministic $1/2$ approximation over $\K$, up to any prescribed additive error, under Section~\ref{sec:access}. The same factor applies to every supplied degree-two compatible rooted certificate with its standard roof.
\end{theorem}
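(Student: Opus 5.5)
We will apply Theorem~\ref{thm:common} with a single branch, $R(x)_i=\phi_2(x_i)$, and roof $H=L_M+D$. Here $D(x)=\sum_i d_ix_i(1-x_i)$ is the exact diagonal residual from \eqref{eq:diagonal_split}. Three things need checking: $H$ majorizes $F$ and can be maximized to additive accuracy $\delta$; the map $R$ is downward; and $F(R(x))\ge\tfrac12H(x)$ on the cube. Downwardness is immediate since $\phi_2(t)\le t$.

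For majorization, use the identity \eqref{hard:eq:cut-decomposition} obtained from an optimal solution of the orientation LP \eqref{eq:quadratic_orientation}. Its coefficients $r,p_i,q_i,\theta_{ij}$ are nonnegative, with $r=\min_Sf(S)\ge0$ because $F$, and hence $M$, is Boolean-nonnegative. Each edge satisfies $x_i(1-x_j)\le\min\{x_i,1-x_j\}$, so $L_M\ge M$ and therefore $H\ge F$. The LP has polynomial size with rational data, so an optimal vertex of polynomial encoding length is computable. The roof $H$ is concave: $L_M$ is a nonnegative combination of affine terms and minima of affine functions, and $D$ is a nonnegative combination of concave quadratics. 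After introducing one auxiliary variable $z_{ij}\le x_i$, $z_{ij}\le 1-x_j$ per edge, maximizing $H$ over $\K$ becomes a convex quadratic program. Under Section~\ref{sec:access} it can be solved to accuracy $\delta$ with a feasible output. Choosing $\delta$ and the rounding error so that $\tfrac12\delta$ plus the rounding loss is at most $\varepsilon$ gives the additive error; we round $R(\bar x)$ downward to a rational point and control the loss with the Lipschitz bound $M_F$.

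The recovery inequality is proved termwise and aggregated by Corollary~\ref{cor:aggregation}, since $F=M+D$ is a nonnegative sum of components evaluated at the common point $R(x)$. The constant $r$ pays fully because $\rho\le1$. Positive unaries pay by the first inequality of \eqref{eq:clipping}. Complemented unaries lose nothing because $1-\phi_2(x_i)\ge1-x_i$. Each directed edge pays by the second inequality of \eqref{eq:clipping}, with $a=x_i$ and $u=x_j$. This already covers the degree-two compatible certificate claim: complemented roots with one head are handled by the root/head interchange noted after \eqref{eq:clipping}. A repeated coordinate in a self-atom such as $x_i(1-x_i)$ is covered because the scalar inequality is applied with $a=u$.

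The remaining point, and the only place the diagonal residual matters, is the diagonal term. We need $\phi_2(t)(1-\phi_2(t))\ge\tfrac12t(1-t)$ for $t\in[0,1]$. If $t\le1/2$, the left side is $t(1-t)$, which is at least $\tfrac12t(1-t)$. If $t>1/2$, the left side is $1/4$, which exceeds $t(1-t)/2\le1/8$. Summing all component inequalities with their nonnegative weights gives $F(R(x))\ge\tfrac12H(x)$. Theorem~\ref{thm:common} then yields $F(R(\bar x))\ge\tfrac12\OPT(F,\K)-\tfrac12\delta$.

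I expect no step to be a real obstacle. The most delicate part is bookkeeping: confirming that the exact residual $D$ is retained in the roof, rather than replaced by the self-atom roof, remains concave and tractable. The other point to confirm is that the complexity claims, namely the LP encoding length and the accuracy of the convex program solution, fit Section~\ref{sec:access}.
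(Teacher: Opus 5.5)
Your argument for the intrinsic quadratic class is the paper's argument. You orient $M$ by \eqref{eq:quadratic_orientation}, optimize the hybrid roof $L_M+D$, clip by $\phi_2$, pay each component by \eqref{eq:clipping}, and aggregate with Corollary~\ref{cor:aggregation}. Your diagonal bound $\phi_2(t)(1-\phi_2(t))\ge\tfrac12t(1-t)$ is weaker than the paper's observation that clipping never decreases $t(1-t)$, but it suffices.

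The gap is in the second sentence of the theorem. A degree-two compatible certificate may contain a mixed product $x_i(1-x_j)$. Compatibility forces its root to be the complemented occurrence, so the atom is $(1-x_j)-x_i(1-x_j)=(1-x_i)(1-x_j)$, with standard roof $\min\{1-x_i,1-x_j\}$. For $i=j$ the atom is $(1-x_i)^2$, with roof $1-x_i$. Your proposal treats positive-root edges, complemented uniform edges via root/head interchange, and self-atoms $x_i(1-x_i)$, but never this term. The interchange does not reach it. Matching $(1-x_j)(1-x_i)$ to the complemented one-head form $(1-a)u$ requires $a=x_j$ and $u=1-x_i$, but the recovered head factor is $1-\phi_2(x_i)$, not $\phi_2(1-x_i)$. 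So your claim that the argument ``already covers'' the compatible-certificate case is unjustified as written.

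The missing step is the downwardness comparison behind Lemma~\ref{lem:mixed}: $\phi_2(1-x_i)\le1-x_i\le1-\phi_2(x_i)$. This reduces the mixed atom to the complemented one-head inequality. More directly, as in the paper, $1-\phi_2(t)\ge\max\{\tfrac12,1-t\}$ gives $(1-\phi_2(x_i))(1-\phi_2(x_j))\ge\tfrac12(1-x_i)\ge\tfrac12\min\{1-x_i,1-x_j\}$, also when $i=j$. With this case added, aggregation yields factor $\tfrac12$ for the standard roof of every supplied degree-two compatible certificate.
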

\begin{proof}
Let $L_M$ be the standard roof of the certificate for $M$ in \eqref{eq:diagonal_split}. Optimize the hybrid concave roof
\[
 H_F(x)=L_M(x)+\sum_id_ix_i(1-x_i)
\]
to additive accuracy $\delta$, and put $y_i=\phi_2(\bar x_i)$. Equation~\eqref{eq:clipping} pays each rooted quadratic term and positive unary by $1/2$; constants and complemented unaries lose no value. Clipping also preserves the diagonal residual: if $\bar x_i\le1/2$, then $y_i=\bar x_i$; otherwise
\[
 y_i(1-y_i)-\bar x_i(1-\bar x_i)
 =\tfrac14-\bar x_i(1-\bar x_i)=(\bar x_i-1/2)^2\ge0.
\]
Consequently $F(y)\ge H_F(\bar x)/2\ge\OPT(F,\K)/2-\delta/2$. Choose tolerances with total loss at most $\varepsilon$. For a supplied degree-two compatible certificate, the positive and complemented uniform-polarity atoms satisfy the same clipping inequality. A mixed atom has the form $(1-a)(1-u)$, including identified coordinates, and roof $\min\{1-a,1-u\}$. Both recovered factors are at least $1/2$ and dominate their original values, so their product is at least half this roof. Constants and complemented unaries lose no value, so aggregation gives the same factor for its standard roof.
\end{proof}
The diagonal-free sharpness examples in Section~\ref{sec:frontiers} apply both to the hybrid roof and to the standard atom roof.

\paragraph{The implemented quadratic algorithm.}
Construct \eqref{hard:eq:cut-decomposition}, maximize the hybrid roof to the accuracy specified in Section~\ref{sec:access}, and return $y_i=\min\{\bar x_i,1/2\}$. Negating the roof gives a convex quadratic program; when the diagonal residual vanishes, the formulation is linear. Algorithm~\ref{alg:recovery} specifies the numerical tolerances.

\subsection{Lattice-submodular quadratics on a box}\label{sec:quadratic_box}
On a box, positive diagonal curvature can be removed before applying Theorem~\ref{thm:quadratic} and restored by endpoint choices afterward. After substituting fixed coordinates and normalizing the remaining intervals to the unit cube, a lattice-submodular quadratic has the form
\begin{equation}\label{eq:lattice_quadratic}
 F(x)=c+\sum_i\ell_ix_i+\sum_i\kappa_ix_i^2
       -\sum_{i<j}b_{ij}x_ix_j,
 \qquad b_{ij}\ge0,\quad\kappa_i\in\R.
\end{equation}
Lattice submodularity constrains the mixed coefficients but allows either sign for $\kappa_i$. Put $I_+=\{i:\kappa_i>0\}$ and replace each positive square by its endpoint chord:
\begin{equation}\label{eq:quadratic_chord}
 G(x)=F(x)+\sum_{i\in I_+}\kappa_ix_i(1-x_i).
\end{equation}
Then $G\ge F$, its mixed derivatives agree with those of $F$, and $\partial_{ii}^2G=2\min\{\kappa_i,0\}\le0$. Thus $G$ is DR-submodular and is affine in every coordinate in $I_+$.

\begin{corollary}[Box-constrained submodular quadratic maximization]\label{cor:quadratic_box}
Let $B=\prod_i[\alpha_i,\beta_i]$ be a nonempty finite box with rational bounds, and let $F$ be a lattice-submodular quadratic with rational coefficients, nonnegative on $B$. For every $\varepsilon>0$, a deterministic polynomial-time algorithm returns $z\in B$ with
\[
 F(z)\ge\tfrac12\max_{x\in B}F(x)-\varepsilon.
\]
\end{corollary}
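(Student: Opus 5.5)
The plan is to reduce to Theorem~\ref{thm:quadratic} on the unit cube, applied to the chord-modified objective $G$ of \eqref{eq:quadratic_chord}, and then to round the coordinates in $I_+$ to endpoints without loss. First normalize: coordinates with $\alpha_i=\beta_i$ are substituted, and each remaining interval is mapped affinely onto $[0,1]$ by $x_i=\alpha_i+(\beta_i-\alpha_i)t_i$. This preserves rationality, polynomial encoding length, lattice submodularity (mixed coefficients are scaled by positive factors), and nonnegativity on the cube. So we may assume $B=[0,1]^N$ and that $F$ has the form \eqref{eq:lattice_quadratic}. The constraint set $\K=[0,1]^N$ is down-closed and has an explicit inequality description, so the access assumptions of Section~\ref{sec:access} hold.

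Next check the hypotheses for $G$. As noted after \eqref{eq:quadratic_chord}, $G$ is a DR quadratic. Since $x_i(1-x_i)\ge0$, we have $G\ge F\ge0$ on the cube, so $G$ is Boolean-nonnegative. At Boolean points $G=F$, and more importantly $\max_{[0,1]^N}G\ge\max F$. Theorem~\ref{thm:quadratic} then returns $y\in[0,1]^N$ with $G(y)\ge\tfrac12\max G-\varepsilon\ge\tfrac12\max F-\varepsilon$.

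The remaining step is to convert $y$ into a point $z$ with $F(z)\ge G(y)$. For each $i\in I_+$, $G$ is affine in $x_i$ with the other coordinates fixed. Process the coordinates of $I_+$ one at a time, setting $z_i\in\{0,1\}$ to whichever endpoint does not decrease $G$. This is a comparison of two explicit rational values. It keeps $G$ affine in the not-yet-processed coordinates of $I_+$, because the affinity in $x_j$ holds for every fixed value of the other coordinates. Coordinates outside $I_+$ keep $z_i=y_i$.

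At the end, every $i\in I_+$ has $z_i\in\{0,1\}$, so the chord terms $\kappa_iz_i(1-z_i)$ vanish. This gives $F(z)=G(z)\ge G(y)$. Undoing the affine normalization yields $z\in B$ with the stated guarantee.

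The main subtlety is numerical. The rounding step is exact, but $y$ may be only approximately rational, since clipping the roof optimizer is rational here. The additive error from Theorem~\ref{thm:quadratic} must therefore be budgeted so that the total loss, including the rescaling of values, which is multiplicative only in the domain and not in $F$, stays at most $\varepsilon$. Because the endpoint choices use exact rational comparisons, no further error arises. I expect no real obstacle beyond this bookkeeping.
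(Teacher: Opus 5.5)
Your proposal is correct and follows the paper's proof essentially step for step. You normalize to the unit cube, apply Theorem~\ref{thm:quadratic} to the chord surrogate $G$, and use the affinity of $G$ in each coordinate of $I_+$ to move those coordinates to endpoints, so that $F(z)=G(z)\ge G(y)$. The only difference is cosmetic: the paper also rounds a maximizer of $G$ to prove $\max G=\max F$, whereas you rightly note that the inequality $\max G\ge\max F$, immediate from $G\ge F$, already suffices.
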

\begin{proof}
Substitute fixed coordinates and map each remaining interval to $[0,1]$ by $x_i=\alpha_i+(\beta_i-\alpha_i)t_i$. If none remain, return the unique point. This positive affine change preserves degree, mixed derivative signs, and nonnegativity. Work with the normalized polynomial and its surrogate \eqref{eq:quadratic_chord}.

Starting from any $y$ in the normalized cube, visit the coordinates in $I_+$ once. Replace the current coordinate by whichever endpoint, zero or one, gives the larger value of $G$, choosing zero in a tie. Affinity in that coordinate means the replacement cannot decrease $G$. At the final point $z$, every correction in \eqref{eq:quadratic_chord} vanishes, so $F(z)=G(z)\ge G(y)$. Applying this procedure to a maximizer of $G$, and using $G\ge F$, proves
\[
 \max_{[0,1]^n}G=\max_{[0,1]^n}F.
\]
Theorem~\ref{thm:quadratic} gives a rational point $y$ with $G(y)\ge\tfrac12\max G-\varepsilon$. The endpoint procedure transfers this guarantee to $F$ without further loss. Map its output back to $B$. Normalization and its inverse have polynomial encoding length; the added step uses at most $2|I_+|$ exact rational value evaluations.
\end{proof}

The algorithm therefore consists of the existing orientation LP, concave quadratic roof optimization, clipping, and one pass of endpoint choices. Form $Z$ by independently replacing each $y_i$, $i\in I_+$, with a Bernoulli variable of mean $y_i$, leaving the other coordinates fixed. Then $\E F(Z)=G(y)$, and the endpoint procedure makes this rounding deterministic. Endpoint choices can increase coordinates; their feasibility uses the product structure of the box.

\begin{example}[A convex coordinate and a fractional optimum]\label{ex:quadratic_box}
On $[0,1]^2$, take
\[
 F(x_1,x_2)=x_1^2+(1-x_1)x_2+x_2(1-x_2).
\]
This function is nonnegative and lattice submodular, while $\partial_{11}^2F=2>0$. Replacing $x_1^2$ by $x_1$ gives $G=x_1(1-x_2)+x_2+x_2(1-x_2)$, with hybrid roof
\[
 H_G(x)=\min\{x_1,1-x_2\}+x_2+x_2(1-x_2)
       \le1+x_2-x_2^2\le5/4.
\]
The point $\bar x=(1/2,1/2)$ attains this roof bound. Clipping leaves it fixed, with $G(\bar x)=1$ and $F(\bar x)=3/4$. Comparing $G(0,1/2)=3/4$ and $G(1,1/2)=5/4$ chooses $z=(1,1/2)$. Thus $F(z)=5/4$, attaining the certified optimum while retaining a fractional coordinate with negative curvature.
\end{example}

\citet[Theorem~13]{niazadeh2020optimal} obtain a randomized $1/2$ approximation, up to additive error, for general nonnegative Lipschitz continuous lattice-submodular functions on boxes, using coordinatewise double greedy and zero-sum games. Their class already includes \eqref{eq:lattice_quadratic}. For explicit quadratics, the chord reduction and Theorem~\ref{thm:quadratic} give a shorter construction and proof of the same factor, with an explicit roof certificate. The same reduction can also precede their deterministic DR algorithm \citep[Theorem~14]{niazadeh2020optimal}; the simplification here comes from the quadratic representation and the lossless endpoint step.

\subsection{A fixed-terminal cut representation}

For a loopless directed graph $G=(V,A)$ with nonnegative arc weights $w$, the directed
cut leaving $T\subseteq V$ has value
\[
 \delta^+(T):=\sum_{(u,v)\in A}w_{uv}\ind\{u\in T,v\notin T\}.
\]
A \emph{fixed-terminal directed-cut representation} has two terminals
$s,t$, requires $s\in T$ and $t\notin T$, and counts only nonterminal
vertices toward the cardinality budget.

\begin{proposition}[Fixed-terminal equivalence]\label{hard:prop:cut-equivalence}
A quadratic submodular pseudo-Boolean function is globally nonnegative if
and only if it is the cut-value function of a directed graph with
nonnegative arc weights and two fixed terminals.  A representation can be
constructed in polynomial time, and the cardinality of $S$ equals the number
of selected nonterminal vertices.
\end{proposition}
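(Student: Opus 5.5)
The plan is to prove both directions from the orientation linear program \eqref{eq:quadratic_orientation} and the identity \eqref{hard:eq:cut-decomposition}. A quadratic submodular pseudo-Boolean function is a multilinear $f(S)=c+\sum_{i\in S}a_i-\sum_{\{i,j\}\subseteq S}b_{ij}$ with $b_{ij}\ge0$, exactly of the form \eqref{hard:eq:problem}.

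For the easy direction, suppose $f(S)=\delta^+(S\cup\{s\})$ for a nonnegatively weighted digraph with terminals $s,t$. Then $f$ is a sum of nonnegative arc indicators, so it is globally nonnegative. It remains to check that $f$ is quadratic and submodular. Each arc contributes one of the following: a constant ($s\to t$), a positive unary $x_v$ ($v\to t$), a complemented unary $1-x_u$ ($s\to u$), or a directed-edge term $x_u(1-x_v)$. Arcs entering $s$ or leaving $t$ contribute zero. Every pair product therefore carries a nonpositive coefficient, which gives submodularity.

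For the converse, suppose $f\ge0$ on the cube. Solve \eqref{eq:quadratic_orientation} and read off $r,p,q,\theta$ from an optimal solution. The threshold-set duality argument already given shows $r=\min_Sf(S)\ge0$, and the constraints give $p,q,\theta\ge0$. Identity \eqref{hard:eq:cut-decomposition} then yields a graph on $N\cup\{s,t\}$ with the following arcs:
\begin{itemize}
\item weight $\theta_{ij}$ on $i\to j$;
\item weight $p_i$ on $i\to t$;
\item weight $q_i$ on $s\to i$;
\item weight $r$ on $s\to t$.
\end{itemize}
Evaluating $\delta^+(T)$ at $T=S\cup\{s\}$ reproduces each term of \eqref{hard:eq:cut-decomposition} at $x=\ind_S$. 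Hence $f(S)=\delta^+(S\cup\{s\})$. The terminals are excluded from the count, so the budget counts $|S|$ exactly. The graph is loopless because $\theta$ is indexed by $i\ne j$ and the terminals are distinct.

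For the complexity claim, the LP has $O(n^2)$ variables and constraints with rational data of polynomial size. An optimal vertex solution of polynomial encoding length can therefore be found in polynomial time, as already noted after \eqref{hard:eq:cut-decomposition}. The graph is then written down directly from that solution.

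The step needing most care is the sign $r\ge0$. It rests entirely on the strong-duality computation identifying the LP optimum $\sum_iq_i$ with $c-\min_Sf(S)$, which the paper has already established. Beyond that, only a bookkeeping check remains: that arcs incident to terminals give exactly the constant and unary terms, and that nothing is double counted.
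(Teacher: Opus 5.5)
Your proposal is correct and follows the paper's proof essentially step for step. The orientation LP and identity \eqref{hard:eq:cut-decomposition} supply the nonnegative weights $r,p_i,q_i,\theta_{ij}$ on the arcs $s\to t$, $i\to t$, $s\to i$, $i\to j$, and the converse decomposes a fixed-terminal cut into $1$, $x_i$, $1-x_i$, and $x_i(1-x_j)$ once zero-contribution arcs are discarded; your remarks about arcs entering $s$ or leaving $t$, and about looplessness, only spell out what the paper compresses into that phrase.
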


\begin{proof}
The orientation construction in Section~\ref{sec:quadratic_orientation} gives the nonnegative coefficients in \eqref{hard:eq:cut-decomposition}, with $r=\min_S f(S)$. Introduce arcs $s\to t$, $i\to t$, $s\to i$, and $i\to j$ with weights $r,p_i,q_i$, and $\theta_{ij}$, respectively. The cut leaving $\{s\}\cup S$ has value $M(\ind_S)=f(S)$. Its number of selected nonterminal vertices is exactly $|S|$, so the original cardinality budget is preserved. The orientation program and the graph construction have polynomial encoding length.

Conversely, a fixed-terminal directed cut with nonnegative arc weights is a nonnegative sum of $1$, $x_i$, $1-x_i$, and $x_i(1-x_j)$ after zero-contribution arcs are discarded. Its value function is globally nonnegative, quadratic, and submodular.
\end{proof}

Because the arc weight $r=\min_S f(S)$ must be nonnegative, the construction requires nonnegativity on the whole Boolean cube, including infeasible sets.

\paragraph{Discrete applications.}
A loopless directed graph with nonnegative arc weights has a multilinear quadratic cut objective of this form. Both an upper cardinality bound and a matroid-independence polytope are down-closed, and Section~\ref{sec:rounding} converts the recovered point to a feasible set without further loss. The terminals in Proposition~\ref{hard:prop:cut-equivalence} enter as constants, so recovery acts only on the nonterminal coordinates.

\subsection{Relation to linear and semidefinite relaxations}\label{sec:quadratic_relaxations}
McCormick's bilinear inequalities provide a basic linearization of quadratic optimization \citep{mccormick1976computability}. Goemans and Williamson established semidefinite relaxation and randomized rounding for cut problems \citep{goemans1995improved}; Nesterov studied semidefinite approximation for quadratic models with diagonal quadratic constraints \citep{nesterov1998semidefinite}; and Helmberg, Rendl, and Weismantel developed semidefinite bounds and cutting-plane strengthenings for quadratic knapsack \citep{helmberg2000semidefinite}. Semidefinite rounding gives stronger guarantees in particular cut models \citep{BrakensiekHuangPotechinZwick2026,goemans1995improved}. Our theorem applies to the stated down-closed regions while retaining the continuous diagonal residual. To compare the bounds precisely, we fix the objective and moment constraints of the relaxation below.

For the multilinear part, the standard roof has the pairwise form
\[
 L_M(x)=c+\sum_i a_ix_i-\sum_{i<j}b_{ij}\max\{0,x_i+x_j-1\}.
\]
Here $X_{ij}$ is a Boolean pair-moment variable, intended to represent $\Pr(i,j\in S)$ for a random set $S$ with marginals $x$. At fixed $x$, each pair moment takes its smallest value allowed by the McCormick inequalities. The semidefinite programming (SDP) bound $L_{\rm SDP}$, obtained by adding a positive-semidefinite constraint on the first and second moments while retaining those inequalities, is concave and satisfies $M\le f^+\le L_{\rm SDP}\le L_M$. Appendix~\ref{app:quadratic_comparison} gives the formulation and proof. The identity $X_{ii}=x_i$ concerns Boolean moments; the original continuous objective still contains $x_i^2$, with its diagonal residual kept separately. Tightening the upper bound alone does not establish a stronger recovery factor.

\subsection{A first boundary: the odd directed cycle}\label{sec:quadratic_boundary}
For an odd cycle on $n\ge3$ vertices, let
\[
 M_n(x)=\sum_{i=1}^n x_i(1-x_{i+1}),\qquad
 L_n(x)=\sum_{i=1}^n\min\{x_i,1-x_{i+1}\},\qquad x_{n+1}=x_1.
\]
The roof has the unique cube maximizer $\bar x=\ind/2$, with value $n/2$. Every partial derivative of $M_n$ is nonnegative on $[0,\bar x]$, so the best downward value is $n/4$. Even coordinated recovery is therefore limited to one-half of this roof value. The full Boolean optimum is $(n-1)/2$, and the ratio $n/[2(n-1)]$ tends to one-half. Section~\ref{sec:frontiers} proves these claims and gives an obstruction that also survives representation optimization. 

\section{Cubics: new algebra and two-map recovery}\label{sec:cubic_main}\label{sec:representation}\label{sec:kernels}
Quadratics have nonpositive pair coefficients and a separable, concave residual after Boolean multilinearization. Cubics can nevertheless have positive cubic coefficients, and their residual after Boolean multilinearization can fail to be concave. An exact complement identity gives a certificate and concave roof without relying on a concave residual. Two common maps then provide the recovery guarantee.

\subsection{What changes beyond the quadratic decomposition}
Consider the first term of the worked example below:
\[
 F(x,y)=(1-y)(2x-x^2),\qquad M(x,y)=x(1-y).
\]
This continuous polynomial is nonnegative and DR-submodular, and $M$ extends its Boolean restriction. The residual $F-M=x(1-x)(1-y)$ is nonnegative and vanishes at Boolean points, but
\[
 \nabla^2(F-M)=\begin{pmatrix}-2(1-y)&2x-1\\2x-1&0\end{pmatrix},
 \qquad \det\nabla^2(F-M)=-(2x-1)^2.
\]
The residual is therefore not generally concave. Moreover, the positive coefficient of $x^2y$ is compatible with all the DR inequalities of $F$. A cubic algorithm must address both features.

\subsection{The DR test and the complement identity}
Every second derivative of a cubic is affine, so maximizing it over the cube amounts to setting the coordinates with positive coefficients to one. This gives an explicit DR test, including repeated-variable and diagonal terms.

Write a cubic polynomial as
\begin{equation}\label{eq:cubic_input}
 F(x)=c+\sum_i a_ix_i+\sum_{|\alpha|=2}q_\alpha x^\alpha
                   +\sum_{|\beta|=3}t_\beta x^\beta.
\end{equation}
For multiindices, $\binom\beta\alpha=\prod_i\binom{\beta_i}{\alpha_i}$ and $\partial^\alpha=\prod_i\partial_i^{\alpha_i}$.
\begin{lemma}[Exact cubic DR condition]\label{lem:cubic_DR}
The polynomial \eqref{eq:cubic_input} is DR-submodular on the cube if and only if
\begin{equation}\label{eq:cubic_DR}
 q_\alpha+\sum_{\substack{|\beta|=3\\\beta\ge\alpha}}
 \binom\beta\alpha\pos{t_\beta}\le0\qquad(|\alpha|=2).
\end{equation}
This includes $\alpha=2e_i$, hence the diagonal conditions.
\end{lemma}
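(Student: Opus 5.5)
The plan is to compute the second derivatives of \eqref{eq:cubic_input} explicitly and use that each is affine in $x$. An affine function $g(x)=g_0+\sum_k g_kx_k$ is nonpositive on the whole cube exactly when $g_0+\sum_k\pos{g_k}\le0$, since its maximum over $[0,1]^N$ is attained at the vertex setting $x_k=1$ precisely when $g_k>0$. The lemma then reduces to identifying the constant term and the linear coefficients of each $\partial^\alpha F$ with $|\alpha|=2$. Here $\partial^\alpha$ denotes $\partial^2_{ij}$ when $\alpha=e_i+e_j$ with $i\ne j$, and $\partial^2_{ii}$ when $\alpha=2e_i$. Together these cover every entry of the Hessian in \eqref{eq:DR}.

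First, the constant term. Only quadratic monomials contribute a constant to $\partial^\alpha F$, and only the monomial $x^\alpha$ itself does so. Its contribution is $\partial^\alpha x^\alpha=\alpha!\,q_\alpha$, which equals $q_\alpha$ for $i\ne j$ and $2q_{2e_i}$ on the diagonal.

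Second, the linear terms. For a cubic monomial, $\partial^\alpha x^\beta$ is nonzero only if $\beta\ge\alpha$, and then $\beta-\alpha=e_k$ for a unique $k$. The formula $\partial^\alpha x^\beta=\frac{\beta!}{(\beta-\alpha)!}x^{\beta-\alpha}=\alpha!\binom\beta\alpha x_k$ gives coefficient $\alpha!\binom\beta\alpha t_\beta$. Distinct $\beta\ge\alpha$ produce distinct indices $k$, because $\beta=\alpha+e_k$. Therefore no two cubic coefficients merge into a single linear coefficient, and the positive-part operation can be applied termwise: the maximum over the cube is $\alpha!\bigl(q_\alpha+\sum_{\beta\ge\alpha}\binom\beta\alpha\pos{t_\beta}\bigr)$. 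Dividing by $\alpha!>0$ gives exactly \eqref{eq:cubic_DR}. Both directions of the equivalence follow, because the maximizing vertex lies in the cube.

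The main obstacle is bookkeeping rather than anything conceptual. The checks are that the factor $\alpha!$ appears uniformly in both the constant and linear terms, including the repeated-index cases $\beta=3e_i$ and $\beta=2e_i+e_j$ when $\alpha=2e_i$ or $\alpha=e_i+e_j$. The other check is the injectivity $\beta\mapsto k$, which justifies taking positive parts coefficient by coefficient.
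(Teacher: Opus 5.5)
Your proposal is correct and follows the paper's proof: both write $\frac1{\alpha!}\partial^\alpha F$ as the affine function $q_\alpha+\sum_{\beta\ge\alpha}\binom\beta\alpha t_\beta x^{\beta-\alpha}$ and maximize it over the cube by setting to one exactly the coordinates with positive coefficients. Your checks of the uniform factor $\alpha!$ and of the injectivity of $\beta\mapsto\beta-\alpha$ spell out what the paper's one-line argument leaves implicit.
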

\begin{proof}
For $|\alpha|=2$, the normalized second derivative is the affine function
\[
 \frac1{\alpha!}\partial^\alpha F(x)
 =q_\alpha+\sum_{\substack{|\beta|=3\\\beta\ge\alpha}}
   \binom\beta\alpha t_\beta x^{\beta-\alpha}.
\]
Its cube maximum sets to one exactly those coordinates with positive coefficients. The resulting maximum is the left side of \eqref{eq:cubic_DR}.
\end{proof}

For example, the repeated-variable identity
\[
 x^2y=1-2x-y+x^2+2xy-(1-x)^2(1-y)
\]
replaces a positive cubic product by a negative complemented product and lower-degree terms. The multiplicities on $x^2$ and $xy$ match those in the normalized second derivatives. In the multilinear case, this is the classical unary-negaform transformation \citep{BillionnetMinoux1985}. Retaining the multiplicities makes the identity valid on the continuous cube. The quadratic allocation argument can then be extended to the two polarities.

\subsection{Uniform-polarity orientation}\label{sec:uniform_orientation}
Consider an exact uniform-polarity negaform
\begin{equation}\label{eq:uniform_negaform}
 F(x)=c+\sum_i a_ix_i-\sum_{\eta\in H^+}b_\eta x^\eta
                  -\sum_{\beta\in H^-}d_\beta(1-x)^\beta,
 \qquad b_\eta,d_\beta\ge0,
\end{equation}
where the multiindices are nonnegative integers of total degree at least two. Split each nonlinear coefficient among its distinct coordinate roots by solving
\begin{equation}\label{eq:orientation}
\begin{aligned}
 \min\quad&\sum_iq_i\\
 \text{s.t.}\quad&\sum_{\eta:\,i\in\supp\eta}\theta^+_{\eta,i}
       -\sum_{\beta:\,i\in\supp\beta}\theta^-_{\beta,i}-q_i\le a_i &&(i\in N),\\
 &\sum_{i\in\supp\eta}\theta^+_{\eta,i}=b_\eta &&(\eta\in H^+),\\
 &\sum_{i\in\supp\beta}\theta^-_{\beta,i}=d_\beta &&(\beta\in H^-),\\
 &\theta^+,\theta^-,q\ge0.
\end{aligned}
\end{equation}
Occurrences of the same coordinate and polarity share one root allocation, so the exponents need not be expanded. The program minimizes the complemented-unary mass needed to make the remaining linear coefficients nonnegative. A threshold-set argument for its dual determines the residual constant.

\begin{theorem}[Orientation theorem]\label{thm:orientation}
The value of \eqref{eq:orientation} is
\begin{equation}\label{eq:orientation_value}
 c-\sum_{\beta\in H^-}d_\beta-\min_{S\subseteq N}F(\ind_S).
\end{equation}
If $F$ is Boolean-nonnegative, an optimal solution yields a nonnegative uniform-polarity rooted certificate of the same maximum literal degree, with residual constant exactly $\min_SF(\ind_S)$. The certificate is computable by a rational linear program of size polynomial in the sparse support-incidence input. Conversely, a nonnegative certificate implies nonnegativity on the cube.
\end{theorem}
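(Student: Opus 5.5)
The plan is to follow the quadratic argument of Section~\ref{sec:quadratic_orientation}: compute the LP dual of \eqref{eq:orientation}, evaluate it by threshold sets, and read the certificate off an optimal primal solution.

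\textbf{Step 1: the dual.} Assign the following dual variables:
\begin{itemize}
\item $u_i\ge0$ to the $i$th inequality, rewritten as $-\sum_{\eta}\theta^+_{\eta,i}+\sum_\beta\theta^-_{\beta,i}+q_i\ge-a_i$;
\item free $v_\eta$ and $w_\beta$ to the two equality families.
\end{itemize}
The column of $q_i$ gives $u_i\le1$. The column of $\theta^+_{\eta,i}$ gives $v_\eta\le u_i$ for every $i\in\supp\eta$. The column of $\theta^-_{\beta,i}$ gives $w_\beta\le-u_i$ for every $i\in\supp\beta$. Eliminating $v,w$ at their largest feasible values yields
\[
 \max_{0\le u\le1}\Bigl\{\sum_{\eta\in H^+}b_\eta\min_{i\in\supp\eta}u_i-\sum_{\beta\in H^-}d_\beta\max_{i\in\supp\beta}u_i-\sum_ia_iu_i\Bigr\}.
\]
The primal is feasible, since any split works after enlarging $q$. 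It is bounded below by $0$. Hence strong duality applies and the two optimal values coincide.

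\textbf{Step 2: threshold sets.} With $S_t=\{i:u_i\ge t\}$ we have
\[
 u_i=\int_0^1\ind\{i\in S_t\}\,dt,\qquad \min_{\supp\eta}u=\int_0^1\ind\{\supp\eta\subseteq S_t\}\,dt,
\]
\[
 \max_{\supp\beta}u=1-\int_0^1\ind\{\supp\beta\cap S_t=\emptyset\}\,dt.
\]
At a Boolean point $\ind_S$, $x^\eta=\ind\{\supp\eta\subseteq S\}$ and $(1-x)^\beta=\ind\{\supp\beta\cap S=\emptyset\}$, whatever the multiplicities. So the dual objective at $u=\ind_S$ equals $c-\sum_\beta d_\beta-F(\ind_S)$. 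For general $u$ it is the $t$-average of these Boolean values. The dual optimum is therefore at most $\max_S\{c-\sum d_\beta-F(\ind_S)\}$, and this bound is attained by a Boolean $u$. This proves \eqref{eq:orientation_value}.

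The step needing the most care is the complemented polarity. The dual constraint becomes a $\max$ rather than a $\min$, and one must check that its threshold representation produces exactly the constant $-\sum d_\beta$ together with the Boolean value of $(1-x)^\beta$. This is also where sharing one allocation per coordinate (rather than per occurrence) matters: it makes only $\supp$ appear, so repeated powers cause no issue.

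\textbf{Step 3: the certificate.} Take an optimal $(\theta^\pm,q)$ and set
\[
 p_i=a_i-\sum_\eta\theta^+_{\eta,i}+\sum_\beta\theta^-_{\beta,i}+q_i\ge0.
\]
Rewrite each term exactly on the cube:
\[
 -\theta^+_{\eta,i}x^\eta=\theta^+_{\eta,i}(x_i-x^\eta)-\theta^+_{\eta,i}x_i,
\]
\[
 -\theta^-_{\beta,i}(1-x)^\beta=\theta^-_{\beta,i}\bigl[(1-x_i)-(1-x)^\beta\bigr]-\theta^-_{\beta,i}+\theta^-_{\beta,i}x_i.
\]
Each bracket is a rooted atom with root $i\in\supp$ and the same literal degree. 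Writing $p_ix_i+q_i(1-x_i)=(p_i-q_i)x_i+q_i$, the linear coefficients match $a_i$. The constant is $r=c-\sum_\beta d_\beta-\sum_iq_i$, which by Step~2 equals $\min_SF(\ind_S)$. It is nonnegative under Boolean nonnegativity.

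The LP has one variable per (term, support coordinate) pair and rational data. It therefore has polynomial size in the sparse support-incidence input, and it has an optimal vertex of polynomial encoding length. For the converse, every literal is nonnegative on the cube. Every atom is nonnegative because a product of literals in $[0,1]$ is at most its root factor. Hence a nonnegative certificate is nonnegative on the cube.
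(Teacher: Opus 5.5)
Your proposal is correct and follows essentially the same route as the paper. It uses LP duality with the equality multipliers eliminated to reach the $\min$/$\max$ dual, then threshold-set averaging to show the dual value is $c-\sum_\beta d_\beta-\min_S F(\ind_S)$, and finally coefficient matching of an optimal allocation to obtain the rooted certificate with residual constant $\min_S F(\ind_S)$. Your explicit column-by-column dual derivation and your treatment of the converse through atom nonnegativity spell out details that the paper states only briefly.
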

\begin{proof}
An arbitrary distribution of each coefficient becomes feasible after increasing $q_i$ sufficiently. Eliminating the dual variables for the equality constraints gives
\begin{equation}\label{eq:orientation_dual}
 \max_{0\le u\le1}\left\{
 \sum_\eta b_\eta\min_{i\in\supp\eta}u_i
 -\sum_\beta d_\beta\max_{i\in\supp\beta}u_i-\sum_i a_iu_i\right\}.
\end{equation}
For $S_t=\{i:u_i\ge t\}$, integrate the identities
\[
 u_i=\int_0^1\ind\{i\in S_t\}\,dt,\quad
 \min_{i\in T}u_i=\int_0^1\ind\{T\subseteq S_t\}\,dt,\quad
 \max_{i\in T}u_i=\int_0^1\ind\{T\cap S_t\ne\varnothing\}\,dt.
\]
The dual objective becomes
\[
 \int_0^1\left[
 \sum_{\eta:\supp\eta\subseteq S_t}b_\eta
 -\sum_{\beta:\supp\beta\cap S_t\ne\varnothing}d_\beta
 -\sum_{i\in S_t}a_i\right]dt.
\]
This average cannot exceed the largest bracketed set value, and the corresponding Boolean vector attains that value. At a set $S$, the bracket equals $c-\sum_\beta d_\beta-F(\ind_S)$. Strong linear-programming duality gives \eqref{eq:orientation_value}.

Set
\[
 p_i=a_i-\sum_\eta\theta^+_{\eta,i}+\sum_\beta\theta^-_{\beta,i}+q_i,
 \qquad c_0=c-\sum_\beta d_\beta-\sum_iq_i.
\]
Missing root-index terms are zero. The constraints give $p_i\ge0$, and the optimum value gives $c_0=\min_SF(\ind_S)\ge0$. Expanding
\begin{align*}
 F(x)={}&c_0+\sum_ip_ix_i+\sum_iq_i(1-x_i)\\
 &+\sum_{\eta,i}\theta^+_{\eta,i}(x_i-x^\eta)
 +\sum_{\beta,i}\theta^-_{\beta,i}\bigl((1-x_i)-(1-x)^\beta\bigr)
\end{align*}
recovers every coefficient of \eqref{eq:uniform_negaform}. Each atom is nonnegative because its root is one of the factors in the product. All operations have polynomial encoding length.
\end{proof}
The Boolean dual depends only on supports, whereas the polynomial identity retains multiplicities. The proof therefore covers both distinct-variable multilinear negaforms and continuous powers. Orientation supplies a nonnegative certificate; Proposition~\ref{prop:root_independence} shows that the roof of a fixed negaform is independent of this choice.

The complement identity produces exactly the quadratic coefficients controlled by \eqref{eq:cubic_DR}.

\begin{proposition}[Cubic certificate construction]\label{prop:cubic_certificate}
Every Boolean-nonnegative cubic DR polynomial has a polynomially constructible nonnegative uniform-polarity rooted certificate of literal degree at most three. Its optimally oriented residual constant is its minimum Boolean value. Conversely, every nonnegative sum of constants, literals, and uniform-polarity rooted atoms of literal degree at most three is Boolean-nonnegative and DR-submodular.
\end{proposition}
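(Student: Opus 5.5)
The plan is to reduce the statement to Theorem~\ref{thm:orientation}: first build an exact uniform-polarity negaform \eqref{eq:uniform_negaform} of literal degree at most three. Start from the coefficient form \eqref{eq:cubic_input}. Split the cubic terms by sign. For each $\beta$ with $t_\beta<0$, keep $t_\beta x^\beta$ as a positive-polarity product with $b_\beta=-t_\beta$. For each $\beta$ with $t_\beta>0$, use the exact complement expansion
\[
 x^\beta=\prod_i\bigl(1-(1-x_i)\bigr)^{\beta_i}
\]
and solve for $x^\beta$, exactly as in the displayed identity for $x^2y$. This writes $x^\beta$ as $-(1-x)^\beta$ plus an affine function plus a quadratic part. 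The quadratic part is $\sum_{|\alpha|=2,\alpha\le\beta}\binom\beta\alpha x^\alpha$; I would verify that coefficient by differentiating twice, matching the normalized second derivatives in Lemma~\ref{lem:cubic_DR}. Each positive cubic coefficient therefore contributes $d_\beta=t_\beta$ to a complemented cubic product. It also adds $t_\beta\binom\beta\alpha$ to every quadratic coefficient with $\alpha\le\beta$. The expansion has polynomially many terms, since each $\beta$ has at most six sub-multiindices of degree two.

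After this substitution, the new quadratic coefficient of $x^\alpha$ is $q_\alpha+\sum_{\beta\ge\alpha}\binom\beta\alpha\pos{t_\beta}$, which is $\le0$ by \eqref{eq:cubic_DR}. Hence every quadratic monomial, including the squares $x_i^2$ and the products $x_ix_j$, appears with a nonpositive coefficient. It can be placed in $H^+$ with $b_\alpha\ge0$. Together with the negated negative cubics, this gives \eqref{eq:uniform_negaform} with all products of literal degree at most three. The identity is exact on the continuous cube because no exponent was reduced. The main care point is bookkeeping for repeated indices, such as $\beta=3e_i$ or $2e_i+e_j$. I would handle it uniformly by the multiindex binomial expansion rather than case by case. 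The multiindex form makes the quadratic coefficient exactly $\binom\beta\alpha$, consistent with the proof of Lemma~\ref{lem:cubic_DR}.

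Now apply Theorem~\ref{thm:orientation} to this negaform. Boolean nonnegativity of $F$ yields, via the orientation LP, a nonnegative rooted certificate of the same maximum literal degree (at most three) with residual constant $\min_SF(\ind_S)$. The construction is polynomial, since the LP has size polynomial in the sparse input.

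For the converse, evaluation at Boolean points shows that each atom, literal and nonnegative constant is Boolean-nonnegative. For DR-submodularity it suffices, by closure under nonnegative sums, to check each generator. Constants and literals are affine. A positive atom $x_i-x^\eta$ with $|\eta|\le3$ has second derivatives $-\binom{\eta}{\alpha}\alpha!\,x^{\eta-\alpha}\le0$. For a complemented atom $(1-x_i)-(1-x)^\beta$, substitute $z=1-x$: it becomes $z_i-z^\beta$. Second derivatives are invariant under the reflection $x\mapsto 1-x$, since the two sign changes cancel. The same computation therefore gives $\le0$. The step I expect to need the most care is the first paragraph's coefficient matching, not the converse.
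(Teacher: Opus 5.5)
Your proposal is correct and follows the paper's proof essentially step for step: the complement identity for positive cubic coefficients, nonpositivity of the resulting quadratic coefficients via Lemma~\ref{lem:cubic_DR}, an appeal to Theorem~\ref{thm:orientation}, and a generator-by-generator second-derivative check for the converse. One small correction: a degree-three multiindex $\beta$ has at most three degree-two sub-multiindices (namely $\beta-e_i$ for $i\in\supp\beta$), not six, which only tightens your size bound.
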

\begin{proof}
For each positive cubic coefficient, use the exact identity
\begin{equation}\label{eq:cubic_complement}
 x^\beta=1-\sum_i\beta_ix_i+
 \sum_{\substack{|\alpha|=2\\\alpha\le\beta}}\binom\beta\alpha x^\alpha
 -(1-x)^\beta\qquad(|\beta|=3).
\end{equation}
The remaining quadratic coefficients are nonpositive by Lemma~\ref{lem:cubic_DR}. The result is a uniform-polarity negaform of literal degree at most three. Apply Theorem~\ref{thm:orientation}. Conversely, each uniform-polarity atom has nonpositive diagonal and mixed second partials: its nonlinear part is the negative of a product of literals of one polarity. Nonnegative sums preserve this property and nonnegativity.
\end{proof}

\subsection{Occurrence slots and the failure of clipping}
Treat the root and head values as independent real variables, called \emph{occurrence slots}. For an integer $m\ge1$ and $a,u_1,\ldots,u_m\in[0,1]$, define
\begin{equation}\label{eq:slot_atoms}
\begin{aligned}
 h_m^+(a,u)&=a\left(1-\prod_{j=1}^m u_j\right),&
 \ell_m^+(a,u)&=\min\left\{a,\sum_j(1-u_j)\right\},\\
 h_m^-(a,u)&=(1-a)\left(1-\prod_{j=1}^m(1-u_j)\right),&
 \ell_m^-(a,u)&=\min\left\{1-a,\sum_j u_j\right\}.
\end{aligned}
\end{equation}
The \emph{rank} is the number $m$ of heads, so the literal degree is $m+1$. A scalar map is applied to a vector coordinatewise. Repeated variables are allowed by restricting independent occurrence slots to the relevant diagonal subspaces.

For the positive cubic atom $h_2^+(a,u,v)=a(1-uv)$ at $(a,u,v)=(1,1/2,1/2)$, the roof equals one, but quadratic clipping gives only $\tfrac12(1-\tfrac14)=3/8$. This falls short of $8/17$ and raises a broader question: how much can any one common scalar map guarantee?

\subsection{A single-map obstruction already in cubic degree}\label{sec:single_upper}
Suppose a downward map $\phi$ pays every positive-root atom through rank two by factor $\rho\ge0$, and put $r=\phi(1/2)\in[0,1/2]$. A positive rank-one midpoint atom gives
\[
 \rho\le2r(1-r).
\]
For any $0<\eta<1$, let $q_\eta=\phi(1-\eta)$. A positive rank-one atom with root $\eta$ and head $1-\eta$ gives
\[
 \rho\eta\le\phi(\eta)(1-q_\eta)\le\eta(1-q_\eta),
 \qquad\text{hence}\qquad q_\eta\le1-\rho.
\]
A positive rank-two atom with root $1-\eta$ and two midpoint heads gives
\[
 \rho(1-\eta)\le q_\eta(1-r^2)\le(1-\rho)(1-r^2).
\]
Letting the scalar parameter $\eta$ tend to zero yields
\[
 \rho\le\frac{1-r^2}{2-r^2}.
\]
This argument takes no limit of $\phi$. On $[0,1/2]$, $2r(1-r)$ increases and $(1-r^2)/(2-r^2)$ decreases. They intersect at $r=(\sqrt3-1)/2$, since
\[
 2r(1-r)(2-r^2)-(1-r^2)=(r-1)^2(2r^2+2r-1).
\]
The common value is $2\sqrt3-3$, proving the upper bound.

The bound uses only positive-root atoms through degree three and assumes no regularity of the map. Section~\ref{sec:single} constructs a continuous scalar map attaining it at every finite rank. Since $2\sqrt3-3<8/17$, a universal componentwise $8/17$ guarantee requires more than one scalar map.

\subsection{An elementary cubic portfolio}\label{sec:cubic_maps}
The following two branches satisfy one weighted inequality for both atom polarities and every positive unary:
\begin{equation}\label{eq:elementary_maps}
\begin{aligned}
 \phi_0(t)&=\begin{cases}
 t,&0\le t\le1/2,\\ t/4+3/8,&1/2\le t\le3/4,\\9(1-t)/4,&3/4\le t\le1,
 \end{cases}\\
 \phi_1(t)&=\begin{cases}
 t,&0\le t\le1/4,\\t/3+1/6,&1/4\le t\le3/4,\\(7t-4)/3,&3/4\le t\le1.
 \end{cases}
\end{aligned}
\end{equation}
Both maps are continuous and downward. The first fixes $1/2$ and maps $1$ to zero; the second maps $1/2$ to $1/3$ and fixes $1$. Downwardness does not require either map to be increasing.

\paragraph{The two contact configurations.}
At the midpoint edge, the two branches attain fractions $1/2$ and $4/9$ of the roof value. At the positive cubic atom with root one and two midpoint heads, they attain zero and $8/9$. Weighting the first branch by $p$ gives
\[
 \frac p2+\frac{4(1-p)}9,
 \qquad \frac{8(1-p)}9.
\]
Balancing these values gives $p=8/17$ and common value $8/17$. The contact configurations thus determine the weights. To prove the guarantee, however, we must verify the inequality for every occurrence-slot configuration.

\paragraph{From contact points to the whole domain.}
For $m=1,2$ and $\sigma\in\{+,-\}$, define the \emph{recovery deficit} by
\[
 D_m^\sigma(a,u)=8h_m^\sigma(\phi_0(a),\phi_0(u))
 +9h_m^\sigma(\phi_1(a),\phi_1(u))-8\ell_m^\sigma(a,u).
\]
The required scalar inequality is $D_m^\sigma(a,u)\ge0$.
 A \emph{knot} is a breakpoint between affine pieces; a \emph{knot box} is a Cartesian product of intervals between consecutive knots. A \emph{switching face} of an atom roof is the hyperplane where the two affine arguments of its minimum agree. An off-switch region lies on one side of that hyperplane.
Within a knot box and a fixed affine branch of the roof, the recovery deficit is affine in each occurrence slot. Moving a minimizing slot to a knot or to the switching face reduces the continuum to knot configurations and switching triangles. Appendix~\ref{app:cubic} checks them all. Its triangle certificates use the Bernstein basis, whose nonnegative functions sum to one; nonnegative coefficients therefore certify the inequality. Two triangles need separate arguments because some coefficients are negative.

\begin{lemma}[Cubic scalar recovery]\label{lem:cubic_maps}
For $m=1,2$, $\sigma\in\{+,-\}$, and all occurrence-slot values,
\begin{equation}\label{eq:cubic_slot_inequalities}
 8h_m^\sigma(\phi_0(a),\phi_0(u))+
 9h_m^\sigma(\phi_1(a),\phi_1(u))\ge8\ell_m^\sigma(a,u).
\end{equation}
Also $8\phi_0(t)+9\phi_1(t)\ge8t$ for $t\in[0,1]$.
\end{lemma}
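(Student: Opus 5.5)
The plan is a piecewise-affine case analysis that reduces the continuum of occurrence slots to finitely many checkable configurations. The knots of $\phi_0$ and $\phi_1$ are $0,1/4,1/2,3/4,1$. On each interval between consecutive knots, both maps are affine.

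We start with the unary inequality. On each knot interval the function $8\phi_0(t)+9\phi_1(t)-8t$ is affine, so it suffices to check it at the five knots. The values are $0$, $1/4$, $1/2$, $3/4$ at the first four knots, and at $t=1$ they are $0+9-8=1$. All are nonnegative, so $8\phi_0(t)+9\phi_1(t)\ge 8t$ on $[0,1]$.

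For the atom inequalities, fix $m\in\{1,2\}$ and $\sigma$. Fix a knot box in $(a,u)$ and one branch of the minimum in $\ell_m^\sigma$, that is, one side of the switching face $a=\sum_j(1-u_j)$ (respectively $1-a=\sum_j u_j$). On this region the roof term is affine.

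Each $h_m^\sigma(\phi_k(a),\phi_k(u))$ is a product of affine functions of distinct slots. It is therefore multilinear in the slots, and in particular affine in each slot separately. So $D_m^\sigma$ is affine in each single slot with the others fixed.

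Within the region, we move one slot at a time to a minimizing endpoint. Each endpoint is either a knot or a point on the switching face. Repeating this shows that a minimum occurs where every slot is at a knot, or where the remaining free slots lie on the switching face. For $m=2$, the face is a plane cut by the knot grid into triangles and polygons.

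In the first case we evaluate $D_m^\sigma$ on the finite grid $\{0,1/4,1/2,3/4,1\}^{m+1}$ in exact rational arithmetic. On the face, we eliminate one slot using the face equation. What remains is a polynomial of degree at most $m+1$ in the remaining slots, over a polygon that we triangulate. On each triangle we expand it in the Bernstein basis; nonnegative coefficients certify $D\ge0$.

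Two simplifications reduce the work. First, the map $x\mapsto 1-x$ on slots exchanges $h^+$ and $h^-$, but only with the maps replaced by $1-\phi_k(1-t)$. So the two polarities cannot simply be identified, and each must be checked separately. Second, the $m=1$ cases are two-dimensional, and the face analysis there is one-variable. The same procedure handles repeated variables: the inequalities hold for independent slots, hence also on the diagonal subspaces.

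The main obstacle is the switching-face triangles for $m=2$ near the contact configurations. These are the midpoint edge and the root-one, two-midpoint-head cubic atom, where the inequality is tight. There the Bernstein coefficients can be negative. For those triangles I would first subdivide them (degree elevation or barycentric splitting) until the coefficients become nonnegative. If tightness at a vertex prevents this, I would argue directly: factor out the vanishing linear form, or check the gradient at the contact point and bound the quadratic remainder.
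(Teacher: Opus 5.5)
Your plan matches the paper's proof: on each knot box and roof branch the deficit is affine in each slot, which reduces the problem to the $5$, $25$, and $125$ knot configurations together with the switching-face triangles, certified in the Bernstein basis, with the two polarities checked separately. Two corrections: the unary deficits at the knots are $0,9/4,3,9/4,1$, not $0,1/4,1/2,3/4,1$ (harmless, since all are nonnegative), and the only triangles with a negative Bernstein coefficient are the positive cell $[3/4,1]\times[0,1/4]\times[3/4,1]$ (with its mirror) and the complemented cell $[3/4,1]\times[0,1/4]^2$, not the cells at the contact configurations, which are certified directly; the paper settles these two cells by direct calculus and by the identity $3(u-v)^2+uv[4-3(u+v)]$, and on the complemented cell pure degree elevation can never work, because the deficit vanishes at $(a,u,v)=(1,0,0)$ with leading form $3u^2-2uv+3v^2$, although a single bisection of the opposite edge already gives nonnegative coefficients.
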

\begin{proof}
Appendix~\ref{app:cubic} proves the inequalities by checking the breakpoints and certifying the switching faces. The slots range independently over real values, so the same inequalities hold when repeated occurrences are identified.
\end{proof}

\begin{theorem}[Cubic approximation]\label{thm:cubic}
Under Section~\ref{sec:access}, every explicitly represented Boolean-nonnegative cubic DR polynomial admits a deterministic $8/17$ approximation over $\K$, up to any prescribed additive error. More generally, the same elementary portfolio applies to any supplied certificate in $\CC_{\le3}$.
\end{theorem}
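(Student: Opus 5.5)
We assemble the theorem from results already in place: the certificate of Proposition~\ref{prop:cubic_certificate}, the scalar inequalities of Lemma~\ref{lem:cubic_maps}, and the interface of Theorem~\ref{thm:common} with Corollary~\ref{cor:aggregation}.

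\emph{Certificate.} Given the sparse rational coefficients of $F$, first verify the DR condition \eqref{eq:cubic_DR} of Lemma~\ref{lem:cubic_DR}. Then rewrite every positive cubic coefficient with the complement identity \eqref{eq:cubic_complement}. This gives an exact uniform-polarity negaform \eqref{eq:uniform_negaform} of literal degree at most three. Solve the orientation program \eqref{eq:orientation}. By Theorem~\ref{thm:orientation}, its optimal solution yields a nonnegative rooted certificate in $\CC_{\le3}$ with residual constant $\min_SF(\ind_S)\ge0$. In the supplied-certificate case, this step is skipped.

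\emph{Roof.} Replace each atom by its standard atom roof $\ell_m^\sigma$ and keep constants and literals unchanged; call the result $L$. Each atom is bounded by both arguments of its minimum, since a product of literals is bounded by any factor and $1-\prod u_j\le\sum(1-u_j)$. Hence $L\ge F$ on the cube, and $L$ is concave and piecewise affine. It has a polynomial-size linear hypograph formulation with one auxiliary variable per atom. Compute a $\delta$-accurate feasible maximizer $\bar x$ under the access assumptions of Section~\ref{sec:access}.

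\emph{Recovery.} Use the portfolio $R_0=\phi_0$ and $R_1=\phi_1$, applied coordinatewise, with weights $p_0=8/17$ and $p_1=9/17$. Both maps are downward, so both recovered points are feasible.
\begin{itemize}
\item For atoms, dividing \eqref{eq:cubic_slot_inequalities} by $17$ gives $\sum_sp_sh_m^\sigma(R_s(\cdot))\ge\tfrac{8}{17}\ell_m^\sigma$ for $m=1,2$ and both polarities. This covers repeated variables, because the slots are independent and the inequality restricts to diagonals.
\item A rank-zero term cannot occur, since atoms have degree at least two.
\item Positive unaries are paid by $8\phi_0+9\phi_1\ge8t$.
\item Complemented unaries and the constant lose nothing, since $\rho\le1$.
\end{itemize}
Corollary~\ref{cor:aggregation} then gives \eqref{eq:common_inequality} with $H=L$ and $\rho=8/17$. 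Theorem~\ref{thm:common} yields $\max_sF(R_s(\bar x))\ge\tfrac{8}{17}\OPT(F,\K)-\tfrac{8}{17}\delta$.

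\emph{Numerics.} The maps $\phi_0,\phi_1$ are rational piecewise affine, so $R_s(\bar x)$ is rational and no downward rounding is needed. Evaluating both branches exactly, or within $\eta$ using the Lipschitz bound $M_F$, and choosing $\delta$ and $\eta$ with $\tfrac{8}{17}\delta+2\eta\le\eps$ gives the stated additive error. All steps run in polynomial time.

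The main obstacle is the validity of Lemma~\ref{lem:cubic_maps} over the whole occurrence-slot domain. That lemma is assumed here, so what remains to check is the roof domination and the applicability of the slot inequalities to identified coordinates, and both are routine.
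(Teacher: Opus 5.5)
Your argument for the intrinsic class is correct and coincides with the paper's. Proposition~\ref{prop:cubic_certificate} gives a uniform-polarity certificate, Lemma~\ref{lem:cubic_maps} pays its atoms and positive unaries, and Corollary~\ref{cor:aggregation} with Theorem~\ref{thm:common} finishes.

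The gap is in the second sentence of the theorem. A supplied certificate in $\CC_{\le3}$ may contain compatible \emph{mixed-polarity} atoms, such as $(1-x_1)(1-x_2x_3)$ or $(1-x_1)\bigl(1-(1-x_2)x_3\bigr)$. After you skip the construction step, your payment bullets only verify $h_m^+$ and $h_m^-$, that is, uniform polarity. Restricting independent slots to diagonals does not reach these atoms. A positive head $x_i$ under the complemented root $1-a$ contributes $1-x_i$ to the atom roof, so matching $\ell_m^-(a,v)$ forces the slot value $v=1-x_i$. But the recovered head is $\phi_s(x_i)=\phi_s(1-v)$, whereas $h_m^-(\phi_s(a),\phi_s(v))$ uses $1-\phi_s(v)$. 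No choice of slot values makes the recovered mixed atom an instance of a uniform slot atom.

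The missing step is the downwardness comparison of Lemma~\ref{lem:mixed}. Compatibility forces the root of a mixed product to be complemented, so it recovers as $1-\phi_s(a)$. Write every head literal as $1-v_j$. Complemented heads recover exactly as $1-\phi_s(v_j)$, and positive heads satisfy $\phi_s(1-v_j)\le1-v_j\le1-\phi_s(v_j)$. Three consequences follow:
\begin{enumerate}
\item the recovered head product is at most $\prod_j(1-\phi_s(v_j))$;
\item the recovered atom is at least $h_m^-(\phi_s(a),\phi_s(v))$;
\item its standard roof \eqref{eq:atom_roof} is exactly $\ell_m^-(a,v)$.
\end{enumerate}
The complemented cases of \eqref{eq:cubic_slot_inequalities} then pay these atoms by $8/17$, and aggregation proceeds as you wrote. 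This is precisely what the paper's proof adds for supplied certificates. With it, and with \eqref{eq:atom_roof} in place of $\ell_m^\sigma$ for the mixed atoms in your roof, your proof is complete. Your feasibility and numerical-error accounting are fine.
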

\begin{proof}
Proposition~\ref{prop:cubic_certificate} constructs a uniform-polarity certificate for the intrinsic polynomial class. Lemma~\ref{lem:cubic_maps} pays its atoms and positive unaries, while constants and complemented unaries lose no value. Add these common inequalities using Corollary~\ref{cor:aggregation}. For supplied compatible certificates, Lemma~\ref{lem:mixed} covers the mixed-polarity atoms. If $\bar x$ is a $\delta$-accurate roof optimizer, Theorem~\ref{thm:common} gives value at least $8(\OPT(F,\K)-\delta)/17$ for the better of $\phi_0(\bar x)$ and $\phi_1(\bar x)$. Allocate the numerical errors as in Section~\ref{sec:access}.
\end{proof}

\paragraph{A certificate on a switching triangle.}
On the complemented switching face $a+u+v=1$, the exceptional cell with $u,v\ge0$ and $u+v\le1/4$ has the exact deficit
\[
 3(u-v)^2+uv\bigl[4-3(u+v)\bigr]\ge0.
\]
The identity certifies the entire triangle. Appendix~\ref{app:cubic} treats the remaining cells, including the positive-polarity exception.

\Needspace{26\baselineskip}
\subsection{Implementing the algorithm}\label{sec:algorithm}
\begin{algorithm}[H]
\caption{Certificate, roof, and common downward recovery}\label{alg:recovery}
\centering
\fbox{\begin{minipage}{0.95\textwidth}
\small

\textbf{Input.} A rational quadratic/cubic coefficient list satisfying the stated promises, or a representation for one of the supplied classes in Table~\ref{tab:extensions}; the feasible-region access of Section~\ref{sec:access}; and a rational tolerance $\varepsilon>0$.
\begin{enumerate}[leftmargin=*,itemsep=3pt,topsep=4pt]
\item Construct a nonnegative certificate by Section~\ref{sec:quadratic_orientation} or Proposition~\ref{prop:cubic_certificate}, or read the supplied certificate/decomposition. For the quadratic hybrid implementation, separate the exact diagonal residual as in \eqref{eq:diagonal_split}.
\item Form the corresponding concave roof $H$. Use clipping with $\rho=1/2$ for quadratics, the elementary pair with $\rho=8/17$ for cubics, or the maps certified for the supplied class.
\item Obtain a feasible $\delta$-accurate maximizer $\bar x$ of $H$, with $\delta=\varepsilon/3$.
\item Evaluate every prescribed common branch at $\bar x$. Where necessary, approximate it coordinatewise downward by a rational point $y^s$, losing at most $\varepsilon/3$ in objective value per branch.
\item Estimate every $F(y^s)$ with absolute error at most $\varepsilon/6$ and return a branch with largest estimate.
\end{enumerate}
\textbf{Output.} A point $y\in\K$ with $F(y)\ge\rho\OPT(F,\K)-\varepsilon$. A discrete solution additionally requires the rounding hypothesis in Section~\ref{sec:rounding}.
\end{minipage}}
\end{algorithm}

For roof-optimization error $\delta$, map-approximation loss at most $\varepsilon_{\rm map}$ per branch, and value-estimation error at most $\eta$, Theorem~\ref{thm:common} gives
\begin{equation}\label{eq:implementation_error}
 F(y)\ge\rho\OPT(F,\K)-\rho\delta-\varepsilon_{\rm map}-2\eta.
\end{equation}
Because $0\le\rho\le1$, the tolerances in Algorithm~\ref{alg:recovery} give total loss at most $\varepsilon$. When $M_F>0$, the bound in Section~\ref{sec:access} makes coordinate accuracy $\varepsilon/(3M_F)$ sufficient; for a constant objective, any feasible point suffices. Rational piecewise-affine branches can be evaluated exactly at rational $\bar x$.
\subsection{A constrained continuous cubic example}\label{sec:worked_cubic}
Consider the rational coefficient input
\[
 F(x_1,x_2)=2x_1-x_1^2-2x_1x_2+x_1^2x_2+2x_2-2x_2^2.
\]
We maximize it over the unit simplex. Factoring the polynomial gives
\begin{equation}\label{eq:worked_cubic}
 F(x_1,x_2)=(1-x_2)\bigl[1-(1-x_1)^2\bigr]+2x_2(1-x_2),
 \qquad \K=\{x\ge0:x_1+x_2\le1\}.
\end{equation}
This proves nonnegativity on the cube. Its Hessian is
\[
 \nabla^2F=\begin{pmatrix}
 -2(1-x_2)&-2(1-x_1)\\
 -2(1-x_1)&-4
 \end{pmatrix},
\]
so both the diagonal and mixed DR conditions hold. The Boolean restriction extends to $M(x)=x_1(1-x_2)$, but the coefficient-input algorithm evaluates recovered points using the original continuous objective $F$.

\paragraph{From coefficients to a negaform.}
Apply \eqref{eq:cubic_complement} to the sole positive cubic term, $x_1^2x_2$, and collect coefficients:
\[
 F(x)=1+x_2-2x_2^2-(1-x_1)^2(1-x_2).
\]
In \eqref{eq:uniform_negaform}, this gives $(c,a_1,a_2)=(1,0,1)$ and two subtracted products: the positive-only product $x_2^2$ with weight $2$, and the complemented product $(1-x_1)^2(1-x_2)$ with weight $1$.

\paragraph{Constructing the orientation.}
Let $\theta_2^+$ be the weight assigned to the root $x_2$ of the positive product, and let $\theta_1^-,\theta_2^-$ be the weights assigned to the roots $1-x_1,1-x_2$ of the complemented product. As in \eqref{eq:orientation}, $q_1,q_2$ are the coefficients of the complemented-unary corrections. The two occurrences of $1-x_1$ define the same rooted atom, so $\theta_1^-$ records their total allocation; the products retain their powers. Program~\eqref{eq:orientation} becomes
\[
\begin{aligned}
 \min\quad &q_1+q_2\\
 \text{s.t.}\quad
 &-\theta_1^- -q_1\le0,\\
 &\theta_2^+-\theta_2^- -q_2\le1,\\
 &\theta_2^+=2,\qquad \theta_1^-+\theta_2^-=1,\\
 &\theta_2^+,\theta_1^-,\theta_2^-,q_1,q_2\ge0.
\end{aligned}
\]
Eliminating the equalities gives $0\le\theta_1^-\le1$ and $q_2\ge\theta_1^-$. The objective is nonnegative and attains zero at the unique aggregated optimum
\[
 (\theta_2^+,\theta_1^-,\theta_2^-,q_1,q_2)=(2,0,1,0,0).
\]
Indeed, objective value zero forces $q_1=q_2=0$, and hence $\theta_1^-=0$.
The dual in \eqref{eq:orientation_dual} specializes to
\[
 \max_{0\le u_1,u_2\le1}\{u_2-\max(u_1,u_2)\}=0,
\]
with, for example, $u_1=u_2=0$ attaining the bound. This matching dual certificate verifies the allocation's optimality. The residual coefficients are
\[
 p_1=\theta_1^-+q_1=0,\qquad
 p_2=1-\theta_2^++\theta_2^-+q_2=0,\qquad
 c_0=1-1-q_1-q_2=0.
\]
Substituting this optimal allocation constructs the certificate
\[
 F(x)=\bigl[(1-x_2)-(1-x_1)^2(1-x_2)\bigr]
       +2\bigl[x_2-x_2^2\bigr].
\]

\paragraph{Roof optimization.}
The standard roof is
\begin{equation}\label{eq:worked_roof}
 L(x)=\min\{1-x_2,2x_1\}+2\min\{x_2,1-x_2\}.
\end{equation}
The roof program has the explicit lift
\[
\begin{aligned}
 \max\quad& z_1+2z_2\\
 \text{s.t.}\quad&x\in\K,\quad
 z_1\le1-x_2,\quad z_1\le2x_1,\\
 &z_2\le x_2,\quad z_2\le1-x_2.
\end{aligned}
\]
For $x_2\le1/2$, $L\le1+x_2\le3/2$; for $x_2\ge1/2$, $L\le3(1-x_2)\le3/2$. The roof maximizers are exactly $x_2=1/2$, $1/4\le x_1\le1/2$. Choose $\bar x=(1/2,1/2)$, whose roof value is $3/2$.

\paragraph{Recovery and benchmarks.}
The elementary pair gives $\phi_0(\bar x)=(1/2,1/2)$ and $\phi_1(\bar x)=(1/3,1/3)$, with original values $7/8$ and $22/27$, respectively. We therefore return $(1/2,1/2)$ with value $7/8$. Comparing this feasible value with the roof optimum certifies an instance-specific ratio of $(7/8)/(3/2)=7/12$.

Since $\partial_1F=2(1-x_2)(1-x_1)\ge0$, a continuous maximum can be sought on $x_2=1-x_1$, where $F=2x_1-x_1^3$. Hence
\[
 x^*=\bigl(\sqrt{2/3},1-\sqrt{2/3}\bigr),\qquad
 \OPT(F,\K)=\frac{4\sqrt6}{9}>1.
\]
The actual approximation ratio is $21\sqrt6/64$; the computable roof certificate gives the lower bound $7/12$. The best feasible Boolean value is $1$, which exceeds the recovered value $7/8$.

\subsection{The same contacts in the sharpness geometry}\label{sec:cubic_boundary}
The two maps' midpoint and endpoint values also arise in the cubic sharpness construction. For $0\le q\le1$ and $0\le t\le1/2$, put
\[
 H(q,t)=\frac5{12}+\frac q{12}+t-\left(1+\frac q2\right)t^2.
\]
Then
\[
 \frac23-H(q,t)
 =(1-q)(t-1/2)^2+\frac32q(t-1/3)^2.
\]
Equality holds only at $(q,t)=(0,1/2)$ and $(1,1/3)$. Section~\ref{sec:frontiers} embeds this expression in a down-closed instance with roof value approaching $17/12$ and lower-box maximum $2/3$. A better feasible point lies outside that box. These equality configurations also constrain higher-degree portfolios in Section~\ref{sec:degree13}.

\paragraph{A multilinear application.}
With three distinct coordinates, $x_i(1-x_jx_k)$ is the probability of selecting $i$ while not selecting both $j$ and $k$. Nonnegative sums of these directed-hyperedge indicators, their complemented counterparts, and unaries form submodular cubic set functions. Apply the cubic recovery theorem to their multilinear extensions and then the rounding guarantee in Section~\ref{sec:discrete}. Identifying coordinates produces continuous powers; the scalar proof still applies, but the discrete interpretation changes.

\subsection{Formulation size and bit complexity}\label{sec:implementation_size}
Let $s$ be the number of nonzero monomials in a rational polynomial of degree at most three. Each positive cubic monomial in \eqref{eq:cubic_complement} generates at most three linear terms and three quadratic terms, in addition to one complemented product and a constant. After collecting coefficients, the negaform therefore has $O(s+n)$ encoded terms. If $r$ is its number of nonlinear products and $J$ the sum of their distinct support sizes, then $J\le3r$. Program~\eqref{eq:orientation} has $J+n$ variables, $r$ coefficient equalities, and $n$ residual inequalities, in addition to nonnegativity. A certificate has at most $J$ rooted atoms and $2n$ unaries. Its standard roof uses at most $J$ auxiliary variables and $2J$ affine inequalities; the root-independent form \eqref{eq:standard_roof} can instead use $r$ auxiliaries and $2r$ inequalities.

At fixed degree, expanding and collecting coefficients preserves polynomial binary encoding length. The determinant bounds for rational linear programs also give an optimal basic solution of the orientation LP with polynomial encoding length \citep{Schrijver2003}. Thus cubic roof optimization is a polynomial-size rational LP over an explicitly encoded polytope. The quadratic hybrid roof adds at most $n$ concave diagonal terms; negating it gives a convex quadratic program. Both conclusions also hold under the bounded rational-polytope and strong-separation assumptions in Section~\ref{sec:access}. Feasible optimization to the prescribed additive tolerance, followed by the fixed finite recovery maps, has polynomial bit complexity in the input length and $\log(1+1/\varepsilon)$. For a general compact convex region, a roof-optimization procedure must be supplied separately. Equation~\eqref{eq:implementation_error} bounds the numerical losses.

\Needspace{14\baselineskip}
\part{Extensions, recovery boundaries, and discrete consequences}\label{part:extensions}
\section{Beyond cubics I: larger rooted certificates}\label{sec:larger_atoms}
Allowing more literal occurrences in a rooted certificate requires recovery inequalities for products with more heads. We assume that the certificate is supplied, except in the negatively signed subclass, where orientation constructs one. A degree bound alone does not guarantee a certificate.

\subsection{Occurrence-indexed certificates}\label{sec:literals}
To account for repeated variables, we index literal occurrences rather than coordinates. A product has uniform polarity when all its literals are positive or all are complemented; otherwise it has mixed polarity. A literal monomial is a finite product indexed by occurrences
\[
 m_\sigma(x)=\prod_{q=1}^{d_\sigma}\lambda_{\sigma q}(x),
 \qquad \lambda_{\sigma q}\in\{x_i,1-x_i:i\in N\}.
\]
We take $d_\sigma\ge2$, absorbing products of degree zero or one into the affine part. Occurrences may repeat, including both polarities of one coordinate. The \emph{literal degree} $d_\sigma$ counts occurrences with multiplicity, not distinct coordinates. Multiplicities may be stored in binary: $m_\sigma=x^{\mu^+_\sigma}(1-x)^{\mu^-_\sigma}$.

\begin{definition}[Exact negaform and compatible rooted certificate]\label{def:certificate}
An exact negaform is a polynomial identity
\begin{equation}\label{eq:negaform}
 F(x)=A(x)-\sum_{\sigma\in\Sigma}b_\sigma m_\sigma(x),\qquad b_\sigma\ge0,
\end{equation}
where $A$ is affine. For any occurrence index $r\in\{1,\ldots,d_\sigma\}$ in $m_\sigma$, rooting at $r$ defines two functions:
\begin{equation}\label{eq:atom_roof}
 h_{\sigma,r}=\lambda_{\sigma r}-m_\sigma,
 \qquad
 \ell_{\sigma,r}=\min\left\{\lambda_{\sigma r},\sum_{q\ne r}(1-\lambda_{\sigma q})\right\}.
\end{equation}
The distinguished occurrence is the \emph{root}; the remaining occurrences are its \emph{heads}. An expression $h_{\sigma,r}$ is a \emph{rooted atom}. A root is \emph{polarity-compatible} if a positive-only monomial uses a positive root and every monomial containing a complemented occurrence uses a complemented root. A nonnegative compatible rooted certificate subordinate to \eqref{eq:negaform} is an identity
\begin{equation}\label{eq:root_certificate}
 F(x)=c_0+\sum_i p_ix_i+\sum_i q_i(1-x_i)
       +\sum_{\sigma,r}\theta_{\sigma,r}h_{\sigma,r}(x),
\end{equation}
with all coefficients nonnegative, all selected roots compatible, and
$\sum_r\theta_{\sigma,r}=b_\sigma$. Let $\CC_{\le d}$ denote the cone generated by the constant function one, literals, and compatible rooted atoms of literal degree at most $d$. Here a \emph{conic combination} is a finite sum with nonnegative coefficients, and the cone generated by a family consists of all its conic combinations.
\end{definition}
Denote the exact negaform~\eqref{eq:negaform} by $\N$.
Membership in $\CC_{\le d}$ guarantees a finite certificate, but the algorithms require a supplied or constructible encoding of polynomial size. It does not imply DR-submodularity. For example, rooting the mixed product $x(1-x)$ at $1-x$ gives the rooted atom $h=(1-x)-x(1-x)=(1-x)^2$, whose second derivative is positive.

\begin{proposition}[Root independence for a fixed negaform]\label{prop:root_independence}
For every root occurrence,
\[
 \ell_{\sigma,r}-\lambda_{\sigma r}
 =\min\left\{0,\sum_{q=1}^{d_\sigma}(1-\lambda_{\sigma q})-1\right\}
 =:\psi_\sigma.
\]
Thus a fixed negaform induces the root-independent \emph{standard concave roof}
\begin{equation}\label{eq:standard_roof}
 L_{\N}(x)=A(x)+\sum_\sigma b_\sigma\psi_\sigma(x).
\end{equation}
It majorizes $F$ on the cube and agrees with $F$ at Boolean points. If a subordinate certificate exists, this is also the sum of its atom roofs and affine terms.
\end{proposition}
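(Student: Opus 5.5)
The identity is elementary, so I will prove it first and then read off the roof properties. Fix $\sigma$ and a root occurrence $r$, and abbreviate $\lambda_q=\lambda_{\sigma q}$ and $s=\sum_{q=1}^{d_\sigma}(1-\lambda_q)$. The second argument of the minimum in \eqref{eq:atom_roof} is $\sum_{q\ne r}(1-\lambda_q)=s-(1-\lambda_r)$. Subtracting $\lambda_r$ from both arguments gives
\[
 \ell_{\sigma,r}-\lambda_r=\min\{0,\,s-(1-\lambda_r)-\lambda_r\}=\min\{0,s-1\}=\psi_\sigma,
\]
which does not depend on $r$. Repeated occurrences cause no difficulty, because the computation is indexed by occurrences rather than coordinates.

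Next I compare $L_{\N}$ with $F$. Since $F=A-\sum_\sigma b_\sigma m_\sigma$ and $b_\sigma\ge0$, it suffices to prove $-m_\sigma\le\psi_\sigma$ on the cube, with equality at Boolean points. Choose any occurrence $r$. Then $\psi_\sigma=\ell_{\sigma,r}-\lambda_r$, so the inequality is equivalent to $h_{\sigma,r}=\lambda_r-m_\sigma\le\ell_{\sigma,r}$. This is the atom-roof domination.
\begin{itemize}
\item Because $m_\sigma\ge0$, we have $\lambda_r-m_\sigma\le\lambda_r$.
\item Write $m_\sigma=\lambda_r\prod_{q\ne r}\lambda_q$ and use $1-\prod_{q\ne r}\lambda_q\le\sum_{q\ne r}(1-\lambda_q)$, which is the union bound for products of numbers in $[0,1]$. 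Together with $\lambda_r\le1$ this gives $\lambda_r\bigl(1-\prod_{q\ne r}\lambda_q\bigr)\le\sum_{q\ne r}(1-\lambda_q)$.
\end{itemize}
For Boolean exactness, note that every literal is $0$ or $1$ at a Boolean point. Then $s$ is the number of zero literals. If $s=0$, then $m_\sigma=1$ and $\psi_\sigma=-1$. If $s\ge1$, then $m_\sigma=0$ and $\psi_\sigma=0$. In both cases $-m_\sigma=\psi_\sigma$, so $L_{\N}(\ind_S)=F(\ind_S)$. Concavity of $L_{\N}$ follows because $A$ is affine, each $\psi_\sigma$ is a minimum of affine functions, and the weights $b_\sigma$ are nonnegative.

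For the last claim, take a subordinate certificate \eqref{eq:root_certificate}. It is an exact identity, so its affine part satisfies
\[
 c_0+\sum_ip_ix_i+\sum_iq_i(1-x_i)+\sum_{\sigma,r}\theta_{\sigma,r}\lambda_{\sigma r}=A(x).
\]
To see this, note that $h_{\sigma,r}=\lambda_{\sigma r}-m_\sigma$ and $\sum_r\theta_{\sigma,r}=b_\sigma$, so the nonlinear parts match \eqref{eq:negaform} exactly. The sum of the affine terms and the atom roofs $\theta_{\sigma,r}\ell_{\sigma,r}$ is then
\[
 A(x)+\sum_{\sigma,r}\theta_{\sigma,r}\psi_\sigma=A(x)+\sum_\sigma b_\sigma\psi_\sigma=L_{\N}(x).
\]

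The main obstacle is a bookkeeping point in this last step. One must check that the affine parts match as polynomials, not only as functions, since the monomials $m_\sigma$ could coincide with each other or have affine content. The identity of functions on the full-dimensional cube settles this, because a polynomial that vanishes on the cube is the zero polynomial. The remaining steps are routine.
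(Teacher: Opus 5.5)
Your proof is correct and follows the paper's argument: subtract the root literal inside the minimum, use the union bound for domination, check Boolean exactness and concavity termwise, and match coefficients using $\sum_r\theta_{\sigma,r}=b_\sigma$. Your closing bookkeeping concern is unnecessary. Both identities subtract the same function $\sum_\sigma b_\sigma m_\sigma$, so the affine parts agree pointwise on the cube, and that pointwise agreement is all the final claim requires.
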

\begin{proof}
Subtract the root literal from the minimum in \eqref{eq:atom_roof}. The union bound gives
$\lambda_r(1-\prod_{q\ne r}\lambda_q)\le\min\{\lambda_r,\sum_{q\ne r}(1-\lambda_q)\}$.
Each correction $\psi_\sigma$ is concave, since it is the minimum of two affine functions, and the bound is exact for Boolean literal values. Summation and coefficient matching prove the result.
\end{proof}

With distinct underlying coordinates, the standard atom roof is the continuous concave envelope; Section~\ref{sec:local} gives the coupling argument for uniform polarities. Repeated occurrences can make it nonminimal, as Example~\ref{ex:self_atom} shows.

\paragraph{Encoding.}
A supplied certificate records rational weights, coordinate indices, polarities, and binary multiplicities. An atom roof adds one auxiliary variable and two affine inequalities; multiplicities appear in their coefficients and need not be expanded. Unless a construction is available, the certificate forms part of the input. Objective and map evaluation follow the accuracy assumptions of Section~\ref{sec:access}.

\subsection{A constructive signed subclass}
An NNS (nonlinearly negatively signed) polynomial is supplied in the form
\begin{equation}\label{eq:NNS}
 F(x)=c+\sum_i a_ix_i-\sum_{\beta\in B}b_\beta x^\beta,
 \qquad b_\beta\ge0,\quad |\beta|\ge2.
\end{equation}
The sign condition applies to the continuous monomial expansion, rather than to a multilinearization of the Boolean restriction. Every such polynomial is DR-submodular. With $H^-=\varnothing$, the orientation value becomes
\[
 \max_{S\subseteq N}\left\{\sum_{\beta:\supp\beta\subseteq S}b_\beta-\sum_{i\in S}a_i\right\}
 =c-\min_SF(\ind_S).
\]
Thus Boolean-nonnegative NNS inputs have polynomially constructible positive-only rooted certificates, whose literal degree equals their polynomial degree.

\subsection{Mixed heads from uniform-polarity inequalities}
The compatibility rule also permits heads of mixed polarity. Downwardness bounds their recovered product by a complemented-only product, so the complemented slot inequalities suffice.

\begin{lemma}[Mixed-polarity domination]\label{lem:mixed}
Suppose a downward scalar portfolio pays positive unaries and both slot-atom types through rank $d-1$ by factor $\rho\in[0,1]$. It then pays every compatible rooted certificate in $\CC_{\le d}$ by the same factor.
\end{lemma}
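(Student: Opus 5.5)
By Corollary~\ref{cor:aggregation} it suffices to pay each generator of $\CC_{\le d}$ separately by factor $\rho$. The constant costs nothing when $\rho\le1$, complemented unaries lose nothing under downward maps, and positive unaries are paid by hypothesis. Positive-only atoms are uniform-polarity atoms of rank at most $d-1$, so the hypothesis pays them directly; repeated occurrences are handled by restricting the slot inequalities to diagonal subspaces, as in Lemma~\ref{lem:cubic_maps}. What remains is a compatible atom whose product contains at least one complemented occurrence. By compatibility its root is complemented, $\lambda_r=1-x_{i_0}$.

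Write the heads as complemented occurrences $1-x_{j}$ for $j\in J^-$ and positive occurrences $x_k$ for $k\in J^+$, with $|J^-|+|J^+|=m\le d-1$ and $J^+\ne\varnothing$ (otherwise the atom is uniform). Under a branch $y=R_s(x)\le x$ the atom value is
\[
(1-y_{i_0})\Bigl(1-\prod_{J^-}(1-y_j)\prod_{J^+}y_k\Bigr).
\]
The key step is to bound the head product from above by one with fewer factors. Each positive factor satisfies $y_k\le1$. I would drop all positive heads, which gives atom value at least $(1-y_{i_0})\bigl(1-\prod_{J^-}(1-y_j)\bigr)$. This is the complemented slot atom $h^-_{m'}$ with $m'=|J^-|$ evaluated at the recovered point, and the hypothesis pays it. If $J^-=\varnothing$, the remaining product is empty and the atom reduces to a complemented unary.

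The point to check is that the roof does not grow in the reduction. The original roof is
\[
\min\Bigl\{1-x_{i_0},\ \sum_{J^-}x_j+\sum_{J^+}(1-x_k)\Bigr\},
\]
while the reduced roof is $\min\{1-x_{i_0},\sum_{J^-}x_j\}$, which is smaller. Dropping $x_k$ keeps the lower bound on recovered value but removes the roof contribution $1-x_k$, so paying the reduced roof does not pay the original one. Here I expect the main obstacle.

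I would therefore drop the positive heads differently. Instead of bounding $y_k\le1$, replace each positive head $y_k$ by $1-(1-y_k)$ and view $1-x_k$ as a complemented-type slot. The intended comparison is with a complemented slot atom in which the positive head $x_k$ is replaced by a complemented slot of value $u=1-x_k$, since the roof term $\sum_{J^-}x_j+\sum_{J^+}(1-x_k)$ then matches $\ell^-$ exactly. On the recovered side I need $y_k\le 1-\phi(1-x_k)$, that is, $\phi(x_k)+\phi(1-x_k)\le1$. This holds because $\phi(t)\le t$ for downward maps, so it follows from downwardness.

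Then
\[
\prod_{J^-}(1-y_j)\prod_{J^+}y_k\le\prod_{J^-}(1-y_j)\prod_{J^+}\bigl(1-\phi(1-x_k)\bigr),
\]
which is exactly the complemented product of the slot atom $h^-_m$ evaluated at the recovered slot values $(\phi(1-x_{i_0})$-type root, $\phi(x_j)$, $\phi(1-x_k))$. Its roof $\ell^-_m$ coincides with the original atom roof. Averaging over the portfolio and applying the rank-$m$ complemented slot inequality at the slot point $(x_{i_0};x_{J^-},1-x_{J^+})$ then pays the mixed atom.

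The bookkeeping check that needs care is that the root slot receives $\phi(x_{i_0})$ rather than a transformed value. It does, because only head slots are substituted and the slot inequality holds for all independent slot values. With that verified, aggregation finishes the proof.
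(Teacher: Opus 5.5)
Your final argument is correct and is essentially the paper's proof. Writing each positive head as $1-v$ with $v=1-x_k$, the downwardness chain $\phi_s(1-v)\le 1-v\le 1-\phi_s(v)$ bounds the recovered head product by a complemented-only product, the original roof is exactly $\ell^-_m(a,v)$, and the rank-$m$ complemented slot inequality at these independent slot values then pays the mixed atom. Your first attempt, which drops the positive heads, can simply be deleted, since, as you noticed, it shrinks the roof.
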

\begin{proof}
It remains to treat mixed-polarity atoms. Compatibility forces their root to be $1-a$. Write each head literal as $1-v_j$. A complemented head recovers as $1-\phi_s(v_j)$. For a positive head, downwardness gives
\[
 \phi_s(1-v_j)\le1-v_j\le1-\phi_s(v_j).
\]
The recovered head product is therefore no larger than the complemented-only product, and the recovered atom is at least $h_m^-(\phi_s(a),\phi_s(v))$. The roof equals $\ell_m^-(a,v)$. Sum the atom and unary inequalities using Corollary~\ref{cor:aggregation}.
\end{proof}

\subsection{One common map at arbitrary literal degree}\label{sec:single}
Put
\[
 \gamma_*=2\sqrt3-3,\qquad q_*=1-\gamma_*,\qquad
 \sigma_*=15-\frac{17\sqrt3}{2},
\]
and define
\begin{equation}\label{eq:single_map}
 \phi_*(t)=\sigma_*t+(\gamma_*-\sigma_*)\min\{t,2/3\}
                  +(1-\gamma_*)\min\{t,1/4\}.
\end{equation}
Its slopes are $1,\gamma_*,\sigma_*$, and $\phi_*(1)=q_*$.
\begin{theorem}[Optimal common-coordinatewise single-map factor]\label{thm:single}
The continuous map \eqref{eq:single_map} satisfies $\gamma_*t\le\phi_*(t)\le t$ and, for every finite $m\ge1$,
\[
 h_m^\sigma(\phi_*(a),\phi_*(u))\ge\gamma_*\ell_m^\sigma(a,u)
 \qquad(\sigma\in\{+,-\}).
\]
No downward scalar map gives a larger common factor for these atoms and positive unaries. No continuity, monotonicity, or measurability is required of a competing map. The upper bound already holds for positive-root atoms of literal degree at most three.
\end{theorem}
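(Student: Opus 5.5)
The upper bound needs no new argument. Section~\ref{sec:single_upper} already shows, for an arbitrary downward scalar map and using only positive-root atoms of literal degree at most three, that $\rho\le\min\{2r(1-r),(1-r^2)/(2-r^2)\}\le2\sqrt3-3$ with $r=\phi(1/2)$. That argument assumes no regularity of the map. It remains to prove that $\phi_*$ attains $\gamma_*$.

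\textbf{Elementary properties of $\phi_*$.} First I would record the shape of $\phi_*$. It is piecewise affine with knots $1/4$ and $2/3$ and successive slopes $1,\gamma_*,\sigma_*$. Since $1>\gamma_*>\sigma_*>0$, the map is concave and increasing with $\phi_*(0)=0$. Hence $\phi_*(t)\le t$, and $\phi_*(t)/t$ is nonincreasing. Its minimum is $\phi_*(1)=q_*=1-\gamma_*\approx0.536$, which exceeds $\gamma_*$, so $\phi_*(t)\ge\gamma_*t$. I would also verify $\phi_*(1/2)=(\sqrt3-1)/2$, so the midpoint contact of Section~\ref{sec:single_upper} is tight. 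Concavity with $\phi_*(0)=0$ makes $\log\phi_*$ concave on $(0,1]$, and I plan to use this below.

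\textbf{Positive atoms.} Fix $a$ and the head deficit $s=\sum_j(1-u_j)$. The roof depends only on $\min\{a,s\}$, so I would maximize $P=\prod_j\phi_*(u_j)$ subject to this constraint. Log-concavity of $\phi_*$ makes the maximizer spread the deficit as evenly as the box $[0,1]$ allows. A head left at $u_j=1$ still contributes a factor $q_*<1$, so additional heads only shrink $P$. This should reduce every rank $m$ to a one-parameter comparison between equal spreading at $m=1,2$ and the limit $m\to\infty$. The resulting claim is $\phi_*(a)(1-P)\ge\gamma_*\min\{a,s\}$, which I would then check on the knot boxes $a\in[0,1/4],[1/4,2/3],[2/3,1]$ and on the switching line $a=s$. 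Within each box the deficit is piecewise concave in $a$, so it suffices to check endpoints and switching points, as in Appendix~\ref{app:cubic}. The binding contacts should be the rank-one midpoint edge and the rank-two configuration with root near $1$ and midpoint heads, since these are the configurations that defined $\gamma_*$.

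\textbf{Complemented atoms.} The root factor satisfies $1-\phi_*(a)\ge1-a$, and the roof is $\min\{1-a,\sum_ju_j\}$. It therefore suffices to prove $1-\prod_j(1-\phi_*(u_j))\ge\gamma_*\min\{1,\sum_ju_j\}$. Here I would use $1-\phi_*(u)\le1-\gamma_*u\le e^{-\gamma_*u}$, which gives $1-\prod_j(1-\phi_*(u_j))\ge1-e^{-\gamma_*\sum_ju_j}$. Comparing this with $\gamma_*\min\{1,\sum_ju_j\}$ is immediate for small sums, using $1-e^{-x}\ge x-x^2/2$. For larger sums I would instead use $\phi_*(u)\ge\min\{u,\ldots\}$ directly. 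If the exponential bound is too weak near $\sum_ju_j=1$, I would fall back on the exact maximization of $\prod_j(1-\phi_*(u_j))$, whose maximizer concentrates the mass $\sum_ju_j$ in one slot because $1-\phi_*$ is convex.

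\textbf{Main obstacle.} The hard step is making the positive-polarity rank reduction rigorous uniformly in $m$. In particular, I must show that spreading a fixed deficit over many heads never beats the rank-two configuration, and this is the step most likely to need a careful case split at the knot $2/3$.
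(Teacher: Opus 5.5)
\textbf{The complemented-atom reduction is invalid, and the missing idea is keeping the root's recovered value.} Your upper-bound argument and the elementary properties of $\phi_*$ are fine. The first reduction for complemented atoms fails, however. You combine $1-\phi_*(a)\ge1-a$ with a head bound $1-\prod_j(1-\phi_*(u_j))\ge\gamma_*\min\{1,S\}$, where $S=\sum_ju_j$ and $r=1-a$. This gives only $r\,\gamma_*\min\{1,S\}$, and $rS<\min\{r,S\}$ whenever $0<r,S<1$.

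No argument that keeps only $1-a$ for the root can work. At $m=1$, $a=u=1/2$, the bound gives at most $\tfrac12\phi_*(1/2)=(\sqrt3-1)/4<\gamma_*/2$. The true value $(1-\phi_*(1/2))\phi_*(1/2)$ equals $\gamma_*/2$ exactly; this is the midpoint contact. At $a=1-\varepsilon$, $u=\varepsilon$, your bound has ratio $\varepsilon$, while the true ratio is $1-\phi_*(1-\varepsilon)\to1-q_*=\gamma_*$.

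The recovered root $\chi(r)=1-\phi_*(1-r)$, with $\chi(0)=\gamma_*$, must be kept. Since $\chi$ is convex with $\chi(1)=1$, the quotient $\chi(r)/r$ is nonincreasing. Hence $\inf_r\chi(r)/\min\{r,S\}$ equals $\chi(S)/S$ for $S\le1$, and $1$ for $S\ge1$. What must be shown is $\chi(S)\bigl[1-\prod_j(1-\phi_*(u_j))\bigr]\ge\gamma_*S$.

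Your later steps also fail. First, $1-e^{-\gamma_*S}<\gamma_*S$ for every $S>0$, so that comparison never closes. Second, the head product is not maximized by concentrating $S$ in one slot. On $[0,1/4]$ we have $1-\phi_*(u)=1-u$, whose logarithm is strictly concave, so splitting drives the product toward $e^{-S}>1-S$.

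The correct bound comes from the shape of $t\mapsto\log(1-\phi_*(t))/t$. This function decreases on $(0,1/4]$ from $-1$ and increases on $[1/4,1]$. It gives $\prod_j(1-\phi_*(u_j))\le\max\{e^{-S},1-\phi_*(S)\}$ for $S\le1$, and $\le\gamma_*^S$ for $S\ge1$. This reduces the complemented case to the two scalar inequalities $\chi(S)\phi_*(S)\ge\gamma_*S$ and $\chi(S)(1-e^{-S})\ge\gamma_*S$ on $[0,1]$.

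\textbf{The positive-atom rank reduction is missing.} Your Jensen bound $\prod_j\phi_*(u_j)\le\phi_*(1-s/m)^m$ and the reduction to $a=s$ are correct. The comparison uniform in $m$, which you name as the obstacle, is not supplied. Your heuristic that extra heads only shrink $P$ is false from $m=1$ to $m=2$. At $s=1$, one head gives $\phi_*(0)=0$ and two give $\phi_*(1/2)^2>0$; this is exactly why rank two is binding.

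What works is that $J(d)=\log\phi_*(1-d)/d$ is nondecreasing on $(0,1/2]$. This yields $\phi_*(1-s/m)^m=e^{sJ(s/m)}\le\phi_*(1-s/2)^2$ for $m\ge2$ and $s\le1$. To prove it, note that on each affine piece $\phi_*(1-d)=A-Bd$ the numerator $-Bd/(A-Bd)-\log(A-Bd)$ of $J'$ is decreasing in $d$. It therefore suffices to check the piece endpoints $d=1/3$ and $d=1/2$ using $-\log z\ge1-z$. At $d=1/2$ the check holds with equality, so there is no slack.

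Ranks one and two are then explicit one-variable inequalities: $\phi_*(s)\chi(s)\ge\gamma_*s$, and $\phi_*(s)\bigl[1-\phi_*(1-s/2)^2\bigr]\ge\gamma_*s$, the latter tight at $s=1$. The case $s\ge1$ reduces to $s=1$ with root factor $q_*$. The limit $m\to\infty$ plays no role.
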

\begin{proof}
Appendix~\ref{app:single} proves the inequalities for every finite rank. The matching upper bound in Section~\ref{sec:single_upper} uses only positive-root atoms through rank two and applies to every downward scalar map.
\end{proof}
For literal degree two, clipping and the midpoint cut give the exact single-map factor $1/2$. For every degree cap at least three, Theorem~\ref{thm:single} gives the exact value $\gamma_*$. By Lemma~\ref{lem:mixed}, every polynomial supplied with a compatible rooted certificate of arbitrary finite literal degree admits the deterministic factor $\gamma_*$, up to the additive error specified in Section~\ref{sec:access}.

\subsection{The degree-twelve portfolio}\label{sec:twelve}
Let $\alpha_{12},\beta_{12}$ be the rational piecewise-affine maps in Table~\ref{tab:knots}. They use the same 23 knots as the arity-four pair introduced later, with different ordinates. They satisfy
\[
 0\le\alpha_{12}(t),\beta_{12}(t)\le t,\quad
 \alpha_{12}(1/2)=1/2,\ \beta_{12}(1/2)=1/3,\quad
 \alpha_{12}(1)=0,\ \beta_{12}(1)=1.
\]
\begin{theorem}[Degree-twelve atomwise recovery]\label{thm:twelve_atoms}
For $m=1,\ldots,11$, $\sigma\in\{+,-\}$, and $a,u_1,\ldots,u_m\in~[0,1]$,
\begin{equation}\label{eq:twelve}
 8h_m^\sigma(\alpha_{12}(a),\alpha_{12}(u))+
 9h_m^\sigma(\beta_{12}(a),\beta_{12}(u))\ge8\ell_m^\sigma(a,u).
\end{equation}
Moreover $8\alpha_{12}(t)+9\beta_{12}(t)\ge8t$. Repeated variables and literal occurrences are permitted.
\end{theorem}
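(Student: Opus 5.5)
The plan is to mirror the proof of Lemma~\ref{lem:cubic_maps}, organized so that it is a finite, machine-checkable verification over the 23 knots of Table~\ref{tab:knots}. This is consistent with the paper's statement that the degree-twelve inequalities are formalized in Lean.

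\textbf{Reduction by symmetry and monotonicity.} First, the unary inequality $8\alpha_{12}(t)+9\beta_{12}(t)\ge8t$ is piecewise affine in $t$, so it suffices to check it at the 23 knots. For the atom inequality \eqref{eq:twelve}, the head slots enter both $h_m^\sigma$ and $\ell_m^\sigma$ symmetrically. Fix $\sigma$ and $m$, and fix the knot box containing $(a,u)$ together with the branch of the minimum in $\ell_m^\sigma$. On that region the deficit is affine in each single slot, because each recovered head enters the product linearly and the maps are affine within the box. Hence a minimizing slot can be pushed to a knot or onto the switching face $a=\sum_j(1-u_j)$ (respectively $1-a=\sum_j u_j$). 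This follows exactly the cubic argument.

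\textbf{Controlling rank by a multiset argument.} After these moves, all but at most one head sit at knots; a single free head may lie on the switching face. Since the heads are exchangeable, a configuration is a multiset of knot values of size at most $11$, plus possibly one face-constrained slot. Brute-force enumeration is too large: $\binom{23+11}{11}$ multisets, times root knots, times two polarities. I would instead exploit monotone structure. For the positive polarity, $h_m^+$ depends on the heads only through $P_s=\prod_j\phi_s(u_j)$ for $s\in\{\alpha_{12},\beta_{12}\}$, while the roof depends only on $\sum_j(1-u_j)$. For each root knot and each target value of the roof sum, I would therefore compute the extremal attainable pairs $(P_\alpha,P_\beta)$ by dynamic programming over heads. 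The deficit is linear in $(P_\alpha,P_\beta)$ once the root is fixed, so only the lower-left Pareto frontier of this pair set matters. The frontier for rank $m$ is obtained from that for rank $m-1$ by one multiplication step, which keeps the computation polynomial. The complemented polarity is handled identically with $\prod_j(1-\phi_s(u_j))$ and $\sum_j u_j$.

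\textbf{Switching faces and the main obstacle.} On a switching face, one slot is determined by the others. The remaining two-dimensional pieces (root plus one free head, or two free heads with the others at knots) are polynomial in at most two variables on triangles. I would certify each of them by exact rational Bernstein expansion, with subdivision wherever some coefficient is negative, as in Appendix~\ref{app:cubic}.

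I expect the main obstacle to be this switching-face step. Near the contacts $(1/2,1/2)$ and (root $1$, midpoint heads), the deficit is tight or nearly tight, so Bernstein coefficients become negative. There I would try an explicit sum-of-squares identity like $3(u-v)^2+uv[4-3(u+v)]$, falling back on adaptive subdivision. A second concern is repeated variables. They need no extra work: slots range independently, so the inequality restricts to diagonals, just as stated in the proof of Lemma~\ref{lem:cubic_maps}. Finally, mixed-polarity heads follow from Lemma~\ref{lem:mixed}.
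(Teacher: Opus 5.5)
There is a genuine gap in your switching-face step. The one-slot argument (the deficit is multiaffine on a knot box with a fixed roof branch) shows only that a minimizer is either a strict knot configuration off the face or lies \emph{somewhere} on the switching face. It does not give ``all but at most one head at knots.''

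Once a point is on the face, no single slot can move without leaving it. Along face-preserving directions such as $u_i\mapsto u_i+t$, $u_j\mapsto u_j-t$, the head product is a concave quadratic in $t$ whenever the two slopes have the same sign. The deficit is therefore convex along these directions, and its minimum over a face cell is typically interior, at equal heads.

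A concrete instance: take the complemented atom with $m=11$, root $a=1/2$, heads in $[0,1/20]$ (where both maps are the identity), and $\sum_ju_j=1/2$. The deficit is $10\bigl[1-\prod_j(1-u_j)\bigr]-4$. By AM--GM it is minimized at $u_j=1/22$, a point with no head at a knot. Its value $10\bigl[1-(21/22)^{11}-2/5\bigr]\approx5.4\times10^{-3}$ is the near-tight configuration recorded in Appendix~\ref{app:twelve}. Every configuration in this cell with ten heads at the knots $0$ or $1/20$ is strictly worse; for example, ten heads at $1/20$ and one at $0$ give about $1.3\times10^{-2}$. Checking your reduced configurations therefore does not certify the cell.

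More generally, for $m\ge3$ the face cells are $m$-dimensional, up to $11$. Your ``two-dimensional pieces'' and triangle Bernstein certificates do not cover them. Bernstein expansion of a degree-$12$ polynomial over $11$-dimensional cells, across all knot boxes, is not a realistic substitute.

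What is missing is an analytic dimension reduction on the faces. The paper's formal proof supplies it with the weighted AM--GM variational identity together with posterior--tangent compression,
\[
 C_0P_0+C_1P_1\le(C_0+C_1)(1-\bar x)^k .
\]
This bound replaces the heads by their average once the posterior weight is controlled; on complemented faces, that control comes from a log-ratio estimate between $1-\alpha_{12}$ and $1-\beta_{12}$. It reduces the face cases to equal heads, or to one distinguished head with the others equal. Positive faces use explicit product bounds for ranks four through eleven, with separate calculations for smaller ranks.

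Separately, your off-face dynamic program is sound in spirit, but the relevant frontier is the upper-right one, not the lower-left. The deficit is decreasing in each head product: the coefficients are $-8\alpha_{12}(a)$ and $-9\beta_{12}(a)$, or their complemented analogues. The worst case is therefore large products.
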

\begin{proof}
Appendix~\ref{app:twelve} identifies the formal statement and source version proving nonnegativity of both slotwise deficits and the unary deficit. The knot-agreement results match the formal maps to Table~\ref{tab:knots}, and the atom definitions give exactly \eqref{eq:twelve}. Since the inequality holds for all independent slots, identifying slots covers repeated occurrences.
\end{proof}
\begin{corollary}[Degree-twelve certificate recovery]\label{cor:twelve}
A polynomial supplied with a polynomial-size compatible certificate in $\CC_{\le12}$ admits a deterministic $8/17$ approximation over $\K$, up to any prescribed additive error. Optimize its standard roof, apply $\alpha_{12}$ and $\beta_{12}$, and keep the better branch.
\end{corollary}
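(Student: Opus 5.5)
The plan is to assemble the guarantee from the earlier components, in the same way as the proof of Theorem~\ref{thm:cubic}, with the degree-twelve atom inequalities in place of the elementary cubic ones.

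\textbf{Certificate and roof.} Write the supplied certificate in the form \eqref{eq:root_certificate}, with compatible roots and literal degree at most twelve. Each atom then has rank $m\le11$. By Proposition~\ref{prop:root_independence}, the standard roof $L_\N$ is the sum of the atom roofs $\ell_{\sigma,r}$ and the affine terms. It majorizes $F$ on the cube, so $\max_\K L_\N\ge\OPT(F,\K)$. Its lifted formulation adds one auxiliary variable and two affine inequalities per atom, so it has polynomial size. Under Section~\ref{sec:access}, a $\delta$-accurate feasible maximizer $\bar x$ can therefore be computed in polynomial time.

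\textbf{Payment of each component.} Use the portfolio consisting of $\alpha_{12}$ with weight $8/17$ and $\beta_{12}$ with weight $9/17$; both maps are downward. Theorem~\ref{thm:twelve_atoms} gives the following payments by factor $8/17$:
\begin{itemize}[leftmargin=*]
\item Uniform-polarity atoms of every rank $m\le11$ and both polarities, after dividing \eqref{eq:twelve} by $17$. Repeated occurrences are covered by restricting independent slots to the diagonal.
\item Positive unaries, via $8\alpha_{12}+9\beta_{12}\ge8t$.
\end{itemize}
Complemented unaries and the nonnegative constant lose nothing, since $8/17\le1$.

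Mixed-polarity compatible atoms are handled by Lemma~\ref{lem:mixed}. Its hypothesis is exactly the conclusion above with $d=12$. Its argument bounds each recovered positive head $\alpha_{12}(1-v)\le1-v\le1-\alpha_{12}(v)$, and likewise for $\beta_{12}$. This reduces a mixed atom to the complemented slot inequality with the same roof $\ell_m^-$.

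\textbf{Aggregation and error.} Corollary~\ref{cor:aggregation} then yields
\[
 \tfrac{8}{17}F(\alpha_{12}(x))+\tfrac{9}{17}F(\beta_{12}(x))\ge\tfrac{8}{17}L_\N(x)
\]
on $\K$. Theorem~\ref{thm:common} gives that the better branch at $\bar x$ is feasible and satisfies
\[
 F\ge\tfrac{8}{17}\OPT(F,\K)-\tfrac{8}{17}\delta.
\]
The maps are rational piecewise affine, so both branches are computed exactly at rational $\bar x$. The remaining numerical losses come only from evaluating $F$ through the certificate. They are controlled by the Lipschitz-type bound of Section~\ref{sec:access} and the tolerance allocation in Algorithm~\ref{alg:recovery} and \eqref{eq:implementation_error}.

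The only substantive step is Theorem~\ref{thm:twelve_atoms}, which is taken as given. The one check to make explicitly is that Lemma~\ref{lem:mixed} applies with the two-branch weighted portfolio rather than a single map. This holds because the domination argument is carried out branchwise before the weights are applied.
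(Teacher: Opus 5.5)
Your proposal is correct and follows the paper's own argument, which simply combines Theorem~\ref{thm:twelve_atoms}, Lemma~\ref{lem:mixed}, and Theorem~\ref{thm:common} with weights $8/17$ and $9/17$; you fill in exactly those steps, together with aggregation via Corollary~\ref{cor:aggregation} and the error allocation of Section~\ref{sec:access}. Your check that Lemma~\ref{lem:mixed} applies to the weighted two-branch portfolio is also right, because the lemma is stated for downward scalar portfolios and its domination step is carried out branchwise.
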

\begin{proof}
Apply Theorem~\ref{thm:twelve_atoms}, Lemma~\ref{lem:mixed}, and Theorem~\ref{thm:common} with probabilities $8/17,9/17$.
\end{proof}
In particular, Boolean-nonnegative NNS polynomials of degree at most twelve have factor $8/17$ by Theorem~\ref{thm:orientation}; arbitrary finite-degree NNS polynomials have factor $\gamma_*$. Their degree-two subclass has the stronger $1/2$ factor.

Section~\ref{sec:degree13} proves a strict gap below $8/17$ at degree thirteen for termwise guarantees using finite common-coordinatewise portfolios and both polarities.

\section{Beyond cubics II: richer local set functions}\label{sec:local}
Another extension replaces rooted generators by arbitrary nonnegative submodular functions on small supports. Explicit value tables make their local concave closures tractable, although these functions need not admit the preceding rooted representations. To apply common recovery, we must establish an inequality for this larger local cone. Throughout this section the continuous objective is a multilinear extension.

\subsection{The multilinear local-function model}\label{sec:local_model}
A \emph{local conic combination} consists of finite $B$, local ground sets $N_b\subseteq N$, weights $w_b\ge0$, and nonnegative submodular functions $g_b:2^{N_b}\to\R_{\ge0}$ such that
\begin{equation}\label{eq:local_decomposition}
 f(S)=\sum_{b\in B}w_bg_b(S\cap N_b).
\end{equation}
The arity of $g_b$ is $|N_b|$. Overlap is unrestricted. If $G_b$ is the multilinear extension of $g_b$, then $F(x)=\sum_bw_bG_b(x_{N_b})$.

Projecting a global distribution onto the local ground sets gives
\begin{equation}\label{eq:local_roof}
 F(x)\le f^+(x)\le L(x):=\sum_bw_bg_b^+(x_{N_b}).
\end{equation}
The second inequality can be strict when the distributions attaining the local closures cannot be coupled into a single global distribution.

Unless a shorter encoding is specified, each local function is supplied by its full rational value table. The lifted program jointly optimizes their closures:
\begin{equation}\label{eq:local_LP}
\begin{aligned}
 \max\quad&\sum_bw_b\sum_{T\subseteq N_b}\lambda_{b,T}g_b(T)\\
 \text{s.t.}\quad&x\in\K,\qquad \lambda_{b,T}\ge0,\\
 &\sum_{T\subseteq N_b}\lambda_{b,T}=1 &&(b\in B),\\
 &\sum_{T\subseteq N_b:\,i\in T}\lambda_{b,T}=x_i &&(b\in B,\ i\in N_b).
\end{aligned}
\end{equation}
It has $n+\sum_b2^{|N_b|}$ variables and is polynomial in the full-table input length even without bounded arity.

\paragraph{Local-function classes.}
For $d\ge2$, let $\CQ_d$ be the cone of nonnegative submodular set functions of multilinear degree at most $d$, and let $\CL_d$ be those admitting \eqref{eq:local_decomposition} with arity at most $d$. For $i\in T$, $|T|\ge2$, write
\begin{equation}\label{eq:UV}
 U_{T,i}(x)=x_i-x^T,\qquad V_{T,i}(x)=(1-x_i)-(1-x)^T.
\end{equation}
Let $\CR_d$ consist of the set functions whose multilinear extensions are nonnegative sums of $1,x_i,1-x_i,U_{T,i},V_{T,i}$ with $|T|\le d$. A particular coefficient list is denoted by $\mathcal R$, and its additive atom-closure roof by $L_{\mathcal R}$. Set $\CR_\infty=\bigcup_{d\ge2}\CR_d$. These are distinct-variable, uniform-polarity classes, unlike the larger continuous certificate cones $\CC_{\le d}$.

A local function is \emph{concave-cardinality} if $g(S)=h(|S|)$ for a nonnegative sequence with nonincreasing consecutive differences. Let $\CA_d$ be the corresponding local-conic-combination class with arity at most $d$, and put $\CA_\infty=\bigcup_{d\ge1}\CA_d$. 
\subsection{Rooted multilinear functions and their exact closures}
For distinct variables indexed by $T$ and $i\in T$, the Boolean restrictions of $U_{T,i},V_{T,i}$ in \eqref{eq:UV} are
\[
 u_{T,i}(S)=\ind\{i\in S,\ T\nsubseteq S\},\qquad
 v_{T,i}(S)=\ind\{i\notin S,\ (T\setminus\{i\})\cap S\ne\varnothing\}.
\]
Their exact concave closures are
\begin{equation}\label{eq:rooted_closures}
 u_{T,i}^+(x_T)=\min\left\{x_i,\sum_{j\in T\setminus\{i\}}(1-x_j)\right\},\qquad
 v_{T,i}^+(x_T)=\min\left\{1-x_i,\sum_{j\in T\setminus\{i\}}x_j\right\}.
\end{equation}
The root probability and the sum of the head-event probabilities both bound each event probability. To attain the bounds, arrange the head events on a nonatomic probability space so that their union has measure equal to the smaller of one and the sum of their measures. Then arrange the root event to maximize its intersection with this union. Complementing the head events gives the first formula. The standard atom roof therefore equals the exact local closure for distinct variables; Example~\ref{ex:self_atom} explains why repeated variables require care.

\begin{proposition}[Class relations]\label{prop:classes}
For $d\ge2$,
\[
 \CR_d\subseteq\CL_d\subseteq\CQ_d,\qquad \CA_d\subseteq\CL_d,
 \qquad \CR_2=\CL_2=\CQ_2,\quad \CR_3=\CL_3=\CQ_3.
\]
On a four-element ground set, $\CR_\infty=\CR_4\subsetneq\CL_4=\CQ_4$. On ground sets of size at least five, $\CL_4$ and $\CR_\infty$ are incomparable. For $d<n$, $\CA_\infty\nsubseteq\CL_d$; for $n\ge4$, $\CA_\infty$ and $\CR_\infty$ are incomparable.
\end{proposition}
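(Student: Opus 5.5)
The plan is to dispose of the inclusions first, then the equalities in degrees two and three, and last the strict and incomparability statements via explicit witnesses.

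\textbf{Inclusions.} For $\CR_d\subseteq\CL_d$, each generator $1,x_i,1-x_i,U_{T,i},V_{T,i}$ with $|T|\le d$ is the multilinear extension of a nonnegative submodular local function on a support of size at most $d$. For $u_{T,i}$ this is $x_i(1-\prod_{j\ne i}x_j)$, and submodularity follows from the DR signs of the multilinear product. Hence a nonnegative sum is a local conic combination.

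For $\CL_d\subseteq\CQ_d$, sums of nonnegative submodular functions are nonnegative and submodular. A function on $N_b$ with $|N_b|\le d$ has multilinear degree at most $d$.

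For $\CA_d\subseteq\CL_d$, a concave-cardinality $h(|S|)$ is submodular, nonnegative, and has arity $|N_b|$.

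\textbf{Equalities for $d=2,3$.} I would show $\CQ_d\subseteq\CR_d$ using the orientation machinery. A nonnegative submodular function of multilinear degree at most three has Möbius coefficients satisfying the Boolean analogue of Lemma~\ref{lem:cubic_DR}: each pair coefficient plus the positive parts of the triple coefficients containing it is nonpositive. This is the multilinear specialization of submodularity, obtained by evaluating second differences with the other variables at $0/1$.

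Applying the complement identity \eqref{eq:cubic_complement} to positive triples gives a uniform-polarity negaform. Theorem~\ref{thm:orientation} then yields a nonnegative rooted certificate with residual $\min f\ge0$. All products involve distinct variables, so the atoms are exactly $U_{T,i},V_{T,i}$ with $|T|\le3$. Combined with the inclusions, this gives $\CR_3=\CL_3=\CQ_3$. The case $d=2$ is the quadratic orientation \eqref{hard:eq:cut-decomposition}.

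\textbf{Four elements: $\CR_\infty=\CR_4\subsetneq\CL_4=\CQ_4$.} On four elements every $T$ has size at most four, so $\CR_\infty=\CR_4$. Moreover $\CL_4=\CQ_4$ trivially, since the whole ground set is one local support.

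Strictness is the main obstacle. I need a nonnegative submodular $g$ on four elements that is not a conic combination of the rooted generators. The candidate I would try first is a concave-cardinality function such as $g(S)=\min\{|S|,2\}$ or $\ind\{1\le|S|\le3\}$-type functions, with Möbius coefficient pattern $a_{\{i,j\}}=-1$, $a_T=+1$ for triples, and $a_N=-1$.

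The separation argument would go as follows. The generator cone is finitely generated, so I would exhibit a linear functional on set functions that is nonnegative on every generator and negative on $g$, found by a small LP/Farkas computation. The finite family of generators makes this an exact rational check.

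The same witness padded with dummy elements, together with the observation that a single generator $U_{T,i}$ with $|T|=5$ lies in $\CR_\infty$ but has multilinear degree five and so is not in $\CL_4$, gives incomparability for $n\ge5$. The padding argument shows the witness remains outside $\CR_\infty$ by restriction/projection onto the four coordinates, setting the others to zero.

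\textbf{The $\CA$ statements.} For $d<n$, $\CA_\infty\nsubseteq\CL_d$ follows from $h(|S|)=\min\{|S|,1\}$ on all $n$ elements. Its Möbius coefficient on $N$ is $\pm1\ne0$, so its degree is $n>d$, whereas members of $\CL_d$ have degree at most $d$.

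For incomparability with $\CR_\infty$ when $n\ge4$, one direction uses the four-element concave-cardinality witness above (padded). The other uses a single $U_{T,i}$, which is not symmetric enough to be a sum of local concave-cardinality terms. For this I would use a Farkas functional again, or the observation that concave-cardinality sums are determined by symmetric local pieces whose extreme rays are $\min\{|S\cap N_b|,k\}$ type functions, and check that $u_{T,i}$ lies outside their cone.
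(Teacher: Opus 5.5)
Your inclusions, the degree-two and degree-three equalities, the degree-five generator $U_{T,i}$ with $|T|=5$, and the coverage witness for $\CA_\infty\nsubseteq\CL_d$ all match the paper's argument. For the equalities you correctly use the Boolean DR condition, the complement identity and Theorem~\ref{thm:orientation}. The gap is at the step you flag as the main obstacle. You never produce the separating argument for $\CL_4\nsubseteq\CR_\infty$, on which the $n\ge5$ incomparability and $\CA_\infty\nsubseteq\CR_\infty$ also depend, and your candidate list is partly wrong.

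On four elements, $\ind\{1\le|S|\le3\}$ equals $1-\prod_i(1-x_i)-\prod_ix_i=U_{N,1}+V_{N,1}\in\CR_4$, so no Farkas functional exists for it. The M\"obius pattern you quote (pairs $-1$, triples $+1$, top coefficient $-1$) is that of coverage $1-\prod_i(1-x_i)=x_1+V_{N,1}$, which is also in $\CR_4$. Your other candidate $\min\{|S|,2\}$ does work, but its coefficients are $0$ on pairs, $-1$ on triples and $+2$ on $N$. That positive top coefficient is the whole point.

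The missing idea is that the degree-four M\"obius coefficients themselves give the functional:
\begin{itemize}
\item $U_{T,i}=x_i-x^T$ contributes $-1$ to the coefficient of $x^U$ only when $U=T$;
\item $V_{T,i}=(1-x_i)-(1-x)^T$ contributes $-(-1)^{|U|}=-1$ for every four-set $U\subseteq T$;
\item literals and constants contribute nothing.
\end{itemize}
Hence every member of $\CR_\infty$, on any ground set, has nonpositive quartic coefficients. A concave-cardinality function with a positive quartic coefficient therefore lies in $\CA_4\subseteq\CL_4$ but not in $\CR_\infty$. The paper uses the sequence $(0,2,3,3,3)$, with $F=2\sum_ix_i-\sum_{i<j}x_ix_j+x_1x_2x_3x_4$. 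Embedding into a larger ground set does not change coefficients, so this settles strictness, the $n\ge5$ direction, and $\CA_\infty\nsubseteq\CR_\infty$ at once. Your restriction argument for padding is valid but unnecessary.

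The reverse direction $\CR_\infty\nsubseteq\CA_\infty$ is also left unproved; ``not symmetric enough'' is not an argument. Your description of the concave-cardinality extreme rays as $\min\{|S\cap N_b|,k\}$-type functions is also incomplete. It omits the decreasing generator $1-k/d$ and the tents in \eqref{eq:tent_expansion}, so a ray check built on it would not cover the cone.

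The paper's argument is short. Take the directed edge $u_{\{1,2\},1}=\ind\{1\in S,2\notin S\}$ and restrict any proposed nonnegative sum of concave-cardinality terms to subsets of $\{1,2\}$. Each summand must vanish at $\varnothing$, $\{2\}$ and $\{1,2\}$. Now split by how a summand's support meets $\{1,2\}$:
\begin{itemize}
\item in both elements: its value at $\{1\}$ equals its value at $\{2\}$;
\item only in $1$: its value at $\{1\}$ equals its value at $\{1,2\}$;
\item only in $2$, or not at all: its value at $\{1\}$ equals its value at $\varnothing$.
\end{itemize}
In every case the summand also vanishes at $\{1\}$, which contradicts the target value one there.
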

\begin{proof}
Every rooted function is nonnegative and submodular, and its degree is at most its arity. Nonnegative sums preserve both properties. The decreasing consecutive differences make concave-cardinality functions submodular. The reverse low-degree inclusions follow from Theorem~\ref{thm:orientation} and Proposition~\ref{prop:cubic_certificate}, restricted to multilinear inputs.

On four elements, consider the cardinality sequence $(0,2,3,3,3)$, with polynomial
\[
 F(x)=2\sum_i x_i-\sum_{i<j}x_ix_j+x_1x_2x_3x_4.
\]
Concavity places this function in $\CA_4\subseteq\CL_4$. In a nonnegative uniform-polarity rooted representation, every degree-four monomial has a nonpositive coefficient: a positive rooted product contributes a negative coefficient on its support, and a complemented rooted product contributes a nonpositive coefficient on every four-subset of its support. The positive quartic coefficient therefore excludes $\CR_\infty$, even after embedding the function into a larger ground set. Conversely, $u_{T,i}$ with $|T|=5$ has degree five and cannot belong to $\CL_4$.

The function $S\mapsto\min\{1,|S\cap A|\}$ with $|A|=d+1$ belongs to $\CA_{d+1}$ and has degree $d+1$, excluding $\CL_d$. Finally, $\ind\{1\in S,2\notin S\}$ cannot be a nonnegative sum of concave-cardinality functions. Restrict any proposed sum to $\{1,2\}$. Because the function vanishes at $\varnothing,\{2\},\{1,2\}$, every nonnegative summand must vanish at these sets. A two-coordinate summand has equal values on the two singletons, and a one-coordinate summand takes its selected value also on $\{1,2\}$. Every summand must consequently vanish at $\{1\}$, a contradiction. The four-element example proves the reverse noninclusion.
\end{proof}

An \emph{extreme ray} is a one-dimensional face of a cone. A rational ray has a unique \emph{primitive integer generator}: the nonzero integer vector on that ray whose coordinates have greatest common divisor one. Relabeling coordinates acts on these generators; a \emph{permutation orbit} is the set of generators obtained from one another by such relabelings.

\begin{proposition}[Recognition at fixed local arity]\label{prop:membership}
For each fixed $d$, membership of a rational degree-at-most-$d$ multilinear polynomial in $\CL_d$ is decidable by a rational linear program of size $n^{O(d)}$. A feasible solution supplies a local conic combination.
\end{proposition}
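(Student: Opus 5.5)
The plan is to recast membership as a conic-feasibility problem over a finite, explicitly described family of local cones. Write the input as $F(x)=\sum_{|T|\le d}a_Tx^T$. A decomposition \eqref{eq:local_decomposition} of arity at most $d$ can be regrouped by support: local functions with the same ground set $A$, $|A|\le d$, can be added, since nonnegative sums of nonnegative submodular functions on $2^A$ remain nonnegative and submodular. Every local function on a ground set smaller than $d$ embeds as a function on any $d$-superset by ignoring the extra coordinates, and this embedding preserves nonnegativity and submodularity. So it suffices to consider one unknown function $g_A:2^A\to\R$ for each $A\subseteq N$ with $|A|=d$ (or $|A|\le d$ when $n<d$). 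There are $\binom nd=n^{O(d)}$ such sets, and each function has $2^d$ table entries, so the total number of unknowns is $n^{O(d)}$ for fixed $d$.

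The linear program uses the table entries $g_A(T)$, $T\subseteq A$, as variables. The first set of constraints is nonnegativity, $g_A(T)\ge0$. The second set is submodularity, imposed in its local form $g_A(T\cup\{i\})+g_A(T\cup\{j\})\ge g_A(T\cup\{i,j\})+g_A(T)$ for distinct $i,j\in A\setminus T$. These local exchange inequalities are equivalent to submodularity, and there are at most $2^dd^2$ of them per set $A$. The third set is coefficient matching. The multilinear extension $G_A$ has M\"obius coefficients $\sum_{U\subseteq S}(-1)^{|S|-|U|}g_A(U)$ for $S\subseteq A$. Each of these is a linear form in the table with $\pm1$ coefficients. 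Matching therefore reads
\[
 \sum_{A\supseteq S}\ \sum_{U\subseteq S}(-1)^{|S|-|U|}g_A(U)=a_S\qquad(|S|\le d).
\]
Monomials of degree greater than $d$ never arise from arity-$d$ pieces, so no further equations are needed. Uniqueness of multilinear representations makes these equations equivalent to $F=\sum_AG_A$ as polynomials, and hence to $f=\sum_Ag_A(\cdot\cap A)$ on the Boolean cube.

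Correctness then has two directions. If $f\in\CL_d$, the regrouped decomposition, with weights absorbed into the functions, is feasible. Conversely, any feasible table family is itself a local conic combination with unit weights, which supplies the promised certificate. The LP has rational data with entries $0,\pm1$ and right-hand sides $a_S$, and its size is $n^{O(d)}2^{O(d)}$ times the encoding length of the coefficients. Standard results (\citep{Schrijver2003}) therefore decide feasibility in polynomial time and return a rational feasible point of polynomial encoding length.

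I expect the main obstacle to be the reduction to one function per $d$-subset: making sure that padding and merging preserve the class. Padding works because $g(S\cap A')$ is submodular on $2^A$ when $A'\subseteq A$. The remaining steps are routine bookkeeping.
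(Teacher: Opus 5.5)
Your argument is correct, but it uses the inequality description of the local cones where the paper uses their generator description. The paper fixes, for each support $A$ with $|A|\le d$, the finite list of primitive extreme rays of the cone of nonnegative submodular tables on $A$. This list is computed once per $d$, e.g.\ by double description, and has $222$ rays when $d=4$. The paper then embeds the rays' multilinear coefficient vectors into $N$ and asks for nonnegative multipliers matching the input coefficients, giving $O(n^d)$ supports times a constant number of rays.

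You instead take the tables themselves as unknowns, impose $g_A\ge0$ and the elementary exchange inequalities, and match M\"obius coefficients. Your merging and padding to $d$-subsets are valid; your check that $S\mapsto g(S\cap A')$ stays submodular on $2^A$ is exactly the fact needed. They are not necessary, however, since you could keep one table for every $|A|\le d$.

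Your route needs no ray enumeration, and the LP is written down explicitly and uniformly in $d$. Its size is $\binom nd 2^{O(d)}$ plus the coefficient encoding, so its dependence on $d$ is singly exponential. In the paper's bound, the number of extreme rays of the nonnegative submodular cone, which can grow very rapidly with $d$, is hidden in the constant.

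The paper's route has the advantage that a feasible solution is directly a nonnegative combination of primitive rays. That is the form on which the arity-four recovery inequalities are certified. Your output is one arbitrary nonnegative submodular table per support. It is nonetheless already a valid input for \eqref{eq:local_LP} and Corollary~\ref{cor:arity_four}, because Theorem~\ref{thm:arity_four} covers every such table.
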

\begin{proof}
For each $A\subseteq N$ with $|A|\le d$, enumerate the primitive generators of the cone of nonnegative submodular value tables on $A$. Embed their multilinear coefficient vectors into $N$ and match the input coefficients using nonnegative multipliers. For fixed $d$, the number of generators per support is a constant, and there are $O(n^d)$ supports. The system is feasible precisely when the required decomposition exists.
\end{proof}
For arbitrary encoded arity, a rooted representation can instead be supplied succinctly. Corollary~\ref{cor:twelve} gives $8/17$ for a supplied representation in $\CR_{12}$, while Theorem~\ref{thm:single} gives $\gamma_*$ for $\CR_\infty$ without a degree bound.

\subsection{Arbitrary nonnegative submodular local functions on four elements}
Let $\psi_0,\psi_1$ be the arity-four maps specified separately from $\alpha_{12},\beta_{12}$ in Table~\ref{tab:knots}. Their probabilities are again $8/17$ and $9/17$.

The nonnegative submodular cone on four coordinates has finitely many extreme rays. Appendix~\ref{app:arity_four} verifies the common-map inequalities on these rays over the full marginal domain. Nonnegative aggregation then extends them to local functions with overlapping supports.

\begin{theorem}[Arity-four recovery]\label{thm:arity_four}
Let $g:2^A\to\R_{\ge0}$ be submodular with $|A|\le4$, and let $G$ be its multilinear extension. For every $x\in[0,1]^A$,
\begin{align}
 G(\phi_*(x))&\ge\gamma_*g^+(x),\label{eq:arity_single}\\
 8G(\psi_0(x))+9G(\psi_1(x))&\ge8g^+(x).\label{eq:arity_pair}
\end{align}
The factor in \eqref{eq:arity_single} is optimal among all downward scalar maps applied commonly to the coordinates, without regularity assumptions. The pair in \eqref{eq:arity_pair} is compatible with arbitrary overlap among local ground sets.
\end{theorem}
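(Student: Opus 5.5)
The plan is to reduce both inequalities to finitely many generators and then push the per-generator inequality through the concave closure.

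\emph{Step 1: conic reduction.} The map $g\mapsto G$ is linear. The cone of nonnegative submodular tables on $A$ is a polyhedral cone with finitely many extreme rays $g^{(1)},\dots,g^{(K)}$. Any $g$ in it is a conic combination of these rays, and the left sides of \eqref{eq:arity_single} and \eqref{eq:arity_pair} are linear in $g$. The concave closure is only sublinear, $g^+\le\sum_k c_k (g^{(k)})^+$, so this inequality points the right way: it suffices to prove both inequalities for each extreme ray. Smaller supports ($|A|<4$) embed into four coordinates by adding dummy elements, since the closure and the multilinear extension ignore them. After relabeling, only one representative per permutation orbit of primitive generators has to be checked.

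\emph{Step 2: handling the closure.} For a fixed ray $g$, $g^+(x)=\max_\lambda\sum_T\lambda_Tg(T)$ over couplings with marginals $x$. Proving $\Phi(x):=8G(\psi_0(x))+9G(\psi_1(x))\ge 8g^+(x)$ amounts to showing $\Phi(x)\ge 8\sum_T\lambda_T g(T)$ for every vertex $\lambda$ of the transportation-type polytope. Equivalently, on each region of $[0,1]^4$ where a fixed vertex basis is optimal, $g^+$ is affine. I would compute these linearity regions explicitly by exact arithmetic: $g^+$ is piecewise affine with rational breakpoints determined by $g$. Rooted rays (rank $\le 3$) already have the closed forms \eqref{eq:rooted_closures}, and for them the inequality follows from Lemma~\ref{lem:cubic_maps} or Theorem~\ref{thm:twelve_atoms} type arguments with the new maps. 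The genuinely new rays are the non-rooted extreme rays of $\CQ_4$, such as the cardinality example in Proposition~\ref{prop:classes}.

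\emph{Step 3: cell verification.} Intersect the closure's linearity regions with the knot boxes of $\psi_0,\psi_1$ (23 knots). On each cell, $\Phi$ is multilinear in the slot values because the maps are affine there, and $g^+$ is affine. As in Appendix~\ref{app:cubic}, the deficit is affine in each coordinate separately within a cell as long as the closure stays on one affine piece. So its minimum lies at knots or on the faces where the closure switches pieces. On these switching faces I would certify nonnegativity with Bernstein-basis expansions on the resulting simplices, subdividing whenever a coefficient is negative. The single-map inequality \eqref{eq:arity_single} is handled identically with $\phi_*$. Its optimality claim is immediate from Section~\ref{sec:single_upper}: the rooted atoms used there, with $|T|\le3$, are arity-at-most-four submodular functions, and for distinct variables their closures coincide with the standard roofs.

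\emph{Step 4: overlap compatibility.} Compatibility with arbitrary overlap follows from Corollary~\ref{cor:aggregation}: the maps are common-coordinatewise, so each local inequality refers to the same global recovered point.

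\emph{Main obstacle.} I expect the main obstacle to be Step 3 for the non-rooted rays. Their closures have many affine pieces, and the switching faces are higher-dimensional than the triangles of the cubic case. My first attempt would be exact rational enumeration of all cells, with Bernstein certificates on each and adaptive subdivision near the contact configurations $x=\tfrac12\ind$ and a root at $1$. At those contact points equality forces tightness, so subdivision alone cannot certify them; there I would need exact local identities similar to $3(u-v)^2+uv[4-3(u+v)]$.
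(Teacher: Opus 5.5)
Your plan is correct and follows the paper's route: linearity of $G$ and subadditivity of $g^+$ reduce everything to the primitive rays of the four-element cone (the paper finds 222 rays in 21 permutation orbits). Each orbit representative is then certified cell by cell in exact arithmetic, using Bernstein coefficients and exact nonnegativity identities at the tight cells; optimality comes from the rooted witnesses of Section~\ref{sec:single_upper}, and overlap from Corollary~\ref{cor:aggregation}. The one difference is that the paper never computes the exact linearity regions of $g^+$: on each knot box it replaces the closure by a stored Boolean-valid affine majorant (or a convex combination of two or three), so for $\psi_0,\psi_1$ the deficit is multilinear on the whole box, 16 corner signs suffice, and your switching-face analysis is replaced by chamber-wise Handelman identities needed only on the singular cells.
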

\begin{proof}
Appendix~\ref{app:arity_four} describes the finite reduction and the scope of the exact-arithmetic certificates for both inequalities. On four elements the cone has 222 primitive integer rays in 21 permutation orbits. The ray inequalities extend to every $g=\sum_r\lambda_r r$, because
\[
 G=\sum_r\lambda_rG_r,\qquad g^+\le\sum_r\lambda_r r^+,
 \qquad\lambda_r\ge0.
\]
Every ray and local function uses the same maps and weights. Corollary~\ref{cor:aggregation} therefore covers overlapping supports. For fixed-map sharpness in \eqref{eq:arity_single}, use a directed cut on two elements at $x=(1/2,1/2)$. With $z=\phi_*(1/2)=(1+\gamma_*)/4$, its ratio to the closure is $2z(1-z)=\gamma_*$. For optimality among all common downward scalar maps, apply Theorem~\ref{thm:single}: its positive-root witnesses have distinct coordinates and arity at most three, so their roofs are exact closures by \eqref{eq:rooted_closures}.
\end{proof}

For $|A|<4$, extend $g$ to a four-element set by ignoring the added coordinates and set their marginals to zero. Nonnegativity and submodularity are preserved. The multilinear value and concave closure are those on $A$, and every downward map fixes zero. Thus the four-coordinate verification includes all smaller arities.

\begin{corollary}[Arity-four optimization]\label{cor:arity_four}
Given a nonnegative local conic combination of nonnegative submodular functions of arity at most four, optimize \eqref{eq:local_LP}, apply $\psi_0,\psi_1$, and keep the better point. The result is an $8/17$ approximation to the multilinear maximum over $\K$, up to any prescribed additive error under Section~\ref{sec:access}.
\end{corollary}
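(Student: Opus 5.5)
The corollary follows by assembling Theorem~\ref{thm:arity_four}, the roof bound \eqref{eq:local_roof}, Corollary~\ref{cor:aggregation} and Theorem~\ref{thm:common}; the only genuinely new work is the implementation and error accounting. Let $F(x)=\sum_b w_bG_b(x_{N_b})$ be the multilinear objective and $L(x)=\sum_b w_bg_b^+(x_{N_b})$ the additive local-closure roof.

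\textbf{Roof comparison.} By \eqref{eq:local_roof}, $L\ge F$ on the cube, so $\max_\K L\ge\max_\K F$. Each $g_b^+$ is concave and continuous, being the value of a parametric LP with feasible set continuous in $x$. Hence $L$ is real-valued and upper semicontinuous, as Theorem~\ref{thm:common} requires.

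\textbf{Aggregated recovery inequality.} For each active $b$, apply \eqref{eq:arity_pair} to $g_b$ at $x_{N_b}$. The maps $\psi_0,\psi_1$ act coordinatewise, so $\psi_s(x)_{N_b}=\psi_s(x_{N_b})$. If $|N_b|<4$, the padding remark after Theorem~\ref{thm:arity_four} covers it. Dividing by $17$ gives
\[
\tfrac{8}{17}G_b(\psi_0(x_{N_b}))+\tfrac{9}{17}G_b(\psi_1(x_{N_b}))\ge\tfrac{8}{17}g_b^+(x_{N_b}).
\]
This is exactly \eqref{eq:component_payment} with $p=(8/17,9/17)$ and $\rho=8/17$. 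Corollary~\ref{cor:aggregation} then yields
\[
\tfrac{8}{17}F(\psi_0(x))+\tfrac{9}{17}F(\psi_1(x))\ge\tfrac{8}{17}L(x)
\]
for all $x$, even when the supports $N_b$ overlap, because the same global maps are used in every component.

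\textbf{Optimization and error.} The lifted program \eqref{eq:local_LP} has polynomial size in the full-table input, and its projection onto $x$ maximizes $L$ over $\K$. Under the access assumptions of Section~\ref{sec:access}, we obtain a feasible $\delta$-accurate maximizer $\bar x$. Theorem~\ref{thm:common} then gives $\max_s F(\psi_s(\bar x))\ge\tfrac{8}{17}\OPT(F,\K)-\tfrac{8}{17}\delta$, and the recovered points are feasible by down-closedness.

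Since $\psi_0,\psi_1$ are rational piecewise-affine, the points $\psi_s(\bar x)$ are rational and computable exactly. $F$ can be evaluated exactly from the value tables via \eqref{eq:multilinear}, restricted to each $N_b$. Choosing $\delta=\varepsilon$ suffices, with no map or estimation loss, or one may follow the tolerance split of Algorithm~\ref{alg:recovery}.

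\textbf{Main obstacle.} The substantive content is \eqref{eq:arity_pair} itself, which is assumed here. The only remaining subtlety is keeping the benchmark straight. The guarantee is against the multilinear maximum $\max_\K F$, not against $\max_\K L$. It is also not against the global closure $f^+$; that comparison holds only because $L\ge f^+\ge F$ pointwise. Finally, $F$ here is itself multilinear, so no continuous-versus-Boolean discrepancy of the kind in Section~\ref{sec:model_comparison} arises.
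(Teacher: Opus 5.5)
Your proposal is correct and is essentially the paper's proof, which simply sums \eqref{eq:arity_pair} over the weighted components (Corollary~\ref{cor:aggregation} with probabilities $8/17,9/17$) and then invokes Theorem~\ref{thm:common}; your extra checks are valid elaborations, namely upper semicontinuity of $L$, coordinatewise restriction of the maps, padding of smaller supports, and exact rational evaluation. One small correction to your closing remark: the same chain gives $\max_sF(\psi_s(\bar x))\ge\tfrac{8}{17}(\max_\K L-\delta)$, so the guarantee holds even against the roof maximum, which is stronger than the stated benchmark $\max_\K F$ and implies it.
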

\begin{proof}
Sum \eqref{eq:arity_pair} and apply Theorem~\ref{thm:common}.
\end{proof}

\begin{corollary}[Mixed component families]\label{cor:mixed_families}
Suppose a continuous objective is a nonnegative sum of components, each supplied either with a compatible rooted certificate of arbitrary finite literal degree or as the multilinear extension of a nonnegative submodular local function of arity at most four. The sum of the corresponding roofs admits the common recovery factor $\gamma_*$ under $\phi_*$.
\end{corollary}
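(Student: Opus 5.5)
The plan is to reduce Corollary~\ref{cor:mixed_families} to componentwise payment by the single map $\phi_*$ and then apply Corollary~\ref{cor:aggregation} together with Theorem~\ref{thm:common}. Write the objective as $F=\sum_b w_bF_b$ with $w_b\ge0$. Each active component $F_b$ is of one of two types. It may be supplied with a compatible rooted certificate, with roof $L_b$ its standard roof, that is, the sum of atom roofs and affine terms. Alternatively, it may be $G_b(x_{N_b})$ with $|N_b|\le4$, with roof $L_b=g_b^+(x_{N_b})$. The roof of the sum is $L=\sum_bw_bL_b$. It majorizes $F$ termwise, by Proposition~\ref{prop:root_independence} for certificate components and by $G_b\le g_b^+$ for local components. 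It is also concave. The portfolio consists of the single branch $R(x)_i=\phi_*(x_i)$ with weight one. Because this same global map is used in every component, Corollary~\ref{cor:aggregation} needs only the inequality $F_b(\phi_*(x))\ge\gamma_*L_b(x)$ for each active $b$, including components with overlapping supports.

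For a certificate component, expand $F_b$ as $c_0+\sum p_ix_i+\sum q_i(1-x_i)+\sum\theta_{\sigma,r}h_{\sigma,r}$ and pay each generator separately.
\begin{itemize}
\item The constant needs no condition, since $\gamma_*\le1$.
\item Complemented unaries lose nothing, because $\phi_*(t)\le t$.
\item Positive unaries are paid because $\phi_*(t)\ge\gamma_*t$ by Theorem~\ref{thm:single}.
\item Uniform-polarity atoms of every finite rank $m$ are paid by the slot inequality $h_m^\sigma(\phi_*(a),\phi_*(u))\ge\gamma_*\ell_m^\sigma(a,u)$ of Theorem~\ref{thm:single}. This holds for independent slots, and hence also after identifying repeated occurrences.
\item Mixed-polarity compatible atoms are reduced to complemented-only slot atoms exactly as in Lemma~\ref{lem:mixed}, which requires only downwardness of $\phi_*$ and payment of both uniform types through the relevant rank.
\end{itemize}
Summing with the nonnegative certificate weights pays $F_b$ against its standard roof.

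For a local component, the inequality \eqref{eq:arity_single} of Theorem~\ref{thm:arity_four} gives $G_b(\phi_*(x_{N_b}))\ge\gamma_*g_b^+(x_{N_b})$ directly. Arities below four are covered by the zero-padding remark following that theorem. Since $\phi_*$ acts coordinatewise, restricting the global recovered point to $N_b$ is the same as applying $\phi_*$ to $x_{N_b}$. This consistency of the shared global output is the only point where overlap could matter.

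Aggregation now gives $F(\phi_*(x))\ge\gamma_*L(x)$ on the cube. Theorem~\ref{thm:common} then yields the factor $\gamma_*$ for a $\delta$-accurate roof optimizer, with numerical losses handled as in Algorithm~\ref{alg:recovery}. I expect no substantive obstacle, since the heavy lifting lies in Theorems~\ref{thm:single} and~\ref{thm:arity_four}. The one step needing care is the mixed-polarity reduction at arbitrary rank: it must be checked that Lemma~\ref{lem:mixed} applies with $d$ unbounded, which holds because Theorem~\ref{thm:single} covers every finite $m$.
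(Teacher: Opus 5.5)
Your proposal is correct and follows the paper's proof. That proof simply observes that Theorems~\ref{thm:single} and~\ref{thm:arity_four} use the same scalar map $\phi_*$ and then applies Corollary~\ref{cor:aggregation}; your generator-by-generator treatment of certificate components, including the mixed-polarity reduction through Lemma~\ref{lem:mixed} at unbounded rank, spells out the remark the paper makes immediately after Theorem~\ref{thm:single}.
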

\begin{proof}
Theorems~\ref{thm:single} and~\ref{thm:arity_four} use exactly the same scalar map. Apply Corollary~\ref{cor:aggregation}.
\end{proof}
Proposition~\ref{prop:classes} exhibits arity-four functions outside the distinct-variable uniform-polarity rooted cone. Appendix~\ref{app:maps} shows that neither $8/17$ pair satisfies the other family’s fixed-weight inequality. The mixed-family corollary therefore uses their shared single map; an $8/17$ guarantee for the union would require a common portfolio.

\section{Stronger structure that simplifies recovery}\label{sec:structured}
For globally monotone objectives or concave-cardinality components, the identity map suffices. In the first case, subtracting the modular terms given by each element's gain when added last (its local terminal marginal) produces monotone factors. In the second, one-dimensional closures and binomial expectations determine the recovery factor. The unary inequality also allows separable concave additions.

\subsection{Global monotonicity without local monotonicity}\label{sec:monotone}
Set $\gamma_0=\gamma_1=1$ and, for $d\ge2$, let
\[
 \gamma_d=1-(1-1/d)^d.
\]
\begin{theorem}[Monotone bounded-arity recovery]\label{thm:monotone}
Let $d\ge1$. Suppose $f=\sum_bw_bg_b$ is a nonnegative local conic combination of nonnegative submodular functions of arity at most $d$, and the full function $f$ is monotone. The individual $g_b$ need not be monotone. Then
\begin{equation}\label{eq:monotone_recovery}
 F(x)\ge\gamma_d L(x)\ge\gamma_d f^+(x).
\end{equation}
The constant is sharp. A $\delta$-accurate maximizer of $L$ over any compact feasible region gives value at least $\gamma_d(\max F-\delta)$ without coordinate decrease; down-closedness is unnecessary for this fractional conclusion.
\end{theorem}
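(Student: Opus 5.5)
The plan is a recentering argument: subtract from each local function the modular term given by its local terminal marginals, so that every component becomes monotone, and then apply the finite-ground-set correlation inequality of \citet[Lemma~4.10]{ChekuriVondrakZenklusen2014} to each recentered component.

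\textbf{Recentering.} For each $b$ and $i\in N_b$, set $c_{b,i}=g_b(N_b)-g_b(N_b\setminus\{i\})$, the local terminal marginal, and define
\[
 g'_b(S)=g_b(S)-\sum_{i\in S}c_{b,i}.
\]
By submodularity, $c_{b,i}$ is the smallest marginal of $i$ in $g_b$. Hence every marginal of $g'_b$ is nonnegative, so $g'_b$ is monotone. It is also submodular, since a modular shift preserves submodularity, and nonnegative, because $g'_b(\varnothing)=g_b(\varnothing)\ge0$ and $g'_b$ is monotone. Summing gives
\[
 f=\sum_bw_bg'_b+\sum_ia_i\ind\{i\in S\},\qquad a_i=\sum_{b\ni i}w_bc_{b,i}=f(N)-f(N\setminus\{i\})\ge0.
\]
The identity for $a_i$ holds because only components containing $i$ contribute to the terminal marginal of $f$. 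Its sign uses only the global monotonicity of $f$; no individual $g_b$ needs to be monotone. This is the key step, and it is where the hypothesis "the $g_b$ need not be monotone" is absorbed.

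\textbf{Closures and the correlation inequality.} In \eqref{eq:closure} the marginal constraints fix $\sum_T\lambda_T\sum_{i\in T}c_{b,i}=\sum_ic_{b,i}x_i$. Therefore $g_b^+(x)=g'^+_b(x)+\sum_ic_{b,i}x_i$ exactly, and likewise $G_b=G'_b+\sum_ic_{b,i}x_i$. Consequently
\[
 L(x)=\sum_bw_bg'^+_b(x_{N_b})+\sum_ia_ix_i,\qquad F(x)=\sum_bw_bG'_b(x_{N_b})+\sum_ia_ix_i.
\]
For each $b$, the recentered $g'_b$ is a nonnegative monotone submodular function on at most $d$ elements. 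The correlation-gap lemma cited in the introduction then gives $G'_b\ge\gamma_{|N_b|}\,g'^+_b\ge\gamma_d\,g'^+_b$, using that $\gamma_k$ is nonincreasing in $k$. For $d=1$, the functions are modular and $G=g^+$, consistent with $\gamma_1=1$. The linear part satisfies $a_ix_i\ge\gamma_da_ix_i$ because $a_i\ge0$ and $\gamma_d\le1$. Adding these inequalities with the weights $w_b\ge0$ proves $F\ge\gamma_dL$. The second inequality, $L\ge f^+$, is \eqref{eq:local_roof}.

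\textbf{Sharpness and the fractional conclusion.} For sharpness, take $g(S)=\min\{1,|S|\}$ on $d$ elements at $x=\ind/d$. The closure value is $1$, attained by a distribution placing mass $1/d$ on each singleton, while $G(x)=1-(1-1/d)^d$.

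For the algorithmic statement, let $\bar x$ be a $\delta$-accurate maximizer of $L$. Then
\[
 F(\bar x)\ge\gamma_dL(\bar x)\ge\gamma_d\bigl(\max L-\delta\bigr)\ge\gamma_d\bigl(\max F-\delta\bigr),
\]
using $L\ge F$. The returned point is $\bar x$ itself, so no coordinate is decreased. Down-closedness entered Theorem~\ref{thm:common} only to keep recovered points feasible, so it is unnecessary here.

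The main obstacle I expect is confirming that Lemma~4.10 applies in exactly this form: a pointwise closure comparison on a $d$-element ground set for nonnegative monotone submodular functions with arbitrary marginals. If its statement is phrased differently, I would rederive it. Reduce to extreme rays of the cone of monotone submodular functions, or argue directly via coverage-type functions. Then use the standard bound $1-\prod_i(1-x_i)\ge\gamma_d\min\{1,\sum_ix_i\}$.
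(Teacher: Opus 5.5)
Your proposal is correct and follows the paper's proof essentially step for step. Both recenter each $g_b$ by its local terminal marginals, identify the aggregated linear coefficients with $f(N)-f(N\setminus\{i\})\ge0$ using only global monotonicity, apply \citet[Lemma~4.10]{ChekuriVondrakZenklusen2014} componentwise with $\gamma_q$ nonincreasing, and use $\ind\{S\ne\varnothing\}$ at $x=\ind/d$ for sharpness. The only difference is cosmetic: the paper also subtracts $g_b(\varnothing)$ and collects these constants into $f(\varnothing)$, whereas you keep them inside $g'_b$. This is harmless, since the correlation inequality is applied to nonnegative monotone submodular functions either way.
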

\begin{proof}
Absorb empty-support terms into a constant. For $i\in N_b$, put
\[
 m_{b,i}=g_b(N_b)-g_b(N_b\setminus\{i\}),\qquad
 h_b(S)=g_b(S)-g_b(\varnothing)-\sum_{i\in S}m_{b,i}.
\]
Diminishing returns implies that every marginal of $h_b$ is nonnegative. Hence $h_b$ is monotone and submodular; since $h_b(\varnothing)=0$, it is also nonnegative. Let
\[
 c=f(\varnothing),\qquad
 r_i=\sum_{b:i\in N_b}w_bm_{b,i}=f(N)-f(N\setminus\{i\})\ge0.
\]
Let $H_b$ be the multilinear extension of $h_b$. Subtracting an affine function commutes with both extension operations, giving
\[
 F(x)=c+\sum_ir_ix_i+\sum_bw_bH_b(x_{N_b}),\quad
 L(x)=c+\sum_ir_ix_i+\sum_bw_bh_b^+(x_{N_b}).
\]
For a nonnegative monotone submodular function on $q$ elements, the finite-ground-set correlation inequality gives $H_b\ge\gamma_q h_b^+$ \citep[Lemma~4.10]{ChekuriVondrakZenklusen2014}. Since $\gamma_q$ is nonincreasing and $q\le d$, summing with $c,r_i,w_b\ge0$ proves \eqref{eq:monotone_recovery}. Sharpness follows from $g(S)=\ind\{S\ne\varnothing\}$ on $d$ elements at $x_i=1/d$: the closure is one and the multilinear value is $\gamma_d$.
\end{proof}
In particular, $\gamma_2=3/4$, $\gamma_3=19/27$, and $\gamma_4=175/256$. The first two apply to every monotone nonnegative quadratic or cubic set function, by Proposition~\ref{prop:classes}. These are sharp correlation and roof-recovery constants.

\subsection{Concave-cardinality components}\label{sec:cardinality}
Let $|A|=d\ge2$ and $g(S)=h(|S|)$, where $h(0),\ldots,h(d)$ is a nonnegative sequence with nonincreasing consecutive differences. Write $\bar h$ for its piecewise-affine interpolation.
\begin{lemma}[Exact cardinality closure]\label{lem:cardinality_closure}
The local concave closure is
\[
 g^+(x)=\bar h\left(\sum_{i\in A}x_i\right).
\]
\end{lemma}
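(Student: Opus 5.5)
We prove the two inequalities $g^+(x)\le\bar h(s)$ and $g^+(x)\ge\bar h(s)$ separately, where $s=\sum_{i\in A}x_i\in[0,d]$.

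\emph{Upper bound.} Take any feasible $\lambda$ in \eqref{eq:closure}. The objective is $\sum_T\lambda_T h(|T|)$. Since $h(|T|)=\bar h(|T|)$ at integers and $\bar h$ is concave on $[0,d]$, Jensen's inequality gives
\[
 \sum_T\lambda_T\bar h(|T|)\le\bar h\Bigl(\sum_T\lambda_T|T|\Bigr).
\]
Concavity of $\bar h$ holds because the consecutive differences of $h$ are nonincreasing. The marginal constraints give $\sum_T\lambda_T|T|=\sum_i\sum_{T\ni i}\lambda_T=s$. Hence $g^+(x)\le\bar h(s)$.

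\emph{Lower bound.} Write $k=\lfloor s\rfloor$ and $s=k+\theta$ with $\theta\in[0,1)$. If $s=d$, then $x=\ind$, and the point mass on $A$ attains $h(d)$. Otherwise we need a coupling with marginals $x$ that is supported on sets of size $k$ and $k+1$, with probabilities $1-\theta$ and $\theta$. Its value is
\[
 (1-\theta)h(k)+\theta h(k+1)=\bar h(s).
\]

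\emph{Constructing the coupling.} This is the one nontrivial step. We use systematic sampling, the same nonatomic-arrangement idea used for \eqref{eq:rooted_closures}. Lay the intervals $I_i$ of lengths $x_i$ consecutively on $[0,s)$. Draw $U$ uniform on $[0,1)$, and let $S=\{i: I_i \text{ contains a point of } U+\mathbb Z\}$. Because each $x_i\le1$, every interval contains at most one point of $U+\mathbb Z$, counting half-open intervals. Therefore $\Pr(i\in S)=x_i$. The number of lattice points $U,U+1,\dots$ lying in $[0,s)$ is $k+1$ with probability $\theta$ and $k$ otherwise, and this number equals $|S|$.

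An alternative route avoids probability. The polytope $\{y\in[0,1]^A:\sum y_i=s\}$ is, for integer $s$, the convex hull of the $k$-subsets, by total unimodularity of the hypersimplex. For fractional $s$, one writes $x=(1-\theta)y+\theta z$ with $\sum y=k$, $\sum z=k+1$, and $y,z\in[0,1]^A$. Such $y,z$ can be obtained by moving along $x$ toward $\lfloor\cdot\rfloor$- and $\lceil\cdot\rceil$-type points. This route needs more checking, so we prefer the sampling construction.

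Combining the upper and lower bounds gives equality.
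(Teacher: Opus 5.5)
Your proof is correct. The upper bound is the same Jensen argument the paper uses, but your lower bound takes a genuinely different route.

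The paper argues polyhedrally. For $s<d$ it shows that the slab $\{z\in[0,1]^A:k\le\sum_i z_i\le k+1\}$ has only integral vertices. A vertex cannot have two fractional coordinates, which could be perturbed in opposite directions. Nor can it have exactly one, which is either forced integral by an integer sum bound or free to move alone. Hence $x$ is a convex combination of incidence vectors of $k$- and $(k+1)$-sets, and the expected-size identity forces the weights $1-\theta$ and $\theta$.

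Your systematic-sampling coupling replaces this existence argument with an explicit construction. It uses $x_i\le1$ exactly once: each half-open interval meets $U+\mathbb Z$ at most once. It then obtains the marginals and the size distribution by direct computation. The resulting distribution is supported on at most $d+1$ sets and is efficiently computable, and it is the same nonatomic-arrangement mechanism as for the rooted closures.

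The paper's route proves a slightly stronger structural fact, integrality of the whole slab, in a few lines. It is essentially the completed form of your ``alternative route'': instead of splitting $x$ between two hypersimplices, one applies the vertex argument to the slab directly, which removes the checking you flagged.
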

\begin{proof}
Jensen's inequality gives the upper bound. If $s=\sum_i x_i<d$ and $k=\lfloor s\rfloor$, the slab $\{z\in[0,1]^A:k\le\sum_i z_i\le k+1\}$ is the convex hull of incidence vectors of sets of sizes $k$ and $k+1$. Indeed, a vertex cannot have two fractional coordinates, since they could be perturbed in opposite directions. Nor can it have exactly one: at an integer sum bound that coordinate must be integral, and away from both sum bounds it can be perturbed alone. A convex combination of these vertices representing $x$ gives a distribution attaining $\bar h(s)$. The case $s=d$ is immediate.
\end{proof}
\begin{theorem}[Sharp cardinality recovery]\label{thm:cardinality}
Set $\rho_0=\rho_1=1$ and $\rho_d=1-(1-1/d)^{d-1}$ for $d\ge2$. Every nonnegative concave-cardinality function on $d$ elements satisfies
\[
 G(x)\ge\rho_d g^+(x).
\]
Moreover, the infimum of $\max_{0\le y\le x}G(y)/g^+(x)$ over this class and positive denominators equals $\rho_d$. For any nonnegative sum of such local functions, the identity map gives factor $\min_b\rho_{|N_b|}$, regardless of overlap.
\end{theorem}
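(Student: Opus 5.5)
The plan is to use linearity in $h$ to reduce to extreme sequences, then reduce to binomial distributions, then reduce to a single point per extreme sequence, where De~Moivre's mean-absolute-deviation formula leaves a one-parameter inequality.

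\textbf{Step 1: reduction to tents.} For fixed $x$, both $G(x)$ and $g^+(x)=\bar h(\sum_ix_i)$ (Lemma~\ref{lem:cardinality_closure}) are linear in the sequence $h$. It therefore suffices to prove $G\ge\rho_dg^+$ on the generators of the cone of nonnegative concave sequences on $\{0,\ldots,d\}$. These generators are the tents
\[
 \tau_k(s)=\min\{s/k,(d-s)/(d-k)\}\qquad(0\le k\le d),
\]
with the conventions $\tau_0=(d-s)/d$ and $\tau_d=s/d$. The affine tents $k=0,d$ give $G=g^+$, so they are trivial.

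\textbf{Step 2: reduction to the binomial.} Fix $1\le k\le d-1$ and write $X=|R_x|$, a Poisson-binomial variable with mean $s=\sum_ix_i$. Hoeffding's comparison says $\mathrm{Bin}(d,s/d)$ dominates $X$ in convex order. Hence, for the concave $\tau_k$,
\[
 G(x)=\E\tau_k(X)\ge B_k(s):=\E\tau_k\bigl(\mathrm{Bin}(d,s/d)\bigr).
\]

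\textbf{Step 3: reduction to the knot.} The function $B_k$ is the Bernstein polynomial of a concave function, so it is concave in $s$. Moreover $B_k$ and $\tau_k$ both vanish at $s=0$ and $s=d$. On $[0,k]$ the tent is linear through the origin, so $B_k(s)/\tau_k(s)$ is nonincreasing in $s$; symmetrically, on $[k,d]$ it is nondecreasing. The ratio is therefore minimized at the knot $s=k$, where $\tau_k(k)=1$.

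\textbf{Step 4: evaluating the knot.} Put $p=k/d$ and write
\[
 \tau_k(X)=1-\frac{(k-X)_+}{k}-\frac{(X-k)_+}{d-k}.
\]
Since $\E X=k$, both positive parts have expectation $\tfrac12\E|X-k|$. De~Moivre's formula gives $\E|X-k|=2k(1-p)\,b_k$, where $b_k=\Prb(X=k)$. Combining these,
\[
 B_k(k)=1-\frac{d}{2k(d-k)}\,\E|X-k|=1-b_k .
\]
The claim thus reduces to
\[
 \max_{1\le k\le d-1}\binom dk(k/d)^k(1-k/d)^{d-k}=(1-1/d)^{d-1},
\]
that is, the central binomial probability at integer mean is largest at $k=1$ (equivalently $k=d-1$). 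I expect this to be the main obstacle. I would first try to show that $k\mapsto\log b_k$ is symmetric about $d/2$ and convex on $\{1,\ldots,d-1\}$, by comparing consecutive ratios $b_{k+1}/b_k$ via the function $t\mapsto t\log t$. Failing that, I would use Stirling-type bounds for large $d$ and direct verification for small $d$. For sums of local functions, the identity map is downward, and summing via Corollary~\ref{cor:aggregation} gives $\min_b\rho_{|N_b|}$ regardless of overlap.

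\textbf{Sharpness.} Take $g=\tau_1(|S|)$ and $x=\ind/d$, so $g^+(x)=1$. The computation above gives $G(x)=1-(1-1/d)^{d-1}=\rho_d$. For the lower-box maximum, note that
\[
 \partial_iG(y)=\Prb(X_{-i}=0)-\tfrac1{d-1}\Prb(X_{-i}\ge1),
\]
where $X_{-i}$ counts the selected coordinates other than $i$. On $[0,x]$ we have $\Prb(X_{-i}=0)\ge(1-1/d)^{d-1}\ge1/e\ge1/d$, so this derivative is nonnegative. Hence $\max_{0\le y\le x}G(y)=G(x)$, and the infimum equals $\rho_d$.
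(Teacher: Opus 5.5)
You follow the paper's route: tent generators, reduction to $\Bin(d,s/d)$ (you cite Hoeffding's convex order where the paper proves it by pairwise averaging), monotonicity of the tent ratio on each side of the knot, and the knot value $1-b_k$ with $b_k=\binom dk(k/d)^k(1-k/d)^{d-k}$. Your De~Moivre computation matches the paper's identity $\E(X-k)_+=dp(1-p)\Pr(\Bin(d-1,p)=k)$.

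\textbf{The gap.} You do not prove $\max_{1\le k\le d-1}b_k=b_1$. This step produces the constant $\rho_d$. Without it, your argument only gives $G\ge(1-\max_kb_k)\,g^+$. Your first plan works, and the missing computation is short:
\[
 \frac{b_{k+1}}{b_k}=\frac{(1+1/k)^k}{\bigl(1+1/(d-k-1)\bigr)^{d-k-1}}\qquad(1\le k\le d-2).
\]
The map $u\mapsto(1+1/u)^u$ is increasing, because $\log(1+v)>v/(1+v)$ for $v>0$. Hence $\log(b_{k+1}/b_k)$ increases in $k$, so $k\mapsto\log b_k$ is convex on $\{1,\ldots,d-1\}$. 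Together with the symmetry $b_k=b_{d-k}$, the maximum is attained at $k=1$ and $k=d-1$, where it equals $(1-1/d)^{d-1}$. The Stirling-plus-small-cases fallback is not needed, and it would require a uniform large-$d$ bound to be complete.

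\textbf{Two smaller points.}
\begin{enumerate}
\item In the sharpness argument, the chain $(1-1/d)^{d-1}\ge1/e\ge1/d$ fails at $d=2$, since $1/e<1/2$. Use Bernoulli's inequality $(1-1/d)^{d-1}\ge1-(d-1)/d=1/d$ instead, as the paper does; it holds for all $d\ge2$, with equality at $d=2$.
\item You do not treat the cases $d=0,1$. There only affine generators occur, so $G=g^+$, and any positive constant function shows that factor one is attained.
\end{enumerate}
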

\begin{proof}
For $d=0,1$, the extension and closure are the same affine function; a positive constant witnesses sharpness of factor one. For $d\ge2$, Appendix~\ref{app:cardinality} proves the scalar inequality and its sharpness under arbitrary downward recovery. Every component uses the identity map, so summation and Theorem~\ref{thm:common} give the optimization guarantee.
\end{proof}
A sequence of length $d+1$ supplies a roof with $d$ affine hypograph constraints. The multilinear expectation is evaluated from the distribution of a sum of independent Bernoulli variables with possibly unequal success probabilities, called the \emph{Poisson--binomial distribution}. Convolving the Bernoulli coefficients requires $O(d^2)$ arithmetic operations and has polynomial bit complexity for rational inputs. No fixed arity is needed when the sequence is the input representation.

For symmetric zero-endpoint sequences, $h(k)=h(d-k)$ and $h(0)=h(d)=0$, the sharp identity-map constants are
\begin{equation}\label{eq:symmetric_cardinality}
 \rho_d^{\rm sym}=\begin{cases}
 1/2,&d=2,\\2/3,&d=3,\\5/8,&d=4,\\
 1-(1-1/d)^d-d^{-d},&d\ge5.
 \end{cases}
\end{equation}
Appendix~\ref{app:cardinality} proves the finite characterization and these values. Their infimum for $d\ge3$ is $5/8$, and they converge to $1-1/e$.

\subsection{Separable concave additions}\label{sec:addons}
For efficiently represented separable concave terms, subtract each endpoint chord and absorb the chords into the base polynomial. The remaining residual $r$ is concave, nonnegative, and satisfies $r(0)=r(1)=0$. If a downward portfolio satisfies $\sum_sp_s\phi_s(t)\ge\rho t$, then, for $t>0$,
\[
 \sum_sp_sr(\phi_s(t))\ge\frac{r(t)}t\sum_sp_s\phi_s(t)\ge\rho r(t).
\]
Every map vanishes at zero. These concave additions therefore preserve the portfolio guarantees whenever the base polynomial after chord absorption has the required nonnegative certificate and the resulting roof can be optimized efficiently.

\section{Sharpness of the specified recovery models}\label{sec:frontiers}
The sharpness results depend on the roof, benchmark, and permitted recovery rule, as recorded in Table~\ref{tab:sharpness_scope}. The examples distinguish losses from a fixed roof, representation constraints, incompatible local couplings, and the requirement to use common scalar maps. For termwise portfolios using both polarities, the gap becomes strict at degree thirteen.

\begin{table}[htbp]
\centering\small
\setlength{\tabcolsep}{4pt}
\caption{The comparison and recovery class in each sharpness theorem. Positive denominators are understood. Each guarantee at a roof optimizer is required at every such optimizer.}\label{tab:sharpness_scope}
\begin{tabularx}{\textwidth}{@{}p{.14\textwidth}YYY@{}}\toprule
Result & Roof / representation & Permitted recovery & Comparison and boundary\\\midrule
Theorem~\ref{thm:joint_frontiers} & Standard roof of a fixed negaform & Arbitrary joint, instance-dependent points below a roof maximizer & Roof value: $1/2$ at literal degree two; $8/17$ at three through twelve\\[4pt]
Theorem~\ref{thm:best_representation} & Infimum over uniform-polarity rooted representations of a multilinear objective & Arbitrary joint points below a maximizer of the optimized roof & True optimum: $1/2$ for quadratic and $8/17$ for cubic input\\[4pt]
Theorem~\ref{thm:aggregate_frontier} & Sum of the specified local concave closures; arity at most four & Arbitrary joint points in the lower box, with unrestricted overlap & Additive local roof: exactly $8/17$\\[4pt]
Theorem~\ref{thm:thirteen} & Termwise atom roofs through literal degree thirteen; both polarities and positive unaries & Finite common-coordinatewise portfolios with fixed weights & Supremum of universal termwise factors: strictly below $8/17$\\\bottomrule
\end{tabularx}
\end{table}

The first three rows permit arbitrary joint optimization within the lower box. The last requires common scalar maps satisfying a separate inequality for every generator. Part~\ref{part:hardness} treats polynomial-time hardness independently of these recovery models.

\subsection{Fixed-negaform joint downward recovery}
For a literal-degree cap $d$, define
\begin{equation}\label{eq:joint_frontier}
 \rho_d^{\rm joint}=
 \inf_{(F,\N,\K,\bar x)}
 \frac{\max\{F(y):0\le y\le\bar x\}}{L_\N(\bar x)}.
\end{equation}
The infimum ranges over all finite dimensions, supplied certificates in $\CC_{\le d}$ subordinate to an exact negaform $\N$, compact convex down-closed sets $\K$, and roof maximizers $\bar x$ with $L_\N(\bar x)>0$. Recovery may use the full instance and unlimited computation, but its output must lie below $\bar x$. Randomization cannot improve on the lower-box maximum. The infimum over $\bar x$ requires the guarantee to hold at every roof maximizer.

\begin{theorem}[Fixed-roof joint frontiers]\label{thm:joint_frontiers}
The values in \eqref{eq:joint_frontier} satisfy
\[
 \rho_2^{\rm joint}=\tfrac12,\qquad
 \rho_d^{\rm joint}=\tfrac8{17}\quad(3\le d\le12),\qquad
 \gamma_*\le\rho_d^{\rm joint}\le\tfrac8{17}\quad(d\ge13).
\]
The cubic upper bound holds already for positive-only NNS certificates. The quadratic and cubic bounds can also be realized asymptotically relative to the original objective optimum, not only to the roof value.
\end{theorem}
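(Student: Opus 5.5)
The proof splits into lower bounds, which come from the common-map results already proved, and upper bounds, which come from explicit instances. \textbf{Lower bounds.} For $d=2$, Theorem~\ref{thm:quadratic} (clipping by $\phi_2$) supplies one downward map $y=\phi_2(\bar x)\le\bar x$ with $F(y)\ge L_\N(\bar x)/2$. For $3\le d\le12$, Theorem~\ref{thm:twelve_atoms} and Lemma~\ref{lem:mixed}, aggregated by Corollary~\ref{cor:aggregation}, give $\tfrac{8}{17}F(\alpha_{12}(\bar x))+\tfrac{9}{17}F(\beta_{12}(\bar x))\ge\tfrac{8}{17}L_\N(\bar x)$. For every finite $d$, Theorem~\ref{thm:single} with Lemma~\ref{lem:mixed} gives $F(\phi_*(\bar x))\ge\gamma_*L_\N(\bar x)$. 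Each recovered point lies in $[0,\bar x]$, so the lower-box maximum dominates it. This holds at every $\bar x$, not only at roof maximizers, so these lower bounds need no optimality of $\bar x$. Since $\CC_{\le d}$ is increasing in $d$, the upper bound for $d\ge13$ is inherited from the cubic construction.

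\textbf{Quadratic upper bound.} Use the odd directed cycle of Section~\ref{sec:quadratic_boundary} with $\K=[0,1]^n$. First show that $\bar x=\ind/2$ is the unique roof maximizer: $L_n\le\sum_i\tfrac12(x_i+1-x_{i+1})=n/2$, and equality forces $x_i=1-x_{i+1}$ around an odd cycle. Next, $\partial_iM_n=(1-x_{i+1})-x_{i-1}\ge0$ on $[0,\bar x]$, so the lower-box maximum is $M_n(\bar x)=n/4$ and the ratio is exactly $1/2$. For the comparison with the true optimum, the Boolean optimum $(n-1)/2$ gives ratio $n/[2(n-1)]\to1/2$.

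\textbf{Cubic upper bound.} This is the main obstacle: build a positive-only NNS instance realizing the contact identity of Section~\ref{sec:cubic_boundary}. The plan is to combine a symmetric family of cycle-like rank-one edges, which drives midpoint coordinates to $1/2$, with rank-two atoms $a(1-uv)$ whose roots are coordinates forced to $1$ and whose heads are midpoint coordinates. The weights should make the roof value approach $17/12$ at $\bar x$, which has root coordinates at $1$ and head coordinates at $1/2$. Uniqueness of this maximizer must be forced by the down-closed $\K$, for instance by a simplex-type constraint coupling the roots, so that the guarantee is required there.

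Then compute the lower-box maximum. By symmetry and concavity arguments, averaging over the symmetry group reduces to a uniform scaling with root value $q\in[0,1]$ and head value $t\in[0,1/2]$. The objective then becomes the two-variable function $H(q,t)$ up to normalization, and the identity
\[
 \tfrac23-H(q,t)=(1-q)(t-\tfrac12)^2+\tfrac32q(t-\tfrac13)^2
\]
bounds the lower-box maximum by $2/3$, giving the ratio $(2/3)/(17/12)=8/17$. The delicate points are twofold. The symmetrization must be legitimate, which I would justify by exploiting the DR structure and symmetry of the instance rather than by general concavity of $F$, since $F$ is not concave; asymptotic symmetrization over many copies may be needed. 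In addition, $\K$ must be designed so that the roof maximizer is essentially pinned while a better feasible point outside the box has original value near $17/12$, which yields the statement relative to $\OPT$.
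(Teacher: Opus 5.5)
The lower bounds and the odd-cycle quadratic bound are correct and match the paper's argument, and nesting of $\CC_{\le d}$ does carry the cubic upper bound to $d\ge13$. The gap is the cubic upper bound, which is the substantive part of the theorem. You leave it as a plan, and the plan would not work as described, for two reasons.

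\emph{The penalty term is missing.} Your ingredients do not produce the term $\frac5{12}(1-q)$ in $H$. With only rank-two atoms rooted at coordinates equal to $1$ and midpoint edges, for instance $\frac12a(1-t^2)+t(1-t)$, the roof is $1$ at $(1,\frac12)$ while the value at $(1,\frac13)$ is $\frac23$. That gives ratio $2/3$, not $8/17$. The penalty needs another positive-root atom whose \emph{head} is the rank-two root and which contributes $5/12$ to the roof, e.g.\ $\frac5{12\eta}x_0(1-x_1)$ with $x_0\le\eta$. This forces the rank-two root to sit at $1-\eta$, not at $1$: at $x_1=1$ the roof $\min\{x_0,1-x_1\}$ of that atom vanishes. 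That is why the roof value is $\frac{17}{12}-\frac\eta2$.

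\emph{Symmetrization fails.} For a multilinear objective, Lemma~\ref{lem:vertex} puts the lower-box maximum at a vertex, which need not be symmetric. Since $F$ is not concave, averaging over the symmetry group gives no bound and in fact goes the wrong way. For the sparse structure you suggest it is simply false. Put midpoint coordinates on an odd directed cycle, normalize so the roof still approaches $17/12$, and place rank-two heads on cycle pairs. Retaining alternate midpoint coordinates at $1/2$ makes every head product vanish while the edges still contribute about $1/4$. The lower-box value then tends to $\frac12+\frac14=\frac34$, a ratio of $9/17>8/17$. A distinct-variable instance needs a dense head structure (complete or quasirandom graphs) and an exact count over retained sets, as in Appendices~\ref{app:overlap} and~\ref{app:representation_frontiers}.

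The missing idea is that literal degree counts occurrences, so repeated variables are allowed here, including in NNS form. One midpoint coordinate then suffices, and no symmetrization is needed. The paper takes
\[
 F_{\eta,\delta}=\frac5{12\eta}x_0(1-x_1)+\frac12x_1(1-x_2^2)+x_2(1-x_2)+(U_\eta-\delta)z,\qquad U_\eta=\frac{17}{12}-\frac\eta2,
\]
with $0<\delta<U_\eta$, on $\K_\eta=\{x_0\le\eta(1-z),\ x_1\le(1-\eta)(1-z),\ x_2\le\frac12(1-z)\}\cap[0,1]^4$. On $\K_\eta$ every atom roof equals its root. The roof is therefore at most $U_\eta(1-z)+(U_\eta-\delta)z$, with maximizer $(\eta,1-\eta,\frac12,0)$. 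Below this maximizer, $z=0$, and monotonicity gives $x_0=\eta$. The objective is then exactly $H(s,t)$ with $s=x_1\le1-\eta$ and $t=x_2\le\frac12$, which is at most $2/3$ by the identity you quote. The point $(0,0,0,1)$ has value $U_\eta-\delta$, which gives the comparison with $\OPT$. You do not need uniqueness of the roof maximizer, since the infimum in \eqref{eq:joint_frontier} already ranges over every maximizer.
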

\begin{proof}
For degree two, the rank-one clipping inequalities in the proof of Theorem~\ref{thm:quadratic}, followed by Lemma~\ref{lem:mixed} and nonnegative aggregation, cover the entire certificate cone, including mixed-polarity atoms. Corollary~\ref{cor:twelve} and Theorem~\ref{thm:single} give the remaining lower bounds. The following constructions prove the quadratic and cubic upper bounds.

For odd $n\ge3$, use the directed cycle
\[
 F_n(x)=\sum_{i=1}^n x_i(1-x_{i+1}),\qquad
 L_n(x)=\sum_{i=1}^n\min\{x_i,1-x_{i+1}\},\quad x_{n+1}=x_1.
\]
On the cube, $L_n\le n/2$, with equality only at $\bar x=\ind/2$: equality in every minimum requires $x_i=1-x_{i+1}$, and oddness forces all coordinates to be $1/2$. On $[0,\bar x]$, each derivative $1-x_{i-1}-x_{i+1}$ is nonnegative, so the largest value is $n/4$. The roof ratio is exactly $1/2$. Multiaffinity gives the full-cube maximum $(n-1)/2$, so the ratio to that optimum is $n/[2(n-1)]\to1/2$. There is no diagonal residual, so this example also applies to the hybrid quadratic roof.

For the cubic bound, fix $0<\eta<1$ and put
\begin{equation}\label{eq:cubic_obstruction}
 \begin{split}
 G_\eta(x_0,x_1,x_2)&=\frac5{12\eta}x_0(1-x_1)
       +\frac12x_1(1-x_2^2)+x_2(1-x_2),\\
 U_\eta&=\frac{17}{12}-\frac\eta2,\qquad
 F_{\eta,\delta}(x,z)=G_\eta(x)+(U_\eta-\delta)z,
 \end{split}
\end{equation}
where $0<\delta<U_\eta$. Optimize over
\[
 \K_\eta=\left\{(x,z)\in[0,1]^4:
 x_0\le\eta(1-z),\quad x_1\le(1-\eta)(1-z),\quad
 x_2\le\tfrac12(1-z)\right\}.
\]
Every displayed term is nonnegative and has a positive-only rooted certificate. On this set the three nonlinear roofs equal $x_0,x_1,x_2$, respectively. Because their coefficients are positive, a roof maximizer saturates the coordinate bounds. The roof is at most
$U_\eta(1-z)+(U_\eta-\delta)z$.
Its unique maximizer is $(\eta,1-\eta,1/2,0)$ with value $U_\eta$.

Below this maximizer, $z=0$; monotonicity in the first coordinate permits $x_0=\eta$. Writing $s=x_1$ and $t=x_2$, the objective becomes
\[
 \frac5{12}(1-s)+\frac12s(1-t^2)+t(1-t).
\]
For fixed $t$, it is affine in $s\in[0,1-\eta]$. At $s=0$ its maximum is $2/3$ at $t=1/2$. At $s=1-\eta$, its maximum over $0\le t\le1/2$ is
\[
 \frac12-\frac\eta{12}+\frac1{2(3-\eta)}\le\frac23.
\]
The lower-box maximum is therefore $2/3$, and its ratio to $U_\eta$ tends to $8/17$. The incomparable feasible point $(0,0,0,1)$ has value $U_\eta-\delta$, so the ratio to the true optimum has the same upper limit as $\eta,\delta\downarrow0$. This cubic example belongs to every larger nested literal-degree class.
\end{proof}

\subsection{Representation-optimized multilinear roofs}
Let $F$ be the multilinear extension of a nonnegative submodular set function of degree at most three. Among all finite nonnegative uniform-polarity rooted representations $\mathcal R$ of $F$, initially allowing arbitrary supports, define
\begin{equation}\label{eq:best_representation_roof}
 L_F^*(x)=\inf_{\mathcal R}L_{\mathcal R}(x).
\end{equation}
The representation family is nonempty by Proposition~\ref{prop:cubic_certificate}. The bounds $F\le L_F^*\le L_{\mathcal R_0}$ for any fixed feasible representation make the roof finite-valued. An infimum of concave functions is concave, so it is a concave majorant. As an infimum of continuous roofs it is upper semicontinuous and thus attains its maximum on every nonempty compact feasible set. The infimum is over the stated distinct-variable uniform-polarity representations.

The freedom to change this roof can be expressed as a packing problem. After matching the cubic coefficients, one may add equal positive- and complemented-product mass to a triple only by consuming residual capacity on each of its three pairs. Appendix~\ref{app:representation_frontiers} characterizes all such changes by a fractional triple-packing system. If the graph of positive residual pair capacities is triangle-free, no triple can receive this extra mass, and every permitted representation induces the same roof. The resulting uniqueness of the roof prevents representation optimization from removing the following obstructions.

\begin{theorem}[Representation-optimized boundaries]\label{thm:best_representation}
The method first maximizes \eqref{eq:best_representation_roof} over a nonempty compact down-closed set and then may return the best point in $[0,\bar x]$, where $\bar x$ is a roof maximizer. Its largest universal ratio, required to hold for every roof maximizer and measured relative to the true multilinear optimum, is $1/2$ for nonnegative submodular polynomials of degree at most two and $8/17$ for those of degree at most three.
\end{theorem}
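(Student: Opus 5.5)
The plan has two halves: a lower bound showing the method attains the stated ratios, and an upper bound exhibiting instances where it cannot do better.

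\textbf{Lower bound.} Let $\bar x$ maximize $L_F^*$ over $\K$, and fix any feasible representation $\mathcal R_0$. The bound $L_F^*\ge F$ gives $L_F^*(\bar x)\ge\max_\K L_F^*\ge\OPT(F,\K)$. The difficulty is that $L_F^*$ is an infimum, so recovery cannot be applied to $L_F^*$ directly. Instead, for each $\epsilon>0$ I choose a representation $\mathcal R_\epsilon$ with $L_{\mathcal R_\epsilon}(\bar x)\le L_F^*(\bar x)+\epsilon$. Its atoms are distinct-variable uniform-polarity atoms of degree at most three, so clipping (degree two, via \eqref{eq:clipping}) or the elementary pair (degree three, Lemma~\ref{lem:cubic_maps}) pays $\mathcal R_\epsilon$ at the point $\bar x$. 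Corollary~\ref{cor:aggregation} then gives
\[
\max_s F(R_s(\bar x))\ge\rho\,L_{\mathcal R_\epsilon}(\bar x)\ge\rho\,L_F^*(\bar x)\ge\rho\,\OPT(F,\K).
\]
Each $R_s(\bar x)$ lies in $[0,\bar x]$, and the best point of the lower box is at least as good. This holds at every roof maximizer, so it suffices for the universal ratio. One point to check: for degree two, every nonnegative uniform-polarity rooted representation has atoms of degree at most two. Atoms of higher degree could in principle cancel, so I would restrict the representation family to the degree of $F$, as the statement's class suggests, or note that Theorem~\ref{thm:single}'s bound also pays them. Since this yields only $\gamma_*$, the cleaner route is to insist on degree-bounded representations.

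\textbf{Upper bound.} I reuse the obstructions from Theorem~\ref{thm:joint_frontiers}: the odd directed cycle $F_n$, and the cubic instance $F_{\eta,\delta}$ on $\K_\eta$. Both are multilinear, since $x_2^2$ in \eqref{eq:cubic_obstruction} must be replaced. I would rewrite the cubic witness with distinct variables, for example splitting $x_2$ into two copies $x_2,x_2'$ constrained equally by $\K_\eta$ and using $x_1(1-x_2x_2')$ together with $\tfrac12[x_2(1-x_2')+x_2'(1-x_2)]$. The joint-frontier computation then has to be rechecked for the lower-box maximum, where $t\ne t'$ is allowed; by symmetry and concavity the maximum should remain $2/3$. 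The key extra step is that $L_F^*$ coincides with the given roof for these instances. Here I invoke the packing characterization in Appendix~\ref{app:representation_frontiers}: if the graph of positive residual pair capacities is triangle-free, every representation induces the same roof. For the cycle, $F_n$ has no cubic terms, its pair graph is an odd cycle with $n\ge5$, and this graph is triangle-free, so $L_F^*=L_n$. For the cubic witness, the only cubic monomial is supported on $\{1,2,2'\}$. I must verify that at least one of its pairs has no positive residual capacity after matching coefficients. The pair $\{1,2\}$ carries only the mass forced by the triple itself, so it should be exhausted. With the roof unchanged, the roof maximizers and the lower-box maxima are exactly as in Theorem~\ref{thm:joint_frontiers}. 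The ratios relative to the true optimum are then $n/[2(n-1)]\to1/2$ and $(2/3)/(U_\eta-\delta)\to8/17$.

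\textbf{Main obstacle.} I expect the hard step to be certifying that representation optimization cannot lower the roof at the cubic witness. This requires checking, after the multilinear rewrite, that the residual pair capacities are triangle-free, or otherwise computing $L_F^*$ at the relevant points directly through the triple-packing LP. Uniqueness of the maximizer $(\eta,1-\eta,1/2,1/2,0)$ must also be rechecked.
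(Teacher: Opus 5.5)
\textbf{The gap is in the cubic upper bound.} Your quadratic upper bound is fine: the plain odd cycle on the cube with $n\ge5$ has a triangle-free capacity graph, and this is a slightly simpler variant of the paper's augmented cycle. The cubic upper bound fails at exactly the step you flagged. Take your two-copy split and the lower box below $(\eta,1-\eta,\tfrac12,\tfrac12,0)$. At the vertex $x_0=\eta$, $x_1=1-\eta$, $x_2=\tfrac12$, $x_2'=0$, the objective equals $\tfrac{5\eta}{12}+\tfrac{1-\eta}{2}+\tfrac14=\tfrac34-\tfrac{\eta}{12}$. Zeroing one copy kills the product $x_2x_2'$, so the cubic atom is paid in full, while the cut term still contributes $\tfrac14$. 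Checking all eight vertices (Lemma~\ref{lem:vertex}) shows this is the lower-box maximum. It exceeds $\tfrac23$, so the ratio to the roof tends to $9/17>8/17$. The lower box is governed by multilinearity, not concavity, so the ``symmetry and concavity'' step is where the argument breaks.

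The same defect occurs whenever $x_2^2$ is replaced by products $t_it_j$ along the edges of a graph with a large vertex set spanning few edges, for example a bipartite graph with one side retained. A dense graph such as the complete graph does mimic $t^2$, but it has triangles. The triple-packing freedom \eqref{eq:triple_packing} could then lower $L_F^*$ and break the roof computation.

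The missing idea is Lemma~\ref{lem:quasirandom}: a sparse triangle-free graph $G_m$, obtained from $G(m,m^{-2/3})$ by deleting one edge per triangle. It satisfies $\vol(A)/Z_m=u+o(1)$ and $e(A)/Z_m=u^2/2+o(1)$ uniformly over all $A$. Place the atoms $x_1(1-t_it_j)$ and the cut terms on its edges with weight $1/Z_m$. Then every lower-box vertex has value $\tfrac5{12}+\tfrac{s}{12}+\tfrac u2-\tfrac{2+s}{8}u^2+o(1)\le\tfrac23+o(1)$. The positive pair-capacity graph is $G_m$ plus the edge $\{x_0,x_1\}$, which is still triangle-free. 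Hence $L_F^*$ equals the displayed roof, whose value tends to $17/12$.

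\textbf{A smaller problem in the lower bound.} Your detour through a near-optimal representation $\mathcal R_\epsilon$ breaks for quadratics. When the capacity graph of a quadratic $F$ contains a triangle, \eqref{eq:triple_packing} admits $t_T>0$, so $\mathcal R_\epsilon$ may contain cubic atoms, and clipping does not pay those at factor $1/2$. Restricting the representation family instead would change $L_F^*$, and with it the maximizers $\bar x$ in the statement. No $\epsilon$-argument is needed. For the fixed orientation representation $\mathcal R_0$ of degree $\deg F$, one has $L_{\mathcal R_0}\ge L_F^*$ pointwise. Clipping or the elementary pair therefore recovers at least $\rho L_{\mathcal R_0}(\bar x)\ge\rho L_F^*(\bar x)\ge\rho\OPT(F,\K)$ at every maximizer $\bar x$ of $L_F^*$. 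This is the paper's argument.
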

\begin{proof}
For a quadratic $F$, choose the degree-two representation constructed by Theorem~\ref{thm:orientation}; clipping recovers at least $L_{\mathcal R}(x)/2\ge L_F^*(x)/2$. For a cubic $F$, choose the degree-three representation constructed by Proposition~\ref{prop:cubic_certificate}; Lemma~\ref{lem:cubic_maps} gives weighted recovered value at least $8L_{\mathcal R}(x)/17\ge8L_F^*(x)/17$. Theorem~\ref{thm:common} proves the lower bounds at a maximizer of $L_F^*$. Appendix~\ref{app:representation_frontiers} expresses the remaining coefficient freedom as a linear program and constructs instances for which it cannot change the roof. Augmented odd cycles prove the quadratic upper bound; the cubic graph construction proves the cubic upper bound, with all roof maximizers agreeing on the coordinates relevant to the objective.
\end{proof}
Theorem~\ref{thm:best_representation} treats multilinear objectives and optimizes over the representation family in \eqref{eq:best_representation_roof}; the fixed-negaform construction \eqref{eq:cubic_obstruction} uses repeated variables.

\subsection{Additive local roofs and incompatibility under overlap}
For all local conic combinations of arity at most four, let
\[
 \rho_4^{\rm agg}=
 \inf_{(f,\{w_b,g_b,N_b\}),\,x:L(x)>0}
 \frac{\max_{0\le y\le x}F(y)}{L(x)}.
\]
The infimum ranges over all finite ground sets and finite decompositions, with $x\in[0,1]^N$. Here $L$ is the sum of the closures of the \emph{specified} local functions.
\begin{theorem}[Exact aggregate local-roof factor]\label{thm:aggregate_frontier}
The aggregate value is $\rho_4^{\rm agg}=8/17$. The upper bound already uses local functions of arity three and permits arbitrary joint downward recovery.
\end{theorem}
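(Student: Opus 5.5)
The proof has two halves: a lower bound $\rho_4^{\rm agg}\ge 8/17$ from the arity-four portfolio, and a matching upper bound from an explicit family built from arity-three local functions.

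For the lower bound I will apply Theorem~\ref{thm:arity_four}. Fix a local conic combination of arity at most four and a point $x$ with $L(x)>0$. Inequality \eqref{eq:arity_pair} holds for each component $g_b$ at $x_{N_b}$. The maps $\psi_0,\psi_1$ act commonly on coordinates, so Corollary~\ref{cor:aggregation} lets us multiply by $w_b$ and sum, which gives
\[
 8F(\psi_0(x))+9F(\psi_1(x))\ge 8L(x).
\]
Both $\psi_0(x)$ and $\psi_1(x)$ lie in $[0,x]$. Hence $\max_{0\le y\le x}F(y)\ge\tfrac{8}{17}L(x)$, and so $\rho_4^{\rm agg}\ge 8/17$.

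For the upper bound I will reuse the cubic obstruction \eqref{eq:cubic_obstruction}, now read as a multilinear local instance. The repeated-variable term $x_1(1-x_2^2)$ is not multilinear, so it has to be replaced. I will introduce two distinct coordinates $x_2,x_2'$ and use the arity-three rooted function $u_{\{1,2,2'\},1}$, whose extension is $x_1(1-x_2x_2')$. The self-atom $x_2(1-x_2)$ becomes a directed cut $x_2(1-x_2')$, possibly symmetrized with $x_2'(1-x_2)$. The term $x_0(1-x_1)$ stays as an arity-two cut. By \eqref{eq:rooted_closures} each closure is its standard atom roof, so $L$ is exactly the roof used before. Unlike Theorem~\ref{thm:joint_frontiers}, the benchmark here is only the fixed point $x$, not a roof maximizer. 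So I can simply take
\[
 x=\bigl(\eta,\,1-\eta,\,\tfrac12,\,\tfrac12\bigr).
\]
At this point I will check that $L(x)\to 17/12$ as $\eta\downarrow 0$. I will then bound the lower-box maximum of $F$ by $2/3+o(1)$.

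The hard step is this last bound, because the two copies $y_2,y_2'$ may now differ, which could let a joint downward choice beat $2/3$. Monotonicity in $y_0$ first sets $y_0=\eta$. The objective is then affine in $s=y_1$, so it suffices to check $s=0$ and $s=1-\eta$. At each endpoint I will maximize a quadratic in $(y_2,y_2')\in[0,1/2]^2$. I expect the maximizer to lie on the diagonal $y_2=y_2'$, which reduces to the scalar computation already done for Theorem~\ref{thm:joint_frontiers}; the identity $\tfrac23-H(q,t)$ from Section~\ref{sec:cubic_boundary} gives the equality structure.

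If the asymmetric split does help, I will rebalance the coefficients on the two cut directions so that the quadratic form in $(y_2-y_2')$ is negative, keeping the midpoint value $1/4$ per unit weight. I will also rescale the $x_0$ cut coefficient so the diagonal analysis is unchanged. Taking $\eta\downarrow 0$ then gives a ratio tending to $(2/3)/(17/12)=8/17$, which matches the lower bound.
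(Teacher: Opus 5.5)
Your lower bound is correct and is the paper's argument. The upper-bound construction fails, though, at exactly the step you flagged: the maximizer does not stay on the diagonal.

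Take heads $a=y_2$, $b=y_2'$ and the symmetrized cut $\tfrac12[a(1-b)+b(1-a)]$. The point $(\eta,1-\eta,\tfrac12,0)\le x$ kills the head product. The rank-two atom is then paid in full, while the cut still earns $\tfrac14$:
\[
 F\bigl(\eta,1-\eta,\tfrac12,0\bigr)=\tfrac{5\eta}{12}+\tfrac{1-\eta}{2}+\tfrac14\longrightarrow\tfrac34,
\]
so the ratio tends to $9/17$. The unsymmetrized cut is worse: $(\eta,0,\tfrac12,0)$ gives $\tfrac{11}{12}$.

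Neither rebalancing nor rescaling fixes this. Put weights $A/\eta$, $B$, $C$ on the three pieces, and split $C$ between the two cut directions in any way. The roof at $x$ is then $A+B(1-\eta)+C/2$. The better of $(\eta,0,\tfrac12,0)$, $(\eta,1-\eta,\tfrac12,0)$ and their mirror images ($y_2\leftrightarrow y_2'$) gives at least $\max\{A,B(1-\eta)\}+C/4$. That is at least half the roof, so the ratio never falls below $1/2$.

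The underlying reason is that a multilinear submodular objective is convex along $e_2-e_{2'}$, since its second derivative there is $-2\partial_{22'}F\ge0$. So there is no negative form in $y_2-y_2'$ to create, and two distinct copies cannot enforce the equality that $x_2^2$ imposed for free.

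The missing idea is to emulate the repeated variable with many copies rather than two. The paper takes $m=3q$ tail coordinates $t_1,\ldots,t_m$. It places the arity-three atoms $x_1(1-t_it_j)$ and the symmetric cuts $t_i(1-t_j)+t_j(1-t_i)$ on \emph{every} pair, each with weight $1/D$ where $D=m(m-1)$. It adds $\tfrac{5q}{12}x_0(1-x_1)$ and evaluates at $\bar x_q=(q^{-1},1-q^{-1},\ind/2)$, where $L(\bar x_q)=\tfrac{17}{12}-\tfrac1{2q}$.

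With $x_0=q^{-1}$ the objective becomes $\tfrac5{12}+\tfrac s{12}+\mu-(1+s/2)Q$. Here $\mu=m^{-1}\sum_it_i$, and $Q=D^{-1}\sum_{i\ne j}t_it_j$ plays the role of $t^2$. By Lemma~\ref{lem:vertex} and symmetry, only the number $k$ of tails kept at $\tfrac12$ matters. Zeroing some heads now lowers $\mu$ and $Q$ together, as lowering $t$ did in the repeated-variable witness, instead of freeing the atom at no cost to the cuts.

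The two endpoint values in $s$ are $\tfrac23$ minus nonnegative multiples of products of consecutive integers. Hence the lower-box maximum is exactly $\tfrac23$, and the ratio $8q/(17q-6)$ decreases to $8/17$.
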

\begin{proof}
The lower bound is Theorem~\ref{thm:arity_four}. Appendix~\ref{app:overlap} constructs, for every integer $q\ge2$, an arity-three local decomposition and a point $\bar x_q$ at which
\[
 L(\bar x_q)=\frac{17}{12}-\frac1{2q},\qquad
 \max_{0\le y\le\bar x_q}F(y)=\frac23.
\]
The resulting ratio $8q/(17q-6)$ decreases to $8/17$. The point $\bar x_q$ maximizes the additive roof over its lower box.
\end{proof}
An explicit two-function example in the same appendix has ratio $277/562<1/2$, even though every element occurs in at most two local supports. The two local closures require incompatible joint laws on their intersection.

These constants are sharp for additive local roofs. The examples do not establish the same constants relative to the global concave closure $f^+$ or restrict recovery methods based on other roofs.

\subsection{Finite common-coordinatewise portfolios at degree thirteen}\label{sec:degree13}
A \emph{termwise} guarantee holds separately for every generator. For a normalized finite downward scalar portfolio $\Pi=\{(p_s,\varphi_s)\}$, define its degree-$d$ modulus to be the infimum of its weighted recovery-to-roof ratios over positive unaries and both slot-atom types of ranks $1,\ldots,d-1$, omitting zero denominators. Let $\rho_d^{\rm cw}$ be the supremum of this modulus over all such finite portfolios. No continuity is imposed here. Universal slotwise payment and Lemma~\ref{lem:mixed} identify this modulus with termwise certification on $\CC_{\le d}$.

\paragraph{Extremal mechanism.}
Only two branch values are needed: a value $q\in[0,1]$ near the endpoint and a midpoint value $t\in[0,1/2]$. The cubic witness is controlled by
\[
 H(q,t)=\frac5{12}(1-q)+\frac12q(1-t^2)+t(1-t).
\]
Its exact squared-deficit identity is
\begin{equation}\label{eq:thirteen_deficit_identity}
 H(q,t)=\frac23-(1-q)\left(t-\frac12\right)^2
                  -\frac32q\left(t-\frac13\right)^2.
\end{equation}
Thus equality requires $(q,t)=(0,1/2)$ or $(1,1/3)$. For portfolios approaching the upper bound, any distribution representing the limiting moments must be supported on these two points. A midpoint atom bounds the mass at the second point by $9/17$, while a complemented degree-thirteen atom requires more recovery than this mixture supplies. Compactness of a three-dimensional moment set justifies taking the limit.

\begin{theorem}[Strict degree-thirteen portfolio gap]\label{thm:thirteen}
The supremum of finite common-coordinatewise downward portfolio factors satisfies
\[
 \rho_{13}^{\rm cw}<\frac8{17}.
\]
Thus $8/17$ cannot even be approached by such portfolios through literal degree thirteen, irrespective of their number of branches or the regularity of their maps.
\end{theorem}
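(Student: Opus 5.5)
We argue by contradiction and compactness. Suppose there are finite portfolios $\Pi_k=\{(p_s^{(k)},\varphi_s^{(k)})\}$ whose degree-thirteen moduli $\rho_k$ tend to $8/17$. Each portfolio matters only through finitely many scalar values at the test points we choose. We therefore encode it by the pushforward distribution $\mu_k$ of the branch values at three points: the midpoint $1/2$, a near-endpoint $1-\eta$, and one further point needed for the complemented degree-thirteen atom. This gives a probability measure on a compact cube. The set of such measures is weak-$*$ compact, and the payment inequalities we use are continuous integrals of polynomials in the branch values. Hence a subsequence converges to a limit distribution $\mu$ that pays every chosen test atom by factor $8/17$. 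No continuity of the maps is needed, since only finitely many values per map enter.

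\textbf{Step 1: forcing two-point support.} Embed the cubic witness behind \eqref{eq:cubic_obstruction} as termwise inequalities. At the configuration with a positive rank-one atom of root $\eta$ and head $1-\eta$, a rank-one atom with root and head $1/2$, and a rank-two atom with root $1-\eta$ and two midpoint heads, combine the three termwise payments with the weights $5/(12\eta)$, $1/2$, and $1$. Also use the positive-unary inequality for the small root, so that the contribution $\varphi(\eta)\le\eta$ is controlled. As $\eta\downarrow0$, the weighted recovered value is bounded by $\int H(q,t)\,d\mu$, where $q$ is the endpoint value and $t$ the midpoint value. The roof value is $17/12$. Payment at factor $8/17$ forces $\int H\,d\mu\ge2/3$. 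Identity \eqref{eq:thirteen_deficit_identity} then forces $\mu$, projected to $(q,t)$, to be supported on $\{(0,1/2),(1,1/3)\}$. To keep everything on one compact moment set, the limit $\eta\downarrow0$ should be taken jointly with $k\to\infty$ through a diagonal choice. I would fix a sequence $\eta_k\to0$ and record $(q,t)$ together with the third coordinate.

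\textbf{Step 2: bounding the mixture weight.} Let $\pi$ be the mass of $\mu$ at $(1,1/3)$. The midpoint positive rank-one atom pays $2t(1-t)$ against roof $1/2$. This gives $\tfrac12(1-\pi)+\tfrac49\pi\ge\tfrac8{17}$, hence $\pi\le9/17$. Consequently, on mass at least $8/17$ the endpoint map sends values near $1$ to $0$ and fixes $1/2$; on the remaining mass at most $9/17$ it sends $1/2$ to $1/3$.

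\textbf{Step 3: the degree-thirteen contradiction (main obstacle).} We need a complemented atom $h_{12}^-$ whose payment, evaluated on this two-point mixture, falls strictly below $8/17$ of its roof. Its root $1-a$ has $a$ near $1$, or the root sits at the midpoint. Its twelve heads are placed at the midpoint and the near-endpoint value, so that the product $\prod(1-\varphi(u_j))$ is evaluated at $1-t$ and $1-q$. The mixture yields factors of the form $(1/2)^{j}$ and $(2/3)^{j}$. The plan is to search over root placements and over head counts split between $1/2$ and $1-\eta$. Twelve heads should be the first count at which $\tfrac8{17}$ times the first branch plus $\tfrac9{17}$ times the second falls below the roof requirement. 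A third evaluation point, such as $1/4$ or the root value, may be required; that is why the moment set is three-dimensional. I expect finding and verifying this explicit thirteenth-degree witness, and confirming it fails at degree twelve consistently with Theorem~\ref{thm:twelve_atoms}, to be the hardest part.

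Once the witness is in hand, the limit measure violates a continuous payment inequality it must satisfy, which is a contradiction. Hence $\limsup\rho_k<8/17$, which is the claimed strict inequality $\rho_{13}^{\rm cw}<8/17$.
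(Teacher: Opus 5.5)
Your compactness framework and Steps~1--2 agree with the paper's argument. The paper takes the limit in the compact convex hull of the image of $(q,t)\mapsto(H(q,t),t(1-t),1-t)$ via Carath\'eodory, which is equivalent to your weak-$*$ limit. One small correction to Step~1: the bound $\varphi(\eta)\le\eta$ is downwardness, not the positive-unary inequality, which is a lower bound.

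The genuine gap is Step~3, the only place where literal degree thirteen enters, and the witnesses you propose to search cannot work. Consider a complemented atom whose heads satisfy $u_j\ge1/2$, as at $1/2$ and $1-\eta$. Any two heads already give $\sum_ju_j\ge1\ge1-a$, so the roof equals $1-a$, which is also the roof of the rank-two subatom. The remaining heads only shrink $\prod_j(1-\varphi(u_j))$, so the recovered value can only grow. Hence every such rank-twelve atom, for every root placement, is paid by any portfolio that pays cubic atoms. The factors $(1/2)^j$ and $(2/3)^j$ you mention tend to zero, which helps recovery rather than hurting it. A third point such as $1/4$ fares no better: four such heads saturate the roof. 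Moreover, the degree-twelve pair $(\alpha_{12},\beta_{12})$ realizes exactly your two-point limit, with mass $9/17$ at $(1,1/3)$, and pays every atom of rank at most eleven at factor $8/17$.

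The missing idea is to use many \emph{small} heads, where downwardness alone caps the recovered head values, so no third recorded value is needed. Take root $a=1/2$ and twelve heads $u_j=1/24$. The roof is $\min\{1/2,12/24\}=1/2$. Since $\varphi(1/24)\le1/24$, every branch value is at most $b(1-t)$ with $b=1-(23/24)^{12}<2/5$, so payment gives $b\,\E(1-t)\ge\rho/2$; the third moment is therefore just $1-t$.

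In your two-point limit, $\E(1-t)=\tfrac12+\tfrac{w}{6}\le\tfrac{10}{17}$ for $w\le9/17$. Hence $b\,\E(1-t)\le10b/17<4/17=\tfrac{8}{17}\cdot\tfrac12$, which is the contradiction. With $m$ heads at $1/(2m)$ the same bound is $1-(1-1/(2m))^m$. It exceeds $2/5$ at $m=11$ and first drops below it at $m=12$, which is why the gap appears at literal degree thirteen, consistently with Theorem~\ref{thm:twelve_atoms}.
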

\begin{proof}
The cubic witness in \eqref{eq:cubic_obstruction} shows that every such factor is at most $8/17$: its roof tends to $17/12$ while every downward value is at most $2/3$. Suppose that portfolios certify factors $\rho_j\to8/17$, and choose $\eta_j\downarrow0$ with $0<\eta_j<1$. For a random branch of the $j$th portfolio, write
\[
 q=\phi(1-\eta_j),\qquad t=\phi(1/2),\qquad
 (q,t)\in[0,1]\times[0,1/2],
\]
and let $\E_j$ denote expectation over its finite branch distribution. For the function $H$ defined above, replacing $\phi(\eta_j)/\eta_j$ by its upper bound one only increases the recovered value of $G_{\eta_j}$ at $(\eta_j,1-\eta_j,1/2)$. The termwise guarantee then implies
\begin{equation}\label{eq:thirteen_cubic_moment}
 \E_j H(q,t)\ge\rho_j\left(\frac{17}{12}-\frac{\eta_j}{2}\right).
\end{equation}
The positive rank-one midpoint atom gives the second necessary inequality
\begin{equation}\label{eq:thirteen_pair_moment}
 \E_j[t(1-t)]\ge\frac{\rho_j}{2}.
\end{equation}
Finally, put
\[
 b=1-\left(\frac{23}{24}\right)^{12}<\frac25,
 \qquad \frac25-b=\frac{12079851931877}{182601737180282880}>0.
\]
For a complemented atom with root $a=1/2$ and twelve heads $u_i=1/24$, the roof is $1/2$. Downwardness bounds every recovered head coordinate by $1/24$, so its branch value is at most $b(1-t)$. Consequently
\begin{equation}\label{eq:thirteen_head_moment}
 b\E_j(1-t)\ge\frac{\rho_j}{2}.
\end{equation}

The three moments in \eqref{eq:thirteen_cubic_moment}--\eqref{eq:thirteen_head_moment} belong to the convex hull of the continuous image
\[
 (q,t)\longmapsto\bigl(H(q,t),\ t(1-t),\ 1-t\bigr)
\]
of the compact rectangle $[0,1]\times[0,1/2]$. This convex hull is compact in $\R^3$: by Carath\'eodory's theorem, each of its points is a convex combination of at most four image points. A convergent subsequence therefore yields a limiting moment vector represented by a finite probability distribution on the rectangle.

The identity \eqref{eq:thirteen_deficit_identity} bounds $H$ by $2/3$ and identifies its two equality points. Equation~\eqref{eq:thirteen_cubic_moment} forces the limiting expectation to equal $2/3$, so its representing distribution is supported on these two points. Let $w$ be its mass on $(1,1/3)$. The limit of \eqref{eq:thirteen_pair_moment} gives
\[
 \frac{1-w}{4}+\frac{2w}{9}\ge\frac4{17},
 \qquad\text{hence}\qquad w\le\frac9{17}.
\]
The limiting left side of \eqref{eq:thirteen_head_moment} is then at most
\[
 b\left(\frac{1-w}{2}+\frac{2w}{3}\right)
 \le\frac{10b}{17}<\frac4{17},
\]
a contradiction.
\end{proof}
Combining the lower portfolios, upper examples, and nesting gives
\[
 \rho_2^{\rm cw}=\tfrac12,\qquad
 \rho_d^{\rm cw}=\tfrac8{17}\quad(3\le d\le12),
\]
\[
 \gamma_*\le\rho_d^{\rm cw}\le\rho_{13}^{\rm cw}<\tfrac8{17}
 \quad(d\ge13).
\]
The sequence is nonincreasing. The strict gap applies to fixed-weight common-coordinatewise portfolios paying each atom separately in a family with both polarities. It does not establish a strict upper bound for arbitrary instance-dependent joint recovery, positive-only certificates, or recovery relative to the global concave closure.

\section{Stronger roofs and instance-specific recovery}\label{sec:design}
We consider two ways to tighten a roof: change the representation, or require local distributions to agree. We develop grouped formulations on supplied tree decompositions, then use finite games and cone duality to choose recovery rules. For a multilinear objective, recovery for a fixed instance reduces to choosing which coordinates to retain. Keeping feasible solutions and valid upper bounds lets us track the strongest guarantee obtained so far.

\subsection{Orientation is not representation choice}
By Proposition~\ref{prop:root_independence}, changing root weights within one exact negaform leaves its standard roof unchanged. Changing the negaform itself can change the roof, even when both representations are exact on the continuous cube. For example,
\[
 F(x)=2-x_1x_2-x_1x_3-x_2x_3+x_1x_2x_3
\]
has the exact identities
\begin{align*}
 F(x)&=3-x_1-x_2-x_3-(1-x_1)(1-x_2)(1-x_3),\\
 F(x)&=2-x_1x_3-x_2x_3-x_1x_2(1-x_3).
\end{align*}
Compatible nonnegative certificates give the roofs
\begin{align*}
 L_A(x)&=(1-x_2)+(1-x_3)+\min\{1-x_1,x_2+x_3\},\\
 L_B(x)&=(1-x_3)+\min\{x_3,1-x_1\}+\min\{x_3,1-x_2\}\\
 &\hspace{20mm}+\min\{1-x_3,2-x_1-x_2\}.
\end{align*}
At $x=(4/5,4/5,1/5)$, the two roofs equal $6/5$ and $8/5$, respectively. Given a polynomial-size menu of certificates, we can optimize each roof, apply its recovery rule, and keep the best original objective value. The common approximation factor is preserved.

\subsection{Grouping local functions before taking closure}
Let $\Pi$ partition the local-function index set $B$. For each block $C\in\Pi$, put
\[
 N_C=\bigcup_{b\in C}N_b,\qquad
 h_C(S)=\sum_{b\in C}w_bg_b(S\cap N_b),\qquad
 L_\Pi(x)=\sum_{C\in\Pi}h_C^+(x_{N_C}).
\]
We partition the functions, so the coordinate sets $N_C$ may still overlap. Write $\Pi\preceq\Pi'$ when $\Pi'$ is obtained by merging blocks.
\begin{theorem}[Nested grouped roofs]\label{thm:grouping}
Every $L_\Pi$ is concave and Boolean-exact, and
\[
 F\le f^+\le L_{\Pi'}\le L_\Pi\le L
 \qquad\text{whenever }\Pi\preceq\Pi'.
\]
The singleton partition gives $L$, and the one-block partition gives $f^+$. Any common recovery inequality paying $\rho L$ also pays $\rho L_\Pi$ and retains its guarantee after optimizing $L_\Pi$.
\end{theorem}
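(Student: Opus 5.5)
The proof rests on one observation: the grouped closure $h_C^+$ is an instance of the closure \eqref{eq:closure}, applied to the set function $h_C$ on the ground set $N_C$. Concavity and Boolean exactness of each block term are therefore standard. $h_C^+$ is a pointwise maximum of linear functions of $\lambda$ over a polytope whose constraints are affine in $x$, so its hypograph is a projection of a convex set, and $h_C^+$ is concave. At an incidence vector the only feasible $\lambda$ is the point mass, so $h_C^+(\ind_S)=h_C(S)$. Summing over blocks gives concavity of $L_\Pi$. It also gives $L_\Pi(\ind_S)=\sum_C h_C(S\cap N_C)=f(S)$, because every local function lies in exactly one block.

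For the chain of inequalities, I take $F\le f^+$ from the product-distribution remark after \eqref{eq:closure}. The middle inequalities are both instances of one monotonicity lemma, which I prove by projection, as in \eqref{eq:local_roof}. Suppose a block $D$ of $\Pi'$ is a union of blocks $C$ of $\Pi$. Take an optimal distribution $\lambda$ on $2^{N_D}$ attaining $h_D^+(x_{N_D})$. Its marginal on $2^{N_C}$ is feasible for $h_C^+$ at $x_{N_C}$, since the coordinate marginals are preserved. Also $\sum_T\lambda_T h_D(T)=\sum_{C\subseteq D}\sum_T\lambda_T h_C(T\cap N_C)$ is the sum of the projected values, which gives $h_D^+\le\sum_{C\subseteq D}h_C^+$. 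Summing over the blocks $D$ yields $L_{\Pi'}\le L_\Pi$.

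Applying the lemma with $\Pi'$ the one-block partition gives $f^+\le L_{\Pi'}$. Here I need $h_B=f$ on $N_B$, and I need $f^+$ on $N$ to agree with $h_B^+$ on $N_B$. This holds because coordinates outside $N_B$ do not affect $f$; after absorbing empty-support constants, one can extend any distribution by an independent product on the remaining coordinates. Applying the lemma with $\Pi$ the singleton partition gives $L_\Pi\le L$, since $h_{\{b\}}^+=w_bg_b^+$ by positive homogeneity of the closure in the objective when $w_b\ge0$. The identifications of the extreme partitions follow the same way.

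For the recovery statement: if $\sum_sp_sF(R_s(x))\ge\rho L(x)$ on $\K$, then since $L\ge L_\Pi$ and $\rho\ge0$, the same inequality holds with $L_\Pi$. $L_\Pi$ is concave and finite, hence continuous on the interior and upper semicontinuous on the polytope-parametrized domain (a parametric LP value). It also majorizes $F$, so $\max_\K L_\Pi\ge\max_\K F$. Theorem~\ref{thm:common} then applies with $H=L_\Pi$.

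The only delicate step is the bookkeeping in the one-block case, namely that the global closure on $N$ equals the block closure on $N_B$ when $N_B\ne N$. I would handle it by the product-extension argument above. Everything else is the projection argument already used for \eqref{eq:local_roof}.
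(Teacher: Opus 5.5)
Your proposal is correct and follows essentially the same argument as the paper. Concavity comes from mixing fixed-marginal distributions, Boolean exactness from the forced point mass, and monotonicity under merging from projecting an optimal joint law onto the constituent blocks; the recovery claim then follows from $L_\Pi\le L$ together with Theorem~\ref{thm:common}. The only difference is cosmetic: the paper gets $f^+\le L_\Pi$ by projecting a global distribution directly, whereas you route it through identifying the one-block roof with $f^+$ via product extension, which is equally valid.
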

\begin{proof}
Mixing feasible fixed-marginal distributions proves concavity, and Boolean marginals force a point mass. Projecting a global distribution onto the block ground sets gives $f^+\le L_\Pi$. When two blocks merge, each expected contribution under a joint distribution is bounded by its separate closure. Maximizing the joint expectation therefore shows that merging cannot increase the roof. The endpoint identities follow from the definitions. Finally, $L_\Pi\le L$ and Theorem~\ref{thm:common} give the recovery guarantee.
\end{proof}
At fixed marginals, merging preserves the roof value precisely when one joint distribution attains both separate closures. Optimal distributions on disjoint supports can always be coupled. On overlapping supports, they may be incompatible.

A direct block formulation uses $\sum_C2^{|N_C|}$ distribution variables. An alternative exploits the interaction graph, which joins two elements whenever they occur in a common local function. A \emph{tree decomposition} is a tree whose nodes are subsets of graph vertices, called bags. Every vertex belongs to a bag, every edge has both endpoints in one bag, and the bags containing any fixed vertex form a connected subtree. The last condition is the \emph{running-intersection property}. The width is the maximum bag size minus one. The intersection of adjacent bags is their \emph{separator}. Separator consistency equates the full joint probability tables on adjacent-bag intersections.
\begin{proposition}[Junction-tree formulation]\label{prop:junction}
Given a width-$\tau_C$ tree decomposition $T_C$ of the interaction graph of block $C$, its exact closure has a linear formulation with $O(|T_C|2^{\tau_C+1})$ bag-probability variables and separator-consistency constraints. Thus fixed supplied widths yield polynomial-size grouped roof formulations.
\end{proposition}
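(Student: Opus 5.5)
The plan is the standard junction-tree marginal-polytope argument, applied to the block function $h_C$. Fix a block $C$ with interaction graph on $N_C$ and supplied tree decomposition $T_C$ with bags $\{W_\nu\}$ of size at most $\tau_C+1$.

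\textbf{Formulation.} For each bag $\nu$ introduce a probability table $\mu_\nu=(\mu_\nu(T))_{T\subseteq W_\nu}$, with $\mu_\nu\ge0$ and $\sum_T\mu_\nu(T)=1$. For each tree edge $\nu\nu'$ impose separator consistency: the marginals of $\mu_\nu$ and $\mu_{\nu'}$ on $W_\nu\cap W_{\nu'}$ agree as full tables. Couple the tables to $x$ by requiring $\sum_{T\ni i}\mu_\nu(T)=x_i$ for one bag containing $i$; consistency then propagates this to every such bag. Each local function $g_b$, $b\in C$, has $N_b$ a clique of the interaction graph. By the Helly property of subtrees, $N_b$ lies in some bag $\nu(b)$. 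The objective is
\[
 \sum_{b\in C}w_b\sum_{T\subseteq W_{\nu(b)}}\mu_{\nu(b)}(T)\,g_b(T\cap N_b).
\]
This uses $O(|T_C|2^{\tau_C+1})$ variables and constraints, so fixed widths give polynomial size.

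\textbf{Exactness.} Two inequalities are needed.
\begin{itemize}
\item \emph{Closure at most LP.} Any distribution $\lambda$ on $2^{N_C}$ with marginals $x$ projects to consistent bag tables with the same objective value, since $h_C$ depends on $S$ only through the sets $S\cap N_b\subseteq W_{\nu(b)}$. So the LP value is at least $h_C^+(x_{N_C})$.
\item \emph{LP at most closure.} This is the main step. Given consistent tables, I will build a global distribution whose bag marginals are exactly the $\mu_\nu$. Root the tree and sample the root bag from $\mu_{\rm root}$. Then, proceeding down the tree, sample each child bag conditionally on its separator with the parent, using $\mu_\nu(\cdot\mid\text{separator})$; this is well defined by consistency, with an arbitrary choice on zero-probability separator states.
\end{itemize}

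The obstacle in the second inequality is showing that the sampled sets are consistent and that each bag has the right law.
\begin{itemize}
\item \emph{Consistency.} If a vertex $i$ lies in two bags, the running-intersection property puts it in every bag on the path between them. Hence $i$ lies in every separator along that path. Each conditional step copies the separator values, so all bags agree on $i$ and the global set $S$ is well defined.
\item \emph{Correct marginals.} By induction along the root-to-node order, the bag $\nu$ has law $\mu_\nu$. The separator has the correct law by consistency with the parent, and the conditional law then completes $\mu_\nu$.
\end{itemize}
Consequently the global distribution has coordinate marginals $x$ and expected value $\sum_b w_b\E[g_b(S\cap N_b)]$, which equals the LP objective. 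Therefore the LP value is at most $h_C^+$.

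\textbf{Conclusion.} Summing the block formulations over $C\in\Pi$, with the shared $x$ and the constraint $x\in\K$, gives a lifted linear description of the hypograph of $L_\Pi$. Its size is polynomial whenever the widths are fixed and $|T_C|$ is polynomial, which proves the claimed polynomial-size grouped roof formulations.
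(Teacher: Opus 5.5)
Your proposal is correct and follows the paper's proof: you place each clique $N_b$ in a bag, impose normalized bag tables with full-table separator consistency, and glue them into a global law by conditional sampling along the rooted tree, with arbitrary conditionals on null separator states. The converse by projecting a global law, and the size count from the table entries, also match the paper. Prescribing the singleton marginal in only one bag per coordinate, instead of in every bag as the paper does, is an inessential variant, since consistency propagates it along the connected subtree of bags containing that coordinate.
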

\begin{proof}
Each local ground set is a clique, so it lies in some bag. Assign its objective to one such bag. For each bag, introduce a joint probability table, normalize it, prescribe its singleton marginals, and require adjacent bags to agree on their full intersection tables. The running-intersection property lets us glue these distributions successively into a global law. Conditional laws on zero-probability separator states can be chosen arbitrarily, since they carry no mass. Conversely, every global law induces consistent bag tables. The formulation is therefore exact, and counting table entries gives the size bound \citep{WainwrightJordan2008}.
\end{proof}

\subsection{Finite recovery games and universal certification}\label{sec:game}
Write $\Delta(J)=\{w\in\R_{\ge0}^J:\sum_{r\in J}w_r=1\}$ for the probability simplex on a finite nonempty set $J$. A \emph{realizable test row} consists of a nonnegative component $g$, marginals $x$, a positive roof value $u$, and data proving that value. For a local closure these can be an optimal coupling and a tight affine majorant. Write $G_r$ for row $r$'s continuous component. Given finitely many candidate maps, set $A_{rs}=G_r(R_s(x^r))/u_r$. The candidate probabilities solve a finite zero-sum game:
\begin{proposition}[Finite map--adversary duality]\label{prop:finite_game}
For a finite nonempty row set $J$ and $k\ge1$ candidate maps,
\[
 \max_{p\in\Delta([k])}\min_{r\in J}(Ap)_r
 =\min_{w\in\Delta(J)}\max_{s\in[k]}(A^\top w)_s.
\]
For common-coordinatewise maps, the dual mixture is realizable on disjoint copies of the rows: weighting row $r$ by $w_r/u_r$ gives total additive roof one and branch values $(A^\top w)_s$.
\end{proposition}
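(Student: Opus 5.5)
The equality is the finite minimax theorem for a matrix game, so I will first prove it by linear programming duality and then interpret the dual mixture. The payoff matrix is $A\in\R^{J\times[k]}$ with entries $A_{rs}=G_r(R_s(x^r))/u_r$. These entries are finite because each $u_r>0$ and each $G_r$ is real-valued. The row player chooses a test row $r$, and the map player chooses a branch $s$. The left side is the value of the linear program
\[
 \max\{v:\ Ap\ge v\ind,\ \ind^\top p=1,\ p\ge0\}.
\]
This program is feasible and bounded because $\Delta([k])$ is compact and the entries are finite. Its LP dual is
\[
 \min\{z:\ A^\top w\le z\ind,\ \ind^\top w=1,\ w\ge0\},
\]
whose value is the right side. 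Strong duality then gives the stated equality. Alternatively, I can cite von Neumann's minimax theorem directly.

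For the realizability claim, I will take disjoint copies of the ground sets of the rows. Row $r$ uses coordinates $N^{(r)}$, and I place the marginals $x^r$ on that copy. The combined objective is
\[
 \sum_r (w_r/u_r)\,G_r\bigl(x_{N^{(r)}}\bigr).
\]
This is a nonnegative combination of nonnegative components. Its additive roof at the combined point is $\sum_r (w_r/u_r)u_r=\sum_r w_r=1$. This uses the supplied proof data: the optimal coupling and the tight affine majorant certify that the roof of row $r$ at $x^r$ equals $u_r$. For a local closure, that roof is exactly the component closure, and summing over disjoint supports keeps the roof additive.

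The one point that needs care is that a common-coordinatewise map $R_s$ applies the same scalar function $\phi_s$ to every coordinate. Applied to the combined point, it therefore restricts on each disjoint copy to $R_s(x^r)$. No coordinate is shared, so no conflict can arise. The value of branch $s$ on the combined instance is then
\[
 \sum_r (w_r/u_r)\,G_r(R_s(x^r))=\sum_r w_rA_{rs}=(A^\top w)_s.
\]
I expect this locality observation to be the main subtlety. It is exactly where common-coordinatewise structure is needed: a general map could couple the copies, and its restriction to a copy need not equal its action on the isolated row.
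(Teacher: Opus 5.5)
Your proposal is correct and follows the paper's proof: the equality is finite linear-programming duality for the matrix game, and realizability comes from weighting disjoint copies of the rows by $w_r/u_r$, so that the additive roof totals one and branch $s$ has value $(A^\top w)_s$. You also spell out the point the paper leaves implicit, that a common-coordinatewise map acts on each disjoint copy exactly as it acts on that row alone.
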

\begin{proof}
The equality is finite linear-programming duality. Independent copies have additive roofs and objectives, and the prescribed weights normalize their roof contributions.
\end{proof}
A finite game certifies only the rows it contains. To obtain a universal guarantee, we need an analytical argument covering the whole domain or exact separation over the generators and marginals. Reweighting fixed maps is a linear program, but choosing their knot values is generally nonconvex. Appendix~\ref{app:arity_four} describes the certificates used for arity four. Section~\ref{sec:retention} gives a minimax formulation for recovery at fixed marginals.

\subsection{A finite-dimensional dual-cone characterization}\label{sec:dual_cone}
Fix a local ground set $A$. Regard a set function as its vector in $\R^{2^A}$. Write $e_S$ for a coordinate unit vector and, for $i<j$, $T\subseteq A\setminus\{i,j\}$, define
\[
 s_{ij,T}=e_{T\cup\{i\}}+e_{T\cup\{j\}}-e_T-e_{T\cup\{i,j\}}.
\]
The cone $\mathcal S_A$ of nonnegative submodular functions is defined by $e_S^\top g\ge0$ and $s_{ij,T}^\top g\ge0$. The latter elementary inequalities are equivalent to submodularity: they compare one-element marginals at adjacent sets, and iteration gives diminishing returns at every pair of nested sets. This is a \emph{polyhedral cone}, a cone defined by finitely many linear inequalities. Its \emph{dual cone} is $\mathcal S_A^*=\{v:v^\top g\ge0\text{ for all }g\in\mathcal S_A\}$. Let $\nu_y$ be the probability vector of independent sampling with marginals $y$. Extreme rays generate any pointed polyhedral cone, where pointed means that it contains no line.

\begin{proposition}[Common-map dual certificate]\label{prop:dual_cone}
Fix $\rho\ge0$ and a finite portfolio $\{(p_s,R_s)\}$, where $R_s$ maps $[0,1]^A$ into itself. The portfolio satisfies
\[
 \sum_sp_sG_g(R_s(x))\ge\rho g^+(x)
 \quad(g\in\mathcal S_A,\ x\in[0,1]^A)
\]
if and only if, for every distribution $\mu$ on $2^A$, with marginals $x(\mu)$,
\begin{equation}\label{eq:dual_cone}
 \sum_sp_s\nu_{R_s(x(\mu))}-\rho\mu
 \in\cone\{e_S,s_{ij,T}:S,i,j,T\}.
\end{equation}
\end{proposition}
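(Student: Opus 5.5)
The plan is to reduce both sides to linear statements in the value table $g$ and then apply finite-dimensional cone duality (Farkas' lemma) to the polyhedral cone $\mathcal S_A$. First I record two linear identities. The multilinear extension satisfies $G_g(y)=\nu_y^\top g$ for every $y$. By \eqref{eq:closure}, $g^+(x)=\max\{\mu^\top g:\mu\in\Delta(2^A),\ x(\mu)=x\}$. Next I rewrite the portfolio inequality as a family indexed by distributions. Because $g^+(x)$ is a maximum over the distributions $\mu$ with marginals $x$, the inequality for all pairs $(g,x)$ is equivalent to
\[
 \Bigl(\sum_sp_s\nu_{R_s(x(\mu))}-\rho\mu\Bigr)^{\!\top}g\ge0
 \qquad(g\in\mathcal S_A,\ \mu\in\Delta(2^A)).
\]
The forward direction uses $\mu^\top g\le g^+(x(\mu))$. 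The reverse direction takes $\mu$ to be an optimal coupling attaining $g^+(x)$; the maximum exists because the feasible set is a nonempty compact polytope. This step uses only that $R_s$ maps into the cube, so that $\nu_{R_s(\cdot)}$ is defined.

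For a fixed $\mu$, the displayed condition says exactly that the vector $v_\mu=\sum_sp_s\nu_{R_s(x(\mu))}-\rho\mu$ lies in the dual cone $\mathcal S_A^*$. Since $\mathcal S_A=\{g:e_S^\top g\ge0,\ s_{ij,T}^\top g\ge0\}$ is defined by finitely many linear inequalities, Farkas' lemma (equivalently, the bipolar theorem for polyhedral cones) identifies $\mathcal S_A^*$ with $\cone\{e_S,s_{ij,T}\}$. One inclusion is immediate, because each generator is nonnegative on $\mathcal S_A$. For the other, suppose $v\notin\cone\{e_S,s_{ij,T}\}$. This finitely generated cone is closed, so there is a separating $g$ with $e_S^\top g\ge0$, $s_{ij,T}^\top g\ge0$ and $v^\top g<0$. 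Such a $g$ lies in $\mathcal S_A$, so $v\notin\mathcal S_A^*$. Applying this identification to each $v_\mu$ gives \eqref{eq:dual_cone}.

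The main point needing care is the closedness and finiteness that make Farkas' lemma applicable. That the inequalities $s_{ij,T}^\top g\ge0$ together with $e_S^\top g\ge0$ define exactly $\mathcal S_A$ was already noted before the proposition, so no more is required there. Pointedness of $\mathcal S_A$ is not actually needed for this argument; it matters only if one wants to pass to extreme rays, as in Appendix~\ref{app:arity_four}. Everything else is bookkeeping, and neither direction requires any regularity of the maps $R_s$.
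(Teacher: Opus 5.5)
Your proof is correct and follows the same route as the paper. You rewrite the portfolio inequality as nonnegativity of $v_\mu^\top g$ on $\mathcal S_A$ for every distribution $\mu$, by maximizing over couplings with fixed marginals (using $\rho\ge0$ and attainment of $g^+$), and then identify $\mathcal S_A^*$ with $\cone\{e_S,s_{ij,T}\}$ by Farkas duality; your explicit separation argument and the remark that pointedness is unnecessary only supply details the paper compresses into ``polyhedral duality.''
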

\begin{proof}
The finitely generated cone on the right is $\mathcal S_A^*$ by polyhedral duality. Pairing \eqref{eq:dual_cone} with any $g\in\mathcal S_A$ and maximizing over couplings with fixed marginals proves sufficiency. Conversely, the stated inequality dominates $\rho\mu^\top g$ for every such $g$, so the left vector belongs to the dual cone.
\end{proof}
The dual-cone characterization avoids both enumerating extreme rays and choosing roof facets.

\subsection{Common retained-coordinate sets}\label{sec:retention}
Here $\odot$ denotes coordinatewise multiplication. For a fixed $x$, let $x\odot\ind_M$ retain coordinates in $M\subseteq N$ and set the others to zero.
\begin{lemma}[Multilinear lower-box vertices]\label{lem:vertex}
For every multilinear $F$,
\[
 \max_{0\le y\le x}F(y)=\max_{M\subseteq N}F(x\odot\ind_M).
\]
\end{lemma}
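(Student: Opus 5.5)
The plan is to use multiaffinity: maximize $F$ over the lower box $[0,x]$ one coordinate at a time, pushing each coordinate to an endpoint of its interval $[0,x_i]$. The inequality ``$\ge$'' is immediate, since every point $x\odot\ind_M$ satisfies $0\le x\odot\ind_M\le x$ and so lies in the box. For ``$\le$'', note that $[0,x]$ is compact and $F$ is a continuous polynomial, so a maximizer $y^*\in[0,x]$ exists.

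Next, I will convert $y^*$ into a vertex of the box without decreasing $F$. Process the coordinates $i=1,\dots,n$ in turn. Because $F$ is multilinear, it is affine in $y_i$ when the other coordinates are held fixed. On the interval $[0,x_i]$ an affine function attains its maximum at an endpoint, so replacing the current $y_i$ by whichever of $0$ and $x_i$ gives the larger value cannot decrease $F$. Each replacement keeps the point inside $[0,x]$ and leaves the already processed coordinates at endpoints. After $n$ steps the point has every coordinate in $\{0,x_i\}$, so it equals $x\odot\ind_M$ for $M=\{i:y_i=x_i\}$. Its value is at least $F(y^*)$, which gives the reverse inequality.

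There is no real obstacle here; this is the same endpoint argument as in Corollary~\ref{cor:quadratic_box}. The one point to record is the degenerate case $x_i=0$, where the interval collapses and the choice of $M$ on that coordinate is irrelevant. The lemma also uses only coordinatewise affinity, not submodularity or nonnegativity, and this is exactly why it fails for the continuous powers in Example~\ref{ex:self_atom}.
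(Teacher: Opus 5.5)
Your proof is correct and follows the paper's argument: multilinearity makes $F$ affine in each coordinate, so you can move each coordinate of a point of $[0,x]$ to whichever endpoint in $\{0,x_i\}$ does not decrease $F$, while the inequality $\ge$ holds because every $x\odot\ind_M$ lies in the box. Your added details, namely existence of a maximizer by compactness and the degenerate case $x_i=0$, are harmless refinements of the paper's one-line proof.
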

\begin{proof}
With the other coordinates fixed, the objective is affine in each coordinate. Move that coordinate to a maximizing endpoint and repeat for all coordinates.
\end{proof}
We use multilinearity throughout the rest of this subsection. The continuous polynomial in Example~\ref{ex:self_atom} shows why the endpoint argument can fail without it.

Fix permitted supports $E$ and pointed rational polyhedral cones of nonnegative submodular functions on each support. Let $V_E$ contain their primitive ray generators. At fixed $x$, retain rows $j=(E,r)$ with $r^+(x_E)>0$, and define
\[
 J_x=\{(E,r):r\in V_E,\ r^+(x_E)>0\},\qquad
 A_{jM}=\frac{G_r(x_E\odot\ind_{M\cap E})}{r^+(x_E)}.
\]
\begin{theorem}[Optimal common retention game]\label{thm:retention_game}
If $J_x\ne\varnothing$, the largest guarantee supplied by one random retained-coordinate set simultaneously for all allowed local conic combinations equals
\begin{equation}\label{eq:retention_game}
 \rho^*(x)=\max_{\lambda\in\Delta(2^N)}\min_{j\in J_x}\sum_M\lambda_M A_{jM}
 =\min_{\theta\in\Delta(J_x)}\max_{M\subseteq N}\sum_j\theta_j A_{jM}.
\end{equation}
An optimal $\lambda$ satisfies $\E_M F(x\odot\ind_M)\ge\rho^*(x)L(x)$ for every allowed instance. A dual optimum is realized by assigning coefficient $\theta_j/r^+(x_E)$ to the corresponding ray on support $E$.
\end{theorem}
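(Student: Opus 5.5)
The plan is to reduce everything to a finite matrix game and then apply linear-programming duality, mirroring Proposition~\ref{prop:finite_game}. First make precise what ``a guarantee for all allowed local conic combinations'' means at the fixed point $x$. An allowed instance is a nonnegative combination $f=\sum_b w_b g_b$ in which each $g_b$ lies in the cone on a permitted support $E$. Write each $g_b=\sum_{r\in V_E}\lambda_{b,r}r$ with nonnegative multipliers. The multilinear extension is linear in the table, so $G_{g_b}=\sum_r\lambda_{b,r}G_r$. The closure is sublinear and positively homogeneous, so $g_b^+\le\sum_r\lambda_{b,r}r^+$. Hence, for any distribution $\lambda$ on retained sets $M$,
\[
\E_M F(x\odot\ind_M)\ge\min_{j\in J_x}\Bigl(\sum_M\lambda_MA_{jM}\Bigr)\sum_{b,r}w_b\lambda_{b,r}r^+(x_E)\ge\min_{j}(\cdot)\,L(x).
\]
Rays with $r^+(x_E)=0$ contribute nothing to the roof. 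Their $G_r$ value is nonnegative, since $r$ is a nonnegative function, so they can be dropped. The restriction of $x\odot\ind_M$ to $E$ is $x_E\odot\ind_{M\cap E}$. This shows that the max--min value in \eqref{eq:retention_game} is achievable, and that an optimal $\lambda$ gives the stated inequality.

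Second, prove that no random retention rule does better, using the dual side. The index sets $2^N$ and $J_x$ are both finite: $V_E$ is finite because the cones are pointed rational polyhedral, and there are finitely many supports. So finite LP duality, i.e.\ von Neumann's minimax theorem, gives the equality in \eqref{eq:retention_game}. Take a dual optimum $\theta$ and form the instance that assigns coefficient $\theta_j/r^+(x_E)$ to ray $r$ on support $E$. Its additive roof at $x$ is $\sum_j\theta_j=1$. For every deterministic $M$, its value is $F(x\odot\ind_M)=\sum_j\theta_jA_{jM}\le\rho^*(x)$. Any randomized $\lambda$ therefore gets at most $\rho^*(x)L(x)$ on this instance, so $\rho^*(x)$ is the largest simultaneous guarantee. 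Overlapping copies of supports are permitted, so this dual instance is an allowed local conic combination; no disjoint-copy device is needed here.

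The main subtlety lies in the first step. One must check that bounding by ray decompositions does not lose optimality in the ``largest guarantee'' claim. The dual construction settles this, because it uses only ray instances, which are themselves allowed, and the roof $L$ there is computed exactly as $\sum_j\theta_j$. So the two bounds match. A further point to note is that, by Lemma~\ref{lem:vertex}, restricting to retained-coordinate sets loses nothing against the lower box for each fixed multilinear instance. The theorem, however, concerns one common random set, so the statement is taken over that class as defined.
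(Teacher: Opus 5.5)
Your proof is correct and follows the paper's argument. Achievability comes from the ray decomposition, using linearity of the multilinear extension and subadditivity of the closure; zero-roof rows are dropped because $G_r\ge0$; finite LP duality equates the two expressions; and the instance with coefficients $\theta_j/r^+(x_E)$ has additive roof one.

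The only variation is the optimality direction. The paper argues on the primal side: each positive-roof ray is itself an allowed instance, so every row constraint is necessary. You instead cap every random rule with the single dual-optimal instance. That step needs duality, but it gives one instance that defeats all retention rules at once.
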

\begin{proof}
If one distribution pays every ray, it pays every allowed conic combination: the multilinear extension is linear, while concave closure is subadditive under nonnegative addition. Rows with zero roof value need only a nonnegative contribution, so they can be omitted. Conversely, each positive-roof ray is an allowed local function, making the minimum over rows necessary. Finite linear-programming duality gives the second expression. The stated coefficients make the dual instance's additive roof equal to one.
\end{proof}
The game may have exponentially many columns. Nevertheless, with rational $x$, fixed arity, explicit local rays, and a supplied bounded-width interaction-tree decomposition, dual separation reduces to maximizing a sum of binary local functions. Tree dynamic programming solves this problem and, under these assumptions, gives polynomial-time optimization of \eqref{eq:retention_game}.

For one supplied instance, the optimal downward point is found by tabulating
$\vartheta_b(z_{N_b})=w_bG_b(x_{N_b}\odot z_{N_b})$ for Boolean $z$, and maximizing their sum on the interaction tree. With fixed arity and a supplied width-$\tau$ decomposition $T$, this takes
\[
 O\left(\sum_b|N_b|2^{|N_b|}+(|B|+|T|)(\tau+1)2^{\tau+1}\right)
\]
arithmetic operations. The second term accounts for evaluating each local table on its assigned bag and combining the resulting tables before tree dynamic programming. By Lemma~\ref{lem:vertex}, this binary optimization solves the lower-box problem exactly.

\subsection{Nested instance-specific certificates}
For a sequence $\Pi_0\preceq\cdots\preceq\Pi_m$, set
\[
 U_k=\max_{x\in\K}L_{\Pi_k}(x),\qquad
 x^k\in\argmax_{x\in\K}L_{\Pi_k}(x),\qquad
 V_k=\max_{j\le k}F(y^j),\quad 0\le y^j\le x^j.
\]
\begin{proposition}[Best-so-far guarantees]\label{prop:anytime}
The sequence $U_k$ is nonincreasing and $V_k$ is nondecreasing. Whenever $U_k>0$, the value $V_k/U_k$ certifies the retained solution's approximation ratio relative to $\max_\K F$, and these certificates are nondecreasing. If the final partition has one block, $U_m=\max_\K f^+$, not necessarily $\max_\K F$.
\end{proposition}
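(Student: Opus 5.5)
The plan is to derive each claim directly from Theorem~\ref{thm:grouping} and the definitions of $U_k$ and $V_k$; no new inequality is needed.

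\textbf{Monotonicity of $U_k$.} The partitions are nested, $\Pi_{k}\preceq\Pi_{k+1}$, so Theorem~\ref{thm:grouping} gives $L_{\Pi_{k+1}}\le L_{\Pi_k}$ pointwise on the cube. Taking maxima over the same compact set $\K$ yields $U_{k+1}\le U_k$. These maxima exist because each $L_\Pi$ is concave. It is also finite and upper semicontinuous, being a finite sum of closures of finite tables, each the value of a parametric linear program.

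\textbf{Monotonicity of $V_k$.} The value $V_k$ is a running maximum over $j\le k$, so it is nondecreasing by definition.

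\textbf{The certificate.} First note that each $y^j$ is feasible. It satisfies $0\le y^j\le x^j$ with $x^j\in\K$, and $\K$ is down-closed. Hence $V_k=F(y^{j^*})$ for some retained feasible point, and $V_k\le\max_\K F$. Theorem~\ref{thm:grouping} gives $F\le L_{\Pi_k}$, so $\max_\K F\le U_k$. Whenever $U_k>0$, we obtain
\[
 F(y^{j^*})=V_k=\frac{V_k}{U_k}\,U_k\ge\frac{V_k}{U_k}\max_\K F .
\]
This shows that $V_k/U_k$ is a valid ratio certificate for the retained solution. The certificates are nondecreasing in $k$ because the numerator does not decrease and the positive denominator does not increase. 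Positivity of $U_k$ persists only backward: $U_k>0$ implies $U_j\ge U_k>0$ for $j\le k$. Combined with $V_j\ge0$, which follows from nonnegativity of $F$, this gives $V_{k}/U_{k}\ge V_{k-1}/U_{k-1}$ whenever both are defined.

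\textbf{The one-block case.} If $\Pi_m$ has a single block, Theorem~\ref{thm:grouping} identifies $L_{\Pi_m}=f^+$, so $U_m=\max_\K f^+$. To show this need not equal $\max_\K F$, I would point to the gap $F\le f^+$. The edge example in Section~\ref{sec:edge_motivation} serves: at $(1/2,1/2)$ the closure is $1/2$ while $F=1/4$. Taking $\K=[0,1/2]^2$ gives $\max_\K f^+=1/2>1/4=\max_\K F$, since $F=x_1(1-x_2)$ is maximized at $(1/2,0)$ with value $1/4$.

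The only step needing any care is the handling of $U_k>0$ in the monotonicity of the ratios. Everything else is immediate from Theorem~\ref{thm:grouping} and down-closedness.
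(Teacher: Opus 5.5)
Your argument for the first three claims is correct and follows the paper's proof. Nesting and Theorem~\ref{thm:grouping} make $U_k$ nonincreasing. $V_k$ is a running maximum of values at feasible points, which are feasible by down-closedness. Finally, $\max_\K F\le U_k$ together with $V_k\ge0$ gives both the certificate and the monotonicity of $V_k/U_k$. Note that the certificate step $\frac{V_k}{U_k}U_k\ge\frac{V_k}{U_k}\max_\K F$ already uses $V_k\ge0$, not only the monotonicity step.

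The gap is in the one-block clause, where you try to exhibit $\max_\K f^+>\max_\K F$. Your example fails. On $\K=[0,1/2]^2$ we have $F(1/2,0)=\tfrac12\cdot(1-0)=1/2$, not $1/4$, so $\max_\K F=1/2=\max_\K f^+$. This is more than an arithmetic slip: a single directed edge can never produce the gap on a down-closed region. Indeed, $f^+(x)=\min\{x_1,1-x_2\}\le x_1=F(x_1,0)$, and $(x_1,0)\in\K$ whenever $x\in\K$. The pointwise gap at the midpoint disappears once the head coordinate may be lowered.

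A valid witness is the two-element cut $f(S)=\ind\{|S|=1\}$ from Section~\ref{sec:correlation}, with $\K=[0,1/2]^2$. The equal mixture of the two singletons gives $\max_\K f^+=f^+(1/2,1/2)=1$. Meanwhile $F(y)=y_1+y_2-2y_1y_2$ has partial derivatives $1-2y_2\ge0$ and $1-2y_1\ge0$ on $\K$, so $\max_\K F=F(1/2,1/2)=1/2$. With this replacement the proof is complete; the paper itself states this clause without giving an example.
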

\begin{proof}
Theorem~\ref{thm:grouping} makes the upper bounds decrease, while retaining earlier recovered points makes the feasible values increase. Since $\max_\K F\le U_k$, division gives the certificate.
\end{proof}
With inexact optimization, the denominator must remain a certified upper bound. A feasible dual value, or a computed roof value plus a valid optimization-error bound, suffices. A primal roof value alone does not bound the optimum from above. Keeping the smallest certified upper bound and the largest feasible objective value preserves monotonicity of the certificate. This retention matters because a tighter roof can have a different optimizer and need not produce a better recovered solution.

\section{Discrete optimization and correlation consequences}\label{sec:discrete}
For multilinear objectives, rounding converts a recovered point into a feasible set. The same recovery inequalities also bound the ordinary, downward, and max-subset correlation ratios.

\subsection{The rounding interface}\label{sec:rounding}
This section assumes that $F$ is the multilinear extension of a nonnegative submodular set function $f$, and $\K$ contains every feasible incidence vector for a down-closed family $\I$.
\begin{proposition}[Transfer by rounding]\label{prop:rounding}
Let $\rho,\eta,\delta\ge0$. Suppose recovery gives $y\in\K$ with $F(y)\ge\rho(\max_\K F-\delta)$, and a rounding procedure returns $S\in\I$ with $\E f(S)\ge\eta F(y)$. Then
\[
 \E f(S)\ge\eta\rho\left(\max_{T\in\I}f(T)-\delta\right).
\]
\end{proposition}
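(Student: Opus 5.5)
The plan is to chain the two hypotheses and then pass from the continuous maximum to the discrete one. The only genuine step is the comparison $\max_\K F\ge\max_{T\in\I}f(T)$. Everything else is multiplication of inequalities with nonnegative factors.

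First, since $\K$ contains $\ind_T$ for every $T\in\I$, and the multilinear extension agrees with $f$ at Boolean points, we have
\[
 \max_\K F\ge \max_{T\in\I}F(\ind_T)=\max_{T\in\I}f(T).
\]
The maximum over $\K$ exists because $F$ is a polynomial and $\K$ is compact. The right side is a maximum over a finite family, which is nonempty because $\I$ is down-closed and (as a family of feasible sets) contains $\varnothing$. This comparison uses nothing about the rounding procedure. It also needs no equality with the matroid polytope, only the containment hypothesis stated at the start of the section.

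Second, we combine the inequalities. By the rounding hypothesis, $\E f(S)\ge\eta F(y)$. Since $\eta\ge0$, multiplying the recovery inequality by $\eta$ preserves its direction, so $\eta F(y)\ge\eta\rho(\max_\K F-\delta)$. Since $\eta\rho\ge0$, the first step then gives $\eta\rho(\max_\K F-\delta)\ge\eta\rho(\max_{T\in\I}f(T)-\delta)$. The three displayed inequalities concatenate to the claim.

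There is no real obstacle. The only point to flag is that the recovery guarantee is stated relative to $\max_\K F$ rather than the discrete optimum. The transfer therefore rests on the Boolean-exactness of the multilinear extension together with the inclusion of feasible incidence vectors in $\K$. Both are explicit in the section's standing assumptions. If $f$ were a general continuous polynomial's restriction, this step would still hold, but the rounding hypothesis would be the part that can fail (Example~\ref{ex:self_atom}). The proposition simply assumes that hypothesis, for instance via lossless matroid rounding with $\eta=1$.
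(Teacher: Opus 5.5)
Your proposal is correct and matches the paper's proof: both rest on the single observation that every $\ind_T$ with $T\in\I$ lies in $\K$, so $\max_\K F\ge\max_{T\in\I}f(T)$. Chaining this with the two hypotheses, using $\eta,\rho\ge0$, gives the claim.
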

\begin{proof}
Every $\ind_T$, $T\in\I$, belongs to $\K$, so $\max_\K F\ge\max_{T\in\I}f(T)$.
\end{proof}
A \emph{matroid} is a nonempty down-closed family satisfying the exchange axiom: if $I,J$ belong to the family and $|I|<|J|$, there is an $e\in J\setminus I$ for which $I\cup\{e\}$ also belongs to the family. Its independence polytope is the convex hull of its incidence vectors. Its maximal independent sets are called bases; they all have the same size, called the rank. Their convex hull is its base polytope. The uniform matroid of rank $k$ consists of the sets of size at most $k$.

For the cube, successively choosing a maximizing endpoint of each affine coordinate section gives $\eta=1$. For a uniform-matroid independence polytope, or more generally a matroid independence polytope, deterministic pipage rounding also gives $\eta=1$ \citep{AgeevSviridenko2004,CalinescuChekuriPalVondrak2011}. A zero-value dummy is an added element whose inclusion leaves $f$ unchanged. Adjoining enough such elements embeds the fractional point into an enlarged matroid base polytope, by declaring a set feasible when its original elements are independent and its total cardinality, including dummies, is at most the original rank. Along a swap direction $e_i-e_j$, the multilinear extension is convex because its second derivative is $-2\partial_{ij}F\ge0$. One endpoint therefore does not decrease its value; repeated exchanges and deletion of the dummies yield an independent set. Explicit polynomial or local-function representations allow deterministic value comparisons. Swap rounding gives an alternative with no loss in expectation \citep{ChekuriVondrakZenklusen2010}.

Independence and at-most-cardinality constraints are down-closed, as recovery requires. Exact-cardinality and base constraints need a separate recovery argument. Rounding also requires additional structure; a separation oracle for an arbitrary down-closed convex set is insufficient.

For another independence system, put $P_{\I}=\conv\{\ind_S:S\in\I\}$. A \emph{monotone $(b,c)$-balanced contention-resolution scheme}, with $0<b,c\le1$, takes $x\in bP_{\I}$ and an independent sample $R_x$, and returns a feasible subset. Each sampled element is retained with conditional probability at least $c$, and its retention probability cannot increase as the input sample grows. Given $y\in P_{\I}$, use $x=by$. For a nonmonotone objective, prune the output in a fixed order, retaining an element only if its marginal to the retained set is positive. This gives $\E f(S)\ge cF(by)\ge bcF(y)$ \citep{ChekuriVondrakZenklusen2014}. The second inequality uses concavity of $t\mapsto F(ty)$ and $F(0)\ge0$, so Proposition~\ref{prop:rounding} applies with $\eta=bc$.

\begin{corollary}[Lossless discrete specializations]\label{cor:discrete}
Under no side constraint, an at-most-$k$ constraint, or a matroid-independence constraint, the following fractional factors transfer without further loss, subject to the representation and access assumptions already stated: $1/2$ for $\CQ_2$; $8/17$ for $\CQ_3$, supplied $\CR_{12}$ representations, and supplied $\CL_4$ decompositions; $\gamma_*$ for supplied arbitrary-finite-arity $\CR_\infty$ representations; and $\min_b\rho_{|N_b|}$ for supplied concave-cardinality decompositions. Globally monotone members of $\CL_d$ have factor $\gamma_d$.
\end{corollary}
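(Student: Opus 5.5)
The proof combines three ingredients, each already available: the fractional guarantee for the class, the lossless rounding factor $\eta=1$ for the constraint, and Proposition~\ref{prop:rounding}. We treat the three constraint types together. The unconstrained cube, the at-most-$k$ polytope $\{x\in[0,1]^N:\sum_ix_i\le k\}$, and a matroid independence polytope are each nonempty, compact, convex and down-closed rational polytopes. Each contains exactly the incidence vectors of its feasible family. Each also admits an explicit inequality system or a polynomial-time strong-separation oracle, so the access assumptions of Section~\ref{sec:access} hold for $\K=P_\I$.

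\textbf{Fractional step.} For each listed class we invoke the matching fractional theorem to get $y\in\K$ with $F(y)\ge\rho(\max_\K F-\delta)$. Here $F$ is the multilinear extension of $f$, and $\delta$ can be made arbitrarily small.
\begin{itemize}
\item $\CQ_2$: Theorem~\ref{thm:quadratic} gives $\rho=1/2$. The diagonal residual vanishes, and $\CQ_2=\CR_2$ by Proposition~\ref{prop:classes}.
\item $\CQ_3$: Theorem~\ref{thm:cubic} together with $\CQ_3=\CR_3$ gives $\rho=8/17$.
\item Supplied $\CR_{12}$ representations: Corollary~\ref{cor:twelve} gives $\rho=8/17$. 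A distinct-variable uniform-polarity representation is a compatible certificate in $\CC_{\le12}$.
\item Supplied $\CL_4$ decompositions: Corollary~\ref{cor:arity_four} gives $\rho=8/17$.
\item $\CR_\infty$: Theorem~\ref{thm:single} with Lemma~\ref{lem:mixed} gives $\rho=\gamma_*$.
\item Concave-cardinality decompositions: Theorem~\ref{thm:cardinality} gives $\rho=\min_b\rho_{|N_b|}$.
\item Globally monotone members of $\CL_d$: Theorem~\ref{thm:monotone} gives $\rho=\gamma_d$.
\end{itemize}
In each case the objective is the multilinear extension of a nonnegative submodular set function, because every member of these cones is nonnegative and submodular (Proposition~\ref{prop:classes}).

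\textbf{Rounding step.} We apply the rounding facts recorded after Proposition~\ref{prop:rounding}.
\begin{itemize}
\item For the cube, fix the coordinates one at a time at a maximizing endpoint of each affine section. This gives a set $S$ with $f(S)\ge F(y)$.
\item For matroid independence, including the uniform matroid for the at-most-$k$ case, use deterministic pipage rounding after adjoining zero-value dummies. This gives $\eta=1$, because $F$ is convex along every swap direction $e_i-e_j$ (its second derivative is $-2\partial_{ij}F\ge0$).
\end{itemize}
Each comparison needs only exact evaluation of $F$ at rational points. This is polynomial-time from the explicit polynomial, the rooted representation, the local tables, or, for cardinality components, the Poisson--binomial convolution. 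Proposition~\ref{prop:rounding} with $\eta=1$ then gives $f(S)\ge\rho(\max_{T\in\I}f(T)-\delta)$. This is the claimed factor up to additive error.

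\textbf{Main obstacle.} The delicate point is checking that the hypotheses match in every case. First, each fractional guarantee must be stated against $\max_\K F$ for the multilinear $F$, and not against some roof of a different function. Second, the at-most-$k$ and matroid polytopes must be down-closed; this holds for independence polytopes but not for base polytopes. Third, for the monotone class the identity-map guarantee already yields a point of $\K$, so no down-closedness is needed there. We also confirm that pipage rounding is deterministic with exact rational arithmetic, so the bound holds for the returned set itself, not merely in expectation.
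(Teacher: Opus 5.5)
Your proposal is correct and follows the paper's own route. The paper's proof is the same combination: each class's fractional roof-recovery guarantee, stated against $\max_\K F$, fed into Proposition~\ref{prop:rounding} with $\eta=1$, using endpoint rounding on the cube and deterministic pipage rounding for uniform or general matroid independence polytopes. Your additional checks (membership in the nonnegative submodular cone, down-closedness of the independence polytopes, and exact rational evaluation for the comparisons) make explicit what the paper's one-line proof leaves implicit.
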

\begin{proof}
Apply Proposition~\ref{prop:rounding} with $\eta=1$ to the corresponding roof-recovery results.
\end{proof}
These guarantees allow an arbitrarily small additive loss. Generic unconstrained algorithms attain $1/2$ \citep{BuchbinderFeldman2018}; Corollary~\ref{cor:discrete} gives the stated factors under matroid-independence constraints as well.

\subsection{Directed cuts and submodular hypergraph splitting}
For the loopless nonnegative weighted directed-cut model of Section~\ref{sec:quadratic_main}, Corollary~\ref{cor:discrete} gives factor $1/2$ under an at-most-$k$ or matroid-independence constraint on the source side.

A generalized submodular hypergraph splitting objective has the form
\[
 f(S)=\sum_{e\in\mathcal E}w_e\delta_e(S\cap e),\quad
 w_e\ge0,\quad\delta_e:2^e\to\R_{\ge0}\text{ submodular},\quad
 \delta_e(\varnothing)=\delta_e(e)=0.
\]
Undirected splitting also requires $\delta_e(T)=\delta_e(e\setminus T)$. We maximize the objective in this standard splitting-function model \citep{VeldtBensonKleinberg2022}. With supplied local tables and edges of size at most four, Corollary~\ref{cor:arity_four} gives $8/17$ under arbitrary overlap, followed by lossless matroid rounding. The model includes local functions beyond rooted directed-hyperedge indicators.

For concave-cardinality splitting functions, the factor is $\min_e\rho_{|e|}$ at arbitrary encoded edge sizes. For symmetric concave-cardinality splitting with all edges of size at least three, \eqref{eq:symmetric_cardinality} gives at least $5/8$.

\subsection{Three different correlation ratios}\label{sec:correlation}
For a nonzero nonnegative $f$, compare three benchmarks at points where $f^+(x)>0$: independent evaluation, the best independent evaluation below the marginals, and the best subset of an independent sample. Define
\[
 \CG(f)=\inf_x\frac{F(x)}{f^+(x)},\qquad
 \DCG(f)=\inf_x\frac{\max_{0\le y\le x}F(y)}{f^+(x)},
\]
\[
 f_{\max}(S)=\max_{T\subseteq S}f(T),\qquad
 \CG_{\max}(f)=\inf_x\frac{\E f_{\max}(R_x)}{f^+(x)}.
\]
The infima exclude zero denominators. The standard and max-subset ratios serve different purposes in stochastic optimization and prophet inequalities \citep{AgrawalDingSaberiYe2012,Yan2011,RubinsteinSingla2017,ChekuriLivanos2024}.
\begin{proposition}[Correlation consequence]\label{prop:correlation}
For every $x$,
\[
 F(x)\le\max_{0\le y\le x}F(y)\le\E f_{\max}(R_x).
\]
Consequently $\CG(f)\le\DCG(f)\le\CG_{\max}(f)$. A downward recovery inequality paying $\rho L(x)$ with $L\ge f^+$ implies $\DCG(f),\CG_{\max}(f)\ge\rho$.
\end{proposition}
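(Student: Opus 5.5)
The plan is to prove the two displayed inequalities at a fixed $x$ and then pass to infima. Throughout, $F$ is the multilinear extension of a nonnegative submodular $f$, so Lemma~\ref{lem:vertex} applies.

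The first inequality, $F(x)\le\max_{0\le y\le x}F(y)$, is immediate because $y=x$ is admissible. For the second, Lemma~\ref{lem:vertex} gives
\[
 \max_{0\le y\le x}F(y)=F(x\odot\ind_M)
\]
for some $M\subseteq N$. Now couple the samples: $R_{x\odot\ind_M}$ has the same law as $R_x\cap M$, since each $i\in M$ is included independently with probability $x_i$ and each $i\notin M$ is excluded. Hence
\[
 F(x\odot\ind_M)=\E f(R_x\cap M)\le\E\max_{T\subseteq R_x}f(T)=\E f_{\max}(R_x),
\]
because $R_x\cap M\subseteq R_x$ pointwise. This pointwise coupling is the only nontrivial step. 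If one prefers to avoid Lemma~\ref{lem:vertex}, the same bound follows for any $y\le x$ by thinning: keep each $i\in R_x$ independently with probability $y_i/x_i$. The resulting set has law $R_y$ and is a subset of $R_x$, so again $F(y)\le\E f_{\max}(R_x)$.

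Dividing by $f^+(x)>0$ and taking infima over the same set of points yields $\CG(f)\le\DCG(f)\le\CG_{\max}(f)$, since a pointwise ordering of ratios is preserved by infima.

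For the last claim, suppose downward maps $R_s$ with weights $p_s$ satisfy $\sum_sp_sF(R_s(x))\ge\rho L(x)$ and $L\ge f^+$. Each $R_s(x)$ lies in $[0,x]$, so
\[
 \max_{0\le y\le x}F(y)\ge\sum_sp_sF(R_s(x))\ge\rho L(x)\ge\rho f^+(x)
\]
at every $x$. Dividing where $f^+(x)>0$ gives $\DCG(f)\ge\rho$, and the chain just proved then gives $\CG_{\max}(f)\ge\rho$. I expect no real obstacle here. The one point needing care is that the recovery inequality must hold at every $x$, not only at roof maximizers, which is exactly how \eqref{eq:component_payment} is stated.
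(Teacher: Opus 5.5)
Your proof is correct and essentially the paper's. The thinning coupling you offer as an alternative is exactly the paper's argument: retain each element of $R_x$ independently with probability $y_i/x_i$. Your primary route through Lemma~\ref{lem:vertex} and the restriction $R_x\cap M$ is a minor variant that relies on multilinearity, which holds here, and your handling of the infima and of the recovery inequality matches the paper's.
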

\begin{proof}
For $y\le x$, sample $R_x$ and retain element $i$ independently with probability $y_i/x_i$, using zero when $x_i=0$. The retained set has law $R_y$ and is contained in $R_x$, so $F(y)\le\E f_{\max}(R_x)$. Maximizing and then taking infima proves the claims.
\end{proof}
For monotone $f$, all three ratios coincide. Theorem~\ref{thm:monotone} gives the exact class-wide value $\gamma_d$ for monotone $\CL_d$, $\CR_d$, and $\CA_d$: the sharp coverage witness belongs to all three classes.

For nonzero nonnegative quadratic submodular functions, the class-wide downward ratio is exactly $1/2$. The lower bound follows from the quadratic roof theorem and $L\ge f^+$. For sharpness, take $f(S)=\ind\{|S|=1\}$ on two elements, with extension $F(y)=y_1+y_2-2y_1y_2$. At $x=(1/2,1/2)$, the equal mixture of the two singletons gives $f^+(x)=1$. Both partial derivatives of $F$ are nonnegative on $[0,x]$, so its maximum there is $F(x)=1/2$.

The preceding arguments give a downward-correlation lower bound of $8/17$ for cubic functions, supplied rooted representations through degree twelve, and arbitrary overlapping arity-four local functions. For arbitrary-finite-degree rooted representations, the bound is $\gamma_*$. The examples using additive roofs do not prove exactness when the denominator is $f^+$. Theorem~\ref{thm:cardinality}, however, gives the exact downward ratio $\rho_d$ for nonnegative concave-cardinality functions on a fixed $d$-element ground set. Sums of these local functions inherit the minimum component factor. Section~\ref{sec:online} establishes online guarantees for globally monotone local sums and for nonmonotone objectives admitting the cut, unary, and coverage representation \eqref{eq:online_class}.

\Needspace{14\baselineskip}
\part{Online preservation}\label{part:online}
\section{Online preservation and a fractional prophet interface}\label{sec:online}
A selectable greedy online contention-resolution scheme can preserve the value of nonnegative sums of directed-hypergraph cuts, unaries, and coverage terms. The proof uses positive association of decreasing events. Combining this preservation result with the benchmark on the full outcome universe in Appendix~\ref{app:outcomes} gives a prophet guarantee.

\subsection{Greedy online contention resolution}
Let $P_{\I}=\conv\{\ind_S:S\in\I\}$ for a nonempty down-closed family $\I$, and fix $b,c\in[0,1]$. An \emph{online contention-resolution scheme (OCRS)} must accept or reject an active element when it arrives, without changing earlier decisions. For independent activation probabilities $z$, a randomized greedy OCRS first samples a nonempty down-closed family $\mathcal F_z\subseteq\I$, independently of the active set $R_z$. Thus $\varnothing\in\mathcal F_z$ and the accepted set starts at $\varnothing$. It accepts an arriving active element $e$ exactly when $J\cup\{e\}\in\mathcal F_z$, where $J$ is the set accepted so far. Define
\[
 Q_e(R_z,\mathcal F_z)=
 \{J\cup\{e\}\in\mathcal F_z\text{ for every }J\subseteq R_z
                                      \text{ with }J\in\mathcal F_z\}.
\]
Adding other active elements can only destroy $Q_e$; toggling the activation of $e$ leaves it unchanged. If $e$ is active and $Q_e$ holds, every arrival order leads to its acceptance. The scheme is $c$-selectable if $\Pr(Q_e)\ge c$ for every $e$, and $(b,c)$-selectable if this holds for every $z\in bP_{\I}$, following \citet{FeldmanSvenssonZenklusen2021}. All results below allow an \emph{almighty adversary}, which observes the active set and the scheme's random choices before choosing the arrival order.

\begin{corollary}[Monotone local objectives]\label{cor:online_monotone}
If $f\in\CL_d$ is globally monotone, $x$ is a $\delta$-accurate maximizer of its local roof over $P_{\I}$, and a $(b,c)$-selectable greedy OCRS is available, then its output on $R_{bx}$ satisfies
\[
 \E f(S_{\rm out})\ge bc\gamma_d\left(\max_{S\in\I}f(S)-\delta\right).
\]
\end{corollary}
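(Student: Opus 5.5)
The plan is to chain three inequalities: OCRS preservation, then monotone roof recovery, then the roof optimization guarantee. The key simplification is that $f$ is globally monotone, so no pruning step is needed. Also, any OCRS output is a subset of the active set, and monotonicity lets the selection guarantee transfer directly to $f$.

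First, the OCRS step. Put $z=bx\in bP_{\I}$, which is admissible because $x\in P_{\I}$. Element $e$ is accepted whenever $e\in R_z$ and $Q_e(R_z,\mathcal F_z)$ holds. The event $Q_e$ is decreasing in the active set and independent of $\ind\{e\in R_z\}$. Monotone submodular $f$ admits the standard greedy-OCRS preservation bound $\E f(S_{\rm out})\ge c\,F(z)$. I would prove it by ordering elements and writing $f(S_{\rm out})$ as a telescoping sum of marginals $f_{S_{<e}\cap S_{\rm out}}(e)$. Submodularity gives $f_{A}(e)\ge f_{R_z\cap\{<e\}}(e)$ for accepted sets $A\subseteq R_z$, and the marginal on the right is nonnegative by monotonicity. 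Taking the expectation of $\ind\{e\in R_z\}\ind\{Q_e\}$ times that marginal is the delicate point. Because $Q_e$ and the marginal are both decreasing in the other active elements, Harris's inequality gives $\Pr(Q_e\mid\cdot)$ positively correlated with the marginal. This yields at least $c\,\E[\ind\{e\in R_z\}f_{R_z\cap\{<e\}}(e)]$, and summing gives $cF(z)$. I expect this correlation step to be the main obstacle. The fallback is to cite the monotone submodular guarantee of \citet{FeldmanSvenssonZenklusen2021} for greedy OCRSs against an almighty adversary.

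Second, scale from $z=bx$ back to $x$. The function $t\mapsto F(tx)$ is concave on $[0,1]$, since its second derivative is $x^\top\nabla^2F\,x\le0$ for a multilinear extension of a submodular function (the diagonal terms vanish). Together with $F(0)=f(\varnothing)\ge0$, this gives $F(bx)\ge bF(x)$.

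Third, apply Theorem~\ref{thm:monotone}: $F(x)\ge\gamma_dL(x)$ with no coordinate decrease. Since $x$ is a $\delta$-accurate maximizer of $L$ over $P_{\I}$, we have $L(x)\ge\max_{P_{\I}}L-\delta\ge\max_{P_{\I}}F-\delta$. Every feasible $\ind_S$ lies in $P_{\I}$, so this is at least $\max_{S\in\I}f(S)-\delta$, as in Proposition~\ref{prop:rounding}. Multiplying the three bounds, using $b,c,\gamma_d\ge0$, gives the claim.
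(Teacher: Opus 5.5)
Your proposal is correct and follows the paper's proof. You cite the monotone greedy-OCRS bound $\E f(S_{\rm out})\ge cF(bx)$, or reprove it by the Harris-inequality argument you sketch (which is valid after conditioning on $\mathcal F_z$), and then combine Theorem~\ref{thm:monotone} with a ray-scaling step and the roof-optimization comparison. The only cosmetic difference is the order: you scale $F$ along the ray ($F(bx)\ge bF(x)$) before applying Theorem~\ref{thm:monotone} at $x$, whereas the paper applies Theorem~\ref{thm:monotone} at $bx$ and then uses concavity of $L$ with $L(0)\ge0$; both orders are valid.
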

\begin{proof}
The monotone OCRS theorem gives $\E f(S_{\rm out})\ge cF(bx)$ \citep{FeldmanSvenssonZenklusen2021}. Apply Theorem~\ref{thm:monotone}, followed by concavity and nonnegativity at zero of $L$, to obtain $F(bx)\ge\gamma_dL(bx)\ge b\gamma_dL(x)$.
\end{proof}
Monotonicity is required only of the sum. For a matroid, discard loops---elements belonging to no feasible independent set---and use $c=1-b$. Taking $b=1/2$ gives $\gamma_d/4$.

\subsection{Directed-hypergraph cut objectives}
For subsets $E,H\subseteq N$, write
\[
 \cov_E(S)=\ind\{S\cap E\ne\varnothing\},\qquad
 \cut_{E,H}(S)=\ind\{S\cap E\ne\varnothing,\ H\nsubseteq S\}.
\]
The rooted functions are $u_{T,i}=\cut_{\{i\},T}$ and $v_{T,i}=\cut_{T,\{i\}}$. Directed-hypergraph cut functions allow arbitrary $E,H$ and nonnegative weights \citep{Schrijver2003}. Each cut is nonnegative and submodular: for disjoint $E,H$, its multilinear extension is
\[
 \left(1-\prod_{i\in E}(1-x_i)\right)
 \left(1-\prod_{j\in H}x_j\right);
\]
if $E\cap H\ne\varnothing$, it is
\[
 1-\prod_{i\in E}(1-x_i)-\prod_{j\in H}x_j.
\]
Every mixed second derivative is nonpositive. Empty sides give the zero cut. 
\begin{theorem}[Online preservation]\label{thm:online_preservation}
Suppose $f$ has the representation
\begin{equation}\label{eq:online_class}
 f(S)=a+\sum_ip_i\ind\{i\in S\}+\sum_iq_i\ind\{i\notin S\}
       +\sum_r\mu_r\cov_{E_r}(S)+\sum_r\nu_r\cut_{E_r,H_r}(S),
\end{equation}
with all coefficients nonnegative. A $c$-selectable randomized greedy OCRS applied to $R_z$ satisfies
\[
 \E f(S_{\rm out})\ge cF(z)
\]
for every arrival order, including one selected by an almighty adversary. In particular, the statement holds for every function in $\CR_\infty$.
\end{theorem}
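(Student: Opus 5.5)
By linearity of expectation and nonnegativity of the coefficients, it suffices to prove $\E g(S_{\rm out})\ge c\,G(z)$ separately for each generator $g$ in \eqref{eq:online_class}, uniformly over arrival orders. The constant $a$ is trivial, since $c\le1$. For $\ind\{i\notin S\}$, note that $S_{\rm out}\subseteq R_z$, so $\Pr(i\notin S_{\rm out})\ge1-z_i\ge c(1-z_i)$. For $\ind\{i\in S\}$, an active element $i$ with $Q_i$ is accepted under every order. Activation of $i$ is independent of $Q_i$, because $Q_i$ depends only on $\mathcal F_z$ and on $R_z\setminus\{i\}$. Hence $\Pr(i\in S_{\rm out})\ge z_i\Pr(Q_i)\ge cz_i$.

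For $\cov_E$ and the positive side of $\cut_{E,H}$, I would use the fixed-order surrogate. List $E=\{e_1,\dots,e_k\}$ and let $A_j$ be the event that $e_j$ is active, $Q_{e_j}$ holds, and no $e_l$ with $l<j$ is active. These events are disjoint, and on $A_j$ the element $e_j$ is accepted whatever the order. Conditioning on the activation of $e_1,\dots,e_j$, the event $Q_{e_j}$ is decreasing in the remaining activations. I would handle it by Harris's inequality, or more simply by observing that the earlier elements are inactive, so $Q_{e_j}$ only gets easier. This gives $\Pr(A_j)\ge c\,z_{e_j}\prod_{l<j}(1-z_{e_l})$. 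Summing over $j$ yields $\Pr(S_{\rm out}\cap E\ne\varnothing)\ge c\bigl(1-\prod_{i\in E}(1-z_i)\bigr)$, which settles the coverage terms.

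For the cut term $\cut_{E,H}$ I need the intersection of the accept-some-$e\in E$ event with the event $H\nsubseteq S_{\rm out}$. If $E\cap H\ne\varnothing$, the extension $1-\prod_E(1-z)-\prod_Hz$ is bounded by $\Pr(\text{some }E\text{ accepted})-\Pr(H\subseteq S_{\rm out})$, and $\Pr(H\subseteq S_{\rm out})\le\prod_H z$ because $S_{\rm out}\subseteq R_z$. Combining this with the previous step gives $c(1-\prod_E(1-z))-\prod_Hz\ge c\,G$ after regrouping; the constant $c$ on $\prod_H z$ needs care, but the inequality $c\prod_H z\le\prod_Hz$ goes in the wrong direction, so instead I write $G=\Pr(R_z\cap E\ne\varnothing,\ H\nsubseteq R_z)$ and argue as in the disjoint case below. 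For disjoint $E,H$, I would condition on the activations in $H$. The event $\{H\nsubseteq R_z\}$ is decreasing and implies $H\nsubseteq S_{\rm out}$. The surrogate events $A_j$ are built from decreasing components, namely the $Q$'s together with the inactivity of earlier $E$-elements, intersected with the activity of $e_j$, which lies on independent coordinates. Harris's inequality for decreasing events under the product measure on $R_z$, applied conditionally on $\mathcal F_z$, then gives
\[
\Pr(A_j\cap\{H\nsubseteq R_z\})\ge z_{e_j}\Pr(Q_{e_j}\cap\{e_l\notin R_z,\,l<j\})\Pr(H\nsubseteq R_z)\ge c\,z_{e_j}\prod_{l<j}(1-z_{e_l})\bigl(1-\textstyle\prod_H z\bigr).
\]
For the second inequality I would instead apply Harris to $Q_{e_j}$ against the two decreasing events jointly, and then use $\Pr(Q)\ge c$. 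Summing over $j$ gives $c\,G$. For overlapping $E,H$, I would restrict the sum to $e_j\in E\setminus H$ together with the elements of $E\cap H$, which handle themselves: accepting some $e\in E$ while some $h\in H$ is absent is implied by $A_j$ together with inactivity of one element of $H$ other than $e_j$. I would decompose according to the first element of $E$ accepted.

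I expect the main obstacle to be this overlap case, together with making Harris's inequality rigorous when $\mathcal F_z$ is random. The latter is fine conditionally, since $\mathcal F_z$ is independent of $R_z$. Finally, membership of $\CR_\infty$ follows from $u_{T,i}=\cut_{\{i\},T}$ and $v_{T,i}=\cut_{T,\{i\}}$.
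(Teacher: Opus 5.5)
Your proposal is correct and follows essentially the paper's route. You decompose by the first active element $e_j$ of $E$ in a fixed order, use the independence of $Q_{e_j}$ from the activation of $e_j$, and apply Harris's inequality conditionally on $\mathcal F_z$ to $Q_{e_j}$ against the decreasing remaining conditions. The paper avoids your case split by using the single event $C_k'=\{e_1,\ldots,e_{k-1}\notin R_z,\ H\setminus\{e_k\}\nsubseteq R_z\}$ together with the joint Harris step; this treats disjoint and overlapping $E,H$ uniformly, and coverage follows by dropping the $H$ condition. In the overlapping case it is this joint version, not your factorized display, that gives exactly $c\Pr(C_k)$ for each term.
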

\begin{proof}
Consider one cut function and order $E=(e_1,\ldots,e_m)$. The disjoint events
\[
 C_k=\{e_k\in R_z,\ e_1,\ldots,e_{k-1}\notin R_z,\ H\nsubseteq R_z\}
\]
partition the event that the cut is one. Terms with $z_{e_k}=0$ have zero probability and may be omitted. Conditional on $e_k$ being active, its remaining condition is
$C_k'=\{e_1,\ldots,e_{k-1}\notin R_z,\ H\setminus\{e_k\}\nsubseteq R_z\}$.
Conditional on $\mathcal F_z$, both $C_k'$ and $Q_{e_k}$ decrease as the remaining independent activations grow, so Harris's inequality gives positive association \citep{Harris1960}. The event $C_k'$ is independent of $\mathcal F_z$, and $Q_{e_k}$ is independent of the activation of $e_k$. Averaging therefore gives
\[
 \Pr(C_k\cap Q_{e_k})\ge\Pr(C_k)\Pr(Q_{e_k})\ge c\Pr(C_k).
\]
On $C_k\cap Q_{e_k}$, the algorithm accepts an element of $E$, while an inactive element of $H$ remains absent. The output cut is therefore one. Summing proves preservation of its expectation. Omitting the missing-$H$ condition gives the coverage result. Positive literals are preserved with probability at least $cz_i$; complemented literals satisfy $\Pr(i\notin S_{\rm out})\ge1-z_i\ge c(1-z_i)$, and constants lose no value. Sum these bounds with the nonnegative coefficients.
\end{proof}
For a general nonnegative nonmonotone submodular function, the standard theorem applies independent probability-$1/2$ subsampling to the greedy output and guarantees $cF/4$ under this adversary \citep[Theorem~1.10]{FeldmanSvenssonZenklusen2021}. The representation in \eqref{eq:online_class} gives $cF$ for the unmodified greedy output.

\subsection{The additional prophet benchmark}
The preservation theorem compares the output with $F(z)$ at the activation marginals $z$. To obtain a prophet guarantee, we also need a fractional benchmark on the full outcome universe. Let independent items have disjoint finite outcome sets $U_i$, probabilities $p_e$, and possible null outcomes; let $\I$ constrain the selected item indices. Assume the valuation on all of $U=\bigsqcup_iU_i$ has representation \eqref{eq:online_class}. Define
\[
 P^{\rm out}=\left\{x\ge0:x_e\le p_e,\ \left(\sum_{e\in U_i}x_e\right)_i\in P_{\I}\right\}.
\]
If a $(b,c)$-selectable greedy item-space OCRS is efficient and the full outcome valuation has an efficiently optimizable roof $L\ge f^+$ with a common downward recovery factor $\rho$, then a feasible $\delta$-accurate roof optimizer gives
\[
 \E f(S_{\rm alg})\ge bc\rho(\OPT_{\rm prop}-\delta).
\]
Here $\OPT_{\rm prop}$ is the expected value of a feasible selector that sees all outcomes. Proposition~\ref{prop:outcome_transfer} and Corollary~\ref{cor:prophet_interface} in Appendix~\ref{app:outcomes} prove the transfer, including null outcomes and the comparison of one-outcome-per-item sampling with independent outcome sampling. The representation must hold on all of $U$. Arbitrary arity-four local functions need this additional hypothesis for the online transfer.

\Needspace{14\baselineskip}
\part{Explicit-input computational hardness}\label{part:hardness}
\section{Hardness methodology}\label{hard:sec:methodology}
Section~\ref{sec:frontiers} bounds what specified roofs and recovery rules can achieve. Here we consider computational hardness for explicit quadratic instances. The reductions use terminal-free directed cuts with nonnegative arc weights; Section~\ref{sec:quadratic_main} gives the more general fixed-terminal representation.

\subsection{The explicit cardinality-constrained model}\label{hard:sec:representation}
Let $f$ be a globally nonnegative quadratic submodular function as in \eqref{hard:eq:problem}, and let $F$ denote its multilinear extension.
We consider the promised explicit-input problem
\begin{equation}\label{hard:eq:qsub-cardinality}
 \max\{f(S):S\subseteq[n],\ |S|\le k\},
 \qquad f(S)\ge0\quad\text{for every }S\subseteq[n].
\end{equation}
The rational coefficients $(c,a,b)$ and the integer $k$ are encoded in binary.
The nonnegativity promise applies to the entire Boolean cube, including
sets that exceed the budget.

For $0<\alpha\le1$, an $\alpha$-approximation returns a feasible $S$ with
$f(S)\ge\alpha\OPT$ on every promised instance, where $\OPT$ denotes the
maximum in \eqref{hard:eq:qsub-cardinality}.  Thus a hardness factor $r$ is an
upper bound on the attainable approximation ratio: ``hard within
$r+\eps$'' means that no polynomial-time $(r+\eps)$-approximation exists
under the stated complexity assumption. All computational-hardness statements here concern deterministic polynomial-time algorithms.

The corresponding continuous feasible region is
\[
 P_k=\left\{x\in[0,1]^n:\sum_i x_i\le k\right\},
 \qquad k\in\{0,\ldots,n\}.
\]
The lossless rounding argument in Section~\ref{sec:discrete} gives
\[
 \max_{x\in P_k}F(x)=\max_{|S|\le k}f(S).
\]
Incidence vectors give one inequality; rounding any fractional point without
decreasing its value gives the other. A continuous approximation on this
explicit multilinear subclass therefore yields the same discrete factor.
In the normalized reductions below, the additive error can be chosen smaller
than the fixed gap between the yes-case and no-case optimum bounds.

\subsection{Test functions and proof mechanism}
A cardinality bound constrains the total selected vertex mass, but different parts of the reduction may have different selection probabilities. A global affine majorant will bound the cut value while allowing this variation.

A \emph{long-code cloud} is a copy of $\{0,1\}^R$, with one coordinate for
each label in $[R]=\{1,\ldots,R\}$. Its type specifies the product measure,
and a Boolean indicator $g$ describes the selected part of the cloud.
A \emph{dictator} selects a coordinate: $g(x)=x_i$. A dictatorship test
distinguishes these intended indicators from the cloud functions arising
in the reduction's no case. The directed test accepts when its sampled tail
is selected and its sampled head is not. Its \emph{completeness} is the acceptance
probability guaranteed by the intended dictators; its \emph{soundness}
bounds acceptance in the no case. After replication, we also use fractional
cloud indicators in $[0,1]$.

\subsection{Retaining directed asymmetry}

If both endpoints have the same distribution and use the same Boolean cloud function, forward and reverse crossings have equal probability.

\begin{proposition}[Equal-marginal collapse]\label{hard:prop:equal-marginal}
If random variables $X,Y$ have identical distributions and $g$ is Boolean,
then
\[
 \Prb[g(X)=1,g(Y)=0]
 =\Prb[g(X)=0,g(Y)=1]
 =\tfrac12\Prb[g(X)\ne g(Y)].
\]
For a one-type test using the same cloud function and bias at both ends,
symmetrization multiplies both completeness and soundness by two. The
soundness-to-completeness ratio is therefore unchanged whenever completeness is positive.
\end{proposition}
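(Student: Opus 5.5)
The plan is to prove the displayed identity first and then read off the two consequences. Write $p=\Prb[g(X)=1]$, which equals $\Prb[g(Y)=1]$ because $X$ and $Y$ have the same distribution. Since $g$ is Boolean, the events $\{g(X)=1\}$ and $\{g(Y)=1\}$ each split into two disjoint pieces:
\[
 \Prb[g(X)=1,g(Y)=0]+\Prb[g(X)=1,g(Y)=1]=p
\]
and
\[
 \Prb[g(X)=0,g(Y)=1]+\Prb[g(X)=1,g(Y)=1]=p.
\]
Subtracting cancels the common term $\Prb[g(X)=1,g(Y)=1]$, so the two crossing probabilities are equal. Their sum is $\Prb[g(X)\ne g(Y)]$, because the event $\{g(X)\ne g(Y)\}$ is the disjoint union of the two crossings. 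This gives the factor $\tfrac12$. No independence or joint structure of $(X,Y)$ is needed, only equality of the marginal laws.

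For the consequence about one-type tests, fix an instance. The directed test accepts an edge sample when its tail is selected and its head is not, that is, on the event $\{g(X)=1,g(Y)=0\}$. The symmetrized, undirected test accepts on $\{g(X)\ne g(Y)\}$. Apply the identity conditionally on the shared randomness of the test, such as the edge, the cloud, and the correlation structure. Under the one-type hypothesis, the tail and head points have the same conditional distribution and use the same cloud function $g$. Averaging then shows that undirected acceptance equals exactly twice directed acceptance for every assignment of cloud functions.

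This pointwise doubling holds both for the intended dictator assignments and for every no-case assignment. It therefore carries over to completeness, which is evaluated at the dictators, and to soundness, which is a supremum over no-case functions. Both quantities are multiplied by $2$, and the soundness-to-completeness ratio is unchanged whenever completeness is positive.

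The step needing the most care is this last one. The equal-distribution hypothesis must hold conditionally on each configuration of the test, not merely in aggregate. Only then can the identity be applied inside the expectation over the test distribution, and only then is the doubling pointwise in the assignment, so that it survives the supremum that defines soundness. The one-type assumption, with the same bias at both ends and the same function, is exactly what guarantees this. I will state that reduction explicitly and treat the rest as bookkeeping.
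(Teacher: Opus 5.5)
Your proof is correct and is essentially the paper's argument. Equal marginals give $\Prb[g(X)=1]=\Prb[g(Y)=1]$, so the two crossing probabilities differ by zero and sum to the disagreement probability; your disjoint decomposition writes out the paper's one-line proof, and your pointwise-doubling treatment of completeness and the soundness supremum spells out the consequence the paper leaves implicit.
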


\begin{proof}
Since $\E[g(X)]=\E[g(Y)]$, subtracting the two crossing probabilities gives zero.
Their sum is the disagreement probability.
\end{proof}

With several types, the imbalance between forward and reverse acceptance depends on the cloud means and net flow through each type.

\begin{proposition}[Type-flow identity]\label{hard:prop:flow}
Let $W_{st}\ge0$ be directed type-pair weights, let $g_s$ be a Boolean cloud
function of type $s$, and set $m_s=\E[g_s]$.  For every used pair $(s,t)$,
assume that $X$ has the prescribed type-$s$ marginal and $Y$ has the
prescribed type-$t$ marginal, so
$\E[g_s(X)]=m_s$ and $\E[g_t(Y)]=m_t$.  If
\begin{align*}
 A&=\sum_{s,t}W_{st}\E[g_s(X)(1-g_t(Y))],\\
 B&=\sum_{s,t}W_{st}\E[(1-g_s(X))g_t(Y)],
\end{align*}
then
\begin{equation}\label{hard:eq:flow}
 A-B=\sum_s(\out_s-\inw_s)m_s,
 \quad
 \out_s:=\sum_tW_{st},\quad \inw_s:=\sum_tW_{ts}.
\end{equation}
In particular, $A=B$ for every family of cloud functions when $W$ is
Eulerian, meaning $\out_s=\inw_s$ for every type $s$.
\end{proposition}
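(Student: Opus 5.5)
The plan is to expand both crossing expectations pairwise and use linearity of expectation. No coupling information between $X$ and $Y$ is needed, because the joint terms cancel. For each used pair $(s,t)$, write the joint term $J_{st}=\E[g_s(X)g_t(Y)]$. Expanding the products gives
\[
 \E[g_s(X)(1-g_t(Y))]=m_s-J_{st},\qquad
 \E[(1-g_s(X))g_t(Y)]=m_t-J_{st},
\]
using only the prescribed marginals $\E[g_s(X)]=m_s$ and $\E[g_t(Y)]=m_t$. Subtracting, each pair contributes $W_{st}(m_s-m_t)$ to $A-B$, whatever the dependence between $X$ and $Y$ within that pair.

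Summing over all pairs, we obtain
\[
 A-B=\sum_{s,t}W_{st}m_s-\sum_{s,t}W_{st}m_t .
\]
In the first sum we group by the tail type $s$, which gives $\sum_s \out_s m_s$. In the second sum we rename the head index and group by it, which gives $\sum_s \inw_s m_s$. This yields \eqref{hard:eq:flow}. Pairs with $W_{st}=0$ contribute nothing, so the marginal assumption is needed only for used pairs.

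The Eulerian case follows at once: if $\out_s=\inw_s$ for every $s$, every coefficient in \eqref{hard:eq:flow} vanishes, so $A=B$ for every choice of Boolean cloud functions. This also recovers Proposition~\ref{hard:prop:equal-marginal} as the one-type case with a single self-loop weight.

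There is no real obstacle here. The only point needing care is bookkeeping: one expectation is taken per used pair, possibly under a different joint law for each pair. The identity holds because the cancelling joint term $J_{st}$ is the same in both crossing expressions for that pair. Booleanity of $g_s$ is in fact not used, so the argument also applies to fractional cloud indicators in $[0,1]$.
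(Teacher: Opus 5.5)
Your proof is correct and follows the paper's argument: the paper also notes that each ordered pair contributes $W_{st}(m_s-m_t)$ to $A-B$ and then collects the coefficient of each mean, and your explicit cancellation of the joint term $\E[g_s(X)g_t(Y)]$ simply spells out that step. (A minor point: your aside recovers only the first equality of Proposition~\ref{hard:prop:equal-marginal}, because identifying the common value with $\tfrac12\Prb[g(X)\ne g(Y)]$ does use Booleanity.)
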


\begin{proof}
For each ordered pair, the difference between forward and reverse acceptance
is $m_s-m_t$. Multiplying by the pair weights and collecting the coefficient
of each mean gives \eqref{hard:eq:flow}.
\end{proof}

On dictators, the cloud means equal the type biases. Unequal biases and a
non-Eulerian type matrix can therefore produce directed imbalance.

\subsection{Directed Gaussian profiles}

The soundness argument uses Gaussian crossing probabilities to bound the cut
test.

Let $\Phi$ be the standard normal distribution function.  For
$\rho\in(-1,1)$ and $a,b\in[0,1]$, define
\begin{align}
 \Gamma_\rho(a,b)
 &:=\Prb[Z\le\Phi^{-1}(a),\ Z'\le\Phi^{-1}(b)],\label{hard:eq:gamma}\\
 D_\rho(a,b)
 &:=a-\Gamma_\rho(a,b)
 =\Gamma_{-\rho}(a,1-b),\label{hard:eq:D}
\end{align}
where $(Z,Z')$ is a standard bivariate normal vector with correlation
$\rho$.  Endpoint values are defined by continuity.  The quantity
$D_\rho(a,b)$ is the Gaussian analogue of the directed event
``first output $1$, second output~$0$.''

A \emph{finite-type directed test} consists of:
\begin{itemize}[leftmargin=2em]
 \item a finite type set $\mathcal T$ and masses $\pi_s>0$ with
 $\sum_s\pi_s=1$;
 \item dictator biases $q_s\in(0,1)$;
 \item normalized directed weights $W_{st}\ge0$ with
 $\sum_{s,t}W_{st}=1$;
 \item for each used pair $(s,t)$, a connected probability distribution
 $\nu_{st}$ on $\{0,1\}^2$ with Bernoulli marginals $q_s,q_t$ and negative
 Pearson correlation $\rho_{st}$.
\end{itemize}
The bipartite support graph of $\nu_{st}$ has two copies of $\{0,1\}$,
with an edge from $x$ in the first copy to $y$ in the second whenever
$\nu_{st}(x,y)>0$. Connectedness of this graph is the irreducibility
condition used in correlated-space invariance principles. Define the dictator density, completeness, and
Gaussian soundness profile by
\begin{equation}\label{hard:eq:test-quantities}
 \mu:=\sum_s\pi_sq_s,
 \qquad
 C:=\sum_{s,t}W_{st}\nu_{st}(1,0),
 \qquad
 \Psi(z):=\sum_{s,t}W_{st}D_{\rho_{st}}(z_s,z_t).
\end{equation}

\begin{lemma}[Binary maximal correlation]\label{hard:lem:binary-correlation}
Let $(X,Y)$ have a connected distribution on $\{0,1\}^2$, nondegenerate
Bernoulli marginals, and Pearson correlation $\rho$.  Its maximal
correlation
\[
 \sup\{\E[u(X)v(Y)]:\E u=\E v=0,\ \E u^2=\E v^2=1\}
\]
equals $|\rho|$.  After replacing $Y$ by $1-Y$, the Pearson correlation is
$-\rho$ and the maximal correlation remains $|\rho|$.
\end{lemma}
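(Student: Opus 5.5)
The plan is to use the fact that on a two-point space every real function is affine. For Boolean $X$ with mean $p\in(0,1)$, the only mean-zero, unit-variance functions are $u(X)=\pm(X-p)/\sqrt{p(1-p)}$. Similarly, for $Y$ with mean $q\in(0,1)$, the only such functions are $v(Y)=\pm(Y-q)/\sqrt{q(1-q)}$. The constraint $\E u=0$ leaves a one-dimensional space of functions, and $\E u^2=1$ fixes it up to sign.

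The supremum in the definition therefore runs over just four pairs $(u,v)$. For each pair, $\E[u(X)v(Y)]=\pm\cov(X,Y)/\sqrt{p(1-p)q(1-q)}=\pm\rho$. Taking the sign that makes this positive gives maximal correlation exactly $|\rho|$. The supremum is attained, and it is finite because the marginals are nondegenerate.

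For the second claim, note that $1-Y$ is again a nondegenerate Bernoulli variable, with mean $1-q$ and the same variance. Its covariance with $X$ is $\cov(X,1-Y)=-\cov(X,Y)$, so its Pearson correlation is $-\rho$. The maps $v\mapsto v(1-\cdot)$ give a bijection between admissible test functions of $Y$ and of $1-Y$, so the maximal correlation is unchanged. Alternatively, one can apply the first part directly to get $|-\rho|=|\rho|$.

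Connectedness of the support graph is not needed for the identity itself. Its role is to guarantee $|\rho|<1$, as required by the invariance principles. I would remark on this separately: $|\rho|=1$ would force $Y$ to be an affine function of $X$ almost surely, so the support would be a perfect matching of two edges, which is disconnected. There is no real obstacle here. The only step needing care is the observation that the unit-variance centered function space is one-dimensional.
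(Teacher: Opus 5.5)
Your proposal is correct and follows essentially the same route as the paper: the centered subspace of $L^2(X)$ (and of $L^2(Y)$) is one-dimensional, so the admissible test functions are the standardized variables up to sign, and complementing $Y$ simply negates the standardized second variable. Your side remark is also accurate: connectedness is not used in the identity itself and serves only to force $|\rho|<1$, since a perfectly correlated law with nondegenerate marginals has a disconnected two-edge support.
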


\begin{proof}
The centered subspace of $L^2(X)$ is one-dimensional, spanned by the
standardized variable $(X-\E X)/\sqrt{\operatorname{Var}X}$; the same holds
for $Y$.  Every admissible $u,v$ is therefore one of these standardized
variables up to sign.  The supremum is the absolute Pearson correlation.
Complementation negates the standardized second variable.
\end{proof}

For the $q_s$-biased product measure, use the orthonormal basis
\[
 \chi_J^{(s)}(x)=\prod_{i\in J}
 \frac{x_i-q_s}{\sqrt{q_s(1-q_s)}},\qquad J\subseteq[R].
\]
If $g_s=\sum_J\widehat g_s(J)\chi_J^{(s)}$, its
\emph{degree-at-most-$d$ influence} at coordinate $i$ is
\[
 \Inf_i^{\le d}(g_s)
 =\sum_{\substack{J\ni i\\|J|\le d}}\widehat g_s(J)^2.
\]
A coordinate has large low-degree influence when the coefficients involving
it account for a substantial share of the low-degree Fourier mass.

\begin{lemma}[Finite-type invariance]\label{hard:lem:finite-type-invariance}
Fix a finite type system $(\mathcal T,W,\nu)$ whose used binary pair spaces
are connected.  For every $0<\eta<1$, there are an integer $d\ge1$ and
constant $0<\tau\le1$ with the following property.  Let
$g_s:\{0,1\}^R\to[0,1]$ have mean $m_s$ under the $q_s$-biased product
measure.  Write $\Inf_i^{\le d}(g_s)$ for the degree-at-most-$d$ influence of
coordinate $i$ in the corresponding biased Fourier basis.  Then either
\begin{equation}\label{hard:eq:finite-type-invariance}
 \sum_{s,t}W_{st}
 \E_{\nu_{st}^{\otimes R}}[g_s(X)(1-g_t(Y))]
 \le \Psi(m)+\eta,
\end{equation}
or some used pair $(s,t)$ and coordinate $i$ satisfy
\[
 \Inf_i^{\le d}(g_s)\ge\tau,
 \qquad
 \Inf_i^{\le d}(g_t)\ge\tau.
\]
\end{lemma}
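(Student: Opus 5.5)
The plan is to reduce the statement to the standard invariance principle for correlated product spaces with bounded functions, in the form of Mossel's Gaussian noise-stability bound \citep{Mossel2010}, applied one pair $(s,t)$ at a time, and then sum the resulting bounds with the weights $W_{st}$. Fix a used pair. By Lemma~\ref{hard:lem:binary-correlation}, the single-coordinate space $\nu_{st}$ is connected with nondegenerate marginals $q_s,q_t$, so its maximal correlation is $|\rho_{st}|<1$. Write the pair term as
\[
 \E[g_s(X)(1-g_t(Y))]=m_s-\E[g_s(X)g_t(Y)]
 =\E[g_s(X)\,\tilde g_t(Y)],\qquad \tilde g_t=1-g_t .
\]
For $[0,1]$-valued functions on a product of $R$ copies of a finite connected correlated space with maximal correlation $\rho<1$, Mossel's theorem gives the following bound, which I will use in its two-function, possibly-asymmetric-correlation form. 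For every $\eta'>0$ there exist $d,\tau$, depending only on $\eta'$, on the space, and on a lower bound for the minimum atom probability, such that if $\min\{\Inf_i^{\le d}(g_s),\Inf_i^{\le d}(g_t)\}<\tau$ for all $i$, then
\[
 \E[g_s(X)g_t(Y)]\ge\Gamma_{\rho}(m_s,m_t)-\eta'
 \quad\text{when }\rho\le0 .
\]
Here $\Gamma_\rho(a,b)$ is the Gaussian probability in \eqref{hard:eq:gamma}. The relevant direction is a lower bound on $\E[g_sg_t]$, which is the lower Gaussian bound; it is the mirror image of the "majority is stablest" upper bound and is obtained by applying that upper bound to $g_s$ and $1-g_t$ with correlation $-\rho$. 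Substituting the bound gives $\E[g_s(1-g_t)]\le m_s-\Gamma_{\rho_{st}}(m_s,m_t)+\eta'=D_{\rho_{st}}(m_s,m_t)+\eta'$, by \eqref{hard:eq:D}.

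Next comes uniformity over pairs. The type system is finite, so I take $d=\max_{(s,t)}d_{st}$ and $\tau=\min_{(s,t)}\tau_{st}$, where each pair constant is chosen with $\eta'=\eta$. Low-degree influence is monotone in $d$, but the thresholds must be handled carefully. To keep this clean, I will instead invoke the theorem once with a common degree $d$ and a common threshold $\tau$. Such a common choice exists because the constants in Mossel's theorem depend only on $\eta'$, on $\rho<1$, and on the minimum atom probability of the finite space; all three range over finitely many values here. The alternative in the statement is taken contrapositively. If no used pair and coordinate have both influences at least $\tau$, then every pair satisfies the low-common-influence hypothesis. Multiplying each pair bound by $W_{st}$ and summing with $\sum W_{st}=1$ gives \eqref{hard:eq:finite-type-invariance} with total error at most $\eta$.

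The main obstacle is matching the exact form of the invariance theorem. Mossel's statement is usually phrased with a common function on both sides, or with the hypothesis $\max_i\min\{\Inf_i(g_s),\Inf_i(g_t)\}<\tau$ under a single product measure. Here the two sides carry different biased measures $q_s\ne q_t$ and different Fourier bases. I would cite the two-function version for general correlated spaces $(\Omega_1\times\Omega_2,\nu)$, Theorem~6.3 / Proposition~6.4 of \citep{Mossel2010}, in which the influences are computed in each marginal's own orthonormal basis; this is exactly the basis $\chi_J^{(s)}$. A second check is the sign convention. With $\rho_{st}<0$, the Gaussian bound uses the correlation $\rho_{st}$ itself, which is legitimate because $\nu_{st}$ has Pearson correlation $\rho_{st}$ and the one-dimensional centered spaces make its Gaussian counterpart exactly correlation $\rho_{st}$. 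Endpoint means $m_s\in\{0,1\}$ are covered by continuity of $\Gamma$.
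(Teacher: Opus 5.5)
Your proposal is correct and follows the paper's proof: you apply Mossel's two-function low-common-influence bound pair by pair, convert it to $\E[g_s(X)(1-g_t(Y))]\le D_{\rho_{st}}(m_s,m_t)+\eta$, fix common $d,\tau$ over the finitely many used pairs, and sum with the weights $W_{st}$. The paper handles the sign differently only in form: it complements the second space and output ($Y'=1-Y$, $h_t(y')=1-g_t(1-y')$) and uses the upper bound $\Gamma_{-\rho_{st}}(m_s,1-m_t)$, whereas you use the anticorrelated lower bound on $\E[g_sg_t]$ directly, which is equivalent. Also, your first choice $d=\max d_{st}$, $\tau=\min\tau_{st}$ already works by monotonicity of low-degree influence in $d$, so the detour to a single common choice is unnecessary.
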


\begin{proof}[Proof outline]
Complement the second binary space and its output, giving maximal correlation $-\rho_{st}$. Mossel's two-function bound then gives $D_{\rho_{st}}(m_s,m_t)$, up to the prescribed error, unless a coordinate has large low-degree influence in both endpoint functions. Choose common influence parameters for the finite set of used pairs and sum with the normalized weights. Appendix~\ref{app:invariance_proof} gives the details and the precise source dependence.
\end{proof}

A bipartite \emph{Unique-Games instance} assigns a permutation of
$[R]$ to each edge. A labeling assigns one label to each vertex; its value
is the fraction of edges whose permutation constraint it satisfies, and
$\operatorname{val}(\mathcal U)$ is the largest such fraction. We use the
regular bipartite form of the \emph{Unique Games Conjecture}: for every
$\zeta,\kappa>0$, some fixed alphabet size $R$ makes it NP-hard to
distinguish instances of value at least $1-\zeta$ from those of value at
most $\kappa$ \citep{Khot2002,AustrinStankovic2019}. Regularity means that
degrees are constant on each side, so uniform edge sampling has uniform
vertex marginals on each side.

The decoding step uses averages over neighboring clouds. Let $\mathcal U$
be a regular bipartite Unique-Games instance with bipartition $(U,V)$, label
set $[R]$, and edge permutations $\sigma_{uv}$. In the reduction, vertices in
$U$ are the \emph{outer vertices}, and clouds are attached to vertices in $V$.
We use the convention that
$(u,v)$ is satisfied when
$\ell(v)=\sigma_{uv}(\ell(u))$.  For cloud functions
$g_{v,s}:\{0,1\}^R\to[0,1]$, let $g^u_{v,s}$ denote the coordinate permutation
of $g_{v,s}$ induced by $\sigma_{uv}$, explicitly
$g^u_{v,s}(x)=g_{v,s}((x_{\sigma_{uv}^{-1}(j)})_{j\in[R]})$, and define
\begin{equation}\label{hard:eq:neighbor-average}
 f_{u,s}:=\E_{v\in N(u)}g^u_{v,s},
 \qquad m_{u,s}:=\E[f_{u,s}].
\end{equation}
The vector $m_u=(m_{u,s})_{s\in\mathcal T}$ is the \emph{mean profile} at
outer vertex $u$.

\begin{lemma}[Influence decoding]\label{hard:lem:influence-decoding}
Fix a finite type system and $0<\eta<1$.  There is a constant $\kappa>0$,
independent of $R$ and of the size of $\mathcal U$, such that
\begin{align}
 &\E_{u\in U}\sum_{s,t}W_{st}
 \E_{\nu_{st}^{\otimes R}}
 [f_{u,s}(X)(1-f_{u,t}(Y))]\notag\\
 &\hspace{30mm}>
 \E_{u\in U}[\Psi(m_u)]+\eta
 \quad\Longrightarrow\quad
 \operatorname{val}(\mathcal U)\ge\kappa .
 \label{hard:eq:decoding-implication}
\end{align}
\end{lemma}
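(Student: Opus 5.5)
The plan is the standard averaging-plus-influence-decoding argument, with Lemma~\ref{hard:lem:finite-type-invariance} applied at each outer vertex to the neighbor-averaged functions $f_{u,s}$.

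\textbf{Step 1 (choose parameters).} Apply Lemma~\ref{hard:lem:finite-type-invariance} with error $\eta/2$. This produces $d$ and $\tau$, which depend only on the type system and $\eta$. Since $0\le f_{u,s}\le1$ and the weights $W_{st}$ are normalized, the per-vertex acceptance
\[
 A_u:=\sum_{s,t}W_{st}\E_{\nu_{st}^{\otimes R}}[f_{u,s}(X)(1-f_{u,t}(Y))]
\]
lies in $[0,1]$, and so does $\Psi(m_u)$. If the averaged hypothesis $\E_u[A_u-\Psi(m_u)]>\eta$ holds, Markov-type averaging shows that a set $U^*\subseteq U$ of measure at least $\eta/2$ satisfies $A_u>\Psi(m_u)+\eta/2$.

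\textbf{Step 2 (apply the invariance lemma at each good vertex).} Each $f_{u,s}$ takes values in $[0,1]$ and has mean $m_{u,s}$ under the $q_s$-biased measure. Permuting coordinates preserves the biased product measure, so the mean is well defined. For $u\in U^*$, the invariance lemma therefore yields a used pair $(s,t)$ and a label $i$ with $\Inf_i^{\le d}(f_{u,s})\ge\tau$ and $\Inf_i^{\le d}(f_{u,t})\ge\tau$. We only need the $s$-side influence.

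\textbf{Step 3 (transfer influence to the clouds).} Low-degree influence is convex in the function: $\widehat f_{u,s}(J)=\E_v\widehat{g^u_{v,s}}(J)$, and Cauchy--Schwarz gives
\[
 \Inf_i^{\le d}(f_{u,s})\le\E_{v\in N(u)}\Inf_i^{\le d}(g^u_{v,s})=\E_v\Inf_{\sigma_{uv}(i)}^{\le d}(g_{v,s}).
\]
The index convention follows from the definition of $g^u_{v,s}$. Hence at least a $\tau/2$ fraction of the neighbors $v$ satisfy $\Inf_{\sigma_{uv}(i)}^{\le d}(g_{v,s})\ge\tau/2$.

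\textbf{Step 4 (the labeling).} For each $v$, define the candidate set
\[
 \mathrm{Cand}(v)=\{j:\exists s,\ \Inf_j^{\le d}(g_{v,s})\ge\tau/2\}.
\]
Since the total degree-$\le d$ influence of a $[0,1]$-valued function is at most $d$, we have $|\mathrm{Cand}(v)|\le 2d|\mathcal T|/\tau$. For each $u$, define
\[
 \mathrm{Cand}(u)=\{i:\exists s,\ \Inf_i^{\le d}(f_{u,s})\ge\tau\},
\]
of size at most $d|\mathcal T|/\tau$. Label every vertex uniformly from its candidate set, or arbitrarily if the set is empty. By regularity, uniform edge sampling is the same as uniform $u$ followed by uniform $v\in N(u)$. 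An edge with $u\in U^*$ and $v$ among the good neighbors is satisfied with probability at least $\bigl(|\mathrm{Cand}(u)|\,|\mathrm{Cand}(v)|\bigr)^{-1}$. The expected satisfied fraction is therefore at least
\[
 \kappa:=\frac{\eta}{2}\cdot\frac{\tau}{2}\cdot\frac{\tau^2}{2d^2|\mathcal T|^2},
\]
which is independent of $R$ and of $|\mathcal U|$. Some labeling attains at least this expectation, so $\operatorname{val}(\mathcal U)\ge\kappa$.

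The main obstacle is Step 3. It requires checking that the influence used in Lemma~\ref{hard:lem:finite-type-invariance} is the biased Fourier low-degree influence, so that it is convex and permutation-equivariant. It also requires getting the direction of $\sigma_{uv}$ right, so that a label $i$ at $u$ corresponds to $\sigma_{uv}(i)$ at $v$, matching the satisfaction convention $\ell(v)=\sigma_{uv}(\ell(u))$. The remaining steps are standard averaging.
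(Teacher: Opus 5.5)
Your proposal is correct and follows the paper's proof essentially step for step. Both apply Lemma~\ref{hard:lem:finite-type-invariance} with error $\eta/2$, average to a set of outer vertices of density greater than $\eta/2$, use the convexity transfer $\Inf_{i}^{\le d}(f_{u,s})\le\E_{v}\Inf_{\sigma_{uv}(i)}^{\le d}(g_{v,s})$, and label each $v$ uniformly from a candidate list of size at most $2|\mathcal T|d/\tau$. The only difference is that the paper labels each good $u$ deterministically by its witness $i_u$ instead of drawing uniformly from a candidate list at $u$. This avoids your extra factor $\tau/(d|\mathcal T|)$ and gives $\kappa=\eta\tau/(4M)$ with $M=\lceil 2|\mathcal T|d/\tau\rceil$.
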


\begin{proof}[Proof outline]
If acceptance exceeds the Gaussian profile on average, a positive fraction of outer vertices must have a shared influential coordinate. Convexity passes this influence from each neighbor average to a positive fraction of its incident clouds. The total low-degree influence bound limits the number of candidate labels. Choosing labels uniformly from these lists gives a positive expected fraction of satisfied edges. Appendix~\ref{app:decoding_proof} gives the constants and permutation convention.
\end{proof}

\begin{theorem}[Affine finite-type criterion]\label{hard:thm:affine-criterion}
Suppose the masses, biases, type weights, and pair-distribution atoms are
rational, $C>0$, and $\lambda\ge0$, $\beta\in\R$ satisfy
\begin{equation}\label{hard:eq:affine-majorant}
 \Psi(z)\le\beta+\lambda\pi^\top z
 \qquad\text{for every }z\in[0,1]^{\mathcal T}.
\end{equation}
Assume also that $S:=\beta+\lambda\mu<C$.  For every
$0<\delta<(C-S)/2$, the Unique Games Conjecture implies NP-hardness of
distinguishing normalized weighted Max-DiCut instances with an
at-most-cardinality budget and $|k/n'-\mu|\le\delta$ satisfying
\begin{equation}\label{hard:eq:ugc-gap}
 \OPT\ge C-\delta
 \qquad\text{from those satisfying}\qquad
 \OPT\le\beta+\lambda\mu+\delta.
\end{equation}
Here $n'$ is the number of vertices and the total arc weight is normalized to
one.  Consequently, for every $\eps>0$ there is no polynomial-time
$((\beta+\lambda\mu)/C+\eps)$-approximation on this family.
\end{theorem}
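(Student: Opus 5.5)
We would follow the standard Khot--Kindler--Mossel--O'Donnell template, with the finite type system as the test. Start from a regular bipartite Unique-Games instance $\mathcal U=(U,V,E,\sigma)$ with alphabet $R$, and with $\zeta,\kappa$ chosen later as functions of $\delta$. For each $v\in V$ and type $s\in\mathcal T$, create a long-code cloud $\{0,1\}^R$ and weight each cloud vertex by $\pi_s$ times its $q_s$-biased probability. To realize these weights with rational data, replicate vertices; the rationality of masses and biases makes this possible, and blowing up by a large integer multiplicity keeps the density $k/n'$ within $\delta$ of $\mu$. Arcs are produced as follows. Sample $u\in U$, two neighbors $v,v'$, a pair $(s,t)$ with probability $W_{st}$, and $(X,Y)\sim\nu_{st}^{\otimes R}$. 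Place an arc from the vertex $X\circ\sigma_{uv}$ in cloud $(v,s)$ to the vertex $Y\circ\sigma_{uv'}$ in cloud $(v',t)$, with its probability as weight; the total arc weight is then one. Set $k=\lceil\mu n'\rceil$, or its nearest admissible value.

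\textbf{Completeness.} Take a labeling satisfying a $1-\zeta$ fraction of edges. Select in every cloud the dictator set $\{x:x_{\ell(v)}=1\}$. Its mass is $\sum_s\pi_sq_s=\mu$, so after replication the cardinality is within $\delta n'$ of $k$; any excess can be trimmed at a cost $O(\delta)$. Whenever both $uv$ and $uv'$ are satisfied, the arc is cut with probability $\nu_{st}(1,0)$, so $\OPT\ge C-2\zeta-O(\delta)\ge C-\delta$ after rescaling the constants.

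\textbf{Soundness.} Given any set with $|S|\le k$, let $g_{v,s}$ be its indicator on cloud $(v,s)$. Replication produces fractional indicators with values in $[0,1]$, which is why Lemma~\ref{hard:lem:finite-type-invariance} is stated for $[0,1]$-valued functions. Conditioning on $u$ and using the independence of $v,v'$, the cut value equals
\[
\E_u\sum_{s,t}W_{st}\E[f_{u,s}(X)(1-f_{u,t}(Y))],
\]
with $f_{u,s}$ as in \eqref{hard:eq:neighbor-average}. Suppose $\operatorname{val}(\mathcal U)<\kappa$. Then Lemma~\ref{hard:lem:influence-decoding}, applied with $\eta=\delta/2$, bounds this value by $\E_u\Psi(m_u)+\delta/2$. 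The affine majorant \eqref{hard:eq:affine-majorant} then gives the bound $\beta+\lambda\,\E_u\pi^\top m_u+\delta/2$. Regularity makes $\E_u m_{u,s}$ equal to the average cloud mean of type $s$. Hence $\E_u\pi^\top m_u$ is exactly the selected fraction $|S|/n'\le k/n'\le\mu+\delta'$. Because $\lambda\ge0$, we conclude $\OPT\le\beta+\lambda\mu+\delta$ once $\delta'$ is chosen small relative to $\delta$. This linearity in the total selected mass is the point of insisting on an affine, rather than concave, majorant: the cardinality budget controls only the $\pi$-weighted mean profile, not the individual $m_{u,s}$.

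\textbf{Conclusion and main obstacle.} The reduction runs in polynomial time for fixed $R$ and a fixed type system. The hypothesis $\delta<(C-S)/2$ separates the yes and no cases, and an $((S/C)+\eps)$-approximation would decide the gap for $\delta$ small enough in terms of $\eps$. We expect the delicate step to be the bookkeeping that links the rational replication to the budget. The cloud weights must be realized exactly enough that the dictator solution stays feasible and that $|S|/n'$ genuinely equals $\E_u\pi^\top m_u$ up to $O(\delta)$. The invariance and decoding lemmas we take as given.
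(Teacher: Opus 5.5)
Your proposal is correct and follows the paper's proof essentially step for step. It uses the same long-code construction with cloud masses proportional to $\pi_s\Pr_{q_s}[x]$, dictator completeness $(1-2\zeta)C$, soundness from the decoding contrapositive combined with the affine majorant and the regularity identity $\E_u\pi^\top m_u=|S|/n'$, and rational replication read through fractional cloud indicators. The paper makes two bookkeeping points explicit that you leave implicit. First, replication must use complete bipartite bundles with product weights, since this is what makes the fractional-indicator formula exact. Second, self-loops inside bundles from a cloud vertex to itself are deleted after every copy set has at least $M$ vertices, which costs only a renormalization factor $1/(1-\ell)$ with $\ell\le1/M$. Your trimming step is unnecessary, because with exact replication the dictator selection has exactly $\mu n'\le k$ vertices.
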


\begin{proof}[Proof outline]
The regular long-code construction gives dictator density $\mu$ and completeness approaching $C$. Influence decoding bounds each no-case cut by the average Gaussian profile; the affine inequality bounds that average through the cardinality budget. Rational replication converts vertex masses to ordinary cardinalities. A cut may separate copies of the same vertex, but the resulting fractional cloud indicators obey the same bilinear soundness bound. Appendix~\ref{app:affine_reduction_proof} gives the construction and accounts for replication, loop removal, and the error terms.
\end{proof}

\subsection{Exact one-budget concavification}
The budget limits the average cost of the mean profiles across outer vertices. It does not require each profile to satisfy the budget separately. The relevant optimization is therefore over distributions of profiles. Let $\mathcal P([0,1]^{\mathcal T})$ be the Borel probability measures on the profile cube and, for $0<\mu<1$, set
\begin{equation}\label{hard:eq:mixing-primal}
 S_\Psi(\mu):=
 \sup_{\xi\in\mathcal P([0,1]^{\mathcal T})}
 \left\{
  \int\Psi(z)\,d\xi(z):
  \int\pi^\top z\,d\xi(z)\le\mu
 \right\}.
\end{equation}

\begin{proposition}[Exact affine dual]\label{hard:prop:affine-dual}
For every continuous $\Psi$ and $0<\mu<1$,
\begin{align}
 S_\Psi(\mu)
 &=\min_{\lambda\ge0}
 \left\{
  \lambda\mu+
  \max_{z\in[0,1]^{\mathcal T}}
   [\Psi(z)-\lambda\pi^\top z]
 \right\}\label{hard:eq:dual-one}\\
 &=\min_{\lambda\ge0,\,\beta}
 \left\{
  \beta+\lambda\mu:
  \Psi(z)\le\beta+\lambda\pi^\top z
  \ \forall z
 \right\}.\label{hard:eq:dual-two}
\end{align}
Thus the optimal affine certificate gives the exact value
$S_\Psi(\mu)$ of the one-budget profile-mixing problem.
\end{proposition}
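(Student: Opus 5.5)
We would prove Proposition~\ref{hard:prop:affine-dual} as a Lagrangian duality for a linear program over measures with a single moment constraint. We first show that the two dual expressions agree, then prove weak duality, and finally establish strong duality by a two-dimensional convexity argument.

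\emph{Equality of the two duals.} For fixed $\lambda\ge0$, the smallest $\beta$ with $\Psi(z)\le\beta+\lambda\pi^\top z$ for all $z$ is $\beta(\lambda)=\max_z[\Psi(z)-\lambda\pi^\top z]$. This maximum is attained because $\Psi$ is continuous on the compact cube. Hence \eqref{hard:eq:dual-two} reduces to \eqref{hard:eq:dual-one}.

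\emph{Attainment of the minimum over $\lambda$.} The function $\lambda\mapsto\lambda\mu+\beta(\lambda)$ is convex and continuous, since it is a maximum of affine functions of $\lambda$. It is also coercive. At $z=0$ we have $\beta(\lambda)\ge\Psi(0)$, so the objective is at least $\lambda\mu+\Psi(0)$, which tends to infinity. The minimum is therefore attained.

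\emph{Weak duality.} Take any feasible $\xi$ and any $(\lambda,\beta)$ satisfying the affine majorant condition. Integrating the majorant and using $\lambda\ge0$ gives
\[
 \int\Psi\,d\xi\le\beta+\lambda\int\pi^\top z\,d\xi\le\beta+\lambda\mu.
\]
Thus $S_\Psi(\mu)$ is at most the dual minimum.

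\emph{Strong duality.} Consider the image set
\[
 \mathcal C=\conv\{(\pi^\top z,\Psi(z)):z\in[0,1]^{\mathcal T}\}\subseteq\R^2.
\]
Because the map $z\mapsto(\pi^\top z,\Psi(z))$ is continuous on a compact set, its image is compact. By Carath\'eodory's theorem, $\mathcal C$ is then compact as well.

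Every finitely supported $\xi$ has moment pair $\bigl(\int\pi^\top z\,d\xi,\int\Psi\,d\xi\bigr)\in\mathcal C$. Conversely, every point of $\mathcal C$ arises from a distribution with at most three atoms. Define the concave upper envelope
\[
 \hat\Psi(s)=\max\{y:(s',y)\in\mathcal C,\ s'\le s\}.
\]
Finitely supported measures give $S_\Psi(\mu)\ge\hat\Psi(\mu)$. Since $\pi^\top z$ ranges over $[0,1]$ and $0<\mu<1$, the point $\mu$ lies in the interior of the projected interval.

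The function $\hat\Psi$ is concave and nondecreasing on $[0,1]$. Because $\mu$ is an interior point, $\hat\Psi$ has a supergradient $\lambda\ge0$ at $\mu$. Put $\beta=\hat\Psi(\mu)-\lambda\mu$. The resulting supporting line satisfies $\Psi(z)\le\hat\Psi(\pi^\top z)\le\beta+\lambda\pi^\top z$ for every $z$. This pair $(\lambda,\beta)$ has dual value $\hat\Psi(\mu)\le S_\Psi(\mu)$. Combined with weak duality, this gives equality, and the supremum in \eqref{hard:eq:mixing-primal} is attained by a distribution with at most three atoms.

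\emph{Main obstacle.} The delicate step is the supporting line. We need nonnegativity of the supergradient, which comes from monotonicity of $\hat\Psi$, the latter being built into its definition through the condition $s'\le s$. We also need $\mu$ to be interior so that the supergradient is finite. Both follow from $0<\mu<1$ together with $\min\pi^\top z=0$ and $\max\pi^\top z=1$, since the masses $\pi_s$ sum to one.
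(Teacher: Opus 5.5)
Your proposal is correct and follows essentially the same route as the paper. Both arguments obtain weak duality by integrating the affine majorant, then take a nonnegative supergradient at the interior budget $\mu$ of a concave, nondecreasing budget-value function and test it against point masses at each $z$. The only difference is that you realize this value function as the upper envelope $\hat\Psi$ of the compact planar hull $\mathcal C$, built from finitely supported measures, rather than as the paper's $G(t)$ over all Borel probability measures. This avoids any compactness argument in the space of measures and also shows that the primal supremum is attained by a distribution with at most three atoms.
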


\begin{proof}
Lagrangian relaxation of the moment constraint gives the right-hand side of
\eqref{hard:eq:dual-one} as an upper bound.  For the reverse inequality, define
\[
 G(t):=\sup_\xi
 \left\{\int\Psi\,d\xi:\int\pi^\top z\,d\xi\le t\right\}.
\]
Compactness and continuity ensure that the supremum is attained. Mixing
feasible measures makes $G$ concave; increasing the budget makes it
nondecreasing.
At the interior point $\mu$, choose a supporting supergradient
$\lambda\ge0$.  A point mass at $z$ is feasible for budget $\pi^\top z$, so
\[
 \Psi(z)-\lambda\pi^\top z
 \le G(\mu)-\lambda\mu.
\]
Taking the maximum over $z$ gives the reverse inequality.  Introducing
$\beta=\max_z[\Psi(z)-\lambda\pi^\top z]$ gives
\eqref{hard:eq:dual-two}.
\end{proof}

The design objective is consequently
\begin{equation}\label{hard:eq:design-objective}
 \frac{S_\Psi(\mu)}{C},
\end{equation}
jointly minimized over type masses, biases, directed weights, admissible
binary pair distributions, and the exact dual slope.

\subsection{Proof and certificate hierarchy}
The hardness proof needs a valid global affine Gaussian bound, not an optimal one. Proposition~\ref{hard:prop:gaussian-certificate} supplies a computer-assisted bound for the rational two-type test. Theorem~\ref{hard:thm:affine-criterion} converts it into a gap, and Theorem~\ref{hard:thm:ugc-hardness} gives the resulting factor. The constructed nonnegative directed-cut objectives belong to the promised quadratic class by Proposition~\ref{hard:prop:cut-equivalence}.

Proposition~\ref{hard:prop:affine-dual} serves the separate task of designing tests: it identifies the exact profile-mixing value obtained by optimizing over affine bounds.

\section{Hardness instantiations}\label{hard:sec:instantiations}
Adding isolated vertices transfers the NP-hardness bound from unconstrained Max-DiCut. The rational two-type test gives a stronger UGC-hardness bound, whose Gaussian certificate we also examine under the losing-half transfer.

\subsection{NP-hardness inherited from Max-DiCut}

\begin{theorem}[Explicit-input NP-hardness]\label{hard:thm:np-hardness}
For every fixed $\eps>0$, it is NP-hard to approximate
\eqref{hard:eq:qsub-cardinality} within $12/13+\eps$.  The statement holds even
for weighted Max-DiCut objectives with nonnegative arc weights, no terminals,
and budget $k=n'/2$, where $n'$ is the enlarged ground-set size.
\end{theorem}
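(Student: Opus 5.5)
We reduce from the NP-hardness of unconstrained weighted Max-DiCut within $12/13+\eps$ \citep{TrevisanSorkinSudanWilliamson2000}, padding the graph with isolated vertices so that an at-most-half cardinality budget never binds.

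\textbf{Construction.} Take a Max-DiCut instance on a loopless digraph $G=(V,A)$ with $n=|V|$ vertices and nonnegative rational arc weights. Add $n$ isolated vertices, which gives $n'=2n$ vertices, and set $k=n'/2=n$. The objective $f(S)=\sum_{(u,v)\in A}w_{uv}\ind\{u\in S,v\notin S\}$ is a terminal-free directed cut. It is therefore globally nonnegative, quadratic, and submodular, so it belongs to the promised class in \eqref{hard:eq:qsub-cardinality}; this is the converse direction of Proposition~\ref{hard:prop:cut-equivalence}, with $r=p_i=q_i=0$. The construction has polynomial encoding length.

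\textbf{Preservation of the optimum.} The padded cardinality-constrained optimum equals the unconstrained Max-DiCut optimum of $G$. Isolated vertices contribute nothing, so any feasible $S$ restricts to $S\cap V$ with the same value. For the reverse inequality, let $T\subseteq V$ be any cut. If $|T|\le n$, it is feasible as it stands. Otherwise $|T|>n=|V|$ is impossible, since $T\subseteq V$. Hence every subset of $V$ is feasible and the two optima coincide.

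The only subtlety is that the budget is exactly $n'/2$ and $k$ must be an integer. Both hold because $n'=2n$ is even.

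\textbf{Transfer of the approximation.} Suppose a polynomial-time algorithm achieves a $(12/13+\eps)$-approximation on the padded instances. It returns some $S$ with $f(S)\ge(12/13+\eps)\OPT$. Then $S\cap V$ is a directed cut of $G$ with the same value, so the algorithm also approximates Max-DiCut within $12/13+\eps$. This contradicts the cited NP-hardness unless P$=$NP.

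I expect no substantive obstacle. The main thing to check is that the cited hardness covers weighted instances with nonnegative rational weights, or unweighted ones, and that no terminals are needed. Both conditions are met by a plain directed cut.
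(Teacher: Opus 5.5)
Your proposal is correct and follows the paper's proof essentially step for step. You pad with $n$ isolated vertices and set $k=n$ on $n'=2n$ vertices. Every subset of the original vertices is then feasible, and deleting selected isolated vertices preserves the cut value, so the two optima coincide and a $(12/13+\eps)$-approximation would transfer to unconstrained Max-DiCut.
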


\begin{proof}
For a weighted directed graph $D=([n],A)$, its directed-cut objective is
\[
 f_D(S)=\sum_{(i,j)\in A}w_{ij}\ind\{i\in S,\ j\notin S\}.
\]
This is an explicit quadratic submodular pseudo-Boolean function that is
nonnegative on the whole cube.  Trevisan et al.\ prove that it is NP-hard to
approximate unconstrained Max-DiCut within $12/13+\eps$
\citep{TrevisanSorkinSudanWilliamson2000}.

Add $n$ isolated vertices and set $k=n$ on the resulting $n'=2n$ vertices.
Every original cut now satisfies the budget. Conversely, removing selected
isolated vertices preserves its value. The two optima are equal, so an
approximation for the constrained instance gives the same factor for the
original instance.
\end{proof}

\subsection{A certified two-type test}

Let the types be $A,B$.  Set
\begin{equation}\label{hard:eq:pi-q}
 \pi_A=\frac1{10^6},\quad
 \pi_B=\frac{999999}{10^6},\qquad
 q_A=\frac{2289}{5000},\quad
 q_B=\frac{933}{10000},
\end{equation}
and
\begin{equation}\label{hard:eq:weights}
 W_{AA}=\frac{1493}{10000},\quad
 W_{BA}=\frac{6627}{10000},\quad
 W_{BB}=\frac{1880}{10000},\quad
 W_{AB}=0.
\end{equation}
Write $p_{st}=\nu_{st}(1,1)$.  Take
\begin{equation}\label{hard:eq:pairs}
 p_{BA}=p_{BB}=0,
 \qquad
 p_{AA}=\frac{73}{500}q_A^2.
\end{equation}
The two marginals determine the remaining probabilities. They are all
nonnegative, and the three support graphs are connected. The correlations are
\begin{equation}\label{hard:eq:correlations}
 \rho_{AA}=-\frac{977403}{1355500},\qquad
 \rho_{BA}=-\sqrt{\frac{2135637}{24580637}},\qquad
 \rho_{BB}=-\frac{933}{9067}.
\end{equation}
The exact density and dictator completeness are
\begin{align}
 \mu&=\frac{186600729}{2000000000}=0.0933003645,\label{hard:eq:mu}\\
 C&=\frac{17893931095731}{125000000000000}
   =0.143151448765848.\label{hard:eq:C}
\end{align}
For $(a,b)\in[0,1]^2$, the soundness profile is
\begin{align}
 \Psi(a,b)={}&\frac{1493}{10000}D_{\rho_{AA}}(a,a)
 +\frac{6627}{10000}D_{\rho_{BA}}(b,a)\notag\\
 &+\frac{1880}{10000}D_{\rho_{BB}}(b,b).
 \label{hard:eq:two-type-profile}
\end{align}

\begin{proposition}[Certified Gaussian inequality]\label{hard:prop:gaussian-certificate}
For every $(a,b)\in[0,1]^2$,
\begin{equation}\label{hard:eq:certified-affine}
 \Psi(a,b)\le
 \frac{58213}{1000000}
 +\frac{64713}{100000}
 \left(\frac{a}{1000000}+\frac{999999b}{1000000}\right).
\end{equation}
\end{proposition}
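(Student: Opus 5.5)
The plan is to prove \eqref{hard:eq:certified-affine} by a computer-assisted interval argument on $[0,1]^2$, carried out in exact rational arithmetic plus outward-rounded MPFR evaluations. Throughout, write $E(a,b)$ for the right side of \eqref{hard:eq:certified-affine} minus $\Psi(a,b)$. The aim is to show that $E\ge0$ on the square.

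\textbf{Step 1: regularity.} I would first record the analytic facts that make a finite check sufficient.
\begin{itemize}
\item Each $D_\rho(a,b)=a-\Gamma_\rho(a,b)$ is continuous on the closed square.
\item Its partial derivatives have closed forms. For instance,
\[
\partial_a\Gamma_\rho(a,b)=\Phi\!\left(\frac{\Phi^{-1}(b)-\rho\Phi^{-1}(a)}{\sqrt{1-\rho^2}}\right),
\]
which lies in $[0,1]$. Hence $\partial_aD_\rho$ and $\partial_bD_\rho$ lie in $[-1,1]$.
\item Therefore $\Psi$ is Lipschitz with an explicit constant. The affine term has known exact slopes.
\item Endpoint values are exact: $\Gamma_\rho(0,b)=0$, $\Gamma_\rho(1,b)=b$, and symmetrically in the other argument. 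Thus $E$ is explicitly computable on the boundary edges.
\end{itemize}
For interior points I would evaluate $\Gamma_\rho$ via Plackett's reduction formula,
\[
\Gamma_\rho(a,b)=ab+\int_0^\rho \varphi_2(h,k;r)\,dr,\qquad h=\Phi^{-1}(a),\ k=\Phi^{-1}(b).
\]
The integrand is smooth and positive, so the one-dimensional integral admits rigorous quadrature error bounds. The irrational $\rho_{BA}$ is bracketed by rationals. Monotonicity of $\Gamma_\rho$ in $\rho$ (Slepian) then lets me use one-sided endpoint brackets rigorously.

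\textbf{Step 2: locate the near-tight set.} I would compute numerically where $E$ is small. I expect the minimum to be nearly zero along a curve or at a few points, since $\beta$ and $\lambda$ were chosen close to the optimal dual of Proposition~\ref{hard:prop:affine-dual}. Plausible locations are $b$ near $q_B$ and $a$ in a middle range. Since $\pi_A$ is tiny, $E$ depends on $a$ almost only through $\Psi$.

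\textbf{Step 3: adaptive subdivision.} I would cover the square by rectangles. On each rectangle $[a_0,a_1]\times[b_0,b_1]$, I certify
\[
\min E\ \ge\ E(\text{corner})-\text{Lipschitz}\times\text{diameter},
\]
or use monotone bounds: $D_\rho(a,b)$ is increasing in $a$ and increasing in $b$ for $\rho<0$. In fact $\partial_bD_\rho=-\partial_b\Gamma_\rho\le0$, so $D_\rho$ is decreasing in $b$. With these, each of the three terms is bounded above by evaluating at the appropriate corner:
\begin{itemize}
\item $D_{\rho_{BA}}(b,a)$ is maximized at $(a_1,b_0)$ in $(a,b)$ order, i.e.\ large first argument $b$ and small second argument $a$; I would sort out the orientation carefully.
\item The diagonal terms $D(a,a)$ and $D(b,b)$ are bounded on an interval by monotone upper and lower corner evaluations of the two arguments separately.
\end{itemize}
This corner-monotonicity bound is cheap and rigorous. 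I refine rectangles until every one certifies $E>0$.

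\textbf{Main obstacle.} The hard part is the region where $E$ is nearly tight. There the margin is tiny, so a first-order corner bound needs extremely fine grids, or may fail outright if the tightness is exact tangency. In that region I would switch to a second-order certificate. That means using the closed-form gradient and a bound on the Hessian, whose entries involve $\varphi(\cdot)/\varphi(\Phi^{-1})$ and are bounded away from the edges, to show $E$ is locally convex-like above a positive floor. Near $a$ or $b\in\{0,1\}$, where $\Phi^{-1}$ blows up, I would instead use the exact edge values together with the $[-1,1]$ derivative bounds.
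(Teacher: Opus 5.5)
Your plan has the same architecture as the paper's proof: Plackett's formula with a uniform quadrature remainder, closed-form partial derivatives, outward-rounded multiple-precision evaluation, and adaptive subdivision of the square. The real difference is the bound used on each cell.

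The paper observes that for $\rho<0$ every second partial of every term of $H_G$ is nonpositive, including the diagonal terms and the swapped term $D_{\rho_{BA}}(b,a)$. Hence each component of $\nabla H_G$ is nonincreasing in both coordinates. The ``convex-like'' behaviour you hoped to establish locally is therefore exact and global. Gradients evaluated at cell endpoints enclose the gradient on the whole cell. Integrating these enclosures from the cell centre, along both coordinate orders, gives a bound whose loss is quadratic in the width near a critical point. This one mechanism needs no Hessian-magnitude estimate and works up to the edges; it certifies the square with 595 cells of depth at most $23$.

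Your termwise corner bounds lose $O(w)$ per cell. The slack is strictly positive but only about $1.2\times10^{-7}$: the largest point bound is $0.0582128836\ldots$ against $0.058213$. So your second-order patch is not an optional refinement but the core of the proof. It also requires localized, rigorous Hessian bounds, and these blow up at the edges.

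If you carry your version out, watch three things.

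\emph{More than one near-contact.} If $\lambda$ is close to the optimal slope in Proposition~\ref{hard:prop:affine-dual}, as the small slack suggests, the near-maximizers of $\Psi-\lambda\pi^\top z$ should have budgets on both sides of $\mu$. The reported maximizer $(0.448,0.071)$ has budget about $0.071<\mu$. A rough hand evaluation suggests a second near-contact, with slack of order $10^{-6}$ or less, near $(a,b)\approx(0.003,0.53)$. There the $a$-curvature carries a factor $1/\varphi(\Phi^{-1}(a))$ and grows rapidly toward $a=0$. Moreover, the maximum along the edge $a=0$ has slack only about $5\times10^{-5}$, so your Lipschitz edge strip cannot reach this contact. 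This is exactly where the monotone-gradient enclosure pays off.

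\emph{Corner orientation.} On a cell $[a_0,a_1]\times[b_0,b_1]$, the term $D_{\rho_{BA}}(b,a)$ is maximized at $(a,b)=(a_0,b_1)$, not at $(a_1,b_0)$.

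\emph{Uniform quadrature error.} The quadrature remainder must be uniform in the unbounded quantiles and well below the slack. The paper obtains this from $|g''|<16$ for $|r|\le151/200$, which gives an error of about $3.6\times10^{-8}$ with $4000$ trapezoidal panels.
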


\begin{proof}
Appendix~\ref{hard:app:certificate} bounds the quadrature error and derives
coordinatewise nonincreasing derivative bounds. The verifier combines these
bounds with 256-bit MPFR evaluation and explicit outward allowances.
Adaptive subdivision then certifies the inequality on every cell.
\end{proof}

\begin{theorem}[Explicit-input UGC-hardness]\label{hard:thm:ugc-hardness}
Assuming the Unique Games Conjecture, for every $\eps>0$ there is no
polynomial-time
\[
 \left(
  \frac{118590464878885}{143151448765848}+\eps
 \right)
 =\left(0.828426578293753\ldots+\eps\right)
\]
approximation for \eqref{hard:eq:qsub-cardinality}.  The hardness holds for the
terminal-free Max-DiCut subclass at asymptotic budget density
\[
 \frac{k}{n'}=\frac{186600729}{2000000000}+o(1)
 =0.0933003645+o(1).
\]
\end{theorem}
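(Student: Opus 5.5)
The proof instantiates Theorem~\ref{hard:thm:affine-criterion} with the rational two-type test \eqref{hard:eq:pi-q}--\eqref{hard:eq:pairs}. The affine majorant comes from Proposition~\ref{hard:prop:gaussian-certificate}. Then Proposition~\ref{hard:prop:cut-equivalence} places the resulting terminal-free directed-cut instances inside the promised class \eqref{hard:eq:qsub-cardinality}. I would proceed in four steps.

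\emph{Step 1: check the hypotheses of the finite-type criterion.} All masses, biases, weights and pair atoms are rational. The weights \eqref{hard:eq:weights} sum to one. For each used pair $(s,t)\in\{AA,BA,BB\}$, the law $\nu_{st}$ is fixed by its marginals $q_s,q_t$ and $p_{st}$. Its atoms $p_{st}$, $q_s-p_{st}$, $q_t-p_{st}$, and $1-q_s-q_t+p_{st}$ must be nonnegative. This reduces to $q_A+q_B<1$ and $2q_A<1$, which hold, and $p_{AA}<q_A$. The support graphs are connected because at least three atoms are positive and the atom missing when $p_{st}=0$ is $(1,1)$ only. The correlation $\rho_{st}=(p_{st}-q_sq_t)/\sqrt{q_s(1-q_s)q_t(1-q_t)}$ is negative in each case and reproduces \eqref{hard:eq:correlations}. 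Exact rational arithmetic then gives $\mu=\pi_Aq_A+\pi_Bq_B$ as in \eqref{hard:eq:mu}. It also gives $C=\sum W_{st}(q_s-p_{st})$ as in \eqref{hard:eq:C}, so $C>0$.

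\emph{Step 2: read off the affine certificate.} Proposition~\ref{hard:prop:gaussian-certificate} is exactly \eqref{hard:eq:affine-majorant} with $\beta=58213/10^6$, $\lambda=64713/10^5\ge0$, and $\pi^\top z=a/10^6+999999b/10^6$. The soundness value is $S=\beta+\lambda\mu$. It should be computed exactly as a rational number and checked to satisfy $S<C$. The ratio $S/C$ should reduce to $118590464878885/143151448765848$; this is a routine exact calculation, with common denominator $2\cdot10^{14}$ scaled appropriately.

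\emph{Step 3: apply the criterion.} Theorem~\ref{hard:thm:affine-criterion}, assuming UGC, gives for each $\eps>0$ a family of normalized weighted Max-DiCut instances with budget density within $\delta$ of $\mu$. On this family no polynomial-time $(S/C+\eps)$-approximation exists. Letting $\delta\to0$ gives the asymptotic density $\mu+o(1)$. These directed-cut objectives have nonnegative arc weights and no terminals, so they are quadratic submodular pseudo-Boolean functions that are nonnegative on the whole cube. By Proposition~\ref{hard:prop:cut-equivalence} (or directly), they are promised instances of \eqref{hard:eq:qsub-cardinality}. Any approximation algorithm for the general problem therefore restricts to this subclass, which transfers the hardness.

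\emph{Main obstacle.} The substantive difficulty lies in the certified inequality and the reduction behind Theorem~\ref{hard:thm:affine-criterion}, but both may be assumed here. What remains delicate is the bookkeeping. The gap $C-S$ must be a positive constant independent of $\eps$, so that choosing $\delta<(C-S)/2$ and any additive error below the gap is legitimate. The encoding must also stay polynomial, with rational weights after replication. I would present the exact arithmetic for $S$, $C$ and the reduced fraction explicitly.
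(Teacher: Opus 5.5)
Your proposal is correct and follows the paper's own proof: you instantiate Theorem~\ref{hard:thm:affine-criterion} with $\beta=58213/10^6$ and $\lambda=64713/10^5$ taken from Proposition~\ref{hard:prop:gaussian-certificate}, compute $(\beta+\lambda\mu)/C$ exactly, and note that the resulting nonnegative-weight, terminal-free directed-cut objectives are promised instances of \eqref{hard:eq:qsub-cardinality}. Your explicit checks of atom nonnegativity, connectedness, negative correlations, and $S<C$ spell out what the paper records alongside the test definition and in Appendix~\ref{hard:app:parameters}.
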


\begin{proof}
Use Theorem~\ref{hard:thm:affine-criterion} with
\[
 \beta=\frac{58213}{1000000},
 \qquad
 \lambda=\frac{64713}{100000}.
\]
Proposition~\ref{hard:prop:gaussian-certificate} bounds soundness. Dividing
this bound by completeness gives
\[
 \frac{\beta+\lambda\mu}{C}
 =\frac{118590464878885}{143151448765848}
 =0.828426578293753\ldots.
\]
The reduction directly produces a directed-cut objective with nonnegative
arc weights, hence an explicit instance of \eqref{hard:eq:qsub-cardinality}.
\end{proof}

Type $A$ has cardinality mass $\pi_A=10^{-6}$ but positive incident
arc weight. The type matrix is non-Eulerian because $W_{AB}=0<W_{BA}$,
allowing nonzero directed imbalance in Proposition~\ref{hard:prop:flow}.
For Bernoulli marginals $q_s,q_t$, the Fr\'echet lower bound on the joint
probability $p_{st}$ is $\max\{0,q_s+q_t-1\}$. The $BA$ and $BB$
distributions attain this bound, while the $AA$ distribution is interior.

\subsection{The losing-half transfer for this test}

For this test, the losing-half composition changes the completeness bound
while retaining the certified soundness bound. Define the independent-coordinate value and the
certified soundness upper bound by
\[
 I:=\sum_{s,t}W_{st}q_s(1-q_t),
 \qquad S_{\mathrm{cert}}:=\beta+\lambda\mu.
\]
Exact substitution gives
\begin{align}
 C&=0.143151448765848,\qquad I=0.08648717847,\label{hard:eq:ci}\\
 S_{\mathrm{cert}}&=0.118590464878885.\label{hard:eq:soundness-value}
\end{align}
The strong half-completeness theorem of Bhangale and Khot gives an outer
labeling that satisfies every incident constraint at asymptotically half
of the relevant vertices \citep{BhangaleKhot2022}. At each such vertex,
the sampled endpoint dictators use the same coordinate and contribute $C$.
At any other outer vertex, the sampled coordinates can agree or differ.
For a type pair, agreement contributes $\nu_{st}(1,0)$ and disagreement
contributes $q_s(1-q_t)$. Negative correlation gives
$\nu_{st}(1,0)\ge q_s(1-q_t)$, so the contribution at every such vertex
is at least $I$. The completeness lower bound is therefore
$(C+I)/2-o(1)$. For a left-regular outer instance, assign right-vertex masses according to
the distribution induced by a uniformly sampled edge. Rational replication
converts these masses to ordinary cardinalities as above. For the available
soundness bound to certify a gap, one would need
$S_{\mathrm{cert}}<(C+I)/2$. Instead,
\[
 S_{\mathrm{cert}}>\frac{C+I}{2}=0.114819313617924,
 \qquad
 \frac{2S_{\mathrm{cert}}}{C+I}=1.032844224\ldots>1.
\]
The certificate therefore gives no completeness--soundness gap after this
conversion. This calculation does not determine the test's exact soundness
or rule out stronger NP-hardness bounds by other means.

\section{Further questions}\label{sec:conclusion}
\subsection{Recovery and representation}
We can construct certificates for the NNS subclass, but the other higher-degree guarantees require a supplied compatible certificate. Which further DR-submodular polynomials admit polynomial-size certificates? Corollary~\ref{cor:certificate_separation} excludes one nonnegative quartic DR polynomial from the compatible rooted cone through literal degree twelve. Representing that polynomial therefore requires a higher literal degree or different generators.

What is the exact value of $\rho_{13}^{\rm cw}$, and what do optimal or near-optimal portfolios look like? Theorem~\ref{thm:thirteen} proves a strict gap below $8/17$ without determining its size. At degree thirteen, the unrestricted joint modulus $\rho_{13}^{\rm joint}$ and the intrinsic recovery ratio relative to the global concave closure also remain open. Which overlap patterns admit stronger common recovery than arbitrary overlap? Can a tractable grouped roof, together with a suitable recovery rule, improve the $8/17$ bound for a class of constrained objectives?

\subsection{Explicit-input hardness}\label{hard:sec:open}

\begin{description}
 \item[Improved NP-hardness ratio.]
 Does there exist a constant $\delta>0$ for which it is NP-hard to
 approximate at-most-cardinality Max-DiCut within $12/13-\delta$?
 One possible route is to avoid the certificate failure in
 \eqref{hard:eq:ci}--\eqref{hard:eq:soundness-value} by a
 cardinality-aware composition or by optimizing a test directly for the
 losing-half certificate objective $2S_{\mathrm{cert}}/(C+I)$.

 \item[Optimal finite-type UGC test.]
 For a fixed density $\mu$, determine
 \[
  \inf\left\{S_\Psi(\mu)/C:
  \begin{array}{l}
  (\pi,q,W,\nu)\text{ is an admissible finite-type directed test},\\[-1mm]
  \sum_s\pi_sq_s=\mu,\ C>0
  \end{array}\right\}.
 \]
 Is a two-type construction in which one type mass tends to zero optimal in this class, or do more types and
 non-extremal pair distributions improve the ratio?

 \item[Bounded-complexity hard instances.]
 Can the ratio in Theorem~\ref{hard:thm:ugc-hardness} be retained for unweighted bounded-
 degree directed graphs with a maximum degree bounded independently of the
 approximation tolerance?
\end{description}

\Needspace{14\baselineskip}
\appendix
\part*{Appendices}
\addcontentsline{toc}{part}{Appendices}
\section{Elementary cubic scalar inequalities}\label{app:cubic}
We prove the scalar inequalities used by the cubic algorithm by reducing the slot domain to knot configurations and switching faces. Two cells require separate arguments because their Bernstein coefficients are not all nonnegative. Lemma~\ref{lem:cubic_maps} requires the unary inequality together with
\begin{equation}\label{eq:cubic_pair}
 \begin{split}
 8\phi_0(a)[1-\phi_0(u)]+9\phi_1(a)[1-\phi_1(u)]
 \ge8\min\{a,1-u\}.
 \end{split}
\end{equation}
\begin{equation}\label{eq:cubic_positive}
 \begin{split}
 8\phi_0(a)[1-\phi_0(u)\phi_0(v)]
 +9\phi_1(a)[1-\phi_1(u)\phi_1(v)]\\
 \ge8\min\{a,2-u-v\}.
 \end{split}
\end{equation}
\begin{equation}\label{eq:cubic_negative}
 \begin{split}
 8[1-\phi_0(a)][1-(1-\phi_0(u))(1-\phi_0(v))]\\
 {}+9[1-\phi_1(a)][1-(1-\phi_1(u))(1-\phi_1(v))]\\
 \ge8\min\{1-a,u+v\}.
 \end{split}
\end{equation}
The complemented rank-one inequality follows from \eqref{eq:cubic_pair} by interchanging the two coordinates.

Let $I_j=[(j-1)/4,j/4]$, $j=1,2,3,4$. Both maps are affine on each interval. Within a product of these intervals and a fixed affine branch of the roof, the recovery deficit defined in Section~\ref{sec:cubic_maps} is affine in each independent slot. Move one slot to an endpoint of its feasible interval without increasing the deficit. The endpoint is either a knot or a roof-switching point. Repeating this step shows that any negative deficit must occur with every slot at a knot or on the switching hyperplane. Rational substitution at the five unary knots, 25 pair knots, and 125 triples for each sign gives nonnegative deficits. At knots with zero roof value, atom nonnegativity gives the same conclusion.

Each nondegenerate intersection of a knot box with $a+u+v=1$ or $a+u+v=2$ is a triangle: after translation and scaling by four it is a section of $[0,1]^3$ with coordinate sum one or two. Write its vertices as $v_0,v_1,v_2$, in any order, and substitute $v_0+r(v_1-v_0)+t(v_2-v_0)$, where $r,t\ge0$ and $r+t\le1$. If the resulting polynomial is $\sum_{a+b\le3}c_{ab}r^at^b$, its total-degree-three Bernstein coefficients are
\begin{equation}\label{eq:bernstein_conversion}
 B_{ij}=\sum_{a\le i,\ b\le j}c_{ab}
       \frac{(i)_a(j)_b}{(3)_{a+b}},\qquad i,j\ge0,\quad i+j\le3,
\end{equation}
where $(k)_a=k!/(k-a)!$ and $(k)_0=1$. The corresponding Bernstein basis functions are
\[
 b_{ij}(r,t)=\frac{3!}{i!\,j!\,(3-i-j)!}\,r^it^j(1-r-t)^{3-i-j},
 \qquad i+j\le3.
\]
The polynomial equals $\sum_{i+j\le3}B_{ij}b_{ij}$. Since the basis functions are nonnegative and sum to one, nonnegative coefficients certify the entire triangle \citep{Bernstein1912,DeLoeraRambauSantos2010}. Permuting the vertices leaves the minimum coefficient unchanged.

On the pair switch $a=1-u$, the four nondegenerate interval pairs $(a,u)\in I_1\times I_4,I_2\times I_3,I_3\times I_2,I_4\times I_1$ have deficits
\[
 3(u-1)^2,\qquad\tfrac34(2u-1)^2,\qquad
 \tfrac34(2u-1)^2,\qquad(u-1)(3u-1),
\]
respectively. The last expression is nonnegative for $u\in[0,1/4]$, and the others are squares.

On the positive switch $a+u+v=2$, symmetry lets us restrict to interval indices with that of $u$ no larger than that of $v$. Table~\ref{tab:cubic_positive} lists the minimum coefficient from \eqref{eq:bernstein_conversion} for every nondegenerate triangle. Only one is negative.
\begin{table}[H]
\centering
\caption{Positive cubic switching triangles. A cell $(r;s,t)$ means $(a,u,v)\in I_r\times I_s\times I_t$ and $a+u+v=2$.}\label{tab:cubic_positive}
\begin{tabular}{cc@{\qquad}cc}\toprule
Cell&Minimum coefficient&Cell&Minimum coefficient\\\midrule
$(1;4,4)$&$0$&$(2;3,4)$&$233/192$\\
$(2;4,4)$&$325/384$&$(3;2,4)$&$89/192$\\
$(3;3,3)$&$2069/4608$&$(3;3,4)$&$89/192$\\
$(4;1,4)$&$-1/16$&$(4;2,3)$&$0$\\
$(4;2,4)$&$5/384$&$(4;3,3)$&$0$\\\bottomrule
\end{tabular}
\end{table}
For the exceptional cell $(4;1,4)$, write $x=u$, $d=1-v$. Its domain is $0\le d\le x\le1/4$ and $a=1-x+d$. The exact deficit is
\[
 Q_x(d)=1-4x+21x^2-5d+\frac{179}{2}xd(d-x).
\]
Let $x_0=\sqrt{10/179}$. For $0\le x\le x_0$, convexity in $d$ and the endpoint derivative give
\[
 \min_{0\le d\le x}Q_x(d)=Q_x(x)
   =21\left(x-\frac3{14}\right)^2+\frac1{28}>0.
\]
For $x_0<x\le1/4$, the constrained minimizer is $d_x=x/2+5/(179x)$, and
\[
 Q_x(d_x)=-\frac{32041x^4-30072x^3+9308x^2-1432x+100}{1432x}.
\]
Its derivative is $-H(x)/(1432x^2)$, where
\[
 H(x)=x^2C(x)-100,\qquad C(x)=96123x^2-60144x+9308.
\]
On $[x_0,1/4]$, $C$ is decreasing and $x_0>17/72$. If $C(x)>0$, then
\[
 x^2C(x)\le C(17/72)/16=805321/27648<100;
\]
if $C(x)\le0$, the same strict bound is immediate. Thus $H<0$, so the minimum deficit increases from a positive value.

Table~\ref{tab:cubic_negative} lists all nondegenerate triangles on the complemented switch $a+u+v=1$. For the sole exception $(4;1,1)$, substituting $a=1-u-v$ gives the exact deficit
\[
 3(u-v)^2+uv[4-3(u+v)]\ge0
 \qquad(u,v\ge0,\ u+v\le1/4).
\]
\begin{table}[H]
\centering
\caption{Complemented cubic switching triangles, with $a+u+v=1$.}\label{tab:cubic_negative}
\begin{tabular}{cc@{\qquad}cc}\toprule
Cell&Minimum coefficient&Cell&Minimum coefficient\\\midrule
$(1;1,3)$&$3/16$&$(1;1,4)$&$3/16$\\
$(1;2,2)$&$63/64$&$(1;2,3)$&$9/8$\\
$(2;1,2)$&$0$&$(2;1,3)$&$0$\\
$(2;2,2)$&$3/8$&$(3;1,1)$&$1/192$\\
$(3;1,2)$&$0$&$(4;1,1)$&$-1/48$\\\bottomrule
\end{tabular}
\end{table}
The triangle faces cover the degenerate intersections, completing the proof of all scalar inequalities and Lemma~\ref{lem:cubic_maps}. The ancillary check regenerates the rational knot and triangle calculations from \eqref{eq:elementary_maps}.

\section{Arbitrary-rank inequalities for the common single map}\label{app:single}
Product bounds and one-variable inequalities prove the arbitrary-rank lower bound in Theorem~\ref{thm:single}. The matching upper bound appears in Section~\ref{sec:single_upper}.

Write $g=\gamma_*=2\sqrt3-3$, $q=1-g$, $\sigma=\sigma_*$, and $\phi=\phi_*$. The map is increasing and concave, its slopes are $1,g,\sigma>0$, and $\phi(1)=q$. Thus $gt\le\phi(t)\le t$, $\phi(t)/t$ is nonincreasing, and $\log\phi$ is concave on $(0,1]$. Define
\begin{equation}\label{eq:chi_pieces}
 \chi(s)=1-\phi(1-s)=\begin{cases}
 g+\sigma s,&0\le s\le1/3,\\
 3q/4+gs,&1/3\le s\le3/4,\\s,&3/4\le s\le1.
 \end{cases}
\end{equation}
\subsection{Positive-root atoms}
Put $D=\sum_j(1-u_j)$. Log-concavity gives the product bound below; if the product is zero, it holds directly:
\[
 \prod_{j=1}^m\phi(u_j)\le\phi(1-D/m)^m.
\]
For $D>0$, the exact root-factor minimum is
\begin{equation}\label{eq:root_factor_positive}
 \inf_{0<a\le1}\frac{\phi(a)}{\min\{a,D\}}
 =\begin{cases}\phi(D)/D,&0<D\le1,\\q,&D\ge1.\end{cases}
\end{equation}
Monotonicity of $\phi$ and the nonincreasing quotient $\phi(a)/a$ give this minimum.

For one head, it suffices to prove
\begin{equation}\label{eq:single_pair}
 \phi(D)\chi(D)\ge gD\qquad(0\le D\le1).
\end{equation}
For $D\le1/4$ the deficit is $\sigma D^2$. For $D\ge3/4$ it is $D(\phi(D)-g)\ge0$. On $[1/3,2/3]$ the deficit is $3(1-2g)(2D-1)^2/4$. On $[1/4,1/3]$, its second derivative is $2g\sigma$, so it is convex. At the right endpoint, its derivative is $7(2g-1)/4<0$ and its value is $(1-2g)/12>0$. On $[2/3,3/4]$, it is again convex; at the right endpoint the derivative is $-3(11g-5)/8<0$ and the value is $3(7-15g)/64>0$. Thus \eqref{eq:single_pair} holds.

For two heads set
\[
 R_2(D)=\frac{\phi(D)}D\,[1-\phi(1-D/2)^2].
\]
For $0<D\le2/3$, monotonicity gives
\[
 R_2(D)\ge\frac{\phi(2/3)}{2/3}[1-\phi(1)^2]
  =\frac{111-231g}{8}
  =g+\frac{414-239\sqrt3}{4}>g.
\]
On $[2/3,1]$, differentiation gives $D^2R_2'(D)=3P(D)/8$, where
\[
 P(D)=(478\sqrt3-828)D^3+(1488-859\sqrt3)D^2+345\sqrt3-598.
\]
Indeed,
\[
 P'(D)=2D\bigl[(717\sqrt3-1242)D+1488-859\sqrt3\bigr]>0
 \qquad(2/3\le D\le1);
\]
the affine factor is positive at both endpoints. Also $P(1)=2(31-18\sqrt3)<0$. Hence $R_2$ is decreasing and
$R_2(D)\ge R_2(1)=q[1-\phi(1/2)^2]=g$.

To compare higher ranks with rank two, define $J(d)=\log\phi(1-d)/d$ for $0<d\le1/2$. On an affine piece $\phi(1-d)=A-Bd$,
\[
 J'(d)=\frac{-Bd/(A-Bd)-\log(A-Bd)}{d^2}.
\]
The numerator decreases with $d$. It therefore suffices to check it at the right endpoint of each piece. The bound $-\log z\ge1-z$ reduces these checks to
\[
 \phi(2/3)(2-\phi(2/3))-q=(64\sqrt3-109)/12>0,
\]
\[
 \phi(1/2)(2-\phi(1/2))-(g+q/4)=0.
\]
Thus $J$ is nondecreasing, and for $m\ge2$, $0<D\le1$,
\[
 \phi(1-D/m)^m=\exp(DJ(D/m))\le\exp(DJ(D/2))=\phi(1-D/2)^2.
\]
Together with \eqref{eq:root_factor_positive}, this proves the positive-root inequality for $D\le1$. For $D\ge1$, the root factor is $q$ and the product gap is nondecreasing in $D$, so the case $D=1$ suffices. The case $D=0$ has zero roof.

\subsection{Complement-rooted atoms}
Put $p(t)=1-\phi(t)$ and $S=\sum_j u_j$. If $S=0$, the roof is zero, so the inequality is immediate. Assume $S>0$. The quotient $\chi(s)/s$ is nonincreasing, and
\begin{equation}\label{eq:root_factor_negative}
 \inf_{0<r\le1}\frac{\chi(r)}{\min\{r,S\}}
 =\begin{cases}\chi(S)/S,&0<S\le1,\\1,&S\ge1.\end{cases}
\end{equation}
Consider $H(t)=\log p(t)/t$ and its limit $H(0+)=-1$. It decreases on $(0,1/4]$ because $-\log(1-t)<t/(1-t)$. On each later affine piece $p(t)=A-Bt$, the numerator of its derivative is $-Bt/(A-Bt)-\log(A-Bt)$ and decreases with $t$. The bound $-\log z\ge2(1-z)/(1+z)$ gives positive endpoint lower bounds
\[
 \frac{6(7-4\sqrt3)}{(18-5\sqrt3)(12-5\sqrt3)},\qquad
 \frac{33-19\sqrt3}{2(2\sqrt3-3)(\sqrt3-1)}
\]
at $2/3$ and one, respectively. Hence $H$ increases on $[1/4,1]$. For $0<S\le1$, the supremum of $H$ on $(0,S]$ is $\max\{H(0+),H(S)\}$; for $S\ge1$, the supremum over $0<t\le1$ is $H(1)=\log g>-1$. Heads with $u_j=0$ contribute $p(0)=1$ and may be omitted from the product. These bounds on the logarithmic quotient give
\begin{equation}\label{eq:negative_product_bound}
 \prod_jp(u_j)\le\begin{cases}
 \max\{e^{-S},p(S)\},&0\le S\le1,\\g^S,&S\ge1.
 \end{cases}
\end{equation}
For $0<S\le1$, equations \eqref{eq:root_factor_negative}--\eqref{eq:negative_product_bound} reduce the desired inequality to
\[
 \chi(S)\phi(S)\ge gS,\qquad \chi(S)(1-e^{-S})\ge gS.
\]
The first is \eqref{eq:single_pair}. For the second, write $K(S)=\chi(S)(1-e^{-S})-gS$. On $[0,1/3]$,
\[
 K''(S)=e^{-S}(2\sigma-g-\sigma S)
\]
changes sign at most once from positive to negative. Thus the minimum of $K'$ occurs at an endpoint. We have $K'(0)=0$ and
\[
 K'(1/3)>\sigma-g+\frac57\left(g-\frac{2\sigma}{3}\right)
          =\frac{366-211\sqrt3}{42}>0.
\]
Here $e^{-1/3}>5/7$, following from $e<68/25<(7/5)^3$. Hence $K\ge0$ on this interval. On $[1/3,3/4]$,
\[
 K'(S)=e^{-S}[3q/4+g(S-1)]>0,
\]
because the bracket at $1/3$ equals $\sigma/3>0$. On $[3/4,1]$, $K(S)=S(q-e^{-S})\ge0$, since $q>1/2>e^{-3/4}$. Finally, for $S\ge1$, the product gap is at least $1-g^S\ge1-g>g$. The complemented-root inequality follows, completing the proof of Theorem~\ref{thm:single}.

\section{Degree-twelve formal statement and map data}\label{app:twelve}
The Lean formalization proves the atomwise and unary inequalities in Theorem~\ref{thm:twelve_atoms}. We identify its statement, exact maps, and pinned dependencies below.

\subsection{Exact maps}
Both 23-knot portfolios use abscissas $\tau_j=k_j/40$ and ordinate denominator $M=1{,}200{,}000$. Table~\ref{tab:knots} specifies the four maps by linear interpolation between consecutive rows. The first pair is the degree-twelve pair; the second is the arity-four pair.
\begin{table}[!ht]
\centering\small
\caption{Exact ordinate numerators. Every ordinate shown is divided by $1{,}200{,}000$; the abscissa is $k_j/40$.}\label{tab:knots}
\begin{tabular}{rrrrrr}\toprule
& & \multicolumn{2}{c}{Degree-twelve atoms} & \multicolumn{2}{c}{Arity-four local functions}\\
\cmidrule(lr){3-4}\cmidrule(lr){5-6}
$j$&$k_j$&$M\alpha_{12}$&$M\beta_{12}$&$M\psi_0$&$M\psi_1$\\\midrule
0&0&0&0&0&0\\
1&2&60000&60000&60000&60000\\
2&4&120000&120000&120000&120000\\
3&6&180000&176600&180000&168000\\
4&8&240000&225180&240000&216000\\
5&10&300000&256980&300000&256980\\
6&12&360000&295200&360000&295200\\
7&14&417720&340020&417720&340020\\
8&16&476580&359076&476580&359076\\
9&18&537720&380000&537720&380000\\
10&19&567429&391720&570000&390000\\
11&20&600000&400000&600000&400000\\
12&21&608799&408280&607500&410000\\
13&22&613320&418452&613320&418452\\
14&24&627552&433908&627552&433908\\
15&26&643008&447768&643008&447768\\
16&28&660000&459600&660000&459600\\
17&30&673992&474096&673992&474096\\
18&32&540000&610600&540000&610600\\
19&34&405000&762400&405000&762400\\
20&36&270000&896400&272400&900973\\
21&38&135000&1046400&30000&1140000\\
22&40&0&1200000&0&1200000\\\bottomrule
\end{tabular}
\end{table}
At each knot, comparing the ordinate with the abscissa proves downwardness; linear interpolation preserves it between knots. The midpoint and endpoint contacts are the same for both portfolios; six knots have differing ordinates, with ten entries different in total.

\subsection{Formal correspondence and fixed version}
The source is identified by the public repository
\begin{center}\url{https://github.com/Duke1968/roof-recovery-theorem-4-10-lean}\end{center}
and tag \texttt{theorem-4.10-public-v1}, at commit
\begin{center}\texttt{ee0959069cf8a741893cf0dc9fcfeef3cb304d74}.\end{center}
The file \path{lean-toolchain} pins Lean version 4.33.0; \path{lake-manifest.json} pins Mathlib and its transitive dependencies. The exported declaration is:
\begin{quote}\ttfamily\small
RoofRecovery.theorem\_4\_10 : Degree12Theorem410Statement
\end{quote}
The file \path{RoofRecovery/Atoms.lean} defines \texttt{hPlus}, \texttt{lPlus}, \texttt{hMinus}, and \texttt{lMinus}, exactly as in \eqref{eq:slot_atoms}. Its \texttt{plusDeficit} and \texttt{minusDeficit} are the left side of \eqref{eq:twelve} minus its right side. The statement in \path{RoofRecovery/Degree12/Statement.lean} quantifies over every integer head rank from one through eleven and all root and head values in $[0,1]$. Its conjunction with the unary inequality is the exported statement above.

The map data and definitions are in the respective files
\begin{quote}\small
\path{RoofRecovery/Maps/Data.lean}\\
\path{RoofRecovery/Maps/Definition.lean}
\end{quote}
The agreement and facts modules identify these maps with the knot interpolation and prove downwardness, the contact values, and the unary inequality. The final dependency hierarchy is
\begin{quote}\small\ttfamily
 theorem\_4\_10\quad$\leftarrow$\quad
 (degree12Atomwise\_proved, degree12UnaryStatement),\\
 degree12Atomwise\_proved\quad$\leftarrow$\quad
 (degree12Plus\_proved,\\
 complementedOffSwitch\_proved).
\end{quote}
The statement quantifies over independent occurrence slots. Repeated-variable cases follow by identifying slots, without any change to the formal theorem.

\subsection{Analytical architecture of the formal proof}
A switching face consists of points where the two arguments of an atom roof agree. A strict knot configuration is a point off that face with every occurrence slot equal to a knot. On a knot box and an affine roof branch, a deficit is affine in each occurrence slot separately. Moving each slot to an endpoint reduces a minimum off the switching face to a strict knot configuration or to the face itself. Repeated coordinates are imposed only after this reduction on the larger independent-slot domain.

Two analytical estimates reduce the cases on switching faces. First, for $X,Y\ge0$,
\[
 X+Y=\max_{0\le\theta\le1}
 \frac{X^\theta Y^{1-\theta}}{\theta^\theta(1-\theta)^{1-\theta}},
\]
with the continuous endpoint conventions; when $X+Y>0$, the maximizing weight is $\theta=X/(X+Y)$. This is the weighted AM--GM variational identity.

Second, let $k\ge1$, let $x_1,\ldots,x_k$ lie in an interval on which $0<r_0\le r_1$, and set $P_\nu=\prod_{i=1}^k r_\nu(x_i)$ and $\bar x=k^{-1}\sum_i x_i<1$. Let $C_0,C_1\ge0$. If $C_0P_0+C_1P_1=0$, the bound below is immediate. Otherwise define its normalized first-product weight, called the \emph{posterior weight}, by
$\theta=C_0P_0/(C_0P_0+C_1P_1)$. Suppose $0\le\theta_0\le\theta\le1$ and
\[
 \theta_0\log r_0(x_i)+(1-\theta_0)\log r_1(x_i)
 \le\log(1-\bar x)-\frac{x_i-\bar x}{1-\bar x}
\]
for each $i$. Summation at $\theta$ and the variational identity give
\[
 C_0P_0+C_1P_1\le(C_0+C_1)(1-\bar x)^k.
\]
We call this estimate posterior--tangent compression. It replaces the head values by a bound involving their average, reducing the remaining cases to equal heads or to one distinguished head with the others equal.

The proof separates the two polarities and, for each, points on and off the switching face. On complemented switching faces, a log-ratio estimate between $1-\alpha_{12}$ and $1-\beta_{12}$ bounds the posterior weight. Replacing several heads by their common average then leaves bounds for one distinguished head. Complemented off-switch cases reach the switching face by monotonicity or scaling; other cases are bounded directly or extended to rank eleven by adding zero heads. On positive switching faces, explicit product bounds handle ranks four through eleven, with separate calculations for smaller ranks. Off the positive switching face, the endpoint reduction and exact product bounds at knots suffice, except that rank two requires separate quadratic calculations. The final assembly modules are
\begin{quote}\small
\path{RoofRecovery/Degree12/ComplementedOffSwitchCompleted.lean}\\
\path{RoofRecovery/Degree12/PositiveOffSwitchCompleted.lean}
\end{quote}

The finite data are rational. Algebraic residuals use exact arithmetic, interval monotonicity, and rational Bernstein coefficients; logarithmic and exponential residuals use analytic inequalities and rational-power bounds. No inequality is accepted on the basis of a floating-point test.

For $m\ge5$, both maps are the identity at $1/(2m)$. At root $a=1/2$ and equal complemented heads $u_i=1/(2m)$, the necessary inequality for this portfolio is $1-(1-1/(2m))^m\ge2/5$. Exact values at the terminal ranks are
\[
 1-(21/22)^{11}-2/5=
 \frac{1567401522879}{2921591507056640}>0,
\]
\[
 1-(23/24)^{12}-2/5=
 -\frac{12079851931877}{182601737180282880}<0.
\]
The listed portfolio therefore reaches literal degree twelve and fails at thirteen. Theorem~\ref{thm:thirteen} gives the stronger conclusion that even the supremum over all finite common-coordinatewise termwise portfolio factors is strictly below $8/17$ at degree thirteen. This analytical proof is independent of the degree-twelve formalization.

\subsection{Reproduction and verification scope}
Use the commit specified above without updating the dependency manifest. From its repository root, the documented commands are
\begin{verbatim}
lake exe cache get
LEAN_NUM_THREADS=4 lake build
lake env lean RoofRecovery/Degree12/Theorem410AxiomAudit.lean
\end{verbatim}
The final command prints the axiom dependencies of the exported theorem and its principal assembled components. The repository records the kernel-checked proof and its axiom audit. The statement correspondence above matches the exported theorem to Theorem~\ref{thm:twelve_atoms}.

\section{Exact-arithmetic verification for arity-four common maps}\label{app:arity_four}
To prove Theorem~\ref{thm:arity_four}, it suffices to verify inequalities on a complete set of cone generators over the marginal cube. The companion supplies exact certificates for the affine majorants, coverage of the domain, and polynomial nonnegativity.

\subsection{Polyhedral and roof reductions}
The cone of nonnegative submodular value tables on four labeled elements has the 16 inequalities $g(S)\ge0$ and the 24 elementary submodularity inequalities
\[
 g(K\cup\{i\})+g(K\cup\{j\})-g(K)-g(K\cup\{i,j\})\ge0,
 \quad i<j,\quad K\subseteq[4]\setminus\{i,j\}.
\]
The exact double-description enumeration gives 222 primitive integer ray generators, partitioned by coordinate permutations into 21 orbits. Double description adds the defining inequalities one at a time, updating the complete list of extreme rays after each addition. Each primitive generator is normalized to greatest common divisor one. Fixed common maps are invariant under these permutations. Since $G_g=\sum_r\lambda_rG_r$ and $g^+\le\sum_r\lambda_r r^+$ for nonnegative conic coefficients, it suffices to verify the same weighted map inequality on each ray.

For a ray $r$, an affine majorant $\ell$ satisfies $\ell(\ind_S)\ge r(S)$ at all 16 Boolean points. Taking expectations gives $\ell(x)\ge r^+(x)$. The minimum of a collection of valid majorants is also an upper bound. It therefore suffices to prove the recovery inequality against a selected affine majorant or a convex combination of valid majorants.

For the single map $\phi_*$, partition at $1/4$ and $2/3$. The active region of a stored majorant $\ell$ is the set where $\ell$ is no larger than any other stored majorant. Intersect each knot box with these regions. Triangulate each region and express $G_r(\phi_*(x))-\gamma_*\ell(x)$ in a Bernstein basis. Nonnegative coefficients prove nonnegativity on the simplex. The certificate records 3,461 active cells, 72,540 initial simplices, and 75,123 final simplices, after at most five exact longest-edge bisections. Every accepted sign is decided exactly in the field $\Q(\sqrt3)=\{a+b\sqrt3:a,b\in\Q\}$; floating-point tolerances play no role.

For the rational pair $\psi_0,\psi_1$, each coordinate has 22 knot intervals. For a fixed affine majorant, the polynomial
\[
 H_{r,\ell}(x)=8G_r(\psi_0(x))+9G_r(\psi_1(x))-8\ell(x)
\]
is affine in each coordinate separately. Nonnegative values at all 16 corners prove nonnegativity throughout the box. The same argument applies to an exact convex combination of two or three majorants. The 21 representatives give $21\cdot22^4=4{,}919{,}376$ coarse boxes. The certificate records 6,276 adaptive nodes with maximum depth eight.

When the deficit vanishes along a boundary, the corner test may fail to certify a box. The companion calls these \emph{singular cells}. Partition each such box into chambers indexed by a minimizing majorant. In the chamber for $j$, the inequalities $\ell_j\le\ell_k$ hold for every active $k$. On this chamber, verify the exact identity
\[
 H_{r,\ell_j}(x)=\sum_\nu c_\nu\prod_{h\in M_\nu}h(x),\qquad c_\nu\in\Q_{\ge0}.
\]
Each factor $h$ is either a nonnegative affine box slack, such as $x_i-l_i$ or $u_i-x_i$, or an ordering slack $\ell_k-\ell_j$. A \emph{Handelman identity} is such a nonnegative linear combination of products of affine slacks. Every term is nonnegative on the chamber. The certificate consists of 32 stored singular-cell certificates with 79 such degree-at-most-four Handelman identities and 1,238 nonnegative terms. Symmetries and one free ray coordinate extend them to 95 singular boxes. The certificate checks chamber coverage and domination exactly; sampled points do not establish either obligation.

\subsection{Acceptance obligations, GitHub source, and reproduction}
\label{app:proof_companion_archive}
The common-map proof requires completeness of the ray enumeration and
permutation coverage, Boolean validity of the affine majorants, exact map
reconstruction and downwardness, complete cell coverage, all box-corner signs
and simplex Bernstein-coefficient signs, and every singular-cell polynomial
identity with nonnegative coefficients.

The MIT-licensed arity-four proof companion is available from the GitHub
repository~\cite{DuArityFourProofCompanion2026}:
\begin{center}\small
\url{https://github.com/Duke1968/concave-roofs-proof-companion}.
\end{center}
The proof snapshot is also included in the ancillary directory
\path{anc/proof_companion}. It contains the verifier source and exact
certificate data for the arity-four common-map inequalities in
Theorem~\ref{thm:arity_four}, together with exact checks of the matching
aggregate upper-bound construction in Theorem~\ref{thm:aggregate_frontier}.
The companion includes the ray and affine-majorant data, rational recovery
maps, singular-cell Handelman certificates, supporting exact data,
environment documentation, and verification output.

The snapshot is identified by the SHA-256 digest of its \path{SHA256SUMS}
manifest:
\begin{center}\small
\texttt{f2168c77ded5a330f4627afadd58174a48c710fcbccabd5a981710ac1aa0e48e}.
\end{center}
This manifest records the digests of the other 339 companion files. To confirm
that a downloaded copy matches the article's snapshot, check the manifest digest
and then the file inventory and digests it records. These checks establish
file identity, not mathematical validity.

The commands below check the stored ray dictionary, its validity, and its
permutation coverage. Completeness of the generating list is established
by the double-description enumeration in
\begin{quote}\footnotesize
\path{research/unrestricted_four_factor/enumerate_four_rays_exact.py}
\end{quote}
To reconstruct the list, run that script in a working copy of the companion and compare the following output with the archived file:
\begin{quote}\footnotesize
\path{research/unrestricted_four_factor/nonnegative_submodular_four_rays_exact.json}
\end{quote} The script starts from the nonnegative orthant,
adds each elementary submodularity inequality, and uses exact adjacency
tests to update the ray list. Its additional symmetry audit uses global
complementation and reports 14 orbits; the common-map proof uses only
coordinate permutations and therefore retains 21 orbits.

\begin{samepage}
The two-map composite certificate is
\begin{center}
\path{research/recovery_pep/cegis_817_full_certificate.json}
\end{center}
with SHA-256 digest
\begin{center}\small
\texttt{b0a7cccc3b87d8652d800db679f7fbda94c655bff590239e5de502ecaa338d63}.
\end{center}
\end{samepage}
\begin{samepage}
Its candidate-map record is
\begin{center}
\path{research/recovery_pep/cegis817_root_12_candidate.json}
\end{center}
with digest
\begin{center}\small
\texttt{210b79dd8f1df90cc7a05717741b529a3f7b9dd06a5098771e789c36f7962493}.
\end{center}
\end{samepage}
The candidate-map record specifies the maps. The composite certificate
supplies the identities and coverage data needed to verify the inequalities.

The verification uses CPython~3.9 or later and only the Python standard
library. It must be run without \texttt{-O} or \texttt{-OO}, and with
\texttt{PYTHONOPTIMIZE} unset, because assertions form part of the proof
checks. Acceptance decisions use integer, rational, or exact
$\mathbb Q(\sqrt3)$ arithmetic. No inequality is accepted using a
floating-point tolerance. From the \path{proof_companion} directory, the checks
corresponding to the results used in this article are:
\begin{verbatim}
python3 replay_all.py --verify-only
python3 research/overlapping_factors/verify_phi_star_four_factor_exact.py
python3 research/recovery_pep/cegis_817_bridge.py \
  --replay research/recovery_pep/cegis_817_full_certificate.json
python3 research/recovery_pep/arbitrary_map_upper.py
python3 replay_all.py --verify-only
\end{verbatim}
The first and last commands check the complete file inventory and digests;
they do not execute the mathematical verifiers. Conditional on completeness
of the archived ray list, the three intervening commands check the
single-map inequality, the two-map inequality, and the aggregate upper
bound, respectively. Missing inputs, failed proof checks, or uncovered
cells cause the corresponding verifier to fail. File-integrity checks and
mathematical acceptance are separate obligations.

This article uses only the common-map and aggregate-bound routines listed
above. Every ray and local function uses the same maps and weights in the
common-map inequalities.

\section{Representation freedom and multilinear sharpness}\label{app:representation_frontiers}
Changing a representation allows coefficient adjustments as well as root reallocations. The adjustments that can change the roof form a fractional triple-packing system. Triangle-free capacity graphs force this packing to be zero, preserving the roofs of the sharpness examples.

\subsection{All uniform-polarity representation freedom in degree three}
Let $F(x)=\sum_{|S|\le3}a_Sx^S$ be nonnegative and submodular on the Boolean cube. For a pair $e$, put
\[
 r_e=-a_e-\sum_{T\supset e,\ |T|=3}\pos{a_T}.
\]
Submodularity is equivalent to $r_e\ge0$ for all pairs. In a representation by the functions $U_{T,i},V_{T,i}$, let $\bar\alpha_T,\bar\beta_T$ be the total positive- and complemented-product masses, summed over roots. Coefficient matching gives, for $|U|\ge2$,
\begin{equation}\label{eq:coefficient_freedom}
 a_U=-\bar\alpha_U+(-1)^{|U|+1}\sum_{T\supseteq U}\bar\beta_T.
\end{equation}
For any four-set, the left side is zero and every term on the right is nonpositive. Thus every complemented-product mass on a support of size at least four vanishes. Equation~\eqref{eq:coefficient_freedom} then also eliminates every positive-product mass on such a support. The remaining freedom on triples is exactly
\begin{equation}\label{eq:triple_packing}
 \bar\alpha_T=\pos{-a_T}+t_T,\qquad
 \bar\beta_T=\pos{a_T}+t_T,\qquad
 t_T\ge0,\qquad \sum_{T\supset e}t_T\le r_e.
\end{equation}
Conversely every vector in \eqref{eq:triple_packing} gives a uniform-polarity negaform of $F$ with nonpositive remaining pair coefficients. Theorem~\ref{thm:orientation} supplies a nonnegative rooted certificate because $F$ is Boolean-nonnegative.

Reallocating the root of a fixed product preserves its roof gap. For pairs, complementing the product also preserves the gap:
\[
 \min\{a,1-b\}-a(1-b)=\min\{b,1-a\}-b(1-a).
\]
Thus \eqref{eq:triple_packing} describes every adjustment that can affect $L_F^*$. The \emph{positive pair-capacity graph} has vertex set $N$ and an edge $e$ precisely when $r_e>0$. A graph is \emph{triangle-free} when it has no cycle of length three. If this graph is triangle-free, no triple can receive positive $t_T$, and the induced roof is unique within the entire representation family in \eqref{eq:best_representation_roof}.

\subsection{Quadratic odd cycles with an incomparable alternative}
For an odd cycle of length $\ell\ge5$, let $F_\ell(x)=\sum_{i=1}^\ell x_i(1-x_{i+1})$, with $x_{\ell+1}=x_1$. Its positive pair-capacity graph is that cycle and is triangle-free. Thus its best representation roof is
\[
 L_{F_\ell}^*(x)=\sum_i\min\{x_i,1-x_{i+1}\}.
\]
Fix $0<\varepsilon<\ell/2$, add coordinate $z$, and set
\[
 \widehat F_{\ell,\varepsilon}(x,z)=F_\ell(x)+(\ell/2-\varepsilon)z,
 \qquad
 P_\ell=\{(x,z):0\le z\le1,\ 0\le x_i\le(1-z)/2\}.
\]
The added coordinate has no pair capacity. On $P_\ell$ each minimum equals $x_i$, so
\[
 L_{\widehat F}^*(x,z)\le\ell(1-z)/2+(\ell/2-\varepsilon)z
 =\ell/2-\varepsilon z.
\]
The roof has unique maximizer $(\ind/2,0)$. Its lower-box maximum is $\ell/4$, whereas $(0,1)$ has objective value $\ell/2-\varepsilon$. The ratio to the true optimum is therefore at most $(\ell/4)/(\ell/2-\varepsilon)\to1/2$.

\subsection{A triangle-free cubic family}
The construction uses triangle-free graphs whose edge counts approximate a uniform density on every vertex subset.
\begin{lemma}\label{lem:quasirandom}
For all sufficiently large integers $m$, there are triangle-free graphs $G_m=(V_m,E_m)$, $|V_m|=m$, such that, with $Z_m=2|E_m|$, uniformly over $A\subseteq V_m$ and $u=|A|/m$,
\[
 \frac{\vol_{G_m}(A)}{Z_m}=u+o(1),\qquad
 \frac{e_{G_m}(A)}{Z_m}=\frac{u^2}{2}+o(1).
\]
Here $e_G(A)$ counts edges internal to $A$, and $\vol_G(A)$ is the sum of degrees in $A$. Uniform $o(1)$ means that the maximum absolute error over all subsets $A$ tends to zero as $m\to\infty$.
\end{lemma}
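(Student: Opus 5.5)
We need triangle-free quasirandom graphs with density tending to zero but uniform edge distribution normalized by edge count. The natural choice is a sparse pseudorandom triangle-free family with spectral gap, for instance Alon's triangle-free $(n,d,\lambda)$-graphs. These are $d$-regular graphs on $n$ vertices with $d=\Theta(n^{2/3})$ and second eigenvalue $\lambda=O(n^{1/3})$, so $\lambda/d\to0$. The other option is random graphs with edge probability $p=m^{-2/3}$ after deleting one edge from each triangle. The spectral route is cleaner, so we plan to use Alon's construction and cite it. It exists for a sequence of $n$ that is not every integer, so one step is to extend from that sequence to all sufficiently large $m$.

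The key step is the expander mixing lemma. For a $d$-regular graph on $n$ vertices with nontrivial eigenvalues bounded by $\lambda$ in absolute value, and every $A$,
\[
 \Bigl|2e_G(A)-\frac{d|A|^2}{n}\Bigr|\le\lambda|A|.
\]
Here $Z=2|E|=nd$ and $\vol(A)=d|A|$, so $\vol(A)/Z=|A|/n=u$ exactly. Dividing the mixing bound by $2Z$ gives
\[
 \Bigl|\frac{e(A)}{Z}-\frac{u^2}{2}\Bigr|\le\frac{\lambda u}{2d}\le\frac{\lambda}{2d}\to0,
\]
and this holds uniformly over all subsets $A$.

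To reach every large $m$, we take the largest admissible $n\le m$ from the Alon sequence and add $m-n$ isolated vertices. The sequence has bounded consecutive ratios, since it is of the form $n=2^{2k}$-type with constant factor steps. This only gives $(m-n)/m$ bounded, not $o(1)$, so it is insufficient. Instead we will use blow-ups. Replacing each vertex of $G_n$ by an independent set of size $s$ preserves triangle-freeness, regularity and the ratio $\lambda/d$, because the new spectrum is the old one scaled by $s$ together with zeros. Choose $n$ in the sequence with $n\to\infty$ slowly, so that $n=o(m)$. Set $s=\lfloor m/n\rfloor$ and add the remaining $r<n$ vertices as isolated vertices, so $r/m\le n/m\to0$.

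For the non-regular blow-up with isolated vertices, we redo the estimate. For a set $A$, let $A'$ be its part inside the blow-up, with $|A'|=|A|-O(r)$. Then
\[
 \frac{\vol(A)}{Z}=\frac{|A'|}{ns}=u+O\!\left(\frac{r}{m}\right)+O\!\left(\frac1n\right),
\]
and the edge count follows from the mixing lemma applied to the blow-up. The blow-up's spectrum gives the same error $\lambda/(2d)$. All errors are uniform in $A$.

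The main obstacle is citing a triangle-free spectral family with $\lambda=o(d)$. The blow-up bookkeeping is routine. If Alon's construction is unavailable, the fallback is the random-graph route. Take $p=m^{-1/2}/\log m$, so the expected number of triangles is $o(pm^2)$, and delete one edge per triangle. Chernoff bounds with a union bound over $2^m$ sets need $pm^2\gg m$, which holds. The deletions remove $o(|E|)$ edges, so the uniform errors stay $o(1)$ after dividing by $Z$.
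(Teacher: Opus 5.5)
Your proof is correct, but your primary route is genuinely different from the paper's. The paper argues probabilistically. It samples $G(m,p)$ with $p=m^{-2/3}$ and applies a Bernstein bound with deviation $Cm^{7/6}=o(pm^2)$, together with a union bound over all $2^m$ subsets, to control every $e(A)$. It then uses Markov's inequality to cap the triangle count at $m\log m=o(pm^2)$, deletes one edge per triangle, and reads off the volume estimate from the identity $\vol(A)=|E|+e(A)-e(V\setminus A)$.

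Your fallback is this same argument with another admissible $p$; any $p$ with $pm\to\infty$ and $p^2m\to0$ works. You should add the Markov step, though: a small \emph{expected} triangle count must be combined with the concentration event to produce a single realization having both properties.

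Your main route instead uses Alon's explicit triangle-free $(n,d,\lambda)$-graphs. Regularity gives $\vol(A)/Z=u$ exactly, and the expander mixing lemma gives the edge estimate with error $\lambda/(2d)=O(n^{-1/3})$. An $s$-fold blow-up, padded by $r<n$ isolated vertices, then reaches every $m$; it preserves the ratio $\lambda/d$ because the spectrum of $A_G\otimes J_s$, with $J_s$ the all-ones $s\times s$ matrix, is $s$ times that of $A_G$ together with zeros.

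Your bookkeeping checks out. With $A'$ the part of $A$ inside the blow-up, $|A'|/(ns)=u+O(r/m)$; the extra $O(1/n)$ term is unnecessary. Dividing the mixing bound by $2nds^2$ gives $e(A)/Z=u^2/2+O(\lambda/d)+O(r/m)$, uniformly in $A$. To fix $n$ concretely, take the largest admissible order $n\le\sqrt m$. Alon's orders are $2^{3k}$ with $3\nmid k$ (not $2^{2k}$), so consecutive orders differ by a factor of at most $64$, which is all you need.

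As for the trade-off: the paper's proof is self-contained, needing only an exponential tail bound and Markov, and it works directly on $m$ vertices. Yours is deterministic and explicit, with explicit error rates, but it rests on a nontrivial cited eigenvalue bound and needs the blow-up-and-padding step. Since the lemma only feeds the existence of sharpness instances, either suffices.
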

\begin{proof}
Start with the random graph $G(m,p)$, in which each pair of the $m$ vertices is independently an edge with probability $p=m^{-2/3}$. For any fixed $A$, the induced-edge count is a sum of independent Bernoulli variables with variance at most $pm^2/2$. The exponential bound
\[
 \Pr(|X-\E X|\ge t)\le2\exp\left(-\frac{t^2}{2(\operatorname{Var}X+t/3)}\right)
\]
follows from the exponential-moment bound for Bernoulli sums. Taking $t=Cm^{7/6}$ with a sufficiently large constant and a union bound over $2^m$ sets yields, with probability tending to one,
\[
 e(A)=p\binom{|A|}{2}+O(m^{7/6})
       =p\binom{|A|}{2}+o(pm^2)
\]
uniformly. The expected number of triangles is $O(m)$, so Markov's inequality makes it at most $m\log m$ with probability tending to one. Choose a realization satisfying both events, then delete one edge from each triangle. At most $m\log m=o(pm^2)$ edges are deleted, preserving the uniform estimate and $|E_m|=(1+o(1))pm^2/2$. Finally,
\[
 \vol_{G_m}(A)=|E_m|+e_{G_m}(A)-e_{G_m}(V_m\setminus A)
\]
gives the volume estimate after division by $Z_m$.
\end{proof}
Pass to $m=3q$ with integers $q\ge2$ tending to infinity, put $U_q=17/12-1/(2q)$, and define
\begin{equation}\label{eq:trianglefree_objective}
\begin{split}
 F_{q,G}(x_0,x_1,t,z)={}&(U_q-q^{-2})z+\frac{5q}{12}x_0(1-x_1)\\
 &+\frac1{Z_m}\sum_{\{i,j\}\in E_m}x_1(1-t_it_j)\\
 &+\frac1{Z_m}\sum_{\{i,j\}\in E_m}
       [t_i(1-t_j)+t_j(1-t_i)].
\end{split}
\end{equation}
Every nonlinear term is a nonnegative rooted multilinear atom of degree at most three. Use the down-closed polytope
\[
 P_q=\{(x_0,x_1,t,z)\in[0,1]^{m+3}:
 x_0\le q^{-1}(1-z),\quad x_1\le(1-q^{-1})(1-z),\quad
 t_i\le(1-z)/2\}.
\]
The positive pair-capacity graph is the disjoint union of the edge for $x_0,x_1$ and $G_m$. It is triangle-free, so \eqref{eq:triple_packing} forces zero triple-packing mass. Thus $L_F^*$ equals the roof of the displayed representation. On $P_q$ all its minima use their root arguments, giving
\begin{equation}\label{eq:trianglefree_roof}
 L_F^*=\frac{5q}{12}x_0+\frac12x_1+
       \frac1{Z_m}\sum_{\{i,j\}\in E_m}(t_i+t_j)
       +(U_q-q^{-2})z\le U_q-q^{-2}z.
\end{equation}
Equality holds at $\bar x_q=(q^{-1},1-q^{-1},\ind/2,0)$. Every roof maximizer agrees with this point on the coordinates appearing in the objective; isolated vertices may be deleted.

By multiaffinity, a lower-box optimum has $x_0=q^{-1}$, $x_1=s\in\{0,1-q^{-1}\}$, and $t_i\in\{0,1/2\}$. Writing $A$ for the retained tail coordinates and $u=|A|/m$, substitution gives
\begin{align*}
 F_{q,G}&=\frac5{12}+\frac{s}{12}
       +\frac{\vol_{G_m}(A)}{2Z_m}
       -\frac{(2+s)e_{G_m}(A)}{4Z_m}\\
 &=\frac5{12}+\frac{s}{12}+\frac u2-\frac{2+s}{8}u^2+o(1),
\end{align*}
uniformly in $A$. At $s=0$ the limiting maximum is $2/3$ at $u=1$; at $s=1-q^{-1}$ it is
$5/12+s/12+1/[2(2+s)]=2/3+o(1)$.
The lower-box maximum is therefore $2/3+o(1)$, while the roof value tends to $17/12$. Their ratio tends to $8/17$. The incomparable feasible point $(0,0,0,1)$ has value $U_q-q^{-2}$, and \eqref{eq:trianglefree_roof} bounds the true optimum by $U_q$. Thus the same limit holds relative to the true multilinear optimum, proving Theorem~\ref{thm:best_representation}.

\section{Concave-cardinality and symmetric sequences}\label{app:cardinality}
An explicit decomposition of concave sequences into affine terms and tents reduces the proof to these generators. Pairwise averaging then reduces recovery to binomial expectations, from which the sharp constants and witnesses follow.

\subsection{The full nonnegative concave sequence cone}
For $d\ge2$, let $h(0),\ldots,h(d)$ be concave and nonnegative. Define
\[
 \ell_0(k)=1-k/d,\quad\ell_d(k)=k/d,\qquad
 \tau_j(k)=\min\{k/j,(d-k)/(d-j)\}\quad(1\le j<d).
\]
The sequence $\tau_j$ is a \emph{tent}: its piecewise-affine interpolation rises from zero to one at $j$ and then falls to zero. Write $\bar\tau_j$ for this interpolation. With $\omega_j=2h(j)-h(j-1)-h(j+1)\ge0$, the exact expansion is
\begin{equation}\label{eq:tent_expansion}
 h(k)=h(0)\ell_0(k)+h(d)\ell_d(k)
       +\sum_{j=1}^{d-1}\omega_j\frac{j(d-j)}d\tau_j(k).
\end{equation}
The endpoints and second differences agree, proving the identity. It generates the full cone because a concave sequence is nonnegative whenever its endpoint values are nonnegative.

At fixed total marginal sum, averaging two Bernoulli probabilities cannot increase the expectation of a concave sequence. Indeed, conditional on the remaining sum $W=w$, for probabilities $a,b$ with $a+b=t$ the expectation is
\[
 (1-t)h(w)+th(w+1)+ab[h(w)-2h(w+1)+h(w+2)].
\]
The bracket is nonpositive and $ab$ is maximized at $a=b=t/2$. By repeated averaging and continuity, equal probabilities $s/d$ minimize the expectation at the given marginal sum.

Write $\Bin(d,p)$ for the distribution of the number of successes in $d$ independent Bernoulli trials of success probability $p$. For $X_s\sim\Bin(d,s/d)$ and $0<s\le j$,
\[
 \frac{\E\tau_j(X_s)}{\bar\tau_j(s)}
 =1-\frac d{d-j}\frac{\E(X_s-j)_+}{s}.
\]
The binomial expectation of $(X-j)_+$ is convex in the success probability. More generally, if $B_a(p)=\E a(\Bin(d,p))$, then
\[
 B_a''(p)=d(d-1)\E[\Delta^2a(\Bin(d-2,p))],
 \quad\Delta^2a(k)=a(k+2)-2a(k+1)+a(k).
\]
For $a(k)=(k-j)_+$, the expectation vanishes at zero; convexity therefore makes its quotient by $p$ nondecreasing. The tent ratio is thus nonincreasing up to $s=j$. Complementing the binomial variable shows that it is nondecreasing after $j$. At $p=j/d$, telescoping gives
\[
 \E(X-j)_+=dp(1-p)\Pr(\Bin(d-1,p)=j).
\]
The minimum tent ratio is consequently $1-Q_{d,j}$, where
\[
 Q_{d,j}=\binom dj(j/d)^j(1-j/d)^{d-j}.
\]
These numbers are symmetric in $j,d-j$, and
\[
 \frac{Q_{d,j+1}}{Q_{d,j}}
 =\frac{(1+1/j)^j}{(1+1/(d-j-1))^{d-j-1}}\qquad(1\le j\le d-2).
\]
Since $(1+1/u)^u$ increases with $u$, the maximum is at $j=1,d-1$ and equals $(1-1/d)^{d-1}$. For the affine generators in \eqref{eq:tent_expansion}, the expectation equals the closure at every marginal vector. The identity-map inequality in Theorem~\ref{thm:cardinality} follows by nonnegative aggregation, since all closures here equal the linear interpolation evaluated at $s$.

For sharpness under arbitrary downward recovery, use $\tau_1$ and $x_i=1/d$. Its multilinear extension is
\[
 G_{\tau_1}(y)=\frac{d(1-\prod_i(1-y_i))-\sum_i y_i}{d-1}.
\]
For $0\le y_i\le1/d$,
\[
 \frac{\partial G_{\tau_1}}{\partial y_i}
 =\frac{d\prod_{\ell\ne i}(1-y_\ell)-1}{d-1}\ge0,
\]
because Bernoulli's inequality gives $(1-1/d)^{d-1}\ge1-(d-1)/d=1/d$. Thus no downward point improves on $x$, and its ratio is $\rho_d$.

\subsection{Symmetric zero-endpoint sequences}
Let $m=\lfloor d/2\rfloor$. The symmetric, nonnegative, concave sequences with zero endpoints are generated by
\[
 \tau_{d,r}(k)=\min\{k,r,d-k\},\qquad1\le r\le m.
\]
The successive differences of the nonincreasing, nonnegative slopes on the first half give the nonnegative coefficients. Pairwise averaging again reduces to equal Bernoulli probabilities. For
$\Phi_{d,r}(s)=\E\tau_{d,r}(\Bin(d,s/d))$,
the second-derivative identity makes $\Phi$ concave, symmetric about $d/2$, and zero at zero. Therefore $\Phi(s)/s$ is nonincreasing up to $r$, while $\Phi(s)/r$ is nondecreasing from $r$ to $d/2$. Complementation gives the same conclusion on the remaining half. The exact constant is
\begin{equation}\label{eq:symmetric_finite}
 \rho_d^{\rm sym}=\min_{1\le r\le\lfloor d/2\rfloor}
 \frac1r\E\min\{X,r,d-X\},\qquad X\sim\Bin(d,r/d).
\end{equation}
The term $r=1$ equals $1-(1-1/d)^d-d^{-d}$.

To determine the minimizing ray for $d\ge5$, express the deficit of the $r$th quotient as $(A_{d,r}+U_{d,r})/r$, where
\[
 A_{d,r}=\E(r-X)_+,\qquad U_{d,r}=\E(X-d+r)_+.
\]
A binomial identity gives
\[
 \frac{A_{d,r}}r=\binom{d-1}{r-1}(r/d)^{r-1}(1-r/d)^{d-r+1}
 \le\pi_r:=e^{-r}\frac{r^r}{r!}.
\]
For fixed $r$ the ratio of the expression at $d+1$ to that at $d$ is
\[
 \frac{(1+1/(d-r))^{d-r+1}}{(1+1/d)^{d+1}}>1,
\]
and its limit is $\pi_r$, proving the bound. At $d=2r$, symmetry gives deficit $\binom{2r}{r}/4^r\le5/16$ for $r\ge3$.

For $d=2r+\Delta$, $\Delta\ge1$, put $\vartheta=r/(r+\Delta)$. Pair the terms with $X=j$ and $X=d-j$, $0\le j<r$, which have the same weights $r-j$. Their probability ratio is
\[
 \frac{\Pr(X=d-j)}{\Pr(X=j)}=\vartheta^{d-2j}
 \le\vartheta^{\Delta+2}.
\]
Thus the deficit is at most $\pi_r(1+\vartheta^{\Delta+2})$. At $(r,\Delta)=(2,1),(3,1)$, direct evaluation gives recovery quotients $93/125$ and $13108/16807$. For $r=2,3$ and $\Delta\ge2$ the deficit bound is below $5/16$. For $r=4,5,6$, its respective bounds are below $37/125,139/500,131/500$; these follow by taking $\Delta=1$ and using $e>1359/500$. The factor $(r/(r+\Delta))^{\Delta+2}$ decreases with $\Delta$ for $\Delta\ge1$, as differentiation of its logarithm shows. For $r\ge7$, the elementary Stirling lower bound $r!\ge\sqrt{2\pi r}(r/e)^r$ gives deficit at most $2/\sqrt{2\pi r}<5/16$.

Every $r\ge2$ therefore has quotient at least $11/16$ when $d\ge5$, whereas the $r=1$ quotient is smaller because $(1-1/d)^d\ge(4/5)^5>5/16$. Therefore $r=1$ is the minimizer. For $d=2,3,4$, direct evaluation of \eqref{eq:symmetric_finite} gives $1/2$, $2/3$, and $\min\{87/128,5/8\}=5/8$. These prove \eqref{eq:symmetric_cardinality}. The limiting value is $1-1/e$; for $d\ge5$, the inequalities $(1-1/d)^d<e^{-1}$, $d^{-d}\le5^{-5}$, and $e>27/10$ show that the value exceeds $5/8$.

\section{Exact overlap examples and portfolio separation}\label{app:overlap}
Two overlap examples establish bounds relative to additive local roofs. We then show that the 23-knot portfolios fail each other's fixed-weight inequalities, despite sharing knots and contact values.

\subsection{Two local functions with incompatible closures}
On coordinates $(x_0,x_1,t_1,t_2,t_3)$, put $\bar x=(1/20,19/20,1/2,1/2,1/2)$ and define
\begin{align*}
 G_1={}&2200x_0(1-x_1)+40x_1(1-t_1t_2)+19(t_1+t_2-2t_1t_2),\\
 G_2={}&40x_1(1-t_1t_3)+40x_1(1-t_2t_3)\\
       &+19(t_1+t_3-2t_1t_3)+19(t_2+t_3-2t_2t_3).
\end{align*}
Their Boolean restrictions $g_1,g_2$ are nonnegative and submodular, with respective supports $\{x_0,x_1,t_1,t_2\}$ and $\{x_1,t_1,t_2,t_3\}$. To attain the first closure, impose $X_0=1-X_1$ and $T_2=1-T_1$, with the two pairs independent. Each summand attains its local upper bound, giving $g_1^+(\bar x)=167$. For the second, impose $T_1=T_2=1-T_3$ and take $X_1$ independently; this gives $g_2^+(\bar x)=114$. Thus the specified two-function roof has value 281.

The aggregate is nondecreasing in $x_0$, so retain $x_0=1/20$. By Lemma~\ref{lem:vertex}, let $s\in\{0,19/20\}$ be the retained value of $x_1$ and $k\in\{0,1,2,3\}$ the number of retained tail coordinates. The eight values are given by
\[
 H(s,k)=110(1-s)+40s\left(3-\frac{k(k-1)}8\right)
             +\frac{19}{2}\left(3-\frac{(3-k)(2-k)}2\right).
\]
Their maximum is $277/2$, and hence
\[
 \frac{\max_{0\le y\le\bar x}(G_1(y)+G_2(y))}
 {g_1^+(\bar x)+g_2^+(\bar x)}=\frac{277}{562}<\frac12.
\]
Attaining either closure requires its positive cut terms to saturate. This forces \(T_1+T_2=1\) almost surely for \(g_1\) and \(T_1=T_2\) almost surely for \(g_2\), so no joint law attains both. The example limits recovery relative to the additive roof; it does not exclude better recovery from a closure computed after grouping the functions.

\subsection{Exact arity-three aggregate barrier}
Fix $q\ge2$, $m=3q$, and $D=m(m-1)$. Set
\begin{equation}\label{eq:aggregate_example}
 \begin{split}
 F_q(x_0,x_1,t)={}&\frac{5q}{12}x_0(1-x_1)
    +\frac1D\sum_{i<j}x_1(1-t_it_j)\\
 &+\frac1D\sum_{i<j}[t_i(1-t_j)+t_j(1-t_i)],
 \end{split}
\end{equation}
treating each displayed rooted atom as a local function. Put
$\bar x_q=(q^{-1},1-q^{-1},\ind/2)$.
The additive roof is
\[
 L(\bar x_q)=\frac5{12}+\frac12(1-q^{-1})+\frac12
 =\frac{17}{12}-\frac1{2q}.
\]
Below $\bar x_q$, the root argument is the smaller argument in every minimum, so
\[
 L(x)=\frac{5q}{12}x_0+\frac12x_1+\frac1m\sum_i t_i.
\]
Thus $\bar x_q$ maximizes the roof on its lower box. The objective is increasing in $x_0$, so set $x_0=q^{-1}$. For $s=x_1$, $\mu=m^{-1}\sum_it_i$, and $Q=D^{-1}\sum_{i\ne j}t_it_j$, expansion gives
\[
 F_q(q^{-1},s,t)=\frac5{12}+\frac{s}{12}+\mu-(1+s/2)Q.
\]
At a lower-box vertex, let $k$ tail coordinates equal $1/2$. At $s=0$ and $s=1-q^{-1}$ the respective values are
\[
 \frac23-\frac{(m-k)(m-k-1)}{4m(m-1)},\qquad
 \frac23-\frac{(k-2q)(k-2q-1)}{24q^2}.
\]
The subtracted products are nonnegative, since their factors are consecutive integers, and equality is attained. Therefore $\max_{0\le y\le\bar x_q}F_q(y)=2/3$, and the ratio is $8q/(17q-6)\downarrow8/17$. This proves the upper bound in Theorem~\ref{thm:aggregate_frontier} without any restriction on the joint downward rule.

\subsection{The two 23-knot portfolios cannot be substituted}\label{app:maps}
The knots, midpoint contacts, and weights shared by the portfolios in Table~\ref{tab:knots} do not make their inequalities interchangeable. Exact substitution shows failure in both directions.

First, take
\[
 G(a,b,c,d)=a(2-c-d)+(1-a)b(1-cd),\qquad
 x=(1/20,19/20,19/40,19/40).
\]
This polynomial is multilinear and nonnegative. Its six mixed second derivatives are
\[
 -(1-cd),\quad-1+bd,\quad-1+bc,\quad
 -(1-a)d,\quad-(1-a)c,\quad-(1-a)b,
\]
all nonpositive. Hence its Boolean restriction $g$ is submodular. A distribution placing masses $1/20,19/40,19/40$ on $(1,0,0,0),(0,1,1,0),(0,1,0,1)$ has marginals $x$ and expected value $21/20$. The affine majorant $2-c-d$ dominates all 16 Boolean values and attains $21/20$ at $x$, proving $g^+(x)=21/20$.

The degree-twelve maps give
\[
 \alpha_{12}(x)=(1/20,9/80,189143/400000,189143/400000),
\]
\[
 \beta_{12}(x)=(1/20,109/125,9793/30000,9793/30000).
\]
Exact substitution yields
\begin{equation}\label{eq:map_counterexample_C}
 8G(\alpha_{12}(x))+9G(\beta_{12}(x))-8g^+(x)
 =-\frac{1507464166491}{32000000000000}<0.
\end{equation}
The degree-twelve pair therefore fails the universal arity-four inequality. Its weighted recovery-to-closure ratio on this point is
$89097511944503/190400000000000<8/17$.

Conversely, take the positive rooted atom $H(a,u)=a(1-\prod_{j=1}^6u_j)$, with $a=1$ and all $u_j=19/20$. Write $x=(a,u_1,\ldots,u_6)$ and $\ell(x)=\min\{a,\sum_{j=1}^6(1-u_j)\}$; then $\ell(x)=3/10$. The arity-four pair has $\psi_0(1)=0$, $\psi_1(1)=1$, and $\psi_1(19/20)=19/20$. Hence
\begin{equation}\label{eq:map_counterexample_D}
 8H(\psi_0(x))+9H(\psi_1(x))-8\ell(x)
 =9[1-(19/20)^6]-12/5=-\frac{1012929}{64000000}<0.
\end{equation}
The witness uses distinct variables and has literal degree seven.

These calculations disprove the fixed-weight universal inequalities. They do not disprove the best-branch guarantees on these instances, exclude reweighting, or rule out another portfolio for the union of classes.

\begin{corollary}[A separation from the degree-twelve certificate cone]\label{cor:certificate_separation}
The polynomial $G$ in \eqref{eq:map_counterexample_C} does not admit a nonnegative compatible continuous rooted certificate of literal degree at most twelve.
\end{corollary}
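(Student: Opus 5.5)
Argue by contradiction. Suppose $G$ admits a nonnegative compatible continuous rooted certificate in $\CC_{\le12}$. By Theorem~\ref{thm:twelve_atoms} and Lemma~\ref{lem:mixed}, the portfolio $(\alpha_{12},\beta_{12})$ with weights $8/17,9/17$ pays every constant, literal and compatible atom of literal degree at most twelve by factor $8/17$. By Corollary~\ref{cor:aggregation} it then pays $G$ against the standard roof $L$ of that certificate:
\[
 8G(\alpha_{12}(x))+9G(\beta_{12}(x))\ge 8L(x)
\]
at the point $x=(1/20,19/20,19/40,19/40)$ of \eqref{eq:map_counterexample_C}. Here $G$ is evaluated as the continuous polynomial itself. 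The certificate is an exact identity on the cube, so summing the recovered atoms and unaries with their coefficients reproduces $G$ at the recovered point.

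It remains to show $L(x)\ge g^+(x)=21/20$. Once this holds, the display contradicts \eqref{eq:map_counterexample_C}, which says the left side is strictly below $8g^+(x)$.

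The key step is therefore: every Boolean-exact concave majorant of $G$ dominates the Boolean concave closure $g^+$. The certificate roof $L$ is a sum of concave terms (minima of affine functions, literals and constants), so it is concave. By Proposition~\ref{prop:root_independence}, or directly because each atom roof is exact at Boolean literal values, $L(\ind_S)=G(\ind_S)=g(S)$ for every $S$. Take the optimal coupling $\lambda$ from Appendix~\ref{app:overlap}, with masses $1/20,19/40,19/40$ on three Boolean points and marginals $x$. Jensen's inequality for the concave $L$ then gives
\[
 L(x)=L\Bigl(\sum_T\lambda_T\ind_T\Bigr)\ge\sum_T\lambda_T L(\ind_T)=\sum_T\lambda_T g(T)=g^+(x)=21/20.
\]
Combining this with the payment inequality gives $8G(\alpha_{12}(x))+9G(\beta_{12}(x))\ge 8g^+(x)$, which contradicts the negative value in \eqref{eq:map_counterexample_C}.

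The main thing to check is that nothing in the certificate framework breaks Boolean exactness or concavity of the roof when repeated variables or mixed-polarity atoms are present. Repeated occurrences can make the roof strictly larger than the continuous envelope, as in Example~\ref{ex:self_atom}, but that only helps the inequality $L\ge g^+$. Boolean exactness still holds because each literal takes a value in $\{0,1\}$ at every vertex. Concavity holds because every atom roof is a minimum of two affine functions.

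The second point to verify is that Lemma~\ref{lem:mixed} is invoked with rank up to eleven. This matches Theorem~\ref{thm:twelve_atoms}, which covers $m=1,\ldots,11$, so the literal-degree cap of twelve is exactly what the argument uses.
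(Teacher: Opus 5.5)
Your proposal is correct and matches the paper's proof. It pays the certificate roof $L$ at $x$ via Theorem~\ref{thm:twelve_atoms}, Lemma~\ref{lem:mixed} and aggregation, then uses $L\ge g^+$ from concavity and Boolean exactness to contradict \eqref{eq:map_counterexample_C}. The paper cites minimality of the concave closure for the second step, and you spell it out with Jensen's inequality at the explicit optimal coupling; this is the same argument made concrete.
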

\begin{proof}
Such a certificate would induce a concave Boolean-exact roof $L\ge g^+$ by the minimality of concave closure. Theorem~\ref{thm:twelve_atoms} and Lemma~\ref{lem:mixed} would then give
$8G(\alpha_{12}(x))+9G(\beta_{12}(x))\ge8L(x)\ge8g^+(x)$,
contradicting \eqref{eq:map_counterexample_C}.
\end{proof}

\section{Computer-assisted Gaussian certificate}\label{hard:app:certificate}
The uniform Gaussian bound follows from an analytic quadrature remainder, derivative enclosures, and subdivision of the profile square into finitely many rectangles. Each accepted rectangle has enough margin to absorb the stated rounding allowances.

Define
\begin{equation}\label{hard:eq:L}
 H_G(a,b):=\Psi(a,b)-\frac{64713}{100000}
 \left(\frac{a}{1000000}+\frac{999999b}{1000000}\right).
\end{equation}
Proposition~\ref{hard:prop:gaussian-certificate} requires
\begin{equation}\label{hard:eq:L-target}
 H_G(a,b)\le\frac{58213}{1000000}
 \qquad((a,b)\in[0,1]^2).
\end{equation}

\subsection{Integral formula and uniform remainder}

For interior $a,b$, set $x=\Phi^{-1}(a)$ and $y=\Phi^{-1}(b)$.  Plackett's
identity \citep{Plackett1954} gives, for $\rho<0$,
\begin{equation}\label{hard:eq:plackett}
 D_\rho(a,b)=a(1-b)+\int_\rho^0
 \frac{\exp\!\left[-\dfrac{x^2-2rxy+y^2}{2(1-r^2)}\right]}
 {2\pi\sqrt{1-r^2}}\,dr.
\end{equation}
Let the integrand be $g(r)$, put $d=1-r^2$, and define
\[
 U=\frac{x+y}{\sqrt{2(1+r)}},
 \qquad
 V=\frac{x-y}{\sqrt{2(1-r)}}.
\]
Writing $\ell=\log g$ gives
\[
 \ell'=\frac r d+\frac{U^2}{2(1+r)}-\frac{V^2}{2(1-r)},
\]
and
\[
 \ell''=\frac{1+r^2}{d^2}
 -\frac{U^2}{(1+r)^2}-\frac{V^2}{(1-r)^2}.
\]
All correlations in \eqref{hard:eq:correlations} lie in $[-151/200,0]$.  With
\[
 r_0=\frac{151}{200},\qquad
 d_0=1-r_0^2=\frac{17199}{40000},\qquad
 h_0=1-r_0=\frac{49}{200},
\]
the identities $g''=g[(\ell')^2+\ell'']$,
$\sup_t t^2e^{-t^2/2}=2/e$, and
$\sup_t t^4e^{-t^2/2}=16/e^2$ imply
\begin{align*}
 |g''(r)|\le{}&\frac1{2\pi\sqrt{d_0}}
 \left[
  \frac{3r_0^2}{d_0^2}
  +\frac{3(16/e^2)}{4h_0^2}
  +\frac{3(16/e^2)}4\right.\\
 &\left.\hspace{24mm}
  +\frac{1+r_0^2}{d_0^2}
  +\frac{2/e}{h_0^2}+\frac2e
 \right]
 <14.421<16.
\end{align*}
The $4000$-panel composite trapezoidal rule therefore has uniform error at
most
\begin{equation}\label{hard:eq:trap-error}
 \frac{16(151/200)^3}{12\cdot4000^2}
 \approx3.5864072917\times10^{-8}
\end{equation}
for each Gaussian term.

\subsection{Monotone derivative enclosure}

The exact derivatives are
\begin{align*}
 \partial_1D_\rho(a,b)
 &=1-\Phi\!\left(
   \frac{\Phi^{-1}(b)-\rho\Phi^{-1}(a)}{\sqrt{1-\rho^2}}
  \right),\\
 \partial_2D_\rho(a,b)
 &=-\Phi\!\left(
   \frac{\Phi^{-1}(a)-\rho\Phi^{-1}(b)}{\sqrt{1-\rho^2}}
  \right),
\end{align*}
and, on the diagonal,
\[
 \frac d{dt}D_\rho(t,t)
 =1-2\Phi\!\left(
  \sqrt{\frac{1-\rho}{1+\rho}}\,\Phi^{-1}(t)
 \right).
\]
For $\rho<0$, each component of $\nabla H_G$ is nonincreasing in both profile
coordinates. To bound a rectangle, the
verifier evaluates $H_G$ at its center and integrates the endpoint derivative
enclosures along each of the two coordinate orders. Both give valid variation
bounds, so it retains the smaller one.

A Gaussian quantile is $\Phi^{-1}(a)$. MPFR provides multiple-precision
floating-point arithmetic with specified rounding modes; binary64 uses a
53-bit significand.
The verifier uses round-to-nearest binary64 arithmetic and is compiled without
unsafe reassociation or fast-math options.
Gaussian quantiles are located by 220 bisection steps at 256-bit precision.
The integral is evaluated with MPFR arithmetic, directed output rounding, the
analytic error \eqref{hard:eq:trap-error}, and an outward $10^{-12}$ allowance at
every density node.  The derivative enclosures use a further $10^{-9}$
outward allowance for conversion of quantiles and Gaussian arguments to
binary64 before the final MPFR evaluation of the normal cumulative distribution
function (CDF). The verifier records these allowances explicitly. The bounds below show that
they cover conversion and roundoff errors throughout the allowed subdivision
depth and correlation interval.

At subdivision depth at most $40$, every interior profile coordinate passed to the quantile
routine lies in $[2^{-41},1-2^{-41}]$, so its normal quantile lies in
$[-8,8]$.  The 220-step MPFR bisection and final binary64 conversion give
absolute quantile error below $4\times10^{-15}$.  If $g$ denotes the density
in \eqref{hard:eq:plackett}, completing the square gives, uniformly for
$r\in[-151/200,0]$,
\[
 |\partial_xg|,\ |\partial_yg|
 \le \frac{e^{-1/2}}{2\pi d_0}<0.225.
\]
The $10^{-12}$ density allowance therefore covers quantile conversion and
binary64 errors in the base term.  The round-to-nearest operations internal to each
256-bit MPFR density evaluation, including the stored values of $\pi$ and the
quadrature node, contribute less than $10^{-70}$ in absolute value: on the
stated domain all algebraic intermediates are bounded, the denominator is at
least $2\pi\sqrt{d_0}$, and fewer than 40 correctly rounded operations are
performed.  The same precision makes CDF roundoff negligible relative to the
quantile-conversion bound.

The verifier stores the right-hand side of
\eqref{hard:eq:trap-error} in binary64; it is below the exact rational remainder by
less than $2\times10^{-24}$.  The $10^{-12}$ node allowance contributes at
least
\[
 |\rho_{BB}|10^{-12}=\frac{933}{9067}10^{-12}>10^{-13}
\]
to every computed Gaussian integral, and therefore covers the stored remainder's deficit and the
density-evaluation errors above.  Using binary64 unit roundoff
$2^{-53}$,
$|x|,|y|\le8$, and $\sqrt{1-\rho^2}\ge\sqrt{d_0}>0.65$, direct propagation
through each derivative argument gives error below $10^{-12}$.  Since
$\sup_z\Phi'(z)=1/\sqrt{2\pi}<0.4$ and the CDF truncation beyond $12$ is
below $2\times10^{-33}$, the $10^{-9}$ derivative allowance is conservative.

\subsection{Certificate output}

A cell's \emph{pruning margin} is the binary64 comparison target minus the
computed upper bound on $H_G$ over the entire cell. The \emph{rounding reserve}
is an additional margin that covers errors in the final assembly of this bound
and rounding of the comparison target. A cell is \emph{pruned}, and becomes a
\emph{terminal cell}, when its margin is finite and exceeds the reserve; it is
then not subdivided further. The reported \emph{residual margin} is the
smallest pruning margin minus the reserve, rounded downward.
Version \texttt{gaussian-reserve-v1} uses this acceptance rule and reports:
\begin{center}
\begin{tabular}{@{}lr@{}}
\toprule
precision & 256 bits\\
trapezoidal panels & 4,000\\
stored quadrature addend & $3.5864072916666665\times10^{-8}$\\
cells visited & 595\\
cells pruned & 298\\
maximum subdivision depth & 23\\
largest point upper bound & $0.058212883635230132$\\
location of largest point bound & $(0.44775390625,0.0712890625)$\\
smallest final pruning margin & $8.4277705231228239\times10^{-9}$\\
required final reserve & $2.0000000000000004\times10^{-12}$\\
residual margin lower bound & $8.4257705231228217\times10^{-9}$\\
\bottomrule
\end{tabular}
\end{center}
\par\medskip\noindent
The verifier's binary64 comparison target is
$0.058213000000000001$, which exceeds the exact rational
$58213/1000000$ by less than $9\times10^{-19}$.  The smallest reported
pruning margin is greater than $8.42\times10^{-9}$.  We allow a further $10^{-12}$ for assembling each final cell bound in binary64. The endpoint derivative formulas are bounded in absolute value
by two, the cell widths are at most one, and the assembly uses fewer than
50 arithmetic operations with intermediate absolute values below 16.
With unit roundoff $2^{-53}$, the total absolute error, including the final
subtraction, is below $10^{-12}$. The recorded pruning margin exceeds this
allowance together with the error in the comparison target. Every cell is
therefore below the exact rational target $58213/1000000=0.058213$, proving
\eqref{hard:eq:L-target}.

Here \texttt{nextafter(x,+infinity)} returns the smallest binary64 number
strictly greater than \texttt{x}. The acceptance threshold is
\texttt{nextafter(2e-12,+infinity)}. A terminal cell is accepted only if its
computed margin is finite and exceeds this threshold. This reserve is larger
than $10^{-12}+9\times10^{-19}$ and thus covers both final assembly and target
rounding. The executable checks the minimum margin once more before reporting
success. Since the terminal cells cover the entire square, these checks
establish the uniform inequality.

\subsection{Public source and reproduction}\label{hard:app:github}

The original source and certificate output supporting
Proposition~\ref{hard:prop:gaussian-certificate} are available from
\citet{DuGaussianCertificate2026} in the GitHub repository
\begin{center}
\url{https://github.com/Duke1968/dicut-gaussian-certificate}.
\end{center}
The source snapshot is identified by commit
\begin{center}
\href{https://github.com/Duke1968/dicut-gaussian-certificate/tree/cbca544a2f248558f6303590214485f15562b7d4}{\nolinkurl{cbca544a2f248558f6303590214485f15562b7d4}}.
\end{center}
The same certificate files are included at \path{anc/explicit_input_hardness/}:
the C verifier, local MPFR header, exact-parameter script, verification output,
and compilation and environment records. The SHA-256 digest of the
\path{SHA256SUMS} manifest is
\begin{center}
\nolinkurl{16a92643ba25ccea23696027db0ab9289ce20648feb56a2fbf5a78c0a68ce561}.
\end{center}
The manifest identifies the other nine files in that directory.

The C verifier checks the Gaussian inequality using MPFR evaluation,
the quadrature remainder, outward error allowances, and adaptive
subdivision. In the C source and its output, \texttt{L} denotes $H_G$.
The Python script checks the rational parameters and derived ratios in
Appendix~\ref{hard:app:parameters}. Theorem~\ref{hard:thm:affine-criterion} supplies the separate reduction from the
Gaussian inequality to computational hardness.
The directory's README provides the compilation and verification commands,
together with the required arithmetic assumptions and software environment.

\paragraph{Reserve-aware source-package version.}
The directory \path{anc/gaussian_reserve_v1/} contains the C source that enforces the reserve, its local header, a patch against the cited public source, and a separate manifest. The cited commit identifies the original verifier; the reserve-enforcing variant is included in this source package. Its output, compiler and environment information, and measured resources are recorded in \path{anc/presentation_revision_2026_09_22/validation/}. The README files explain how the two versions correspond and how to reproduce the checks.

\section{Exact parameter identities}\label{hard:app:parameters}
The parameters in Theorem~\ref{hard:thm:ugc-hardness} give the following exact probabilities, correlations, density, and ratios.

For a binary pair with marginals $q_s,q_t$ and
$p_{st}=\nu_{st}(1,1)$, the four atoms are
\[
 p_{st},\quad q_s-p_{st},\quad q_t-p_{st},\quad
 1-q_s-q_t+p_{st}.
\]
Substituting \eqref{hard:eq:pi-q}--\eqref{hard:eq:pairs} shows that all four probabilities are nonnegative. Their Pearson correlation is
\[
 \rho_{st}=
 \frac{p_{st}-q_sq_t}
 {\sqrt{q_s(1-q_s)q_t(1-q_t)}},
\]
which gives \eqref{hard:eq:correlations}. The density, completeness, and certified soundness satisfy
\begin{align*}
 \mu&=\pi_Aq_A+\pi_Bq_B
 =\frac{186600729}{2000000000},\\
 C&=W_{AA}(q_A-p_{AA})+W_{BA}q_B+W_{BB}q_B
 =\frac{17893931095731}{125000000000000},\\
 \beta+\lambda\mu
 &=\frac{23718092975777}{200000000000000},\\
 \frac{\beta+\lambda\mu}{C}
 &=\frac{118590464878885}{143151448765848}.
\end{align*}

\section{Quadratic linear and semidefinite relaxations}\label{app:quadratic_comparison}
For the quadratic splitting in \eqref{eq:diagonal_split}, put $a_i=\ell_i-d_i$ and write $M(x)=c+\sum_i a_ix_i-\sum_{i<j}b_{ij}x_ix_j$. Proposition~\ref{prop:root_independence} identifies the standard roof of this degree-two negaform as
\begin{equation}\label{eq:quadratic_mccormick}
 L_M(x)=c+\sum_i a_ix_i-\sum_{i<j}b_{ij}\max\{0,x_i+x_j-1\}.
\end{equation}
At fixed $x$, this is the maximum of $c+\sum_i a_ix_i-\sum_{i<j}b_{ij}X_{ij}$ over the pairwise McCormick bounds \citep{mccormick1976computability}
\[
 \max\{0,x_i+x_j-1\}\le X_{ij}\le\min\{x_i,x_j\}\qquad(i<j).
\]
Because $-b_{ij}$ is nonpositive, each product variable can independently take its lower bound. Let $L_{\rm SDP}(x)$ be the value obtained by retaining these bounds and adding the Boolean moment constraints
\[
 \begin{pmatrix}1&x^\top\\x&X\end{pmatrix}\succeq0,
 \qquad X=X^\top,\qquad X_{ii}=x_i\quad(i\in N).
\]
Here $\succeq0$ denotes positive semidefiniteness. If $m$ is the Boolean restriction of $M$, then
\begin{equation}\label{eq:quadratic_sdp_order}
 M(x)\le m^+(x)\le L_{\rm SDP}(x)\le L_M(x).
\end{equation}
Any joint Bernoulli law with marginals $x$ gives feasible first and second moments; independent sampling has expected value $M(x)$. Adding constraints to the same linear relaxation gives the final inequality. Mixing feasible moment matrices proves concavity of $L_{\rm SDP}$. This comparison retains the McCormick inequalities and uses the pairwise roof \eqref{eq:quadratic_mccormick}. For the continuous quadratic $F$, keep the exact diagonal residual in \eqref{eq:diagonal_split} separately: the Boolean moment identity $X_{ii}=x_i$ does not impose the continuous identity $x_i^2=x_i$.

A recovery inequality paying $\rho L_M$ also pays $\rho L_{\rm SDP}$. Reoptimizing the tighter bound need not improve the factor or the returned objective value; Section~\ref{sec:design} discusses this issue for stronger roofs. Spectral assumptions also require care: an entrywise nonpositive Hessian, as required by DR-submodularity, need not be negative semidefinite. Finally, a constant shift changes multiplicative guarantees. For $C\ge0$, a guarantee for $F+C$ yields only
\[
 F(y)+C\ge\alpha\bigl(\OPT(F,\K)+C\bigr)
 \quad\Longrightarrow\quad
 F(y)\ge\alpha\OPT(F,\K)-(1-\alpha)C.
\]
Thus a diagonal shift can make the quadratic matrix positive semidefinite and be constant on the chosen Boolean encoding, yet still fail to preserve a multiplicative guarantee for the original objective.

\section{Outcome-space benchmark and prophet transfer}\label{app:outcomes}
The prophet consequence of Theorem~\ref{thm:online_preservation} uses a valuation on the entire outcome universe. Its fractional benchmark must respect both outcome probabilities and item feasibility; we establish that benchmark and the transfer here.

\subsection{Finite outcomes and the fractional benchmark}
There are independent items $i\in[m]$, with disjoint finite outcome sets $U_i$. Outcome $e\in U_i$ occurs with probability $p_e$, where $\sum_{e\in U_i}p_e\le1$; residual probability produces no element. The valuation $f:2^U\to\R_{\ge0}$ is nonnegative and submodular on the entire outcome universe $U=\bigsqcup_iU_i$. Accepted item indices must belong to $\I$. Let $\OPT_{\rm prop}$ be the expected value of an optimal feasible selection after all outcomes are observed.

Define the outcome-space region from the item-independence polytope by
\begin{equation}\label{eq:outcome_polytope}
 P^{\rm out}=\left\{x\in\R_{\ge0}^{U}: x_e\le p_e\ (e\in U),\quad
          \left(\sum_{e\in U_i}x_e\right)_{i=1}^m\in P_{\I}\right\}.
\end{equation}
The region is compact, down-closed, and contains zero, also when null outcomes have positive probability. It contains the marginals of any outcome set selected by the prophet. Hence
\begin{equation}\label{eq:prophet_roof_bound}
 \OPT_{\rm prop}\le\max_{x\in P^{\rm out}}f^+(x)\le\max_{x\in P^{\rm out}}L(x)
\end{equation}
whenever a local roof is available. For the first inequality, use the distribution of the prophet's selected set as a feasible coupling in the concave closure.

\begin{proposition}[Outcome-space transfer]\label{prop:outcome_transfer}
Assume the valuation on all of $U$ has representation \eqref{eq:online_class}. For $z\in bP^{\rm out}$ and an item-space $(b,c)$-selectable greedy OCRS, activate realized outcome $e$ independently with probability $z_e/p_e$, interpreted as zero when $p_e=0$, and run the OCRS on the corresponding item. The resulting feasible outcome set satisfies
\[
 \E f(S_{\rm alg})\ge cF(z).
\]
\end{proposition}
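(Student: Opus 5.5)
The plan is to reduce the outcome-space statement to Theorem~\ref{thm:online_preservation}, applied on the outcome universe $U$ rather than on items. The main point is that outcome activations are independent across items but mutually exclusive within an item. The realized active outcome set therefore does not have the product law $R_z$. Preservation must be re-proved termwise, with one-outcome-per-item sampling in place of independent outcome sampling.

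\textbf{Step 1: item-level activation.} Item $i$ is active when its realized outcome $e$ is activated. This happens with probability $\sum_{e\in U_i}z_e=:y_i$. The vector $y$ lies in $bP_{\I}$ because $z\in bP^{\rm out}$, so the item-space OCRS is $c$-selectable at $y$. The OCRS samples $\mathcal F_y$ independently of activations. Write $Q_i$ for the event, defined as in the online section, that item $i$ is accepted whenever active. Each $Q_i$ has probability at least $c$, is decreasing in the set of active items, and does not depend on whether $i$ itself is active. The output outcome set is the set of activated outcomes of accepted items.

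\textbf{Step 2: termwise preservation for cuts, coverage, and unaries.} Fix $\cut_{E,H}$ and order $E=(e_1,\dots,e_m)$. Let $C_k$ be the event that $e_k$ is active, that $e_1,\dots,e_{k-1}$ are inactive, and that $H\nsubseteq$ the active set. The $C_k$ partition the event that the cut equals one under the true active-outcome law $A$. For these events I check two things.
\begin{itemize}
\item The expectation of the cut under the law of $A$ should dominate $F(z)$, which is the value under independent outcome sampling. For a cut term I would compare $\Pr(\text{some }E\text{-outcome active})$ and $\Pr(H\nsubseteq A)$ under the two laws. Within an item, distinct outcomes are disjoint events. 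Hence $\Pr(\text{no }E\cap U_i\text{ active})=1-\sum_{E\cap U_i}z_e\le\prod(1-z_e)$, and $\Pr(H\cap U_i\subseteq A)$ is zero unless $|H\cap U_i|\le1$, in which case it equals the product. Coverage and the missing-head events thus become at least as likely, and the cut probability also rises when $E\cap H=\varnothing$. The overlapping case $E\cap H\ne\varnothing$ uses the expression $1-\prod(1-x)-\prod x$ and the same two bounds. Unaries are handled directly.
\item Conditional on $e_k$ active, the residual event $C_k'$ is a decreasing event in the independent item-level outcome variables, for a suitable order on each item's finite outcome set in which "inactive" is the bottom. Harris's inequality for product measures on these finite chains, together with the decreasing event $Q_{i(e_k)}$, gives $\Pr(C_k\cap Q)\ge c\Pr(C_k)$. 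On that event the algorithm outputs $e_k$, and the head stays missing.
\end{itemize}
Summing over $k$ and over terms with the nonnegative coefficients then yields $\E f(S_{\rm alg})\ge c\,\E f(A)\ge cF(z)$.

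\textbf{Main obstacle.} I expect the hardest step to be the Harris step when $H$ or $E$ contains several outcomes of one item, including the item of $e_k$. For this I would first make the $e_k$ conditioning fix that item's outcome. I would then drop the other $U_{i(e_k)}$ outcomes from the residual event, since they are automatically inactive, which only helps. The remaining coordinates are independent items, and Harris applies there.
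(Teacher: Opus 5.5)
Your strategy works and differs from the paper's route, but two of its steps are asserted on grounds that do not establish them.

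\textbf{Dominance for a cut with $E\cap H=\varnothing$.} Comparing $\Pr(A\cap E\ne\varnothing)$ and $\Pr(H\nsubseteq A)$ separately under the two laws does not control their joint probability. These events are independent under $R_z$. Under the one-outcome-per-item law they are dependent whenever some item has outcomes $e\in E$ and $h\in H$, because $e\in A$ forces $h\notin A$. You need $\Pr_A(X\cap Y)\ge\Pr_A(X)\Pr_A(Y)$ for $X=\{A\cap E\ne\varnothing\}$ and $Y=\{H\nsubseteq A\}$.

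This inequality does hold. Let $O_j\in U_j\cup\{\bot\}$ be item $j$'s activated outcome. Order each item's states as: head outcome (if it has exactly one) $<\bot<$ other non-$E$ outcomes $<$ $E$-outcomes. Then $X$ and $Y$ are both increasing, and Harris applies on this product of chains. Your overlapping case is fine as written, because there the cut probability equals $1-\Pr(A\cap E=\varnothing)-\Pr(H\subseteq A)$ under every law.

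\textbf{The chain order in the Harris step.} Your conditioning handles the item of $e_k$; the case that needs care is different. It is an item $j\ne i(e_k)$ containing an earlier tail $e_l$ ($l<k$) and a head $h\in H\setminus E$. The event $C_k'$ forbids $O_j=e_l$ and is helped by $O_j\ne h$. If $h$ sits above $e_l$, raising $O_j$ from $e_l$ to $h$ can turn $C_k'$ from false to true, so $C_k'$ is not decreasing.

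Use the order $\bot<$ other outcomes $<h<$ earlier tails on each item. With $L_j=\{O_j\notin\{e_1,\ldots,e_{k-1}\}\}$, the event $C_k'$ is then either $\bigcap_jL_j$ or a union over head-carrying items $j'$ of $\{O_{j'}\ne h_{j'}\}\cap\bigcap_jL_j$. Each of these is a lower set. This order keeps $\bot$ at the bottom, so $Q_{i(e_k)}$ remains decreasing. You should state this order rather than assert that one exists.

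\textbf{Relation to the paper.} The paper avoids both constructions. It first chooses a candidate outcome for each item with probabilities $z_e/\bar z_i$, then activates that candidate with probability $\bar z_i$. Conditional on the candidates, three things hold:
\begin{itemize}
\item each item is binary;
\item the restriction of $f$ to the candidate set is again of the form \eqref{eq:online_class};
\item the OCRS does not see the candidates.
\end{itemize}
Theorem~\ref{thm:online_preservation} therefore applies verbatim and gives $\E f(S_{\rm alg})\ge c\,\E f(A_z)$. The paper then proves $\E f(A_z)\ge F(z)$ for every submodular $f$ by replacing one item block at a time. For the one-item restriction $h$ it uses $h(S)\le h(\varnothing)+\sum_{e\in S}\bigl(h(\{e\})-h(\varnothing)\bigr)$.

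Your route is self-contained and termwise, but it must rebuild monotone structure on multivalued item states. The paper's route reuses the binary theorem as a black box, and its dominance step does not depend on the cut representation.
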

\begin{proof}
Put $\bar z_i=\sum_{e\in U_i}z_e$. Item activations are independent with marginals $\bar z\in bP_{\I}$. An equivalent procedure independently chooses a candidate outcome for each item with $\bar z_i>0$, using probabilities $z_e/\bar z_i$, and then activates the candidate with probability $\bar z_i$. Conditional on these candidates, apply Theorem~\ref{thm:online_preservation}. Restricting to the candidate set preserves \eqref{eq:online_class}: a cut becomes a cut, a coverage indicator, or zero, and unaries may become constants. Because the OCRS depends only on $\bar z$ and is independent of the candidate identities, its selectability remains $c$ for every fixed candidate set.

Let $A_z$ be the set of activated candidates, which contains at most one
outcome per item. Averaging the conditional OCRS inequalities gives
$\E f(S_{\rm alg})\ge c\E f(A_z)$. It remains to show $\E f(A_z)\ge F(z)$.
Conditional on the other items, the submodular function $h$ on one item's outcomes satisfies
\[
 h(S)\le h(\varnothing)+\sum_{e\in S}\bigl(h(\{e\})-h(\varnothing)\bigr).
\]
Taking an independent sample with marginals $(z_e)_{e\in U_i}$ bounds its expectation by
$(1-\bar z_i)h(\varnothing)+\sum_e z_eh(\{e\})$,
which is the expectation for that item under $A_z$. Replace the item blocks one
at a time to prove this comparison. This is the
submodular-dominance mechanism relevant here \citep{QiuSingla2022}.
\end{proof}

\begin{corollary}[Modular fractional-to-prophet guarantee]\label{cor:prophet_interface}
Under Proposition~\ref{prop:outcome_transfer}, assume the item-space OCRS is efficient and let $\alpha,\varepsilon\ge0$. Any efficiently computed $z\in bP^{\rm out}$ satisfying $F(z)\ge\alpha\OPT_{\rm prop}-\varepsilon$ gives an online guarantee
\[
 \E f(S_{\rm alg})\ge c\alpha\OPT_{\rm prop}-c\varepsilon.
\]
In particular, suppose the full outcome valuation also has an efficiently optimizable roof and a downward recovery inequality of factor $\rho$. Then a $\delta$-accurate roof optimizer gives
\begin{equation}\label{eq:roof_prophet}
 \E f(S_{\rm alg})\ge bc\rho(\OPT_{\rm prop}-\delta).
\end{equation}
\end{corollary}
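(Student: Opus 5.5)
The first bound reduces to Proposition~\ref{prop:outcome_transfer}; the second additionally uses Theorem~\ref{thm:common} together with \eqref{eq:prophet_roof_bound}.

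\emph{First bound.} Let $z\in bP^{\rm out}$ be the efficiently computed point. Proposition~\ref{prop:outcome_transfer} gives $\E f(S_{\rm alg})\ge cF(z)$. The sampling step is efficient because each realized outcome $e$ is activated independently with probability $z_e/p_e$, and the item-space OCRS is efficient by assumption. Substituting the hypothesis $F(z)\ge\alpha\OPT_{\rm prop}-\varepsilon$ and using $c\ge0$ gives $\E f(S_{\rm alg})\ge c\alpha\OPT_{\rm prop}-c\varepsilon$.

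\emph{Roof bound, producing $y\in P^{\rm out}$.} Let $\bar x$ be a feasible $\delta$-accurate maximizer of $L$ over $P^{\rm out}$. This region is compact, down-closed, and nonempty. Apply Theorem~\ref{thm:common} with $H=L$ and $\K=P^{\rm out}$. Its comparison hypothesis $\max_{P^{\rm out}}L\ge\max_{P^{\rm out}}F$ holds because $L\ge f^+\ge F$. By the argument of that theorem, without invoking its final comparison, the best recovered branch $y=R_s(\bar x)$ satisfies
\[
 F(y)\ge\rho L(\bar x)\ge\rho\bigl(\max_{P^{\rm out}}L-\delta\bigr).
\]
By \eqref{eq:prophet_roof_bound}, the right side is at least $\rho(\OPT_{\rm prop}-\delta)$. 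Downward maps preserve membership in the down-closed set $P^{\rm out}$, so $y\in P^{\rm out}$.

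\emph{Roof bound, scaling into $bP^{\rm out}$.} Set $z=by$. Then $z\in bP^{\rm out}$, since $0\le b\le1$ and $P^{\rm out}$ is down-closed. The multilinear extension $F$ of a nonnegative submodular function is concave along nonnegative directions, so $t\mapsto F(ty)$ is concave. Together with $F(0)=f(\varnothing)\ge0$, this gives $F(by)\ge bF(y)$. Applying the first bound to $z$ with $\alpha=b\rho$ and $\varepsilon=b\rho\delta$ yields \eqref{eq:roof_prophet}.

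The main subtlety is that the benchmark is $\OPT_{\rm prop}$ rather than $\max_{P^{\rm out}}F$. We therefore chain the recovery inequality through $L(\bar x)$ and then through \eqref{eq:prophet_roof_bound}, instead of using the final line of Theorem~\ref{thm:common} directly. We also check that the representation \eqref{eq:online_class} holds on all of $U$, which is needed for Proposition~\ref{prop:outcome_transfer}. Finally, numerical errors in the branch evaluations are absorbed exactly as in Algorithm~\ref{alg:recovery}.
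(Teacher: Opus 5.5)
Your proposal is correct and follows essentially the paper's route. The first bound is Proposition~\ref{prop:outcome_transfer} combined with the hypothesis on $F(z)$. For the second, a $\delta$-accurate roof optimizer over $P^{\rm out}$ is recovered to $y$ with $F(y)\ge\rho L(\bar x)$, then scaled to $z=by$, using concavity of $t\mapsto F(ty)$ (nonpositive Hessian entries, $y\ge0$) and $F(0)\ge0$, and closed with \eqref{eq:prophet_roof_bound}. One cosmetic quibble: $z=by\in bP^{\rm out}$ follows directly from $y\in P^{\rm out}$ by the definition of $bP^{\rm out}$, so down-closedness is not needed at that step.
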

\begin{proof}
The first assertion is Proposition~\ref{prop:outcome_transfer}. For the second, optimize $L$ over \eqref{eq:outcome_polytope}, recover $y\le x$ with $F(y)\ge\rho L(x)$, and set $z=by$. Then $z\in bP^{\rm out}$. Since $y\ge0$ and every entry of $\nabla^2F$ is nonpositive, $d^2F(ty)/dt^2=y^\top\nabla^2F(ty)y\le0$. Concavity and $F(0)\ge0$ give $F(by)\ge bF(y)$. Apply \eqref{eq:prophet_roof_bound}.
\end{proof}
For bit-polynomial implementations, the finite outcome lists and known rational probabilities are explicitly supplied with polynomially bounded encoding length.
On $U$, quadratic, cubic, and supplied bounded-degree rooted valuations have factors $\rho$ of $1/2$, $8/17$, and $8/17$ through degree twelve, respectively. Arbitrary finite rooted representations have factor $\gamma_*$. For arbitrary arity-four factors, this online transfer additionally requires the global representation \eqref{eq:online_class}.

\section{Details of the finite-type hardness reduction}\label{app:hardness_details}
We prove the invariance, decoding, and replication steps behind the affine criterion, using the definitions and hypotheses of Section~\ref{hard:sec:methodology}.

\subsection{Finite-type invariance}\label{app:invariance_proof}
\begin{proof}[Proof of Lemma~\ref{hard:lem:finite-type-invariance}]
For a used pair $(s,t)$, put $Y':=1-Y$ and
$h_t(y'):=1-g_t(1-y')$.  Apply Mossel's two-function theorem to $g_s$ and
$h_t$ on the correlated space $(X,Y')$.  By
Lemma~\ref{hard:lem:binary-correlation}, its maximal correlation is $-\rho_{st}$, so
the Gaussian upper bound is
\[
 \Gamma_{-\rho_{st}}(m_s,1-m_t)
 =D_{\rho_{st}}(m_s,m_t).
\]
The two-function refinement applies whenever no coordinate has large
low-degree influence in both functions \citep[Theorem~6.3 and Lemma~6.8, arXiv:math/0703683v6]{Mossel2010}; input and output
complementation do not change influence.  Choose the same $d,\tau$ for the finitely many used pairs and apply each
pairwise bound with error $\eta$. Multiplying by
$W_{st}$ and using $\sum_{s,t}W_{st}=1$ proves
\eqref{hard:eq:finite-type-invariance}.
\end{proof}

\subsection{Influence decoding}\label{app:decoding_proof}
\begin{proof}[Proof of Lemma~\ref{hard:lem:influence-decoding}]
Apply Lemma~\ref{hard:lem:finite-type-invariance} with error $\eta/2$, obtaining
$d$ and $\tau$.  Let $B\subseteq U$ be the vertices where the family $(f_{u,s})_s$
satisfies the influence alternative.  Outside $B$, the
local test value is at most $\Psi(m_u)+\eta/2$; on $B$, its excess over
$\Psi(m_u)$ is at most one.  Thus \eqref{hard:eq:decoding-implication} implies
$|B|/|U|>\eta/2$.

For every $u\in B$, fix a witness $(s_u,t_u,i_u)$.  The first endpoint function
has influence at least $\tau$, so label $u$ by $i_u$.
Convexity of squared biased Fourier coefficients gives
\[
 \Inf_{i_u}^{\le d}(f_{u,s_u})
 \le \E_{v\in N(u)}
 \Inf_{\sigma_{uv}(i_u)}^{\le d}(g_{v,s_u}).
\]
Consequently, for at least a $\tau/2$ fraction of the incident edges,
$\Inf_{\sigma_{uv}(i_u)}^{\le d}(g_{v,s_u})\ge\tau/2$.  For $v\in V$, set
\[
 I_v:=\bigcup_{s\in\mathcal T}
 \{j:\Inf_j^{\le d}(g_{v,s})\ge\tau/2\}.
\]
The total low-degree influence bound gives
$|I_v|\le M:=\lceil2|\mathcal T|d/\tau\rceil$.  Label each $v$ uniformly
from $I_v$, using an arbitrary label when $I_v$ is empty.  For each qualifying edge $(u,v)$, the label $\sigma_{uv}(i_u)$ belongs to $I_v$
and is chosen with probability at least $1/M$. Under our permutation
convention, this choice satisfies the edge. Regularity and the fraction of
qualifying incident edges give an expected satisfied-edge fraction of at least
\[
 \frac{|B|}{|U|}\frac{\tau}{2M}
 >\frac{\eta\tau}{4M}=:\kappa.
\]
Some deterministic labeling attains this value.
\end{proof}

\subsection{Affine finite-type reduction}\label{app:affine_reduction_proof}
\begin{proof}[Proof of Theorem~\ref{hard:thm:affine-criterion}]
Use the regular long-code construction with a global cardinality constraint
\citep{RaghavendraTan2012,AustrinStankovic2019}.  For every $v\in V$ and
$s\in\mathcal T$, create a type-$s$ cloud indexed by $x\in\{0,1\}^R$ with
vertex mass
\[
 \frac1{|V|}\pi_s\Prb_{q_s}[x].
\]
Sample an arc by choosing $u\in U$ uniformly and ordered neighbors
$v,w\in N(u)$ independently. Choose $(s,t)$ with probability $W_{st}$,
draw $(X,Y)\sim\nu_{st}^{\otimes R}$, and apply the edge permutations.
Place the arc from the type-$s$ cloud of $v$ to the type-$t$ cloud of $w$.  For a cut with cloud indicators $g_{v,s}$, its value
is exactly the left-hand side of \eqref{hard:eq:decoding-implication}, with
$f_{u,s}$ as in \eqref{hard:eq:neighbor-average}.

Suppose a labeling satisfies at least $1-\zeta$ of the Unique-Games edges.
Select in each cloud the dictator coordinate given by its right-vertex label.
The selected vertex mass is exactly $\mu$.  Both sampled constraints are
satisfied with probability at least $1-2\zeta$; conditional on this event the
two pulled-back dictators use the same coordinate and a type-$(s,t)$ arc is
cut with probability $\nu_{st}(1,0)$.  Nonnegativity of arc weights gives
completeness at least $(1-2\zeta)C$.

For soundness, fix $\eta>0$ and take $\kappa$ from
Lemma~\ref{hard:lem:influence-decoding}. Invoke the Unique Games Conjecture with no-case value at most $\kappa/2$, so that the decoding contrapositive applies.
Equation~\eqref{hard:eq:decoding-implication} then bounds every cut by
$\E_u[\Psi(m_u)]+\eta$. Regularity expresses its selected vertex mass as
$\E_u[\pi^\top m_u]$.  A budget density at most $\mu+r$ and
\eqref{hard:eq:affine-majorant} therefore give
\[
 \text{cut value}
 \le\beta+\lambda(\mu+r)+\eta.
\]

Because all cloud probabilities are fixed rationals, replace a cloud vertex
of rational mass by the corresponding number of equal-mass copies.  Replace
each original arc by the complete bipartite bundle between its two copy sets,
distributing the arc weight by the product of the endpoint copy proportions.
For an arbitrary cut of the replicated instance, define $g_{v,s}(x)\in[0,1]$
as the selected fraction of the copies of $(v,s,x)$.  By bilinearity, its cut value and selected-vertex fraction are given by the
same expressions with these fractional indicators. Soundness therefore holds
even for cuts that separate copies. For a fixed gap tolerance, the denominators
of the type masses and product probabilities, the alphabet size $R$, and the
additional replication factor are constants. Both the number $n'$ of copies
and the explicit rational arc list are consequently polynomial in the outer
instance size. Multiplying all copy counts by a further integer $M$ makes
every copy set have at least $M$ vertices. In a bundle from a cloud vertex to itself, self-loops carry at most a $1/M$
fraction of the bundle weight. They contribute zero to every cut, so delete
them and normalize the remaining arc weight to one.
If their total weight is $\ell\le1/M$, the soundness bound is divided by
$1-\ell$, while completeness cannot decrease. Taking
$k=\lceil\mu n'\rceil$ gives $k/n'=\mu+o(1)$.
Choose $\zeta$, $\eta$, $1/M$, and the resulting density error $r$
sufficiently small relative to $\delta$.  The Unique Games Conjecture then yields the gap
\eqref{hard:eq:ugc-gap}.  If an approximation ratio exceeded $(\beta+\lambda\mu)/C$ by a fixed
amount, choosing $\delta$ sufficiently small would make the algorithm's
output values distinguish the two cases.
\end{proof}

\Needspace{17\baselineskip}
\section*{Statements and declarations}
\addcontentsline{toc}{section}{Statements and declarations}
\paragraph{Funding.} This work was supported by the Natural Sciences and Engineering Research Council of Canada, grant RGPIN/3840-2024.
\paragraph{Competing interests.} The author declares no relevant financial or nonfinancial competing interests.
\paragraph{Code and proof certificates.}
The three GitHub repositories listed
in Appendices~\ref{app:twelve}, \ref{app:arity_four}, and
\ref{hard:app:github} contain, respectively, the Lean formalization of
Theorem~\ref{thm:twelve_atoms}, the exact-arithmetic certificates for
Theorem~\ref{thm:arity_four}, and the MPFR-based Gaussian certificate for
Proposition~\ref{hard:prop:gaussian-certificate}. These appendices specify
the source versions, certificate identifiers, and reproduction commands.
The article's ancillary material also includes the arity-four and Gaussian
certificate files. The reserve-enforcing Gaussian verifier and its output
are supplied alongside the separately identified public snapshot;
Appendix~\ref{hard:app:certificate} explains how they correspond.
All remaining proofs appear in the manuscript.
\paragraph{Generative AI assistance.}
ChatGPT assisted with exposition and LaTeX preparation,
literature-source auditing, and supporting symbolic,
exact-arithmetic, and proof-software checks. The author is responsible for providing the preliminary research notes that initiated the project, guiding multiple rounds of collaborative feedback. Additionally, the author maintains full accountability for the mathematical content and its alignment with the proof artifacts.

\setlength{\bibsep}{6pt plus 1pt minus 1pt}
\bibliographystyle{plainnat}
\bibliography{references}
\end{document}